\theoremstyle{plain} 
\newtheorem{theorem}{\indent\sc Theorem}[section] 
\newtheorem{corollary}[theorem]{\indent\sc Corollary}
\newtheorem{proposition}[theorem]{\indent\sc Proposition}
\newtheorem{claim}[theorem]{\indent\sc Claim}
\theoremstyle{definition} 
\newtheorem{definition}[theorem]{\indent\sc Definition}
\newtheorem{remark}[theorem]{\indent\sc Remark}
\begin{document}

\title[Ricci curvature]{Ricci curvature and $L^p$-convergence} 

\author[Shouhei Honda]{Shouhei Honda} 

\subjclass[2000]{Primary 53C20.}

\keywords{Gromov-Hausdorff convergence, Ricci curvature, and Laplacian.}

\address{ 
Faculty of Mathmatics \endgraf
Kyushu University \endgraf 
744, Motooka, Nishi-Ku, \endgraf 
Fukuoka 819-0395 \endgraf 
Japan
}
\email{honda@math.kyushu-u.ac.jp}

\maketitle

\begin{abstract}
We give the definition of $L^p$-convergence of tensor fields with respect to the Gromov-Hausdorff topology and several fundamental properties of the convergence.
We apply this to establish a Bochner-type inequality which keeps the term of Hessian on the Gromov-Hausdorff limit space of a sequence of Riemannian manifolds with a lower Ricci curvature bound and to give a geometric explicit formula for the Dirichlet Laplacian on a limit space defined by Cheeger-Colding.
We also prove a continuity of the first eigenvalues of the $p$-Laplacian with respect to the Gromov-Hausdorff topology.
\end{abstract}

\section{Introduction}
Let $n \in \mathbf{N}$, $K \in \mathbf{R}$ and let $(M_{\infty}, m_{\infty}, \upsilon)$ be the Gromov-Hausdorff limit metric measure space of a sequence of renormalized pointed complete $n$-dimensional Riemannian manifolds $\{(M_i, m_i, \underline{\mathrm{vol}})\}_{i \in \mathbf{N}}$ with $\mathrm{Ric}_{M_i} \ge K(n-1)$ and $M_{\infty} \neq \{m_{\infty}\}$, where  $\underline{\mathrm{vol}}:=\mathrm{vol}/\mathrm{vol}\,B_1(m_i)$.

In \cite{ch-co3} Cheeger-Colding showed that the cotangent bundle $\pi_1^0: T^*M_{\infty} \to M_{\infty}$ of $M_{\infty}$ exists in some sense.
It is a fundamental property of the cotangent bundle that every Lipschitz function $f$ on a Borel subset $A$ of $M_{\infty}$ has the canonical section $df (x) \in T^*_xM_{\infty}$ (called the \textit{differential} of $f$) for a.e. $x \in A$. 
We also define the tangent bundle $\pi^1_0: TM_{\infty} \to M_{\infty}$ of $M_{\infty}$ by the dual vector bundle of $T^*M_{\infty}$ and denote the dual section of $df$ by $\nabla f: A \to TM_{\infty}$.
For $r, s \in \mathbf{Z}_{\ge 0}$, let $\pi^r_s: T^r_sM_{\infty}:=\bigotimes_{i=1}^rTM_{\infty} \otimes \bigotimes _{i=r+1}^{r+s}T^*M_{\infty} \to M_{\infty}$.
For $A \subset M_{\infty}$, we put $T^r_sA:=(\pi^r_s)^{-1}(A)$.
We will denote by $\langle \cdot, \cdot \rangle$ the canonical metric on $T^r_sM_{\infty}$ (defined by the \textit{Riemannian metric $g_{M_{\infty}}$ of $M_{\infty}$}) for brevity and  by $L^p(T^r_sA)$ the space of $L^p$-sections of $T^r_sA$ over $A$.
Note $g_{M_{\infty}} \in L^{\infty}(T^0_2M_{\infty})$. 

Let $r, s \in \mathbf{Z}_{\ge 0}, R>0, 1<p < \infty$ and $T_i \in L^p(T^r_sB_R(m_{i}))$ for every $i\le \infty$ with $\sup_{i \le \infty}||T_i||_{L^p}<\infty$, where $B_R(m_{i}):=\{x_{i} \in M_{i}; \overline{x_{i}, m_{i}}<R\}$ and $\overline{x_{i}, m_{i}}$ is the distance between $x_{i}$ and $m_{i}$.

The main purpose of this paper is to give the following two definitions and applications: 
\begin{enumerate}
\item[\textbf{(W)}] \textit{$T_i$ $L^p$-converges weakly to $T_{\infty}$.}
\item[\textbf{(S)}] \textit{$T_i$ $L^p$-converges strongly to $T_{\infty}$.}
\end{enumerate}
Note that in \cite{KS2} Kuwae-Shioya gave the definitions above for the case of $r=s=0$ (i.e., each $T_i$ is a function) and showed several important properties.
A difficulty to give the definitions above for tensor fields is that we can NOT consider the difference `$T_i - T_{\infty}$' canonically
because it would be hard to compare between $T^r_sM_i$ and $T^r_sM_{\infty}$.
We first give equivalent versions of the definitions:
\begin{definition}[Definitions \ref{tensor weak conv}, \ref{strongdef} and Proposition \ref{hon}]
$\,\,\,\,\,\,\,\,\,\,\,\,\,\,\,\,\,\,\,\,\,\,\,\,$
\begin{enumerate}
\item[\textbf{(W)}] We say that \textit{$T_i$ $L^p$-converges weakly to $T_{\infty}$ on $B_R(m_{\infty})$} if for every $x_{\infty} \in B_R(m_{\infty})$, every $\{z_i\}_{1 \le i \le r+s} \subset M_{\infty}$ and every $r>0$ with $B_r(x_{\infty}) \subset B_R(m_{\infty})$  we have 
\[\lim_{j \to \infty}\int_{B_r(x_j)}\left\langle T_j, \bigotimes _{i=1}^r\nabla r_{z_{i, j}} \otimes \bigotimes_{i=r+1}^{r+s}dr_{z_{i, j}}\right\rangle d\underline{\mathrm{vol}}=\int_{B_r(x_{\infty})}\left\langle T_{\infty}, \bigotimes _{i=1}^r\nabla r_{z_{i}} \otimes \bigotimes_{i=r+1}^{r+s}dr_{z_{i}} \right\rangle d\upsilon,\]
where $x_j \to x_{\infty}$, $z_{i, j} \to z_i$ as $j \to \infty$ and $r_z$ is the distance function from $z$.
\item[\textbf{(S)}] We say that \textit{$T_i$ $L^p$-converges strongly to $T_{\infty}$ on $B_R(m_{\infty})$} if $T_i$ $L^p$-converges weakly to $T_{\infty}$ on $B_R(m_{\infty})$ and $\limsup_{i \to \infty}||T_i||_{L^p(B_R(m_{i}))}\le ||T_{\infty}||_{L^p(B_R(m_{\infty}))}$.
\end{enumerate}
\end{definition}
Compare with the definition of the convergence of the differentials of Lipschitz functions with respect to the Gromov-Hausdorff topology \cite[Definition $4.4$]{Ho}.
It is important that if $(M_i, m_i, \underline{\mathrm{vol}}) \equiv (M_{\infty}, m_{\infty}, \upsilon)$ holds for every $i$, then $T_i$ $L^p$-converges strongly to $T_{\infty}$ on $B_R(m_{\infty})$ in the sense of \textbf{(S)} if and only if $||T_i-T_{\infty}||_{L^p(B_R(m_{\infty}))} \to 0$ as $i \to \infty$.

As an important example we first observe about $L^p$-convergence of Riemannian metrics $g_{M_i}$ of $M_i$ with respect to the Gromov-Hausdorff topology:
\begin{theorem}\label{metric}
We see that $g_{M_i}$ $L^{\hat{p}}$-converges weakly to $g_{M_{\infty}}$ on $B_{\hat{R}}(m_{\infty})$ for every $\hat{R}>0$ and every $1<\hat{p}<\infty$.
Moreover, $g_{M_i}$ $L^{\hat{p}}$-converges strongly to $g_{M_{\infty}}$ on $B_{\hat{R}}(m_{\infty})$ for some (or every) $\hat{R}>0$ and some (or every) $1<\hat{p}<\infty$ if and only if $(M_{\infty}, m_{\infty})$ is the noncollapsed limit space of $\{(M_{i}, m_{i})\}_i$ (i.e., the Hausdorff dimension of $M_{\infty}$ is equal to $n$).
\end{theorem}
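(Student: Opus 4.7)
Since $g_{M_j}$ is a section of $T^0_2M_j$, the pairing appearing in \textbf{(W)} specialises to
\[
\left\langle g_{M_j}, dr_{z_{1,j}} \otimes dr_{z_{2,j}} \right\rangle = \left\langle \nabla r_{z_{1,j}}, \nabla r_{z_{2,j}} \right\rangle,
\]
so the required assertion reduces to the convergence of $\int_{B_r(x_j)} \langle \nabla r_{z_{1,j}}, \nabla r_{z_{2,j}} \rangle\, d\underline{\mathrm{vol}}$ to $\int_{B_r(x_\infty)} \langle \nabla r_{z_1}, \nabla r_{z_2}\rangle\, d\upsilon$. The plan is to polarise,
\[
2 \left\langle \nabla r_{z_1}, \nabla r_{z_2}\right\rangle = |\nabla r_{z_1}|^2 + |\nabla r_{z_2}|^2 - |\nabla (r_{z_1} - r_{z_2})|^2,
\]
and, using $|\nabla r_z| \equiv 1$ $\upsilon$-a.e.\ (by \cite{ch-co3}) and $|\nabla r_z| \equiv 1$ $\underline{\mathrm{vol}}$-a.e.\ on each $M_i$, to reduce everything to the convergence of $\int |\nabla(r_{z_{1,j}} - r_{z_{2,j}})|^2 d\underline{\mathrm{vol}}$. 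The latter is a direct application of the convergence framework for differentials of Lipschitz functions under Gromov-Hausdorff convergence recorded in \cite[Definition~4.4 and neighbouring results]{Ho}: $r_{z_{i,j}} - r_{z_{j+1,j}}$ is uniformly $2$-Lipschitz and its differential converges in the sense of that framework, so its pointwise squared norm integrates to the limiting integral.

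\textbf{Strong convergence.} Since $|g_{M_i}|^2 \equiv n$ pointwise on any $n$-dimensional Riemannian manifold,
\[
\|g_{M_i}\|_{L^{\hat p}(B_{\hat R}(m_i))}^{\hat p} = n^{\hat p/2}\,\underline{\mathrm{vol}}(B_{\hat R}(m_i)) \longrightarrow n^{\hat p/2}\,\upsilon(B_{\hat R}(m_\infty))
\]
by the weak convergence of the renormalised measures (Cheeger-Colding). On $M_\infty$, at $\upsilon$-a.e.\ $x$ the fibre $T_xM_\infty$ extracted from \cite{ch-co3} is a Euclidean space of dimension equal to the essential dimension $k(x) \in \{1, \ldots, n\}$, and therefore $|g_{M_\infty}|^2(x) = k(x)$. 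Hence
\[
\|g_{M_\infty}\|_{L^{\hat p}(B_{\hat R}(m_\infty))}^{\hat p} = \int_{B_{\hat R}(m_\infty)} k(x)^{\hat p/2}\,d\upsilon \le n^{\hat p/2}\,\upsilon(B_{\hat R}(m_\infty)),
\]
with equality exactly when $k \equiv n$ $\upsilon$-a.e.\ on $B_{\hat R}(m_\infty)$. The strong-convergence inequality $\limsup \|g_{M_i}\|_{L^{\hat p}} \le \|g_{M_\infty}\|_{L^{\hat p}}$ is therefore equivalent to this pointwise dimensional condition.

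\textbf{Identifying noncollapsing.} By Colding's volume convergence theorem, together with the constancy (up to a $\upsilon$-null set) of the essential dimension established by Colding-Naber, $k \equiv n$ $\upsilon$-a.e.\ on any open ball in $M_\infty$ if and only if the whole sequence is noncollapsed, i.e., $\dim_{\mathcal H} M_\infty = n$. This simultaneously delivers the ``some $\hat R$ iff every $\hat R$'' equivalence and, since the pointwise identity $|g_{M_\infty}|^2 = k$ is independent of $\hat p$, the corresponding equivalence in $\hat p$. I expect the main technical obstacle to lie not in either asymptotic estimate but in cleanly justifying, within the convergence framework set up in this paper, the identity $|g_{M_\infty}|^2(x) = k(x)$: one must check that the metric on the abstract tangent bundle of \cite{ch-co3} restricts on a $k$-regular fibre to the standard Euclidean inner product, so that the $L^{\hat p}$ norm of the tensorial object $g_{M_\infty}$ really is computed by $\int k^{\hat p/2} d\upsilon$.
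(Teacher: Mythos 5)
Your proposal is correct, and it runs on the same two inputs as the paper's (very terse) proof, which simply cites Propositions \ref{dist} and \ref{compati7}; the differences are only in packaging. For the weak part, your polarisation identity is a rewriting of the angle convergence: since $|dr_z|=1$ a.e., $|d(r_{z_1}-r_{z_2})|^2=2-2\cos\angle z_1xz_2$ a.e., so what you need is exactly Proposition \ref{angle2} (equivalently Proposition \ref{dist}), together with Proposition \ref{weak2}/Corollary \ref{098} to pass from a.e. pointwise convergence of these uniformly bounded functions to convergence of the ball integrals, plus $\underline{\mathrm{vol}}\,B_r(x_j)\to\upsilon(B_r(x_\infty))$ for the constant terms. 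For the strong part, the paper argues pointwise: $|g_{M_i}|^2\equiv n$ while $|g_{M_\infty}|^2=k:=\mathrm{dim}\,M_\infty$ a.e., so the weak upper semicontinuity of $|g_{M_i}|^2$ demanded in Definition \ref{inftyinfty} holds iff $n\le k$, i.e.\ iff the limit is noncollapsed, and Proposition \ref{compati7} then converts this into $L^{\hat p}$-strong convergence for every $\hat p$; you instead compare global $L^{\hat p}$-norms and use the criterion of Proposition \ref{hon}, which is an equally valid and essentially equivalent route, buying a slightly more self-contained computation at the cost of invoking the norm characterization rather than the $L^\infty$ compatibility. Finally, the ``technical obstacle'' you flag is not really one: by Colding--Naber and the remark in subsection $2.5.4$ one may take all rectifiable patches $k$-dimensional, so a.e.\ fibre of $TM_\infty$ is a $k$-dimensional inner product space, and for any inner product $g$ on a $k$-dimensional space one has $|g|^2=k$ by coordinate-free linear algebra (no normalization of the chart metric is needed); combined with the Cheeger--Colding noncollapsing equivalences and the a.e.\ constancy of $k$, your ``some/every $\hat R$, some/every $\hat p$'' bookkeeping goes through.
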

Roughly speaking, this theorem says that \textit{a Gromov-Hausdorff convergence always yields $L^p$-weak convergence of the Riemannian metrics.}

Let us denote by $H_{1, p}(U)$ the $H_{1, p}$-Sobolev space on an open subset $U$ of $M_{\infty}$.
Note that every $f \in H_{1, p}(U)$ also has the canonical section $df(x) \in T^*_xM_{\infty}$ for a.e. $x \in U$ with $||f||_{H_{1, p}}=||f||_{L^p}+||df||_{L^p}$.

In \cite{ch-co3} Cheeger-Colding defined the Dirichlet Laplacian  on $L^2(M_{\infty})$ as the self adjoint operator by the closable bilinear form
\[\int_{M_{\infty}}\langle df, dg\rangle d\upsilon\]
if $M_{\infty}$ is compact. They also showed continuities of eigenvalues and of eigenfunctions with respect to the Gromov-Hausdorff topology which  solve a conjecture given by Fukaya in \cite{fu}. 
Kuwae-Shioya proved the existence of the Dirichlet Laplacian  on $L^2(B_R(m_{\infty}))$ and similar continuities for noncompact case in \cite{KS}.

In this paper we use the following notation: 
For every open subset $U$ of $M_{\infty}$, let $\mathcal{D}^2(\Delta^{\upsilon}, U)$ be the space of $f \in H_{1, 2}(U)$ satisfying  that there exists $h \in L^2(U)$ such that 
\[\int_{U}\langle df, dg \rangle d\upsilon = \int_{U}hgd\upsilon\]
holds for every Lipschitz function $g$ on $U$ with compact support.
Since $h$ is unique, we denote $h$ by $\Delta^{\upsilon}f$.

On the other hand in \cite{Ho3} we knew that $M_{\infty}$ has a \textit{second order differential structure} in some weak sense.
More precisely, by taking a subsequence in advance without loss of generality we can assume that there is such a second order differential structure \textit{associated with $\{(M_i, m_i, \underline{\mathrm{vol}})\}_i$} in some sense. 
See subsection $2.5.7$ for the precise definition.
We will always consider such structure. 

It was also proved in \cite{Ho3} that the Riemannian metric $g_{M_{\infty}}$ is differentiable at a.e. $x \in M_{\infty}$ with respect to the structure, in particular we knew that there exists the \textit{Levi-Civita connection}. 
It is important that these facts allow us to define a \textit{weakly twice differentiable function} and the \textit{Hessian} of a weakly twice differentiable function naturally.

We will apply several fundamental properties of \textbf{(W)} and of \textbf{(S)} to the study of the second order differential structure on $M_{\infty}$.
In this section we introduce the following four applications only.
The first application is about $L^2$-weak convergence of Hessians:
\begin{theorem}\label{main}
Let $f_i \in L^2(B_R(m_i))$ for every $i \le \infty$.
Assume that $f_i \in C^2(B_R(m_i))$ holds for every $i<\infty$, $\sup_{i<\infty}(|| f_i||_{H_{1, 2}(B_R(m_i))}+||\Delta f_i||_{L^2(B_R(m_i))})<\infty$ and that $f_i$ $L^2$-converges weakly to $f_{\infty}$ on $B_R(m_{\infty})$.
Then there exists $p_1:=p_1(n, K, R)>1$ depending only on $n, R$ such that the following hold:
\begin{enumerate}
\item\label{a} $f_i$ $L^2$-converges strongly to $f_{\infty}$ on $B_R(m_{\infty})$.
\item\label{b} $f_{\infty} \in \mathcal{D}^2(\Delta^{\upsilon}, B_R(m_{\infty})) \cap H_{1, 2p_1}(B_r(m_{\infty}))$ for every $r<R$.
\item\label{c} $f_i, \nabla f_i$ $L^{2p_1}$-converge strongly to $f_{\infty}, \nabla f_{\infty}$ on $B_r(m_{\infty})$ for every $r<R$, respectively.
\item\label{d} $|\nabla f_{\infty}|^2 \in H_{1,  p_1}(B_r(m_{\infty}))$ for every $r<R$.
\item\label{e} $\nabla|\nabla f_{i}|^2$ $L^{p_1}$-converges weakly to $\nabla|\nabla f_{\infty}|^2$ on $B_r(m_{\infty})$ for every $r<R$.
\item\label{f} $\Delta f_i$ $L^2$-converges weakly to $\Delta^{\upsilon}f_{\infty}$ on $B_R(m_{\infty})$.
\item\label{g} $f_{\infty}$ is a weakly twice differentiable function on $B_R(m_{\infty})$.
\item\label{h} The Hessian $\mathrm{Hess}_{f_{\infty}}$ of $f_{\infty}$ is in $L^2(T^0_2B_r(m_{\infty}))$ for every $r<R$.
\item\label{i} $\mathrm{Hess}_{f_i}$ $L^2$-converges weakly to $\mathrm{Hess}_{f_{\infty}}$ on $B_r(m_{\infty})$ for every $r<R$.
\end{enumerate}
\end{theorem}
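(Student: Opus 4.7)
The plan is to derive uniform Bochner-type bounds on each $M_i$, translate them into convergence of tensors on $M_\infty$ via the definitions \textbf{(W)} and \textbf{(S)}, and identify the limits using the second order differential structure from \cite{Ho3}.

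Item (\ref{a}) follows from the uniform $H_{1,2}$-bound combined with Rellich-type $L^2$-compactness for Ricci limits (Cheeger--Colding, Kuwae--Shioya), which upgrades the weak $L^2$-convergence of $f_i$ to strong. For item (\ref{h}) I integrate the Bochner formula
\[\tfrac12\Delta|\nabla f_i|^2=|\mathrm{Hess}\,f_i|^2+\langle\nabla f_i,\nabla\Delta f_i\rangle+\mathrm{Ric}_{M_i}(\nabla f_i,\nabla f_i)\]
against a Lipschitz cutoff $\phi$ supported in $B_R(m_i)$, integrating by parts to eliminate $\nabla\Delta f_i$, which yields an interior $L^2$-Hessian bound
\[\int\phi^2|\mathrm{Hess}\,f_i|^2\,d\underline{\mathrm{vol}}\le C(n,K,R,\phi)\bigl(\|f_i\|_{H_{1,2}(B_R(m_i))}^2+\|\Delta f_i\|_{L^2(B_R(m_i))}^2\bigr)\]
uniform in $i$. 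The higher integrability needed for (\ref{b})--(\ref{e}) arises from Meyers-type self-improvement: the Kato inequality $|\nabla|\nabla f_i||\le|\mathrm{Hess}\,f_i|$ combined with the doubling and $(1,2)$-Poincaré inequality valid on Ricci limits gives a reverse H\"older estimate, which by Gehring's lemma yields an exponent $p_1=p_1(n,K,R)>1$ and a uniform $L^{2p_1}$-bound on $\nabla f_i$ over compactly contained balls.

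With these uniform estimates I identify limits as follows. Testing $\int\Delta f_i\,g\,d\underline{\mathrm{vol}}=-\int\langle df_i,dg\rangle\,d\underline{\mathrm{vol}}$ against compactly supported Lipschitz $g$ and passing to the limit using the strong $L^2$-convergence of $\nabla f_i$ (a consequence of the Hessian bound and the framework of \cite{Ho}) together with Theorem \ref{metric} gives (\ref{f}) and identifies $\Delta^\upsilon f_\infty$, hence (\ref{b}). Interpolating strong $L^2$- with uniform $L^{2p_1}$-convergence yields (\ref{c}). Applying H\"older to $\nabla|\nabla f_i|^2=2\,\mathrm{Hess}\,f_i(\nabla f_i,\cdot)$ together with the uniform Hessian and $L^{2p_1}$-gradient bounds gives (\ref{d}) and (\ref{e}). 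Item (\ref{g}) follows from (\ref{h}) through the weak second order structure of \cite{Ho3}, and (\ref{i}) is obtained by extracting $L^2$-weak subsequential limits of $\mathrm{Hess}\,f_i$ and identifying them via pairings with tensor products $\nabla r_{z_j}\otimes\nabla r_{w_j}$ as in the definition \textbf{(W)}.

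The hard part will be (\ref{i}). Even with a uniform $L^2$-bound on $\mathrm{Hess}\,f_i$, pinning down the weak $L^2$-limit as $\mathrm{Hess}_{f_\infty}$ requires an integration-by-parts identity expressing $\mathrm{Hess}_{f}(\nabla r_z,\nabla r_w)$ in terms of $\nabla f$, derivatives of the distance functions, and the Levi-Civita connection, whose stability under the Gromov--Hausdorff topology must be established by combining Theorem \ref{metric} with the second order differential structure. Controlling the covariant derivative $\nabla_{\nabla r_w}\nabla r_z$ on both $M_i$ and $M_\infty$ in a way compatible with the \textbf{(W)}-framework is the technical core of the argument.
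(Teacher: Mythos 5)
Your overall skeleton (Bochner with cutoffs for a uniform interior $L^2$-Hessian bound, higher integrability of $|\nabla f_i|$, strong convergence of gradients, then limit identification) matches the paper's, but the decisive step, item (\ref{i}), is exactly where your plan has a genuine gap. You propose to identify the $L^2$-weak limit of $\mathrm{Hess}_{f_i}$ by pairing with $\nabla r_{z}\otimes\nabla r_{w}$ and you yourself flag that this requires controlling $\nabla_{\nabla r_w}\nabla r_z$ on $M_i$ and $M_\infty$ compatibly. That control is not available: distance functions are not $C^2$ across cut loci, and under a mere lower Ricci bound there is no uniform $L^2$ bound on $\mathrm{Hess}_{r_z}$ (only semiconcavity/Laplacian comparison in a measure sense), so an integration-by-parts identity involving covariant derivatives of $\nabla r_z$ cannot be passed to the limit in the \textbf{(W)}/\textbf{(S)} framework. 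The paper avoids this entirely: it tests the Hessian against gradients of $C^2$ functions $g_i,h_i$ with uniform Lipschitz and $L^2$-Laplacian bounds (the harmonic-type coordinates of Theorem \ref{harm3}), and uses the polarization identity $2\langle\nabla_{\nabla g_i}\nabla f_i,\nabla h_i\rangle=\langle\nabla g_i,\nabla\langle\nabla f_i,\nabla h_i\rangle\rangle+\langle\nabla h_i,\nabla\langle\nabla g_i,\nabla f_i\rangle\rangle-\langle\nabla f_i,\nabla\langle\nabla g_i,\nabla h_i\rangle\rangle$, which turns all Hessian pairings into first-order quantities handled by (\ref{c}), (\ref{e}), Propositions \ref{harm5} and \ref{conv678}; the passage back to the distance-function-based definition of weak convergence is then exactly Remark \ref{000} and Proposition \ref{compat}. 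Without this (or an equivalent device) your proof of (\ref{h})--(\ref{i}) does not close. Relatedly, (\ref{g}) cannot be deduced "from (\ref{h})": one must first know $f_\infty$ is weakly twice differentiable (the paper gets this from (\ref{d}), since $\langle df_\infty,dg_\infty\rangle\in H_{1,p_1}$ is weakly Lipschitz) before $\mathrm{Hess}_{f_\infty}$ is even defined.

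Two smaller points. First, the Meyers/Gehring self-improvement you invoke is both unjustified as stated (no reverse H\"older inequality is actually derived from the Kato inequality) and unnecessary: the paper gets $p_1>1$ simply by noting $|\nabla|\nabla f_i|^2|\le|\mathrm{Hess}_{f_i}|^2+|\nabla f_i|^2\in L^1$ and applying the $(1,1)$-Poincar\'e together with the Poincar\'e--Sobolev embedding of Haj\l asz--Koskela to the function $|\nabla f_i|^2$; your Kato-inequality route would also work if you replace Gehring by this Sobolev embedding. Second, the strong $L^2$-convergence of $\nabla f_i$ is not "a consequence of the Hessian bound and the framework of \cite{Ho}": the results of \cite{Ho} require uniformly Lipschitz $f_i$, which you do not have. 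The paper proves it by a cutoff argument (Claims \ref{7001}, \ref{7002}): writing $\int\phi_i|\nabla f_i|^2=-\int f_i\,\mathrm{div}^{\underline{\mathrm{vol}}}(\phi_i\nabla f_i)$, passing to the limit via Theorem \ref{green} and the strong convergence of $f_i$, and controlling the boundary layer using precisely the $L^{2p_0}$-bound on $\nabla f_i$; this step should be made explicit in your argument, since (\ref{e}) and the polarization argument for (\ref{i}) both depend on it.
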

The second application is the following \textit{Bochner-type inequality} on $M_{\infty}$ which keeps the term of Hessian:
\begin{theorem}\label{bochner1}
Let $\{f_i\}_{i \le \infty}$ be as in Theorem \ref{main}.
Moreover assume that $\Delta f_i$ $L^2$-converges strongly to $\Delta^{\upsilon}f_{\infty}$ on $B_r(m_{\infty})$ for every $r<R$.
Then 
\begin{align*}
-\frac{1}{2}\int_{B_R(m_{\infty})}\langle d\phi_{\infty}, d|df_{\infty}|^2 \rangle d\upsilon &\ge \int_{B_R(m_{\infty})}\phi_{\infty}|\mathrm{Hess}_{f_{\infty}}|^2d\upsilon \\
&+ \int_{B_R(m_{\infty})}\left(-\phi_{\infty}(\Delta^{\upsilon}f_{\infty})^2+\Delta^{\upsilon}f_{\infty}\langle d\phi_{\infty}, df_{\infty}\rangle \right)d\upsilon \\
&+K(n-1)\int_{B_R(m_{\infty})}\phi_{\infty}|df_{\infty}|^2d\upsilon
\end{align*}
holds for every nonnegatively valued Lipschitz function $\phi_{\infty}$ on $B_R(m_{\infty})$ with compact support.
\end{theorem}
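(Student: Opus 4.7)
The plan is to start from the classical pointwise Bochner formula on each smooth $M_i$, integrate against a suitable test function, and pass to the Gromov--Hausdorff limit using Theorem \ref{main} together with the additional strong-convergence hypothesis on $\Delta f_i$. With the paper's positive-Laplacian convention, Bochner combined with $\mathrm{Ric}_{M_i}\ge K(n-1)$ reads
\[
-\tfrac{1}{2}\Delta|df_i|^2 \,\ge\, |\mathrm{Hess}_{f_i}|^2 - \langle df_i, d\Delta f_i\rangle + K(n-1)|df_i|^2
\]
on $M_i$. Multiplying by a nonnegative Lipschitz $\phi_i$ with compact support in $B_R(m_i)$ and integrating by parts on both sides (once on the left, and once in the $\langle df_i,d\Delta f_i\rangle$ term to produce $\int \Delta f_i\langle d\phi_i,df_i\rangle-\int\phi_i(\Delta f_i)^2$) gives exactly the statement of the theorem at the pre-limit level on $M_i$.

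Next I would construct approximating test functions. Given $\phi_\infty$ nonnegative Lipschitz with compact support $K\subset B_{r_0}(m_\infty)\Subset B_R(m_\infty)$, lift $\phi_\infty$ along $\varepsilon_i$-Hausdorff approximations, followed by a short-range averaging on $M_i$ (to restore Lipschitz regularity) and a cutoff (to keep the support inside $B_R(m_i)$), to obtain nonnegative Lipschitz $\phi_i$ with uniformly bounded Lipschitz constant, uniformly bounded supports, and $(\phi_i,d\phi_i)\to(\phi_\infty,d\phi_\infty)$ $L^p$-strongly for every $1<p<\infty$. With these in hand, the three ``equality'' terms pass to the limit by standard weak--strong pairings: $\int\phi_i|df_i|^2$ and $\int\Delta f_i\langle d\phi_i,df_i\rangle$ use the $L^{2p_1}$-strong convergence of $\nabla f_i$ from Theorem \ref{main}(c), the $L^\infty$-strong convergence of $(\phi_i,d\phi_i)$, and the hypothesized $L^2$-strong convergence of $\Delta f_i$; the same hypothesis yields $\int\phi_i(\Delta f_i)^2\to\int\phi_\infty(\Delta^{\upsilon}f_\infty)^2$; and the left-hand side $\int\langle d\phi_i,d|df_i|^2\rangle$ converges by pairing the $L^{p_1}$-weak convergence of $\nabla|\nabla f_i|^2$ from Theorem \ref{main}(e) against the strong convergence of $d\phi_i$ in the conjugate exponent.

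The main obstacle, and the only term entering as an inequality rather than an equality, is the Hessian term: I need the weighted lower semicontinuity
\[
\liminf_{i\to\infty}\int_{B_R(m_i)} \phi_i |\mathrm{Hess}_{f_i}|^2 \, d\underline{\mathrm{vol}} \,\ge\, \int_{B_R(m_\infty)} \phi_\infty |\mathrm{Hess}_{f_\infty}|^2 \, d\upsilon.
\]
Here I would pick $r\in(r_0,R)$ to localize everything to $B_r(m_\infty)$, where Theorem \ref{main}(h),(i) apply. Writing $\phi_i|\mathrm{Hess}_{f_i}|^2=|\sqrt{\phi_i}\,\mathrm{Hess}_{f_i}|^2$ and exploiting the uniform convergence $\sqrt{\phi_i}\to\sqrt{\phi_\infty}$ (valid since $\phi_i\ge 0$ converges uniformly to $\phi_\infty$) upgrades the $L^2$-weak convergence of $\mathrm{Hess}_{f_i}$ in Theorem \ref{main}(i) to $L^2$-weak convergence of the scaled tensor $\sqrt{\phi_i}\,\mathrm{Hess}_{f_i}$; the standard lower semicontinuity of the $L^2$-norm under weak convergence---recast inside the $L^p$-convergence framework for tensor fields developed earlier in the paper---then gives the required bound. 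Combining the four limit results yields the stated inequality.
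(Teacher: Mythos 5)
Your overall strategy is the same as the paper's: integrate the smooth Bochner inequality on $M_i$ against nonnegative approximating test functions, then pass to the limit term by term, treating the Hessian term by writing $\phi_i|\mathrm{Hess}_{f_i}|^2=|\sqrt{\phi_i}\,\mathrm{Hess}_{f_i}|^2$, upgrading Theorem \ref{main} (\ref{i}) to $L^2$-weak convergence of $\sqrt{\phi_i}\,\mathrm{Hess}_{f_i}$ and invoking lower semicontinuity of the norm (Proposition \ref{lower semi}); the other three terms are weak--strong pairings exactly as in the paper's proof.

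The gap is in your construction of the test functions. You claim that lifting $\phi_\infty$ through the $\epsilon_i$-almost isometries, mollifying by a short-range average and cutting off yields nonnegative Lipschitz $\phi_i$ with $(\phi_i,d\phi_i)\to(\phi_\infty,d\phi_\infty)$ $L^p$-strongly. That construction does give uniform Lipschitz bounds, uniform convergence $\phi_i\to\phi_\infty$ and nonnegativity, but strong convergence of the differentials does not follow: from those properties one only obtains weak convergence of $d\phi_i$ together with $\liminf_i\|d\phi_i\|_{L^p}\ge\|d\phi_\infty\|_{L^p}$ (Corollary \ref{conv1}), whereas strong convergence requires the reverse inequality $\limsup_i\|d\phi_i\|_{L^p}\le\|d\phi_\infty\|_{L^p}$ (Proposition \ref{hon}); uniformly Lipschitz, uniformly convergent sequences can fail this badly (see the functions $G_n$ in Remark \ref{qqqqq}), and producing such a ``recovery sequence'' is precisely the nontrivial approximation theorem, not a consequence of averaging the lift. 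You genuinely need it here, because the left-hand side pairs $d\phi_i$ against $\nabla|\nabla f_i|^2$, which by Theorem \ref{main} (\ref{e}) converges only weakly, so the strong factor must be $d\phi_i$. The paper obtains the approximations from \cite[Theorem $4.2$]{Ho}, which only provides functions $\phi_{i,j}$ whose differentials converge on large subsets $\Omega_j$ and to nearby functions $\phi_{\infty,j}$, forcing a diagonal subsequence; moreover these approximations need not be nonnegative, so a small multiple $\epsilon_j\hat{\phi}_i$ of a Cheeger--Colding cutoff function \cite[Theorem $6.33$]{ch-co} is added to restore $\phi_i\ge 0$ before Bochner can be applied. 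Your argument becomes complete once the mollification step is replaced by an appeal to such an approximation result (with the nonnegativity repair); as written, that step is unjustified.
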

Note that the Bochner-type inequality holds on a dense subspace in $L^2(B_R(m_{\infty}))$ and that this is stronger than \textit{$\Gamma_2$-condition}.
See Remarks \ref{eigen conti} and \ref{gamma2}.
It is worth pointing out that recently Zhang-Zhu proved a similar result on an Alexandrov space in \cite{zz}.

On the other hand in \cite{Ho3} the author defined the (geometric) Laplacian $\Delta^{g_{M_{\infty}}}f$ for a weakly twice differentiable function $f$ by taking the trace of $-\mathrm{Hess}_f$:
\[\Delta ^{g_{M_{\infty}}}f:= \frac{-1}{\sqrt{\mathrm{det} (g_{ab})}}\sum_{i, j=1}^k\frac{\partial}{\partial x_i}\left(g^{ij}\sqrt{\mathrm{det} (g_{ab})}\frac{\partial f}{\partial x_j} \right)\]
on each \textit{$k$-dimensional rectifiable coordinate patch} $(U, \phi)$, where $\phi(p)=(x_1(p), x_2(p), \ldots, x_k(p)) \in \mathbf{R}^k$, $g_{ij}=g_{M_{\infty}}(\partial /\partial x_i, \partial / \partial x_j)$ and $(g^{ij})_{ij}=(g_{ij})_{ij}^{-1}$.

We now consider the following question:

\textbf{Question}:\, When does $\Delta^{\upsilon}f=\Delta^{g_{M_{\infty}}}f$ hold?

For example if $M_{\infty}$ is a $k$-dimensional smooth Riemannian manifold and $\upsilon$ is the $k$-dimensional Hausdorff measure, then 
 $\Delta^{g_{M_{\infty}}}f = \Delta^{\upsilon}f$ holds for every $f \in C^2(B_R(m_{\infty}))$ with $\Delta f \in L^2(B_R(m_{\infty}))$.
This is a direct consequence of the divergence formula on a manifold.
Note that in general $\Delta^{\upsilon}f \neq \Delta^{g_{M_{\infty}}}f$.
See Remark \ref{notexample} for an example.

The third application is to give a sufficient condition for $f$ in order to satisfy $\Delta^{\upsilon}f=\Delta^{g_{M_{\infty}}}f$:
\begin{theorem}\label{laplacian}
Let $\{f_i\}_{i \le \infty}$ be as in Theorem \ref{main}.
Then we have the following:
\begin{enumerate}
\item If $(M_{\infty}, m_{\infty})$ is the noncollapsed limit space of $\{(M_i, m_i)\}_i$, then $\Delta^{g_{M_{\infty}}} f_{\infty} = \Delta^{\upsilon}f_{\infty}$ on $B_R(m_{\infty})$.
\item If $\mathrm{Hess}_{f_i}$ $L^2$-converges strongly to $\mathrm{Hess}_{f_{\infty}}$ on $B_r(m_{\infty})$ for every $r<R$, then  $\Delta^{g_{M_{\infty}}} f_{\infty} = \Delta^{\upsilon}f_{\infty}$ on $B_R(m_{\infty})$ and $\Delta f_i$ $L^2$-converges strongly to $\Delta^{\upsilon}f_{\infty}$ on $B_r(m_{\infty})$ for every $r<R$. 
\end{enumerate}
\end{theorem}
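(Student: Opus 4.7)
The starting point is the pointwise identity on each smooth $M_i$,
\[\Delta f_i = -\mathrm{tr}_{g_{M_i}}(\mathrm{Hess}_{f_i}) = -\langle g_{M_i}^{-1}, \mathrm{Hess}_{f_i}\rangle,\]
which coincides with $\Delta^{g_{M_i}} f_i$ by the divergence formula. The plan for both parts is to test this identity against an arbitrary Lipschitz function $\phi_\infty$ with compact support in $B_R(m_\infty)$, approximate by $\phi_i \to \phi_\infty$ uniformly, and pass to the limit through a weak-strong pairing of tensor fields. Theorem \ref{main} supplies the weak $L^2$-convergence of $\mathrm{Hess}_{f_i}$ and of $\Delta f_i$ to $\Delta^\upsilon f_\infty$ as well as the strong $L^{2p_1}$-convergence of $\nabla f_i$; Theorem \ref{metric} supplies weak $L^p$-convergence of $g_{M_i}$ and of its inverse together with the pointwise bound $|g_{M_i}^{-1}|\equiv\sqrt{n}$, upgradable to strong $L^p$-convergence precisely in the noncollapsed case.

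For Part (1), the noncollapsing hypothesis gives strong $L^q$-convergence of $g_{M_i}^{-1}$ for every $q<\infty$, so $\phi_i g_{M_i}^{-1}$ converges strongly in $L^2$ to $\phi_\infty g_{M_\infty}^{-1}$. Pairing with the weakly convergent $\mathrm{Hess}_{f_i}$ yields
\[\int \phi_\infty \Delta^\upsilon f_\infty\, d\upsilon = -\lim_i \int \langle \phi_i g_{M_i}^{-1}, \mathrm{Hess}_{f_i}\rangle\, d\underline{\mathrm{vol}} = -\int \langle \phi_\infty g_{M_\infty}^{-1}, \mathrm{Hess}_{f_\infty}\rangle\, d\upsilon = \int \phi_\infty \Delta^{g_{M_\infty}} f_\infty\, d\upsilon,\]
and density of Lipschitz test functions gives the a.e.\ identity on $B_R(m_\infty)$. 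The first assertion of Part (2) is obtained by the same computation with the strong/weak roles exchanged: $\phi_i g_{M_i}^{-1}$ converges only weakly but remains uniformly $L^\infty$-bounded, while $\mathrm{Hess}_{f_i}$ is strongly $L^2$-convergent by hypothesis.

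For the strong $L^2$-convergence of $\Delta f_i$ in Part (2), only norm convergence $\|\Delta f_i\|_{L^2(B_r(m_i))}\to \|\Delta^\upsilon f_\infty\|_{L^2(B_r(m_\infty))}$ remains. The plan is to exploit the Bochner equality on $M_i$ against a Lipschitz cutoff $\phi_i$ with $\phi_i\equiv 1$ on $B_r(m_i)$ and supported in $B_{r'}(m_i)$ for $r<r'<R$,
\[\int \phi_i |\Delta f_i|^2 d\underline{\mathrm{vol}} = \int \phi_i |\mathrm{Hess}_{f_i}|^2 d\underline{\mathrm{vol}} + \int \phi_i \mathrm{Ric}_{M_i}(\nabla f_i,\nabla f_i) d\underline{\mathrm{vol}} + \int \Delta f_i \langle d\phi_i, df_i\rangle d\underline{\mathrm{vol}} + \tfrac12 \int \langle d\phi_i, d|df_i|^2\rangle d\underline{\mathrm{vol}},\]
pass each non-Ricci term to its expected limit (the Hessian term by strong $L^2$-convergence of $\mathrm{Hess}_{f_i}$, the two gradient terms by the weak-strong pairings of Theorem \ref{main} combined with the already-proved first assertion of Part (2)), and combine the lower bound $\mathrm{Ric}_{M_i}\ge K(n-1)g_{M_i}$ with the weak lower semicontinuity $\int \phi_\infty |\Delta^\upsilon f_\infty|^2 \le \liminf_i \int \phi_i|\Delta f_i|^2$ to sandwich the norms.

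The main obstacle will be controlling the non-negative excess Ricci term $\int \phi_i(\mathrm{Ric}_{M_i}-K(n-1)g_{M_i})(\nabla f_i,\nabla f_i) d\underline{\mathrm{vol}}$ in the limit: it must match the Bochner defect on $M_\infty$ (expressible through $\mathrm{Hess}_{f_\infty}$ and $\Delta^\upsilon f_\infty$) for the sandwich to close, which is where the Gromov-Hausdorff stability of the Ricci lower bound genuinely enters. Note that Theorem \ref{bochner1} cannot be used as a black box here since its hypothesis is exactly the strong convergence we are trying to establish; instead, the argument must extract the Bochner-type inequality on $M_\infty$ directly from the limit analysis of the smooth Bochner equalities on $M_i$. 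The remaining tasks are routine: verifying the weak-strong product rules of Section 2 for the specific tensor combinations above and carrying out the final density argument.
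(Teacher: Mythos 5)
Your treatment of the two identities $\Delta^{g_{M_\infty}}f_\infty=\Delta^{\upsilon}f_\infty$ is essentially correct and is only a cosmetic variant of the paper's argument: pairing $\mathrm{Hess}_{f_i}$ against $\phi_i\,g_{M_i}$ (equivalently $g_{M_i}^{-1}$ under the musical isomorphisms) and using Theorem \ref{metric} together with the weak--strong pairing results (Propositions \ref{weak4}, \ref{conv678}, \ref{strong83}, \ref{compati7}) is exactly the mechanism packaged in the paper as Propositions \ref{contr} (noncollapsed, weak convergence of the trace) and \ref{contr2} (strong convergence of the trace, no noncollapsing). Combined with item (\ref{f}) of Theorem \ref{main} and the density of compactly supported Lipschitz test functions, this gives part (1) and the first assertion of part (2).

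The genuine gap is in your proof of the strong $L^2$-convergence of $\Delta f_i$ in part (2). The Bochner sandwich cannot close: in the identity
\begin{align*}
\int \phi_i (\Delta f_i)^2\,d\underline{\mathrm{vol}}
&=\int \phi_i |\mathrm{Hess}_{f_i}|^2\,d\underline{\mathrm{vol}}
+\int \phi_i \mathrm{Ric}_{M_i}(\nabla f_i,\nabla f_i)\,d\underline{\mathrm{vol}}
+\text{(gradient terms)}
\end{align*}
the Ricci term has only the one-sided bound $\mathrm{Ric}_{M_i}\ge K(n-1)g_{M_i}$, which pushes the left-hand side up from below; an upper bound $\limsup_i\|\Delta f_i\|_{L^2(B_r)}\le\|\Delta^{\upsilon}f_\infty\|_{L^2(B_r)}$, which is what Proposition \ref{hon} requires, would need an upper control of the Ricci term, and none is available (the sequence $\mathrm{Ric}_{M_i}$ can be wildly unbounded above). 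What the limit of the smooth Bochner equalities actually yields is convergence of the \emph{difference} $\int\phi_i(\Delta f_i)^2-\int\phi_i\mathrm{Ric}_{M_i}(\nabla f_i,\nabla f_i)$, so identifying $\lim_i\int\phi_i(\Delta f_i)^2$ is equivalent to identifying the limit of the Ricci term — a circularity your ``must match the Bochner defect'' remark flags but does not resolve, and which cannot be resolved along these lines. The correct (and short) mechanism is the paper's Proposition \ref{contr2}: strong $L^2$-convergence of a $(0,2)$-tensor field forces strong $L^2$-convergence of its trace, proved pointwise via $\epsilon$-orthogonal frames of differentials of distance functions and Proposition \ref{ep}(2) — once the full Hilbert--Schmidt norm is asymptotically captured by the $k$-frame (which strong convergence guarantees, and weak convergence in the collapsed case does not, cf.\ $\mathrm{Tr}\,g_{M_i}=n\not\to k$), the trace is too. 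Applying this to $\mathrm{Hess}_{f_i}$ on each $B_r(m_\infty)$, together with the already established identity $-\mathrm{tr}(\mathrm{Hess}_{f_\infty})=\Delta^{\upsilon}f_\infty$, finishes part (2); replace your Bochner argument by this.
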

In particular we see that an answer of the question above for noncollapsing case is POSITIVE on a dense subspace in $L^2(B_R(m_{\infty}))$.
Note that in \cite{KMS} Kuwae-Machigashira-Shioya showed a similar result about an explicit formula for the Dirichlet Laplacian on an Alexandrov space.

For $d >0$, let $\mathcal{M}(n, d, K)$ be the space of $n$-dimensional compact Riemannian manifolds $(M, \mathrm{Vol})$ with $\mathrm{diam}\,M \le d$ and $\mathrm{Ric}_M \ge K(n-1)$, where $\mathrm{Vol} := \mathrm{vol}/ \mathrm{vol}\,M$ . 
We denote by $\overline{\mathcal{M}(n, d, K)}$ the measured Gromov-Hausdorff compactification of $\mathcal{M}(n, d, K)$.
For $(X, \nu) \in \overline{\mathcal{M}(n, d, K)}$, we define \textit{the first eigenvalue of the $p$-Laplacian} by
\[\lambda_{1, p}(X):=\inf \left\{\frac{||df||^p_{L^p(X)}}{||f||^p_{L^p(X)}}; f \in H_{1, p}(X), f \not \equiv 0, \int_X |f|^{p-2}fd\nu=0 \right\} >0\]
if $X$ is not a single point, $\lambda_{1, p}(X):=\infty$ if $X$ is a single point.
It is well-known that $\lambda_{1, p}(M)$ coincides the smallest positive eigenvalue of the following nonlinear eigenvalue problem on  $(M, \mathrm{Vol}) \in \mathcal{M}(n, d, K)$:
\[\Delta_p\phi=\lambda |\phi|^{p-2}\phi,\]
where $\Delta_p \phi := -\mathrm{div} (|\nabla \phi|^{p-2} \nabla \phi)$.
The fourth application is the following:
\begin{theorem}\label{p-eigen}
The function $\lambda_{1, p}: \overline{\mathcal{M}(n, d, K)} \to (0, \infty]$ is continuous. 
\end{theorem}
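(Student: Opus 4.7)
The plan is to prove the two-sided inequality by treating the upper and lower semi-continuity separately. Fix a convergent sequence $(X_i, \nu_i) \to (X_{\infty}, \nu_{\infty})$ in $\overline{\mathcal{M}(n, d, K)}$. If $X_{\infty}$ is a single point, then $\mathrm{diam}(X_i) \to 0$ and the Poincar\'e inequality forces $\lambda_{1, p}(X_i) \to \infty = \lambda_{1, p}(X_{\infty})$; assume henceforth that $X_{\infty}$ is not a point, in which case the same holds for all large $i$. I will freely use the Rellich-type compactness and the $L^p$-weak precompactness of families of tensor fields with uniform $L^p$-norm developed earlier in the paper.

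For the lower semi-continuity $\lambda_{1, p}(X_{\infty}) \le \liminf_i \lambda_{1, p}(X_i)$, let $f_i$ be a first $p$-eigenfunction on $X_i$ with $||f_i||_{L^p(X_i)} = 1$; these exist by the direct method. If $\lambda_{1, p}(X_i)$ is unbounded along some subsequence the inequality is free, so assume $||df_i||_{L^p}^p = \lambda_{1, p}(X_i) \le C$. The family $\{f_i\}$ is then uniformly bounded in $H_{1, p}$, so after passing to a subsequence $f_i$ $L^p$-converges strongly to some $f_{\infty} \in L^p(X_{\infty})$ and $df_i$ $L^p$-converges weakly to some $\omega \in L^p(T^*X_{\infty})$. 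Testing against gradients of distance functions (the form used in the definition of \textbf{(W)}) identifies $\omega = df_{\infty}$, so $f_{\infty} \in H_{1, p}(X_{\infty})$. Strong $L^p$-convergence preserves both $||f_{\infty}||_{L^p} = 1$ and the mean-zero constraint $\int |f_{\infty}|^{p-2} f_{\infty} d\nu_{\infty} = 0$, because the Nemytskii operator $u \mapsto |u|^{p-2}u$ is continuous from $L^p$ into $L^{p/(p-1)}$ for $1 < p < \infty$. By $L^p$-weak lower semi-continuity of the norm, $||df_{\infty}||_{L^p}^p \le \liminf_i ||df_i||_{L^p}^p$, which yields $\lambda_{1, p}(X_{\infty}) \le \liminf_i \lambda_{1, p}(X_i)$.

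For the upper semi-continuity $\limsup_i \lambda_{1, p}(X_i) \le \lambda_{1, p}(X_{\infty})$, pick a minimizer $f_{\infty}$ for $\lambda_{1, p}(X_{\infty})$ with $||f_{\infty}||_{L^p} = 1$ and approximate it in $H_{1, p}(X_{\infty})$ by Lipschitz functions $g_{\infty}^{\epsilon}$. Via a measurable Gromov-Hausdorff approximation together with a short-time heat regularization of the Cheeger-Colding type (the device also used to construct test functions for Theorem \ref{main}), produce Lipschitz functions $g_i^{\epsilon}$ on $X_i$ whose values $L^p$-strongly converge to $g_{\infty}^{\epsilon}$ and whose differentials $L^p$-strongly converge to $dg_{\infty}^{\epsilon}$. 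For each $i, \epsilon$ the map $c \mapsto \int_{X_i} |g_i^{\epsilon} - c|^{p-2}(g_i^{\epsilon} - c)\, d\nu_i$ is continuous and strictly decreasing, so there is a unique real $c_i^{\epsilon}$ annihilating it; strong convergence then forces $c_i^{\epsilon} \to c_{\infty}^{\epsilon}$. Substituting $g_i^{\epsilon} - c_i^{\epsilon}$ into the Rayleigh quotient on $X_i$, sending $i \to \infty$, and then $\epsilon \to 0$, yields the desired inequality.

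The main obstacle is handling the nonlinear mean-value constraint $\int |f|^{p-2} f d\nu = 0$ in both directions: in the lower semi-continuity step it must survive passage to the limit, and in the upper semi-continuity step the approximating test functions on $X_i$ must be renormalized to satisfy it. The former is resolved by the strong $L^p$-compactness provided by Rellich combined with continuity of the Nemytskii map $u \mapsto |u|^{p-2}u$, and the latter by a one-dimensional monotone inversion whose stability under measured Gromov-Hausdorff convergence follows from strong $L^p$-convergence. The secondary technical step, identifying the weak limit $\omega$ with $df_{\infty}$, uses only the test-tensor form of the definition of $L^p$-weak convergence and parallels the identification argument underlying Theorem \ref{main} \eqref{a}.
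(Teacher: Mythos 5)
Your proposal is correct and follows essentially the same route as the paper: existence of constrained minimizers plus the Sobolev compactness theorem, weak lower semicontinuity of the $L^p$-energy, and strong convergence of $f\mapsto f^{(p-1)}$ to pass the nonlinear mean-value constraint to the limit for one inequality; transplantation of a Lipschitz approximation of the limit minimizer and re-centering by the unique shift annihilating the constraint for the other, with the single-point case handled separately. The only differences are cosmetic: you justify the re-centering by direct monotone inversion where the paper invokes the Wu--Wang--Zheng characterization of the minimizing shift, and you use a Poincar\'e scaling argument for the collapsing-to-a-point case where the paper cites Naber--Valtorta (and note that in the varying-space setting your ``Nemytskii continuity'' step is precisely Proposition \ref{plap}).
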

This theorem is a generalization of the result about continuity of the eigenvalues of the Laplacian with respect to the Gromov-Hausdorff topology proved by Cheeger-Colding in \cite{ch-co3} to the case of the first eigenvalues of the $p$-Laplacian.

The organization of this paper is as follows:

In Section $2$ we will fix several notation and recall fundamental properties of metric measure spaces and of limit spaces of Riemannian manifolds.

In Section $3$ we will discuss $L^p$-convergence with respect to the Gromov-Hausdorff topology.
More precisely in subsection $3.1$ we will give the definitions of \textbf{(W)} and of \textbf{(S)} for the case of functions by a somewhat different way from Kuwae-Shioya given in \cite{KS2} and their fundamental properties.
We will also show that this formulation is equivalent to that by Kuwae-Shioya.
In particular we will extend a compactness result about $L^2$-weak convergence given by Kuwae-Shioya to $L^p$-case.
See Proposition \ref{weak com}, Corollary \ref{KSequiv} and Proposition \ref{strong com}.
In subsection $3.2$ we will give the original definitions of \textbf{(W)} and of \textbf{(S)} for tensor fields.
Roughly speaking their fundamental properties include the following:
\begin{enumerate}
\item Every $L^p$-bounded sequence has an $L^p$-weak convergent subsequence.
\item $L^p$-norms are lower semicontinuous with respect to the $L^p$-weak convergence.
\item $L^p$-strong (or $L^p$-weak) convergence is stable for every contraction under a suitable assumption.
\end{enumerate}
It is worth pointing out that a key notion to give the definitions of \textbf{(W)} and of \textbf{(S)} is the \textit{angle} $\angle xyz \in [0, \pi]$ given in \cite{Ho3}.
See subsection $2.5.5$ for the precise definition.

In Section $4$ we will apply several results given in Section $3$ to prove theorems introduced in this section.
Moreover we will show a compactness result about Sobolev functions with respect to the Gromov-Hausdorff topology which is a generalization of a Kuwae-Shioya's result about  $L^2$-energy functionals given in \cite{KS, KS2} to $L^p$-case.
See Theorem \ref{7} and Remark \ref{rmk}.
As an application of it, we will prove Theorem \ref{p-eigen}.
We will also discuss a Bochner-type \textit{formula} and the scalar curvature of a limit space.
See Theorem \ref{bochner7} and Corollary \ref{scal conv}.

\textbf{Acknowledgments.}
The author would like to express my appreciation to Professors Kazumasa Kuwada, Kazuhiro Kuwae, Shin-ichi Ohta and Takashi Shioya for helpful comments.
He was supported  by Grant-in-Aid for Young Scientists (B) $24740046$.
\section{Preliminaries}
\subsection{Fundamental notation.}
For $a, b \in \mathbf{R}$ and $\epsilon>0$, throughout this paper, we use the following notation:
\[a=b \pm \epsilon \Longleftrightarrow |a-b|<\epsilon.\]

Let us denote by $ \Psi (\epsilon_1, \epsilon _2 ,\ldots ,\epsilon_k  ; c_1, c_2,\ldots, c_l )$
some positive valued function on $\mathbf{R}_{>0}^k \times \mathbf{R}^l $ satisfying 
\[\lim_{\epsilon_1, \epsilon_2,\ldots ,\epsilon_k \to 0}\Psi (\epsilon_1, \epsilon_2,\ldots ,\epsilon_k  ; c_1, c_2 ,\ldots ,c_l)=0\] 
for fixed real numbers $c_1, c_2,\ldots ,c_l$.
We often denote by $C(c_1, c_2,\ldots ,c_l)$ some positive constant depending only on fixed real numbers $c_1, c_2,\ldots ,c_l$.

Let $X$ be a metric space and $x \in X$. 
For $r>0$ and $A \subset X$, put $B_r(x):=\{w \in X; \overline{x, w}<r\}$, $\overline{B}_r(x):=\{w \in X; \overline{x, w}\le r\}$ and $B_r(A):=\{w \in X; \overline{w, A}<r\}$.
We say that \textit{$X$ is proper} if every bounded closed subset of $X$ is compact.
We also say that \textit{$X$ is a geodesic space} if for every $p, q \in X$ there exists an isometric embedding $\gamma : [0, \overline{p, q}] \to X$ such that $\gamma (0)=p$ and $\gamma (\overline{p, q})=q$ (we call $\gamma$ \textit{a minimal geodesic from $p$ to $q$}).

Let $\upsilon$ be a Borel measure on $X$, $Y$ a metric space and $f$ a Borel map from $X$ to $Y$.
We say that $f$ is \textit{weakly Lipschitz} (or \textit{differentiable at a.e. $x \in X$}) if there exists a countable collection $\{A_i\}_i$ of Borel subsets $A_i$ of $X$ such that $\upsilon (X \setminus \bigcup_iA_i)=0$ and that each $f|_{A_i}$ is a Lipschitz map.

Let $V$ be an  $n$-dimensional real Hilbert space with the inner product $\langle \cdot, \cdot \rangle$, $v \in V$ and $1<p<\infty$.
We often use the following notation: $v^{(p-1)}:=|v|^{p-2}v$ if $v \neq 0$, $v^{(p-1)}:=0$ if $v=0$.
For $k \le n$, $\epsilon >0$ and $\{e_i\}_{1 \le i \le k} \subset V$,
we say that \textit{$\{e_i\}_{1 \le i \le k}$ is an $\epsilon$-orthogonal collection on $V$} if $\langle e_i, e_j \rangle =\delta_{ij} \pm \epsilon$ holds for every $i, j$.
Moreover, if $k=n$, then we say that  \textit{$\{e_i\}_{1 \le i \le n}$ is an $\epsilon$-orthogonal basis on $V$}.
It is easy to check the following:
\begin{proposition}\label{ep}
Let $\epsilon>0$, $L >0$, $r \ge 1$, $T \in \bigotimes_{i=1}^rV^*$ with $|T| \le L$, and let $\{e_i\}_{1 \le i \le k}$ be an $\epsilon$-orthogonal collection on $V$.
Then we have the following:
\begin{enumerate}
\item If $k=n$, then $|T|^2= \sum_{i_1, \ldots, i_r}\left(T(e_{i_1}, \ldots, e_{i_r})\right)^2 \pm \Psi (\epsilon; n, r, L)$.
\item If $r=2$ and $|T|^2=\sum_{i, j}(T(e_i, e_j))^2 \pm \epsilon$, then $\mathrm{Tr}\,T= \sum_{i=1}^kT(e_i, e_i) \pm \Psi(\epsilon; n, L)$, where $\mathrm{Tr}\,T$ is the trace of $T$.
\end{enumerate}
\end{proposition}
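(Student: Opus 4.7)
The plan is to reduce both parts to a comparison between the $\epsilon$-orthogonal collection $\{e_i\}$ and a nearby genuinely orthonormal system obtained by Gram--Schmidt. First I would observe that the Gram matrix $G := (\langle e_i, e_j \rangle)_{i,j}$ satisfies $G = I_k + E$ with each entry of $E$ of absolute value at most $\epsilon$; for $\epsilon$ small compared to $n$ this matrix is invertible and its inverse equals $I_k + O(\epsilon)$. Applying Gram--Schmidt to $\{e_i\}_{1 \le i \le k}$ produces an orthonormal system $\{\tilde e_i\}_{1 \le i \le k}$ with $|\tilde e_i - e_i| \le \Psi(\epsilon; n)$; when $k = n$, this is an honest orthonormal basis of $V$ close to the original collection.

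For part (1), I would evaluate $|T|^2 = \sum_{i_1, \ldots, i_r}\left(T(\tilde e_{i_1}, \ldots, \tilde e_{i_r})\right)^2$ in the orthonormal basis. Multilinearity of $T$ together with $|T| \le L$, $|e_i| \le 1 + \epsilon$, and $|\tilde e_i - e_i| \le \Psi(\epsilon; n)$ gives
\[ T(e_{i_1}, \ldots, e_{i_r}) = T(\tilde e_{i_1}, \ldots, \tilde e_{i_r}) \pm \Psi(\epsilon; n, r, L) \]
tuple by tuple. Squaring and summing over the $n^r$ tuples yields the claimed identity up to an error of the form $\Psi(\epsilon; n, r, L)$.

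For part (2), I would extend $\{\tilde e_i\}_{1 \le i \le k}$ to an orthonormal basis $\{\tilde e_i\}_{1 \le i \le n}$ of $V$. Since the trace is basis-independent, $\mathrm{Tr}\,T = \sum_{i=1}^n T(\tilde e_i, \tilde e_i)$. For $i \le k$ the bilinear approximation already gives $T(\tilde e_i, \tilde e_i) = T(e_i, e_i) \pm \Psi(\epsilon; n, L)$. For $i > k$ I would use the given hypothesis $|T|^2 = \sum_{i,j \le k}(T(e_i, e_j))^2 \pm \epsilon$, replace the $e_i$'s by $\tilde e_i$'s at a cost of $\Psi(\epsilon; n, L)$, and combine with the exact Parseval identity $|T|^2 = \sum_{i,j=1}^n(T(\tilde e_i, \tilde e_j))^2$ for the orthonormal basis. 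This forces $\sum_{(i,j):\,i>k\text{ or }j>k}(T(\tilde e_i, \tilde e_j))^2 \le \Psi(\epsilon; n, L)$, so in particular each diagonal term $|T(\tilde e_i, \tilde e_i)|$ with $i > k$ is of size $\Psi(\epsilon; n, L)$. Summing over $i$ then produces the claimed comparison.

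The only mild obstacle is book-keeping how the Gram--Schmidt error propagates through an $r$-fold multilinear form, which is why the error function $\Psi$ in part (1) must depend on $r$ as well as on $n$ and $L$. Once the Gram--Schmidt step is in place, both assertions follow quickly, in line with the author's remark that the proposition is routine.
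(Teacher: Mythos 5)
Your argument is correct. Note that the paper itself gives no proof of this proposition (it is introduced with ``It is easy to check the following''), so there is no official argument to compare against; your reduction via Gram--Schmidt to an exactly orthonormal system $\{\tilde e_i\}$ with $|\tilde e_i-e_i|\le\Psi(\epsilon;n)$, followed by multilinear replacement of arguments and, in part (2), the exact Parseval identity in the completed orthonormal basis to kill the diagonal terms with $i>k$, is a complete and natural verification. Two small points worth making explicit in a written version: the Gram--Schmidt step (and the invertibility of $I_k+E$) requires $\epsilon$ to be small relative to $n$, which is harmless because for $\epsilon$ above any fixed threshold the claimed estimates hold trivially by enlarging $\Psi$, using only $|T|\le L$ and $|e_i|^2\le 1+\epsilon$; and in part (2) the phrase ``trace is basis-independent'' should be ``independent of the choice of \emph{orthonormal} basis,'' since for a $(0,2)$-tensor the trace is taken with respect to the metric.
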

Let $r, s \in \mathbf{Z}_{\ge 0}$ and $T^r_s(V):=\bigotimes_{i=1}^rV \otimes \bigotimes_{i=r+1}^{r+s}V^*$.
We also denote  by $\langle \cdot, \cdot \rangle$ the canonical inner product on $T^r_s(V)$ for brevity.
For $1 \le l \le r$ and $r+1 \le k \le r+s$, let $C_k^l: T^r_s(V) \to T^{r-1}_{s-1}(V)$ be the contraction defined by 
$C_k^l(\bigotimes_{i=1}^rv_i\otimes \bigotimes_{i=r+1}^{r+s}v_{i}^*):=v_k^*(v_l)\bigotimes_{i=1}^{l-1}v_i \otimes \bigotimes_{i=l+1}^{r} v_{i} \otimes  \bigotimes_{i=r+1}^{k-1}v_i^* \otimes \bigotimes_{i=k+1}^{r+s} v_{i}^*$.
For $1 \le l < k \le r$, let 
$C_k^l: T^r_s(V) \to T^{r-2}_{s}(V)$ be the linear map defined by
 $C_k^l(\bigotimes_{i=1}^rv_i\otimes \bigotimes_{i=r+1}^{r+s}v_{i}^*):=\langle v_l, v_k \rangle \bigotimes_{i=1}^{l-1}v_i \otimes \bigotimes_{i=l+1}^{k-1} v_{i} \otimes \bigotimes_{i=k+1}^r v_i \otimes  \bigotimes_{i=r+1}^{r+s}v_i^*$.
Similarly we define $C_k^l: T^r_s(V) \to T^{r}_{s-2}(V)$ for $r+1 \le l<k \le r+s$.
For $l \le r, k \le s$, let $C^{l, r}_{k, s}:T^{r}_{s}(V) \times T^{l}_{k}(V) \to T^{r-l}_{s-k}(V)$ be the linear map defined by $C^{l, r}_{k, s}(\bigotimes_{i=1}^rv_i\otimes \bigotimes_{i=r+1}^{r+s}v_{i}^*,  \bigotimes_{i=1}^{l}w_i\otimes \bigotimes_{i=l+1}^{l+k}w_{i}^*):=\prod_{i=1}^{l}\langle v_i, w_i\rangle \prod_{i=r+1}^{r+k}\langle v_i^*, w_{i-r+l}^* \rangle \bigotimes_{i=l+1}^{r}v_i \otimes \bigotimes_{i=r+k+1}^{r+s}v^*_i$.
To simplify notation, we write $T(S)$ instead of $C^{l, r}_{k, s}(T, S)$.
Note $|T(S)|\le |T||S|$.
For $\hat{r}, \hat{s} \in \mathbf{Z}_{\ge 0}$, define the bilinear map $f : T^r_s(V) \times T^{\hat{r}}_{\hat{s}}(V) \to T^{r+\hat{r}}_{s+\hat{s}}(V)$ by
$f(\bigotimes_{i=1}^{r}v_i\otimes \bigotimes_{i=r+1}^{r+s}v_{i}^*, \bigotimes_{i=1}^{\hat{r}}w_i\otimes \bigotimes_{i=\hat{r}+1}^{\hat{r}+\hat{s}}w_{i}^*):=\bigotimes_{i=1}^{r}v_i \otimes \bigotimes_{i=1}^{\hat{r}}w_i \otimes \bigotimes_{i=r+1}^{r+s}v_{i}^* \otimes \bigotimes_{i=\hat{r}+1}^{\hat{r}+\hat{s}}w_{i}^*$.
For simplicity of notation, we write $v \otimes w$ instead of $f(v, w)$.
\subsection{Differentiability of Lipschitz functions on a Borel subset of Euclidean space.}
Let $A$ be a Borel subset of $\mathbf{R}^k$, $f$ a Lipschitz function on $A$ and $y \in \mathrm{Leb}\,A:=\{ a \in A; \lim_{r \to 0}H^k(A \cap B_r(a))/H^k(B_r(a))=1\}$, where $H^k$ is the $k$-dimensional spherical Hausdorff measure.
Then we say that \textit{$f$ is differentiable at $y$} if there exists a Lipschitz function $\hat{f}$ on $\mathbf{R}^k$ such that $\hat{f}|_A\equiv f$ and that $\hat{f}$ is  differentiable at $y$.
Note that if $f$ is differentiable at $y$, then a vector
$(\partial \hat{f}/\partial x_1(y), \ldots, \partial \hat{f}/\partial x_n(y))$
does not depend on the choice of such $\hat{f}$.
Thus we denote the vector by
$J(f)(y)=(\partial f/\partial x_1(y), \ldots, \partial f/\partial x_n(y)).$
Let $F=(f_1, \ldots, f_m)$ be a Lipschitz map from $A$ to $\mathbf{R}^m$.
We say that \textit{$F$ is differentiable at $y$} if every $f_i$ is differentiable at $y$.
Note that by Rademacher's theorem \cite{rad}, $F$ is differentiable at a.e. $x \in A$.
Let us denote by $J(F)(x)=(\partial f_i/\partial x_j(x))_{ij}$ the Jacobi matrix of $F$ at $x$ if $F$ is differentiable at $x \in \mathrm{Leb}\,A$. 
We also say that a Borel map $G$ from $A$ to $\mathbf{R}^m$ is \textit{weakly twice differentiable on $A$} if $G$ is weakly Lipschitz on $A$ and if $J(G)$ is weakly Lipschitz on $A$. 

Let $X=\sum_{a \in \Lambda} X_{a}\bigotimes_{i=1}^r\nabla x_{a(i)} \otimes \bigotimes_{i=r+1}^{r+s} dx_{a(i)}$ be a tensor field of type $(r, s)$ on $A$, where $\Lambda:=\mathrm{Map}(\{1, \ldots, r+s\} \to \{1, \ldots, k\})$.
We say that $X$ is a \textit{Borel tensor field on $A$} if every $X_{a}$ is a Borel function.
We also say that $X$ is \textit{weakly Lipschitz on $A$} (or \textit{differentiable at a.e. $x \in A$}) if every $X_{a}$ is weakly Lipschitz on $A$.

For two Borel tensor fields $\{X_i\}_{i=1,2}$ of type $(r, s)$ on $A$, we say that \textit{$X_1$ is equivalent to $X_2$ on $A$} if $X_1(x)=X_2(x)$ holds for a.e. $x \in A$.
Let us denote by $[X]$ the equivalent class of $X$, by $\Gamma_{\mathrm{Bor}}(T^r_sA)$ the set of equivalent classes, and 
by $\Gamma_{1}(T^r_sA)$ the set of equivalent classes represented by a weakly Lipschitz tensor field of type $(r, s)$.
We often write $X=[X]$ for brevity. 
See subsection $3.1$ in \cite{Ho3} for the details of this subsection.
\subsection{Rectifiable metric measure spaces.}
Let $X$ be a proper geodesic space and $\upsilon$ a Borel measure on $X$. 
In this paper we say that $(X, \upsilon)$ is a \textit{metric measure space} if $\upsilon(B_r(x))>0$ holds for every $x \in X$ and every $r>0$.
We now recall the notion of \textit{rectifiability} for metric measure spaces given by Cheeger-Colding in \cite{ch-co3}:
\begin{definition}[Cheeger-Colding, \cite{ch-co3}]\label{rectifiable}
Let $(X, \upsilon)$ be a metric measure space.
We say that \textit{$X$ is $\upsilon$-rectifiable} if there exist $m \in \mathbf{N}$,  collections $\{C_{i}^l\}_{1 \le l \le m, i \in \mathbf{N}}$ of Borel subsets $C_i^l$ of $X$, and of bi-Lipschitz embedding maps $\{ \phi_{i}^l: C_{i}^l \rightarrow \mathbf{R}^l \}_{l, i}$ such that the following three conditions hold:
\begin{enumerate}
\item $\upsilon(X \setminus \bigcup_{l,i}C_{i}^l)=0$.
\item For every $i$ and every $l$, $\upsilon$ is Ahlfors $l$-regular at every $x \in C_{i}^l$, i.e., there exist $C \ge 1$ and $r>0$ such that 
$C^{-1} \le \upsilon (B_t(x)) / t^l \le C$ holds 
for every $0<t<r$.
\item For every $l$, every $x \in \bigcup_{i \in \mathbf{N}}C_{i}^l$ and every $0 < \delta < 1$, there exists $i$ such that $x \in C_{i}^l$ and that 
the map $\phi_{i}^l$ is $(1 \pm \delta)$-bi-Lipschitz to the image $\phi_{i}^l(C_{i}^l)$.
\end{enumerate}
\end{definition}
See \cite[Definition $5.3$]{ch-co3} and the condition $iii)$ of page $60$ in \cite{ch-co3}.
In this paper we say that a family $\mathcal{A}:=\{(C_{i}^l, \phi_{i}^l)\}_{l, i}$ as in Definition \ref{rectifiable} is  a \textit{rectifiable coordinate system (or structure) of $(X, \upsilon)$} and that each $(C_i^l, \phi_i^l)$ is an \textit{$l$-dimensional rectifiable coordinate patch}.
It is important that the cotangent bundle on a rectifiable metric measure space exists in some sense.
We first give several fundamental properties of the cotangent bundle:
\begin{theorem}[Cheeger, Cheeger-Colding, \cite{ch1, ch-co3}]\label{29292}
Let $(X, \upsilon)$ be a rectifiable metric measure space.
Then there exist a topological space $T^*X$ and a Borel map  $\pi^0_1:T^*X \rightarrow X$ with the following properties:
\begin{enumerate}
\item $\upsilon(X \setminus \pi^0_1 (T^*X))=0$.
\item For every $w \in \pi^0_1(T^*X)$, $(\pi^0_1)^{-1}(w) (=T^*_wX)$ is a finite dimensional real Hilbert space with the inner product $\langle \cdot, \cdot \rangle_w$. Let $|v|(w):= \sqrt{\langle v, v\rangle_w}$.
\item For every Lipschitz function $f$ on $X$, there exist a Borel subset $V$ of $X$, and a Borel map $df$ from $V$ to $T^*X$ such that $\upsilon(X \setminus V)=0$, $\pi^0_1 \circ df\equiv id_V$ and that $|df|(w)=\mathrm{Lip}f(w)=Lipf(w)$ holds for every $w \in V$, where 
\begin{enumerate}
\item $\mathrm{Lip}f(x)=\lim_{r \to 0}(\sup_{y \in B_r(x)\setminus \{x\}}(|f(x)-f(y)|/\overline{x, y}))$ and
\item $Lip f(x)=\liminf_{r \to 0}(\sup_{y \in \partial B_r(x)}(|f(x)-f(y)|/\overline{x, y})).$
\end{enumerate}
\end{enumerate} 
\end{theorem}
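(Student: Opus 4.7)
The plan is to build $T^*X$ fibrewise using the rectifiable coordinate system $\mathcal{A}=\{(C_i^l,\phi_i^l)\}_{l,i}$ from Definition \ref{rectifiable}, then assemble a global Borel bundle. Each $\phi_i^l:C_i^l\to\mathbf{R}^l$ is bi-Lipschitz, so for any Lipschitz $f:X\to\mathbf{R}$ the composition $f\circ(\phi_i^l)^{-1}$ is Lipschitz on the Borel set $\phi_i^l(C_i^l)\subset\mathbf{R}^l$; Rademacher's theorem, in the form recalled in subsection~2.2, produces a Jacobian $J(f\circ(\phi_i^l)^{-1})(y)\in\mathbf{R}^l$ at a.e.\ Lebesgue density point $y$. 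Pulling this back through $\phi_i^l$ gives, at a.e.\ $x\in C_i^l$, the coordinate vector that will serve as $df(x)$.

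Next I would equip each fibre with the structure of a Hilbert space. Fix a countable dense family $\{z_\alpha\}\subset X$ and, at $x\in C_i^l$, consider the Euclidean gradients $J(r_{z_\alpha}\circ(\phi_i^l)^{-1})(\phi_i^l(x))\in\mathbf{R}^l$. Condition (3) of Definition \ref{rectifiable}, applied with $\delta\to 0$ through a nested family of subpatches, shows that the $l\times l$ Gram matrices built from any $l$ linearly independent such gradients converge to a positive-definite matrix depending only on $x$; this matrix determines a metric tensor $g_{ab}(x)$ and hence, via duality, the inner product $\langle\cdot,\cdot\rangle_x$ on $T^*_xX:=(\mathbf{R}^l)^*$, yielding the desired $|v|(x)$.

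Third, I would verify compatibility across overlaps $C_i^l\cap C_j^{l'}$. The Ahlfors $l$-regularity in (2) of Definition \ref{rectifiable} forces $l=l'$ on such overlaps, and the transition map $\phi_j^l\circ(\phi_i^l)^{-1}$ is bi-Lipschitz hence differentiable a.e., so the chain rule provides an a.e.\ linear isomorphism between the two candidate fibres which preserves the inner product (both being computed from the same base-point Grams). Equipping $T^*X$ with the disjoint-union topology over a countable fundamental system of subpatches gives a Borel map $\pi^0_1:T^*X\to X$ satisfying (1) and (2). For (3), the inequality $|df|(x)\le Lip\,f(x)\le\mathrm{Lip}\,f(x)$ is essentially automatic, while the reverse $\mathrm{Lip}\,f(x)\le|df|(x)$ at a generic $x$ is obtained by choosing, for each $\delta>0$, a patch on which $\phi_i^l$ is $(1\pm\delta)$-bi-Lipschitz near $x$, transporting the Euclidean estimate on directional derivatives of $f\circ(\phi_i^l)^{-1}$ to $X$-side difference quotients, and letting $\delta\to 0$.

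The main obstacle is this final identification of $|df|$ with both pointwise Lipschitz constants. Only uniform $(1\pm\delta)$ control is available — not an exact infinitesimal isometry — so the argument requires a delicate diagonal procedure in which $\delta_k\to 0$ along a countable family of subpatches, intersected with a full-measure set of density points; it is also at this stage that $\mathrm{Lip}\,f$ and $Lip\,f$ are forced to coincide, since a bi-Lipschitz chart converts the $\liminf$ over spheres in $X$ into a $\liminf$ over almost-Euclidean spheres where the Euclidean gradient controls both the $\liminf$ and the $\limsup$. This is the genuine analytic content of the theorem and is precisely where one uses the definition of rectifiability rather than merely its consequences.
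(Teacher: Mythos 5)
Your construction of the total space agrees with the paper's: the paper (reviewing Cheeger and Cheeger--Colding) also takes $T^*X:=\bigl(\bigsqcup_{i,l}\phi_i^l(C_i^l)\times\mathbf{R}^l\bigr)/\sim$, glued over overlaps by the Jacobians of the a.e.\ differentiable transition maps, with $\pi^0_1$ the obvious projection, and the Borel/bookkeeping parts of your argument are fine. Where you diverge is the fibre inner product and the identity $|df|=\mathrm{Lip}f=Lip f$. The paper does not build the inner product from Gram matrices of distance functions; it sets $|a|_x:=\mathrm{Lip}\bigl(\sum_j a_j\phi^l_{i,j}\bigr)(x)$ and then \emph{imports} from \cite{ch1, ch-co3} the three key facts listed as (a)--(c) and (3) in the review following the theorem: nondegeneracy, the parallelogram identity (which upgrades the norm to an inner product), and the a.e.\ coincidence $\mathrm{Lip}f=Lip f$. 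These are the deep inputs of the theorem, proved in the references using doubling plus the Poincar\'e inequality (Cheeger) or the structure of Ricci limit spaces (Cheeger--Colding); they are not consequences of Definition \ref{rectifiable} alone, and the paper never claims to rederive them.

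Your proposal has a genuine gap exactly at these points. First, the claim that the Gram matrices of $J(r_{z_\alpha}\circ(\phi_i^l)^{-1})$ converge, as $\delta\to 0$ through nested subpatches, to a positive-definite matrix depending only on $x$ is asserted, not proved: different subpatches carry different charts, and no chart-independence or convergence mechanism is given (and positive-definiteness is precisely the nondegeneracy statement (b) you would need to establish). Second, and more seriously, $\mathrm{Lip}f(x)$ and $Lip f(x)$ are defined by suprema over $B_r(x)\setminus\{x\}$ and over $\partial B_r(x)$ \emph{in $X$}, while a $(1\pm\delta)$-bi-Lipschitz chart only controls $f$ on the Borel set $C_i^l$; even at a density point of $C_i^l$, every ball and every sphere around $x$ contains points outside the chart, where $f$ is completely unconstrained by the chart data, and such points can nearly realize the suprema. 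So transporting Euclidean directional-derivative estimates through the chart gives the easy inequality $|df|(x)\le\mathrm{Lip}f(x)$, but it cannot yield $\mathrm{Lip}f(x)\le |df|(x)$, nor does it convert the $\liminf$ over spheres of $X$ into a $\liminf$ over "almost-Euclidean spheres": the spheres are not contained in the chart. Your "delicate diagonal procedure" does not address this ambient-versus-chart issue, which is precisely the content quoted from Cheeger and Cheeger--Colding. To repair the argument you would either have to invoke those results (as the paper does) or add and use the hypotheses under which they are proved; as written, the proposal does not establish parts (2) and (3) of the theorem.
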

Assume that $(X, \upsilon)$ is a rectifiable metric measure space.
We now give a short review of the construction of the cotangent bundle $T^*X$ as in Theorem \ref{29292}:
Let $\{(C_{i}^l, \phi_{i}^l)\}_{l, i}$ be a rectifiable coordinate system of $(X, \upsilon)$.
By Rademacher's theorem and Definition \ref{rectifiable}, without loss of generality we can assume that the following hold:
\begin{enumerate}
\item Every $\phi_i^l \circ (\phi_j^l)^{-1} : \phi_j^l(C_i^l \cap C_j^l) \to \phi_i^l(C_i^l \cap C_j^l)$ is differentiable at every $w \in \phi_j^l(C_i^l \cap C_j^l)$.
\item For every $i, l$, $x \in C_i^l$ and every $(a_1, \ldots, a_l), (b_1, \ldots, b_l) \in \mathbf{R}^l$, we have the following:
\begin{enumerate}
\item $\mathrm{Lip}\left(\sum_ja_j\phi_{i,j}^l\right)(x)=Lip \left(\sum_ja_j\phi^l_{i, j}\right)(x)$.
\item $\mathrm{Lip}\left(\sum_ja_j\phi_{i,j}^l\right)(x)=0$ holds if and only if $(a_1, \ldots, a_l)=0$ holds.
\item $\mathrm{Lip}\left(\sum_j(a_j+b_j)\phi^l_{i,j}\right)(x)^2+\mathrm{Lip}\left(\sum_j(a_j-b_j)\phi^l_{i,j}\right)(x)^2=2\mathrm{Lip}\left(\sum_ja_j\phi^l_{i,j}\right)(x)^2+2\mathrm{Lip}\left(\sum_jb_j\phi^l_{i,j}\right)(x)^2$.
\end{enumerate}
\item For every Lipschitz function $f$ on $X$, we see that $\mathrm{Lip}f(x)=Lip f(x)$ holds for a.e. $x \in X$.
\end{enumerate}
Let $\sim$ be the equivalent relation on $\bigsqcup_{i, l} (\phi_i^l(C_i^l) \times \mathbf{R}^l)$ defined by $(x, u) \sim (y, v)$ if $x = \phi_i^l \circ (\phi_j^l)^{-1}(y)$ and $u=J(\phi_i^l \circ (\phi_j^l)^{-1})(y)^tv$ for some $i, j, l$.
Put $T^*X:= \left(\bigsqcup_{i, l}( \phi_i^l(C_i^l) \times \mathbf{R}^l)\right)/ \sim$ and define the map $\pi^0_1: T^*X \to X$ by $\pi (x,u):=(\phi_i^l)^{-1}(x)$ if $x \in \phi_i^l(C_i^l)$.
The condition $(b)$ above yields that for every $x \in \pi^0_1(T^*X)$ with $x \in C_i^l$, we see that $|a|_x=\mathrm{Lip}\left(\sum_ja_j\phi^l_{i,j}\right)(x)$ is a norm on $\mathbf{R}^l$.
The condition $(c)$ above yields that the norm comes from an inner product $\langle \cdot, \cdot \rangle_x$ on $\mathbf{R}^l$. 
Then it is easy to check that $(T^*X, \pi^0_1, \langle \cdot, \cdot \rangle_x)$ satisfies the desired conditions as in Theorem \ref{29292}.
See Section $6$ in \cite{ch-co3} and page $458-459$ of \cite{ch1} for the details.

Note that similarly, we can define the ($L^{\infty}$-)vector bundle: $\pi^r_s: \bigotimes_{i=1}^r TX \otimes \bigotimes_{j=1}^sT^*X \to X$ for every $r, s \in \mathbf{Z}_{\ge 0}$.
Let $A$ be a Borel subset of $X$ and $T^r_sA$ (or $\bigotimes_{i=1}^r TA \otimes \bigotimes_{j=1}^sT^*A) :=(\pi^r_s)^{-1}(A)$.
For two Borel tensor fields $T_1, T_2$ of type $(r, s)$ on $A$, we say that \textit{$T_1$ is equivalent to $T_2$ on $A$} if
$T_1(x) =T_2(x)$ holds for a.e. $x \in A$. 
Let $\Gamma_{\mathrm{Bor}}(T^r_sA)$ be the space of equivalent classes of Borel sections of $T^r_sA$ over $A$.
Note that for every $T \in \Gamma_{\mathrm{Bor}}(T^r_sA)$, each restriction $T|_{C_i^l \cap A}$ of $T$ to $C_i \cap A$ can be regarded as in $\Gamma_{\mathrm{Bor}}(T^r_s\phi_i^l(C_i^l \cap A))$ and that every weakly Lipschitz function $f$ on $A$ has the canonical section $df \in \Gamma_{\mathrm{Bor}}(T^*A)$. 

We also denote the canonical metric on each fiber of $T^r_sX$ by $\langle \cdot, \cdot \rangle$ for short. 
In particular we call the canonical metric on $TX$ the \textit{Riemannian metric of} $(X, \upsilon)$ and denote it by $g_{X}$.
For every $1 \le p \le \infty$, let $L^p(T^r_sA):=\{T \in \Gamma_{\mathrm{Bor}}(T^r_sA); |T| \in L^p(A)\}$.
Note that $L^p(T^r_sA)$ with the $L^p$-norm is a Banach space and that $g_{X} \in L^{\infty}(T^0_2X)$.
For every weakly Lipschitz function $f$ on $A$ and every $V \in \Gamma_{\mathrm{Bor}}(TA)$, let $\nabla^{g_X} f:=(df)^* \in \Gamma_{\mathrm{Bor}}(TA)$ and $V(f):=\langle V, \nabla f\rangle$, where $^*$ is the canonical isometry $T^*_xX \cong T_xX$ by the Riemannian metric $g_X$.
See subsection $3.3$ in \cite{Ho3} for the detail.

Let $U$ be an open subset of $X$.
Let us denote by $\mathcal{D}^1_{\mathrm{loc}}(\mathrm{div}^{\upsilon}, U)$ the set of $T \in L^1_{\mathrm{loc}}(TU) := \{S \in \Gamma_{\mathrm{Bor}}(TU); |S| \in L^1_{\mathrm{loc}}(U)\}$ satisfying that there exists a unique $h \in L^1_{\mathrm{loc}}(U)$ such that 
\[-\int_{U}fhd\upsilon=\int_{U}\langle \nabla f, T\rangle d\upsilon\]
holds for every Lipschitz function $f$ on $U$ with compact support.
Write $\mathrm{div}^{\upsilon}T=h$.
For $1 \le p \le \infty$, let $\mathcal{D}^p(\mathrm{div}^{\upsilon}, U)$ be the set of $T \in \mathcal{D}^1_{\mathrm{loc}}(\mathrm{div}^{\upsilon}, U)$ satisfying that $T \in L^p(TU)$ and $\mathrm{div}^{\upsilon}T \in L^p(U)$ hold.
Note that for $\mathcal{D}^2(\Delta^{\upsilon}, U)$ defined as in Section $1$, we see that $f \in \mathcal{D}^2(\Delta^{\upsilon}, U)$ holds if and only if $f \in H_{1, 2}(U)$ and $\nabla f \in \mathcal{D}^2(\mathrm{div}^{\upsilon}, U)$ hold. 
\subsection{Weakly second order differential structure on rectifiable metric measure spaces.}
In this subsection we recall the definition of a weakly second order differential structure on a rectifiable metric measure space and their fundamental properties given in \cite{Ho3}.

Let $(X, \upsilon)$ be a metric measure space and $\mathcal{A}:=\{(C_i^l, \phi_i^l)\}_{i, l}$ a rectifiable coordinate system of $(X, \upsilon)$.
We say that \textit{$\mathcal{A}$ is a weakly second order differential structure (or system) on $(X, \upsilon)$} if each map $\phi_i^l \circ (\phi_j^l)^{-1}$ is weakly twice differentiable on $\phi_j^l(C_i^l \cap C_j^l)$.

Assume that $\mathcal{A}$ is a weakly second order differential structure on $(X, \upsilon)$.
Let $A$ be a Borel subset of $X$.
We say that \textit{$T \in \Gamma_{\mathrm{Bor}}(T^r_sA)$ is weakly Lipschitz} if each $X|_{C_i^l \cap A} $ (which can be regarded as in $\Gamma_{\mathrm{Bor}}(T^r_s\phi_i^l(C_i^l \cap A)))$ is a weakly Lipschitz tensor field of type $(r, s)$ on  $\phi_i^l(C_i \cap A)$.
Let us denote by $\Gamma_1(T^r_sA; \mathcal{A})$ the set of equivalent classes of Borel tensor fields of type $(r, s)$ on $A$ represented by a weakly Lipschitz tensor field.
We often write $\Gamma_1(T^r_sA):=\Gamma_1(T^r_sA; \mathcal{A})$ for brevity.
Recall that it was proved in \cite{Ho3} that for $U, V \in \Gamma_1(TA)$, the Lie bracket $[U, V] \in \Gamma_{\mathrm{Bor}}(TA)$ is well-defined in the ordinary way.

Let $f$ be a Borel function on $A$.
We say that \textit{$f$ is weakly twice differentiable on $A$ (with respect to $\mathcal{A}$)} if $f$ is weakly Lipschitz on $A$ and if $df \in \Gamma_1(T^*A)$.  
The following theorem is a main result of \cite{Ho3}:
\begin{theorem}\cite[Theorem $3.25$]{Ho3}\label{levi3}
Assume $g_X \in \Gamma_1(T^0_2X)$.  
Then there exists the Levi-Civita connection $\nabla^{g_X}$ on $X$ uniquely in the following sense:  
\begin{enumerate}
\item $\nabla^{g_X}$ is a map from $\Gamma_{\mathrm{Bor}} (TX) \times \Gamma_{1} (TX)$ to $\Gamma_{\mathrm{Bor}} (TX)$. Let $\nabla^{g_X}_UV:=\nabla^{g_X}(U, V)$.
\item $\nabla^{g_X}_U(V+W)=\nabla^g_UV + \nabla^{g_X}_U W$ holds for every $U \in \Gamma_{\mathrm{Bor}} (TX)$ and every $V, W \in \Gamma_{1} (TX)$.
\item $\nabla^{g_X}_{fU+hV}W =f\nabla^{g_X}_UW + h\nabla^{g_X}_VW$ holds for every $U, V \in \Gamma_{\mathrm{Bor}} (TX)$, every $W \in \Gamma_{1} (TX)$ and every Borel functions $f, h$ on $X$.
\item $\nabla^{g_X}_U(fV)=U(f)V + f\nabla^{g_X}_UV$ holds for every $U \in \Gamma_{\mathrm{Bor}} (TX)$, every $V \in \Gamma_{1} (TX)$ and every weakly Lipschitz function $f$ on $X$.
\item $\nabla^{g_X}_UV - \nabla^{g_X}_VU=[U, V]$ holds for every $U, V \in \Gamma_{1} (TX)$.
\item $Ug(V, W) = {g_X}( \nabla^{g_X}_UV, W) + g(V, \nabla^{g_X}_UW)$ holds for every $U \in \Gamma_{\mathrm{Bor}} (TX)$ and every $V, W \in \Gamma_{1} (TX)$.   
\end{enumerate}
\end{theorem}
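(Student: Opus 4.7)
\emph{Uniqueness.} My plan is to derive a Koszul-type formula from the six axioms. For $U, V, W \in \Gamma_1(TX)$, axiom (6) applied with the three cyclic orderings of $(U,V,W)$ gives three identities; adding the first two and subtracting the third, then using the torsion-free identity (5) to replace $\nabla^{g_X}_V U$, $\nabla^{g_X}_W V$, $\nabla^{g_X}_U W$ by $\nabla^{g_X}_U V$, $\nabla^{g_X}_V W$, $\nabla^{g_X}_W U$ modulo Lie brackets, produces
\[2 g_X(\nabla^{g_X}_U V, W) = U g_X(V,W) + V g_X(W,U) - W g_X(U,V) + g_X([U,V], W) - g_X([V,W], U) + g_X([W,U], V).\]
All quantities on the right-hand side are well-defined a.e.: the Lie brackets of $\Gamma_1$-vector fields are Borel sections by the discussion preceding the statement, and each $U g_X(V,W)$ is a Borel function since $g_X(V,W)$ is weakly Lipschitz when $g_X \in \Gamma_1(T^0_2 X)$ and $V, W \in \Gamma_1(TX)$. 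Non-degeneracy of $g_X$ then determines $\nabla^{g_X}_U V$ a.e.\ for $U, V \in \Gamma_1(TX)$, and axiom (3) extends the definition uniquely to $U \in \Gamma_{\mathrm{Bor}}(TX)$ because locally $U = U^a \partial_a$ with Borel coefficients.

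\emph{Existence.} I would construct $\nabla^{g_X}$ chart by chart. Fix an $l$-dimensional rectifiable coordinate patch $(C_i^l, \phi_i^l) \in \mathcal{A}$ with coordinate functions $x_1, \ldots, x_l$. Because $g_X \in \Gamma_1(T^0_2 X)$, the components $g_{ab} := \langle \partial / \partial x_a, \partial / \partial x_b \rangle$ are weakly Lipschitz on $\phi_i^l(C_i^l)$, hence differentiable a.e.\ by Rademacher's theorem. Define Christoffel-type symbols a.e.\ by
\[\Gamma^c_{ab} := \frac{1}{2} g^{cd}\bigl(\partial_a g_{bd} + \partial_b g_{ad} - \partial_d g_{ab}\bigr),\]
and set $\nabla^{g_X}_U V := U^a \bigl(\partial_a V^c + \Gamma^c_{ab} V^b\bigr)\, \partial_c$ for $U = U^a \partial_a \in \Gamma_{\mathrm{Bor}}(TA)$ and $V = V^b \partial_b \in \Gamma_1(TA)$. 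The classical verification of axioms (1)--(6) transfers verbatim, since it only uses first derivatives of the $g_{ab}$ and of the $V^b$, all of which exist a.e.

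\emph{Gluing and main obstacle.} The delicate step is independence of the construction under change of patch on $C_i^l \cap C_j^l$. Here the hypothesis that $\mathcal{A}$ is a weakly second order differential structure enters essentially: the transitions $\phi_i^l \circ (\phi_j^l)^{-1}$ are weakly twice differentiable, so their second derivatives exist a.e., which is exactly the regularity demanded by the inhomogeneous transformation law of the $\Gamma^c_{ab}$. The a.e.\ Koszul formula from the uniqueness step then forces the two local definitions of $\nabla^{g_X}$ to agree a.e.\ on overlaps, and they assemble into a global map on $\Gamma_{\mathrm{Bor}}(TX) \times \Gamma_1(TX)$. In my view the main obstacle is purely bookkeeping: one must arrange that the various a.e.\ identities (differentiability of transitions, chain and Leibniz rules for weakly Lipschitz maps, a.e.\ symmetry of mixed second partials) hold simultaneously on a single full-measure subset, and that the output $\nabla^{g_X}_U V$ depends Borel-measurably on its arguments. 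Both issues are handled by restricting to a countable subfamily of $\mathcal{A}$ and invoking the Borel structure on $TX$ from Theorem \ref{29292}.
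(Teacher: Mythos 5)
Your proposal cannot be checked against an in-paper argument: Theorem \ref{levi3} is imported here from \cite[Theorem $3.25$]{Ho3}, and this paper contains no proof of it. On its merits, your route --- Koszul formula for uniqueness, a.e.\ Christoffel symbols on each rectifiable patch for existence, and gluing via the weakly second order structure --- is the standard adaptation of the classical argument and is, in spirit, what the cited reference does; I see no step that would fail. Two points deserve explicit care beyond what you wrote. First, the Koszul step needs enough test fields: you must check that the coordinate frame $\partial/\partial x_a$ (equivalently the $\nabla x_a$) lies in $\Gamma_1$ on the patch, which uses $g_X \in \Gamma_1(T^0_2X)$ together with nondegeneracy so that the inverse components $g^{ab}$ are weakly Lipschitz; the same fact is what lets you expand a Borel field $U$ as $\sum_a U^a\,\partial/\partial x_a$ with Borel coefficients and invoke axiom (3) to extend uniqueness from $\Gamma_1$ to $\Gamma_{\mathrm{Bor}}$. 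Second, the role of the weakly second order structure is slightly more basic than the inhomogeneous transformation law of the $\Gamma^c_{ab}$: it is what makes the class $\Gamma_1(TX)$ chart-independent in the first place (weak Lipschitzness of coefficients is preserved only because the Jacobians of the transitions $\phi_i^l\circ(\phi_j^l)^{-1}$ are themselves weakly Lipschitz); once that is in place, agreement of the two local definitions on overlaps already follows from your uniqueness step alone, so the bookkeeping with the transformation law can be dispensed with.
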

\begin{remark}
$\nabla^{g_X}$ is \textit{local}, i.e., for every Borel subset $A$ of $X$, the Levi-Civita connection induces the map $\nabla^{g_X}|_A:\Gamma_{\mathrm{Bor}} (TA) \times \Gamma_{1} (TA) \to \Gamma_{\mathrm{Bor}} (TA)$ by letting $\nabla^{g_X}|_A(U, V):=\nabla^{g_X}_{1_AU}(1_AV)$.
Thus we use the same notation: $\nabla^{g_X}=\nabla^{g_X}|_A$ in this paper for brevity.
See Section $3$ in \cite{Ho3} for the detail.
\end{remark}
The Levi-Civita connection above allows us to give the definitions of the Hessian of a weakly twice differentiable function, and of the divergence of a weakly Lipschitz vector field in the ordinary way of Riemannian geometry.
We only give several fundamental properties of them:
\begin{proposition}\cite[Theorem $3.26$]{Ho3}\label{hess2}
Assume $g_X \in \Gamma_1(T^0_2X)$. 
Let $A$ be a Borel subset of $X$, $f$ a weakly twice differentiable function on $A$, $\omega \in \Gamma_1(T^*A)$ and $Y \in \Gamma_{1}(TA)$.
Then there exist uniquely
\begin{enumerate}
\item $\nabla^{g_X} \omega \in \Gamma_{\mathrm{Bor}}(T^0_2A)$ such that $\nabla^{g_X} \omega(U, V)=g_{X}(\nabla_{V}\omega^*, U)$ holds for every $U, V \in \Gamma_{\mathrm{Bor}}(TA)$,
\item the Hessian $\mathrm{Hess}^{g_X}_f :=\nabla^{g_X}df \in \Gamma_{\mathrm{Bor}}(T^0_2A)$,
\item a Borel function  $\mathrm{div}^{g_X}\,Y:= \mathrm{tr} (\nabla^{g_X}Y^*)$ on $A$,
\item a Borel function $\Delta^{g_X} f:=-\mathrm{div}^{g_X}\,(\nabla^{g_X}f)=- \mathrm{tr} (\mathrm{Hess}_{f}^{g_X})$ on $A$.
\end{enumerate}
Moreover we have the following:
\begin{itemize}
\item[(a)] $\mathrm{Hess}^{g_X}_f(x)$ is symmetric for a.e. $x \in A$.
\item[(b)] $\mathrm{div}^{g_X}\,(hY)=h\mathrm{div}^{g_X} Y +{g_X}(\nabla^{g_X} h, Y)$ holds for every weakly Lipschitz function $h$ on $A$. 
\item[(c)] $\Delta^{g_X} (fh)=h\Delta^{g_X} f-2{g_X}(\nabla^{g_X} f, \nabla^{g_X} h)+f\Delta^{g_X} h$ holds for every weakly twice differentiable function $h$ on $A$.
\end{itemize}
\end{proposition}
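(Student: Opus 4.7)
The plan is to build everything from the Levi-Civita connection $\nabla^{g_X}$ provided by Theorem \ref{levi3}, together with the musical isomorphism $\omega \mapsto \omega^*$ induced by the hypothesis $g_X \in \Gamma_1(T^0_2X)$. For $\omega \in \Gamma_1(T^*A)$ the dual vector field $\omega^* \in \Gamma_1(TA)$ is well-defined (fibrewise by $g_X$-duality; the weak Lipschitz regularity is preserved because $g_X$ itself lies in $\Gamma_1(T^0_2X)$, so the inverse metric is weakly Lipschitz). I would then \emph{define}
\[ \nabla^{g_X}\omega(U,V) := g_X(\nabla^{g_X}_V\omega^*, U) \]
for $U, V \in \Gamma_{\mathrm{Bor}}(TA)$. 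Property (3) of Theorem \ref{levi3} shows the right-hand side is Borel-tensorial in $V$, and linearity in $U$ on each fibre is automatic, so this formula yields a unique Borel section of $T^0_2A$, proving (1).

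Specialising to $\omega = df$, which lies in $\Gamma_1(T^*A)$ by the definition of ``weakly twice differentiable'', gives the Hessian $\mathrm{Hess}^{g_X}_f := \nabla^{g_X}df$ and proves (2). For $Y \in \Gamma_1(TA)$, the one-form $Y^* \in \Gamma_1(T^*A)$ produces $\nabla^{g_X}Y^* \in \Gamma_{\mathrm{Bor}}(T^0_2A)$ via (1); its fibrewise trace, which is independent of the choice of orthonormal frame by classical linear algebra (cf.\ Proposition \ref{ep}), defines $\mathrm{div}^{g_X}Y$ and yields (3). Setting $\Delta^{g_X}f := -\mathrm{div}^{g_X}(\nabla^{g_X}f)$ completes (4).

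For symmetry (a), the key identity comes from metric compatibility (property (6) of Theorem \ref{levi3}) together with $g_X(\nabla^{g_X}f, W) = W(f)$:
\[ \mathrm{Hess}^{g_X}_f(U,V) = g_X(\nabla^{g_X}_V\nabla^{g_X}f, U) = V(U(f)) - (\nabla^{g_X}_VU)(f) \]
for $U, V \in \Gamma_1(TA)$. Swapping $U, V$ and subtracting yields $-[U,V](f) + [U,V](f) = 0$ by the torsion-free property (5). Since $\Gamma_1(TA)$ spans the fibres at a.e.\ point, $\mathrm{Hess}^{g_X}_f$ is symmetric a.e. Properties (b) and (c) are routine Leibniz computations: for (b), property (4) of Theorem \ref{levi3} gives $\nabla^{g_X}(hY)^* = dh \otimes Y^* + h\nabla^{g_X}Y^*$, and taking fibrewise traces yields the claim; (c) follows by applying (b) to $Y = \nabla^{g_X}(fh) = f\nabla^{g_X}h + h\nabla^{g_X}f$ and using the definition of $\Delta^{g_X}$. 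The main subtlety throughout is maintaining Borel measurability and the weak Lipschitz class across each construction, but because every step uses only the structural axioms (1)--(6) of Theorem \ref{levi3}, the classical Riemannian computations transfer verbatim to the almost-everywhere setting, so no new analytic input is required beyond the existence of the Levi-Civita connection itself.
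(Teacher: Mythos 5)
The constructions in (1)--(4) and the Leibniz computations (b), (c) are essentially fine: defining $\nabla^{g_X}\omega(U,V):=g_X(\nabla^{g_X}_V\omega^*,U)$ makes uniqueness trivial, tensoriality in $V$ comes from property (3) of Theorem \ref{levi3}, and you correctly note the one analytic point there, namely that the musical isomorphism preserves $\Gamma_1$ because the inverse metric is again weakly Lipschitz. Bear in mind, though, that this paper does not prove the proposition at all --- it is imported from \cite[Theorem 3.26]{Ho3}, whose proof works in the rectifiable coordinate charts --- so the relevant comparison is with that coordinate-level argument.

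The genuine gap is in (a). After metric compatibility and torsion-freeness, your cancellation uses the identity $[U,V](f)=U(V(f))-V(U(f))$ for $U,V\in\Gamma_1(TA)$ and $f$ weakly twice differentiable. In this a.e.\ setting the bracket is defined through the coordinate formula, and that identity is not one of the axioms: in a rectifiable chart one has
\[U(V(f))-V(U(f))-[U,V](f)=U^iV^j\left(\partial_i\partial_j f-\partial_j\partial_i f\right),\]
where the $\partial$'s are approximate (Rademacher-type) derivatives, so the identity you invoke is exactly the a.e.\ symmetry of the mixed approximate second derivatives $J(J(f))$ --- which is the substance of (a) itself, since the Christoffel part of $\mathrm{Hess}^{g_X}_f$ is symmetric for free. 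As written the argument is therefore circular. The real work behind (a) is a measure-theoretic Schwarz-type statement for weakly twice differentiable functions on Borel subsets of $\mathbf{R}^k$ (obtained, e.g., by matching $f$ on large subsets of each patch with $C^{1,1}$ or $C^2$ functions via Whitney--Federer-type extension and using the classical a.e.\ symmetry there, or an equivalent argument as in \cite{Ho3}); some such input must be supplied. Consequently your closing claim that no analytic input beyond the existence of the Levi-Civita connection is needed is too strong: axioms (1)--(6) of Theorem \ref{levi3} alone do not yield the symmetry of the Hessian.
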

\begin{remark}
We can define the \textit{covariant derivative of tensor fields} $\nabla^{g_X}:\Gamma_1(T^r_sA) \to \Gamma_{\mathrm{Bor}}(T^r_{s+1}A)$ in the ordinary way of Riemannian geometry.
Then it is easy to check the \textit{torsion free condition}: $\nabla^{g_X}g_X\equiv 0$.
\end{remark}
\begin{definition}
Let $\hat{\mathcal{A}}$ be a weakly second order differential structure on $(X, \upsilon)$.
We say that \textit{$\mathcal{A}$ and $\hat{\mathcal{A}}$ are compatible } if so is $\mathcal{A} \cup \hat{\mathcal{A}}$.
\end{definition}
It is trivial that if $\mathcal{A}$ and $\hat{\mathcal{A}}$ are compatible, then the notions introduced here are compatible, i.e., for instance we see that 
a function $f$ on a Borel subset $A$ of $X$ is weakly twice differentiable on $A$ with respect to $\mathcal{A}$ if and only if so is $f$ with respect to $\hat{\mathcal{A}}$, $\Gamma_1(T^r_sA; \mathcal{A})=\Gamma_1(T^r_sA;  \hat{\mathcal{A}})$ and so on.
\begin{remark}
It is known that similar results given here hold on Alexandrov spaces.
See for instance \cite{bgp, KMS, ot, ot-sh, pere, pere1}.
\end{remark}
\subsection{Limit spaces of Riemannian manifolds.}
In this subsection we recall several fundamental properties about limit spaces of Riemannian manifolds with lower Ricci curvature bounds.
\subsubsection{Gromov-Hausdorff convergence.}
Let $\{(X_i, x_i)\}_{1 \le i \le \infty}$ be a sequence of pointed proper geodesic spaces.
We say that \textit{$(X_i, x_i)$  Gromov-Hausdorff converges to $(X_{\infty}, x_{\infty})$} if 
there exist sequences of positive numbers $\epsilon_i \to 0$, $R_i \to \infty$ and of maps $\psi_i:  B_{R_i}(x_i) \to B_{R_i}(x_{\infty})$
 (called \textit{$\epsilon_i$-almost isometries}) with $|\overline{x,y}-\overline{\psi_i(x), \psi_i(y)}|<\epsilon_i$ for every $x, y \in B_{R_i}(x_i)$,
$B_{R_i}(x_{\infty}) \subset B_{\epsilon_i}(\mathrm{Image} (\psi_i))$, and 
$\psi_i(x_i) \to x_{\infty}$ (then we denote it by $x_i \to x_{\infty}$ for short).
See \cite{gr}.
We denote it by $(X_i, x_i) \stackrel{(\psi_i, \epsilon_i, R_i)}{\to} (X_{\infty}, x_{\infty})$ or $(X_i, x_i) \to (X_{\infty}, x_{\infty})$ for short.

Assume $(X_i, x_i) \stackrel{(\psi_i, \epsilon_i, R_i)}{\to} (X_{\infty}, x_{\infty})$.
For a sequence $\{A_i\}_i$ of compact subsets $A_i$ of $B_{R_i}(x_i)$ for every $i \le \infty$,
we say that \textit{$A_i$ Gromov-Hausdorff converges to $A_{\infty}$ with respect to the convergence $(X_i, x_i) \to (X_{\infty}, x_{\infty})$} if $\psi_i(A_i)$ Hausdorff converges to $A_{\infty}$.
Then we often denote $A_{\infty}$ by $\lim_{i \to \infty}A_i$.
Moreover, for a sequence $\{\upsilon_i\}_{1\le i \le \infty}$ of Borel measures $\upsilon_i$ on $X_i$, 
we say that \textit{$\upsilon_{\infty}$ is the limit measure of $\{\upsilon_i\}_i$} if $\upsilon_i(B_r(y_i)) \to \upsilon_{\infty}(B_r(y_{\infty}))$ holds 
for every $r>0$ and every  $y_i \to y_{\infty}$.
See \cite{ch-co1, fu}.
Then we denote it by $(X_i, x_i, \upsilon_i) \stackrel{(\psi_i, \epsilon_i, R_i)}{\to} (X_{\infty}, x_{\infty}, \upsilon_{\infty})$ or $(X_i, x_i, \upsilon_i) \to (X_{\infty}, x_{\infty}, \upsilon_{\infty})$ for brevity.
\subsubsection{Ricci limit spaces.}
Let $n \in \mathbf{N}$, $K \in \mathbf{R}$ and let $(M_{\infty}, m_{\infty})$ be a pointed proper geodesic space.
We say that \textit{$(M_{\infty}, m_{\infty})$ is an $(n, K)$-Ricci limit space (of $\{(M_i, m_i)\}_i$)} if there exist
sequences of real numbers $K_i \to K$ and  of pointed complete $n$-dimensional Riemannian manifolds $\{(M_i, m_i)\}_i$ with $\mathrm{Ric}_{M_i} \ge K_i(n-1)$ such that
$(M_i, m_i) \to (M_{\infty}, m_{\infty})$.
We call an $(n, -1)$-Ricci limit space a \textit{Ricci limit space} for brevity.
Moreover we say that a Radon measure $\upsilon$ on $M_{\infty}$ is the \textit{limit measure of $\{(M_i, m_i)\}_i$}
if $\upsilon$ is the limit measure of $\{\mathrm{vol}/\mathrm{vol}\,B_1(m_i)\}_i$.
Then we say that $(M_{\infty}, m_{\infty}, \upsilon)$ \textit{is the Ricci limit space of} $\{(M_i, m_i, \mathrm{vol}/\mathrm{vol}\,B_1(m_i))\}_i$.
Throughout this paper we use the notation: $\underline{\mathrm{vol}}:=\mathrm{vol}/\mathrm{vol}\,B_1(m_i)$ for brevity.
\subsubsection{Poincar\'e inequality and Sobolev spaces.}
Let $(M_{\infty}, m_{\infty}, \upsilon)$ be the Ricci limit space of $\{(M_i, m_i, \underline{\mathrm{vol}})\}_i$ with $M_{\infty} \neq \{m_{\infty}\}$ (we will use the same notation in the subsections later).
Then it is known that the following hold: 
\begin{enumerate}
\item $(M_{\infty}, m_{\infty})$ satisfies a weak Poincar\'e inequality of type $(1, p)$ for every $1\le p<\infty$.
\item For every $1< p < \infty$ and every open subset $U \subset M_{\infty}$, $(1, p)$-Sobolev space $H_{1,p}(U)$ is well-defined.
\item For every $1<p<\infty$ and every $f \in H_{1,p}(U)$, $f$ is weakly Lipschitz on $U$ and $||f||_{H_{1,p}}=||f||_{L^p}+||df||_{L^p}$. 
\item The space of locally Lipschitz functions on $B_R(x_{\infty}) (\subset M_{\infty})$ in  $H_{1, p}(B_R(x_{\infty}))$ is dense in $H_{1, p}(B_R(x_{\infty}))$ for every $1<p<\infty$. 
\end{enumerate}
See \cite[Corollary $2.25$, $(4.3)$, Theorems $4.14$ and $4.47$]{ch1} and \cite[$(1. 6)$ and Theorem $2.15$]{ch-co3} for the details.
\subsubsection{Regular set.}
A pointed proper metric space $(X, x)$ is said to be a \textit{tangent cone of $M_{\infty}$ at $z_{\infty} \in M_{\infty}$} if there exists $r_i \to 0$ such that $(M_{\infty}, z_{\infty}, r_i^{-1}d_{M_{\infty}}) \to (X, x)$.
Let $\mathcal{R}_k:=\{z_{\infty} \in M_{\infty};$ Every tangent cone at $z_{\infty}$ of $M_{\infty}$ is isometric to $(\mathbf{R}^k, 0_k).\}$ and $\mathcal{R}=\bigcup_{i=1}^n\mathcal{R}_i$.    
We call $\mathcal{R}_k$ \textit{the $k$-dimensional regular set of $M_{\infty}$} and \textit{$\mathcal{R}$ the regular set of $M_{\infty}$}.
Cheeger-Colding showed that
$\upsilon (M_{\infty} \setminus \mathcal{R})=0$ \cite[Theorem $2.1$]{ch-co1}.
On the other hand, recently, Colding-Naber proved that
there exists a unique $k$ such that $\upsilon (\mathcal{R} \setminus \mathcal{R}_k)=0$ \cite[Theorem $1.12$]{co-na1} .
We call $k$ \textit{the dimension of $M_{\infty}$} and denote it by $\mathrm{dim}\,M_{\infty}$.
Note that an argument similar to the proof of \cite[Lemma $3.5$]{Ho} yields that for every rectifiable coordinate system $\mathcal{A}$ on $(M_{\infty}, m_{\infty})$ there exists a subrectifiable coordinate system $\hat{\mathcal{A}}$ of $\mathcal{A}$ such that each patch of $\hat{\mathcal{A}}$ is $k$-dimensional, where we say that a rectifiable coordinate system $\hat{A}$ on $(M_{\infty}, m_{\infty})$ is a \textit{subrectifiable coordinate system of $\mathcal{A}$} if for every $(\hat{C}_i^k, \hat{\phi}_i^k) \in \hat{\mathcal{A}}$ there exists $(C_{j}^k, \phi_j^k) \in \mathcal{A}$ such that $\hat{C}_i^k \subset C_j^k$ and $\phi_j^k|_{\hat{C}_i^k}\equiv \hat{\phi}_i^k$.
\subsubsection{Angles.}
We introduce a key notion \textit{angles} to give the definitions of \textbf{(W)} and of \textbf{(S)} as in Section $1$.
Let $C_x$ be \textit{the cut locus of $x \in M_{\infty}$} defined by $C_x:=\{y \in M_{\infty};\overline{x, y}+ \overline{y, z}>\overline{x,z}$ holds for every $z \in M_{\infty}$ with $z \neq y .\}$.
It is known $\upsilon(C_x)=0$ \cite[Theorem $3.2$]{ho}.
Let $p, x, q \in M_{\infty}$ with $x \not \in C_p \cup C_q$.
Then there exists a unique $\angle pxq \in [0, \pi]$ such that 
\[\cos \angle pxq=\lim_{t \to 0}\frac{2t^2-\overline{\gamma_p(t), \gamma_q(t)}^2}{2t^2}\]
holds for every minimal geodesics $\gamma_p$ from $x$ to $p$ and every $\gamma_q$ from $x$ to $q$.
Note that for every $p, q \in M_{\infty}$ we see that $\langle dr_p, dr_q\rangle(x)=\cos \angle pxq$ holds for a.e. $x \in M_{\infty}$. 
See \cite[Theorem $1.2$]{Ho3} for the details, and see \cite[Theorems $1.2$ and $1.3$]{co-na} for a very interesting example.
\subsubsection{Rectifiability.}
Cheeger-Colding proved that $(M_{\infty}, \upsilon)$ is rectifiable by harmonic functions.
More precisely, by combining with Colding-Naber's result \cite[Theorem $1.12$]{co-na1}, we have:
\begin{theorem}\cite[Theorems $3.3, 5.5$ and $5.7$]{ch-co3}\label{harm3}
There exists a rectifiable coordinate system $\mathcal{A}_h:=\{(C_{i}, \phi_{i})\}_{i} (\phi_i=(\phi_{i, 1}, \ldots, \phi_{i, k}): C_i \to \mathbf{R}^k)$ of $(M_{\infty}, \upsilon)$ such that the following holds:
There exists a subsequence $\{i(j)\}_j$ such that 
for every $l$, there exist $r>0$, sequences $\{x_{i(j)}\}_{j \le \infty}$ of $x_{i(j)} \in M_{i(j)}$ with $x_{i(j)} \to x_{\infty}$, and
 $\{f_{i(j), s}\}_{j \le \infty, s \le k}$ of $C(n)$-Lipschitz harmonic functions $f_{i(j), s}$ on $B_r(x_{i(j)})$ such that $C_l \subset B_r(x_{\infty})$, $f_{\infty, s}|_{C_l} \equiv \phi_{l, s}$
$f_{i(j), s} \to f_{\infty, s}$ on $B_r(x_{\infty})$ as $j \to \infty$ for every $s$.
\end{theorem}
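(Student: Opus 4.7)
The theorem assembles results of Cheeger-Colding \cite{ch-co3} with the uniform dimension theorem of Colding-Naber \cite{co-na1}, so I would structure the argument in three stages.

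First, using Colding-Naber, I fix the unique $k = \mathrm{dim}\,M_{\infty}$ with $\upsilon(\mathcal{R} \setminus \mathcal{R}_k) = 0$. Thus, up to a $\upsilon$-null set, it suffices to cover $\mathcal{R}_k$ by $k$-dimensional rectifiable coordinate patches whose coordinate functions arise as Gromov-Hausdorff limits of harmonic functions on the approximating manifolds.

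Second, at each $x_{\infty} \in \mathcal{R}_k$ there exist arbitrarily small scales $r$ at which $B_r(x_{\infty})$ is $\Psi(r)r$-Gromov-Hausdorff close to $B_r(0_k) \subset \mathbf{R}^k$. Following the almost splitting construction of \cite[Sections $5$ and $6$]{ch-co3}, I would choose $k$ pairs of "direction points" $(p_s, q_s)$ in $M_{\infty}$ whose Gromov-Hausdorff correspondents span the axes of the approximating $\mathbf{R}^k$, then solve the Dirichlet problem to obtain harmonic functions $\phi_s$ on $B_r(x_{\infty})$ with boundary data essentially equal to $(r_{p_s} - r_{q_s})/2$. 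The Cheeger-Colding gradient and Hessian estimates yield $|\nabla \phi_s| \le C(n)$, $L^2$-almost orthonormality of $\{\nabla \phi_s\}_{s}$ with error $\Psi(r)$, and, via the integral Hessian bound, that the map $\phi = (\phi_1, \ldots, \phi_k)$ is $(1 \pm \Psi)$-bi-Lipschitz onto its image after restriction to a Borel subset $C \subset B_{r/2}(x_{\infty})$ of nearly full measure. The Ahlfors $k$-regularity requirement of Definition \ref{rectifiable} at each point of $C$ follows from Colding's volume convergence theorem together with the noncollapsing of $B_r(x_{\infty})$ onto a Euclidean ball under rescaling. A Vitali covering of $\mathcal{R}_k$ by such balls produces the countable system $\mathcal{A}_h = \{(C_l, \phi_l)\}_l$.

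Third, to produce the harmonic lifts $f_{i(j), s}$ on $M_{i(j)}$ converging to $\phi_{l, s}$, I would, for each chart $(C_l, \phi_l)$, solve the Dirichlet problem on $B_r(x_{i(j)})$ whose boundary data are Gromov-Hausdorff liftings of the boundary data defining $\phi_{l, s}$ on the limit. The compactness theory of harmonic functions under a lower Ricci bound (uniform Lipschitz bound plus equicontinuity) then forces $f_{i(j), s} \to \phi_{l, s}$ uniformly along a subsequence, and a diagonal argument over the countable collection $\{l\}$ produces a single subsequence $\{i(j)\}$ that works simultaneously for all charts.

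The main obstacle is the coordination of these three stages: one must select the patches, direction points $(p_s, q_s)$, and boundary data consistently enough that a single subsequence $\{i(j)\}$ simultaneously realizes every $\phi_{l, s}$ as a limit of harmonic $f_{i(j), s}$ while preserving the bi-Lipschitz bounds of each $\phi_l$. The quantitative integral Hessian estimate of Cheeger-Colding, which forces a harmonic almost-splitting to be nearly orthonormal in $L^2$ on a set of nearly full measure, is the key technical device enabling this coordination.
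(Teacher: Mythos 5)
Note first that the paper offers no proof of this statement: it is quoted directly from Cheeger--Colding \cite{ch-co3} (Theorems $3.3$, $5.5$, $5.7$) combined with Colding--Naber \cite{co-na1}, so what is being judged is your reconstruction of that argument. Your list of ingredients is the right one (harmonic almost-splitting maps, the $L^2$ Hessian bound plus the segment inequality, a covering of $\mathcal{R}_k$, a diagonal subsequence), but the order in which you deploy them contains a genuine gap. In your second stage you solve Dirichlet problems on $B_r(x_{\infty}) \subset M_{\infty}$ and invoke ``the Cheeger--Colding gradient and Hessian estimates'' there. On the limit space there is no Hessian to estimate at this point: a second order calculus on $M_{\infty}$ is precisely what is built afterwards (in \cite{Ho3} and in the present paper) out of the harmonic charts this theorem provides, and the $(1 \pm \Psi)$-bi-Lipschitz-on-a-large-set property is obtained in \cite{ch-co3} by running Bochner's formula, the integral Hessian estimate and the segment inequality on the smooth manifolds $M_i$, and only then transferring the resulting measure estimates to the limit. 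In other words, the charts $\phi_{l, s}$ are \emph{defined} as the limits $f_{\infty, s}$ of harmonic splitting maps constructed on $B_r(x_{i}) \subset M_i$; they are not produced intrinsically on $M_{\infty}$ and lifted afterwards. Your reversal of this order is what forces your third stage, and that stage is itself under-justified: the Cheng--Yau bound together with Proposition \ref{conti com} only gives subconvergence of the lifted Dirichlet solutions to \emph{some} limit function, not to the previously chosen $\phi_{l, s}$; identifying the limit requires stability and uniqueness of the Dirichlet problem under measured Gromov--Hausdorff convergence, which you do not address.

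A second, smaller gap is the Ahlfors regularity of $\upsilon$ at points of the charts. You derive it from ``Colding's volume convergence theorem together with the noncollapsing of $B_r(x_{\infty})$ onto a Euclidean ball under rescaling,'' but volume convergence concerns the noncollapsed case where the limit measure is Hausdorff measure; in the collapsed case $\upsilon$ is a renormalized limit measure, and Gromov--Hausdorff closeness of $B_r(x_{\infty})$ to a Euclidean ball gives no control of $\upsilon$ whatsoever. The pointwise density bounds for the renormalized limit measure at a.e.\ point are a separate part of the Cheeger--Colding analysis and must be invoked as such. If you reorganize the argument in the original order --- construct the harmonic splitting maps on the $M_i$, prove the uniform bi-Lipschitz and measure estimates there, pass to the limit to define the charts, and only then diagonalize over the countable family of charts to obtain one subsequence $\{i(j)\}_j$ --- the remaining steps of your sketch go through.
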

See subsection $3.1.1$ for the definition of the pointwise convergence of $C^0$-functions: $f_i \to f_{\infty}$ with respect to the Gromov-Hausdorff topology.
On the other hand, the author proved that $(M_{\infty}, \upsilon)$ is rectifiable by distance functions:
\begin{theorem}\cite[Theorem $3.1$]{Ho}\label{dist3}
There exists a rectifiable coordinate system $\mathcal{A}_d:=\{(C_{i}, \phi_{i})\}_{i<\infty}$ of $(M_{\infty}, \upsilon)$ such that 
every $\phi_{i, s}$ is the distance function from a point in $M_{\infty}$.
\end{theorem}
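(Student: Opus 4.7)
The plan is to refine the harmonic rectifiable coordinate system $\mathcal{A}_h$ of Theorem \ref{harm3} by replacing the harmonic coordinate components with distance functions from suitably chosen points. Since $\upsilon$ is concentrated on $\mathcal{R}_k$ with $k=\dim M_{\infty}$, it suffices to produce, for each $\delta>0$, a countable cover (up to $\upsilon$-measure zero) of $\mathcal{R}_k$ by Borel sets on which a $k$-tuple of distance functions gives a $(1\pm\delta)$-bi-Lipschitz embedding into $\mathbf{R}^k$, and then take the diagonal refinement in $\delta$.

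Fix $x\in \mathcal{R}_k$. Since every tangent cone at $x$ is $(\mathbf{R}^k,0_k)$, for any $\delta>0$ there exists $r=r(x,\delta)>0$ such that $(M_{\infty},x,r^{-1}d_{M_{\infty}})$ is $\Psi(\delta)$-Gromov-Hausdorff close to $(\mathbf{R}^k,0_k)$ on a ball of $r^{-1}$-radius much larger than $1$. Using the approximation I would pick points $p_1,\dots,p_k\in M_{\infty}$ lying GH-near $Re_1,\dots,Re_k\in \mathbf{R}^k$ for some large fixed $R$, where $\{e_i\}$ is the standard basis of $\mathbf{R}^k$. The key point is that on $\mathbf{R}^k$ the distance functions $r_{Re_i}$ satisfy $\langle \nabla r_{Re_i},\nabla r_{Re_j}\rangle \to \delta_{ij}$ uniformly on $B_r(0_k)$ as $R\to\infty$ and $r/R\to 0$.

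The heart of the argument is to transfer this orthogonality from $\mathbf{R}^k$ to $M_{\infty}$. I would invoke the angle identity $\langle dr_{p_s},dr_{p_t}\rangle(y)=\cos\angle p_syp_t$ valid for $\upsilon$-a.e. $y\in M_{\infty}$ (subsection $2.5.5$), together with quantitative angle/splitting estimates on the Ricci limit space: when $(M_{\infty},x,r^{-1}d)$ is close to Euclidean space and the $p_s$ correspond to the $Re_s$, one obtains $\cos\angle p_syp_t=\delta_{st}\pm\Psi(\delta)$ on a subset of $B_r(x)$ of $\upsilon$-measure at least $(1-\Psi(\delta))\upsilon(B_r(x))$. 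By Proposition \ref{ep} this exactly says that $\{dr_{p_s}\}_{s=1}^k$ is a $\Psi(\delta)$-orthogonal basis of $T^*_yM_{\infty}$ at such $y$. Combined with the $1$-Lipschitz property of each $r_{p_s}$ and a Poincar\'e/segment-inequality argument to upgrade pointwise gradient control to a genuine distortion bound for the map $\phi=(r_{p_1},\dots,r_{p_k})$, one gets that $\phi$ is $(1\pm\Psi(\delta))$-bi-Lipschitz on a Borel set $C(x,\delta)\subset B_r(x)$ with $\upsilon(B_r(x)\setminus C(x,\delta))\le \Psi(\delta)\upsilon(B_r(x))$.

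Finally, I would run a Vitali covering of $\mathcal{R}_k$ by such balls $B_r(x)$ (for each fixed $\delta$) and then apply a standard diagonal/measurable-selection argument, combined with restricting to Lebesgue density points, to produce the countable family $\{(C_i,\phi_i)\}$ with $\upsilon(M_{\infty}\setminus \bigcup_iC_i)=0$, each $\phi_i$ a tuple of distance functions, and each $\phi_i$ bi-Lipschitz with constants converging to $1$ along subpatches as required by Definition \ref{rectifiable}(3). The Ahlfors regularity condition (2) follows from the volume comparison/volume convergence theorem of Cheeger-Colding applied at regular points. The main obstacle I anticipate is step three: showing that the distance functions $r_{p_s}$ behave jointly like Euclidean coordinates on a set of almost full measure---the $1$-Lipschitz bound on each $r_{p_s}$ is easy, but getting a lower bi-Lipschitz estimate requires the quantitative angle control, which in turn rests on the almost-splitting theorem and the a.e.\ formula $\langle dr_p,dr_q\rangle=\cos\angle pxq$ from \cite{Ho3}.
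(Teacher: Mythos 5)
Note first that this paper does not prove Theorem \ref{dist3} at all: it is quoted verbatim from \cite[Theorem $3.1$]{Ho}, so there is no internal argument to compare against. Measured against the actual proof in \cite{Ho}, your route is essentially the standard one and essentially the right one: restrict to the regular set, use Gromov--Hausdorff closeness of small rescaled balls to $(\mathbf{R}^k,0_k)$ to choose points $p_1,\dots,p_k$ far away in nearly orthogonal directions, use the a.e.\ identity $\langle dr_{p_s},dr_{p_t}\rangle=\cos\angle p_s y p_t$ together with the almost-splitting estimates to make $\{dr_{p_s}\}$ a $\Psi$-orthogonal basis (Proposition \ref{ep}) on most of the ball, upgrade to a $(1\pm\Psi)$-bi-Lipschitz statement via the segment inequality, and finish with a covering/diagonal argument plus volume convergence for the Ahlfors regularity condition in Definition \ref{rectifiable}. (The reduction to a single dimension $k$ via Colding--Naber is harmless but not needed, since Definition \ref{rectifiable} permits patches of different dimensions $l$; one may simply work on each $\mathcal{R}_l$ of positive measure.)

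The one place where your sketch, as written, would not go through is the bi-Lipschitz upgrade. Knowing that $\cos\angle p_s y p_t=\delta_{st}\pm\Psi$ on a subset of $B_r(x)$ of measure $(1-\Psi)\upsilon(B_r(x))$ at the \emph{single} scale $r$ only yields distortion control with an additive error of size $\Psi r$, i.e.\ for pairs $y,z$ with $\overline{y,z}$ comparable to $r$; it says nothing about pairs at much smaller mutual distance, which is exactly what a $(1\pm\delta)$-bi-Lipschitz embedding requires. The missing ingredient --- and the actual crux in \cite{Ho}, as in Cheeger--Colding's harmonic version --- is a maximal-function argument: one passes to the set of points $y$ at which the averages of the angle-defect functions $|\langle dr_{p_s},dr_{p_t}\rangle-\delta_{st}|$ are small on $B_t(y)$ for \emph{every} scale $t\le r$ (this set still has almost full measure), and only then, for each pair $y,z$ in this set, applies the segment inequality at the scale $t\approx\overline{y,z}$ to nearby good endpoints, using the $1$-Lipschitz bound on the $r_{p_s}$ to transfer the estimate back to $y,z$. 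Writing the increments as $r_{p_s}(z')-r_{p_s}(y')=\int\langle \nabla r_{p_s},\dot\gamma\rangle\,dt$ along the connecting geodesic, the integrated almost-orthonormality (and the fact that the frame almost spans the a.e.\ $k$-dimensional cotangent space, which is where the restriction to density points of $\mathcal{R}_k$ enters) gives both the upper and the lower distortion bounds. With that multi-scale step made explicit, your outline matches the proof of \cite[Theorem $3.1$]{Ho}.
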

\begin{remark}\label{radial}
Moreover it was shown in \cite{Ho} that 
for every dense subset $A$ of $M_{\infty}$,
there exists a rectifiable coordinate system $\mathcal{A}_d=\{(C_{i}, \phi_{i})\}_{i<\infty}$ such that every $\phi_{i, s}$ is the distance function from a point in $A$.
\end{remark}
Note that Theorems \ref{harm3} and \ref{dist3} perform crucial roles in Section $3$ and $4$.
\begin{definition}
Let $\mathcal{A}:=\{(C_i, \phi_i)\}_i$ be a rectifiable coordinate system on $(M_{\infty}, \upsilon)$.
We say that \textit{$\mathcal{A}$ is a rectifiable coordinate system associated with $\{(M_i, m_i, \underline{\mathrm{vol}})\}_{i < \infty}$} if 
for every $i <\infty$ there exists a sequence $\{\phi_{i, l, j}\}_{1 \le l \le k, j\le \infty}$ of Lipschitz functions $\phi_{i, l, j}$ on $M_j$ such that $\sup_{1 \le l \le k, j \le \infty}\mathbf{Lip}\phi_{i, l, j}<\infty$,  $\phi_{i, l, \infty}|_{C_i} \equiv \phi_{i, l}$ and that $(\phi_{i, l, j}, d\phi_{i, l, j}) \to (\phi_{i, l, \infty}, d\phi_{i, l, \infty})$ holds on $C_i$ as $j \to \infty$ for every $l$,
where $\mathbf{Lip}\phi_{i, l, j}$ is the Lipschitz constant of $\phi_{i, l, j}$: $\mathbf{Lip}\phi_{i, l, j}:=\sup_{x \neq y}|\phi_{i, l, j}(x)-\phi_{i, l, j}(y)|/\overline{x, y}$.
\end{definition}
See \cite[Definition $4.4$]{Ho} (or Definition \ref{inftyinfty}) for the definition of a pointwise convergence $df_i \to df_{\infty}$ of the differentials of Lipschitz functions with respect to the Gromov-Hausdorff topology.
Note that by \cite[Corollary $4.5$]{Ho} (or Proposition \ref{harm5}) and \cite[Proposition $4.8$]{Ho} (or Proposition \ref{dist}), for $\mathcal{A}_{h}$ and $\mathcal{A}_{d}$ as in Theorems \ref{harm3} and \ref{dist3}, respectively, we have the following:
\begin{enumerate}
\item $\mathcal{A}_{h}$ is a rectifiable coordinate system of $(M_{\infty}, \upsilon)$ associated with $\{(M_{i(j)}, m_{i(j)}, \underline{\mathrm{vol}})\}_{j < \infty}$.
\item $\mathcal{A}_d$ is a rectifiable coordinate system of $(M_{\infty}, \upsilon)$ associated with $\{(M_{i}, m_{i}, \underline{\mathrm{vol}})\}_{i < \infty}$. 
\end{enumerate}
\subsubsection{Weakly second order differential structure.}
In \cite{Ho3} it was proved that $(M_{\infty}, \upsilon)$ has a weakly second order differential structure.
More precisely, we have:
\begin{theorem}\cite[Theorem $4.13$]{Ho3}\label{2nd}
Let $\mathcal{A}_{2nd}:=\{(C_i, \phi_i)\}_{i<\infty}$ be a rectifiable coordinate system on $M_{\infty}$.
Assume that 
for every $i$ there exist $r>0$, sequences $\{x_j\}_{j \le \infty}$ of $x_{j} \in M_{j}$ and $\{\phi_{i, l, j}\}_{1 \le l \le k, j \le \infty}$ of Lipschitz functions $\phi_{i, l, j}$ on $B_r(x_j)$ such that $\sup_{1 \le l \le k, j \le \infty}\mathbf{Lip}\phi_{i, l, j}<\infty$, $x_j \to x_{\infty}$, $C_i \subset B_r(x_{\infty})$, $\phi_{i, l, \infty}|_{C_i} \equiv \phi_{i, l}$, $\phi_{i, l, j} \in C^2(B_r(x_j))$ holds for every $j<\infty$ with $\sup_{1\le l \le k, j<\infty}||\Delta \phi_{i, l, j}||_{L^2(B_r(x_j))}<\infty$, and that $\phi_{i, l, j} \to \phi_{i, l, \infty}$ on $B_r(x_{\infty})$ as $j \to \infty$. 
Then we see that $\mathcal{A}_{2nd}$ is a weakly second order differential structure on $M_{\infty}$, and that the Riemannian metric $g_{M_{\infty}}$ is weakly Lipschitz with respect to $\mathcal{A}_{2nd}$.
\end{theorem}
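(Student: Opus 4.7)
The strategy is to apply Theorem \ref{main} to the coordinate approximants $\phi_{i,l,j}$ and to their linear combinations in order to upgrade gradients to weakly Lipschitz objects, and then to transport this regularity to the transition maps and to the Riemannian metric.

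First, fix a patch $(C_i,\phi_i)$ and a radius $\rho<r$. Each function $\phi_{i,l,j}$, and any fixed linear combination $\sum_l c_l\phi_{i,l,j}$, is $C^2$ on $B_r(x_j)$ with uniformly bounded Lipschitz constant (hence uniformly $H_{1,2}$-bounded) and uniformly $L^2$-bounded Laplacian (by linearity of $\Delta$). The pointwise convergence on $B_r(x_\infty)$ together with the $H_{1,2}$ bound yields $L^2$-weak convergence of $\sum_l c_l\phi_{i,l,j}$ to $\sum_l c_l\phi_{i,l,\infty}$. Theorem \ref{main}(d) therefore gives $|d(\sum_l c_l\phi_{i,l,\infty})|^2\in H_{1,p_1}(B_\rho(x_\infty))$ for some $p_1=p_1(n,K,r)>1$. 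Polarizing in the $c_l$'s, and running the same argument jointly for two patches $(C_i,\phi_i),(C_{i'},\phi_{i'})$ on a common small ball, one obtains
\[
\langle d\phi_{i,l,\infty},\,d\phi_{i',l',\infty}\rangle\,\in\,H_{1,p_1}.
\]
By property (3) of subsection 2.5.3, every Sobolev function is weakly Lipschitz on the relevant open set.

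Next I read off the transition maps and the Riemannian metric from these inner products. Write $G:=\phi_i\circ\phi_{i'}^{-1}$ on $\phi_{i'}(C_i\cap C_{i'})$. The chain rule $d\phi_{i,l,\infty}=\sum_s\bigl(\partial G_l/\partial x_s\circ\phi_{i'}\bigr)\,d\phi_{i',s,\infty}$, combined with the rectifiability of $\mathcal{A}_{2nd}$ (so that $\{d\phi_{i',s,\infty}(x)\}_s$ is a basis of $T^*_xM_\infty$ for a.e. $x\in C_{i'}$), yields
\[
\partial G_l/\partial x_s\circ\phi_{i'}\,=\,\sum_t\hat g^{\,st}_{i'}\,\langle d\phi_{i,l,\infty},\,d\phi_{i',t,\infty}\rangle,
\]
where $(\hat g^{\,st}_{i'})$ is the inverse of the Gram matrix $g_{i',st}:=\langle d\phi_{i',s,\infty},d\phi_{i',t,\infty}\rangle$. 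After a countable further decomposition of $C_{i'}$ on which $\det(g_{i',st})$ is uniformly bounded below, Cramer's rule together with closure of weakly Lipschitz functions under sums, products, and inversion of bounded-below factors shows that each entry of $(\hat g^{\,st}_{i'})$, and consequently each $\partial G_l/\partial x_s$, is weakly Lipschitz. Hence $G$ is weakly twice differentiable, so $\mathcal{A}_{2nd}$ is a weakly second-order differential structure. The same formula, read on the tangent side via the canonical isomorphism $T^*M_\infty\cong TM_\infty$, shows that in these coordinates the components of $g_{M_\infty}$ are weakly Lipschitz, so $g_{M_\infty}\in\Gamma_1(T^0_2M_\infty;\mathcal{A}_{2nd})$.

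The main obstacle I expect is the matrix-inversion step: one must carve each $C_{i'}$ into countably many Borel pieces on which the smallest eigenvalue of $(g_{i',st})$ stays uniformly away from zero, in order to convert almost-everywhere invertibility plus weakly Lipschitz entries into a \emph{weakly Lipschitz} inverse via Cramer's rule. A secondary technical point is the verification that linear combinations $\sum_l c_l\phi_{i,l,j}$ really inherit the hypotheses of Theorem \ref{main} uniformly in $j$, which is immediate from linearity of $\Delta$ and the triangle inequality but must be recorded before applying the theorem.
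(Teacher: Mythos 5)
This statement is not actually proved in the paper at hand: it is imported from \cite[Theorem $4.13$]{Ho3}, and, as Remark \ref{eigen conti} indicates, the argument there leaned on spectral convergence results (eigenvalues, eigenfunctions, heat kernels) rather than on anything like Theorem \ref{main}. Your route is therefore genuinely different, and its core is sound: polarizing part (\ref{d}) of Theorem \ref{main} to conclude $\langle d\phi_{i,l,\infty},d\phi_{i',l',\infty}\rangle\in H_{1,p_1}$, hence weakly Lipschitz by item (3) of subsection $2.5.3$, is exactly the mechanism this paper itself uses later to deduce part (\ref{g}) of Theorem \ref{main}. Your chain-rule and Gram-matrix steps are also legitimate in this framework: the identity $d\phi_{i,l}=\sum_s(\partial G_l/\partial x_s\circ\phi_{i'})\,d\phi_{i',s}$ a.e.\ on the overlap is precisely the equivalence relation used to construct $T^*M_{\infty}$ in subsection $2.3$; the Gram matrix of $\{d\phi_{i',s}\}$ is a.e.\ positive definite with entries bounded by the Lipschitz constants; and the countable decomposition on which its determinant is bounded below is compatible with the countable-decomposition nature of ``weakly Lipschitz,'' so Cramer's rule does give a weakly Lipschitz inverse, and the same inversion handles the metric components.

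The one issue you must address is circularity. In this paper, Theorem \ref{main} is stated under the standing assumption that a weakly second order differential structure associated with the sequence already exists (parts (\ref{g})--(\ref{i}) do not even make sense without it), and that existence is exactly what Theorem \ref{2nd}, combined with Theorem \ref{harm3}, is supposed to supply. Quoting Theorem \ref{main} wholesale to prove Theorem \ref{2nd} is therefore circular as written. The fix exists but has to be made explicit: you use only the first-order conclusions (in fact only (\ref{d})), and the paper's proofs of (\ref{a})--(\ref{f}) rely solely on first-order ingredients --- the manifold estimates of Claim \ref{7000}, Theorems \ref{green}, \ref{7}, \ref{sobolev emb} and Proposition \ref{hon} --- none of which invoke $\mathcal{A}_{2nd}$. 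Either cite those ingredients directly, or isolate the first-order half of Theorem \ref{main} as a preliminary lemma proved before Theorem \ref{2nd}. Two smaller bookkeeping points should also be recorded: for cross terms between two patches you need a common ball in $M_j$ on which both approximating families are defined with uniform bounds (available after shrinking radii), and you implicitly use locality of differentials, i.e.\ $d\phi_{i,l,\infty}=d\phi_{i,l}$ a.e.\ on $C_i$, when passing from estimates on the ball to statements on the patch.
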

We say that \textit{$\mathcal{A}_{2nd}$ as in Theorem \ref{2nd} is a weakly second order differential structure on $(M_{\infty}, \upsilon)$ associated with $\{(M_i, m_i, \underline{\mathrm{vol}})\}_i$}.
Note that Theorem \ref{harm3} and \cite[Corollary $4.5$]{Ho} yield the following: 
\begin{enumerate}
\item There exist a subsequence $\{(M_{i(j)}, m_{i(j)}, \underline{\mathrm{vol}})\}_j$ and a weakly second order differential structure $\mathcal{A}$ on $(M_{\infty}, \upsilon)$ such that $\mathcal{A}$ is associated with $\{(M_{i(j)}, m_{i(j)}, \underline{\mathrm{vol}})\}_j$.
\item Let $\mathcal{A}_{2nd}$ be a weakly second order differential structure on $(M_{\infty}, \upsilon)$ associated with $\{(M_i, m_i, \underline{\mathrm{vol}})\}_i$.
\begin{enumerate}
\item $\mathcal{A}_{2nd}$ is a rectifiable coordinate system of $(M_{\infty}, \upsilon)$ associated with $\{(M_i, m_i, \underline{\mathrm{vol}})\}_i$.
\item Let $\{i(j)\}_j$ be a subsequence of $\mathbf{N}$ and $\hat{\mathcal{A}}_{2nd}$ a weakly second order differential structure on $(M_{\infty}, \upsilon)$ associated with $\{(M_{i(j)}, m_{i(j)}, \underline{\mathrm{vol}})\}_j$.
Then $\mathcal{A}_{2nd}$ and $\hat{\mathcal{A}}_{2nd}$ are compatible. 
\end{enumerate}
\end{enumerate}
\section{$L^p$-convergence}
\subsection{Functions.}
In this subsection 
we will discuss several convergences of functions with respect to the Gromov-Hausdorff topology.
Throughout this subsection we will always consider the following setting:
Let $\{(X_i, x_i)\}_{1 \le i \le \infty}$ be a sequence of pointed proper geodesic spaces, $R>0$ and $\upsilon_i$ a Radon measure on $X_i$ for every $i \le \infty$ satisfying the following: 
\begin{enumerate}
\item $(X_i, x_i, \upsilon_i) \stackrel{(\psi_i, \epsilon_i, R_i)}{\to} (X_{\infty}, x_{\infty}, \upsilon_{\infty})$ and $X_{\infty} \neq \{x_{\infty}\}$. 
\item For every $\hat{R}>0$ there exists $\kappa=\kappa (\hat{R})\ge 0$ such that $\upsilon_i(B_{2r}(z_i))\le 2^{\kappa}\upsilon_i(B_r(z_i))$ holds for every $i\le \infty$, every $r<\hat{R}$ and every $z_i \in X_i$.
\item $\upsilon_i(B_1(x_i))=1$ for every $i \le \infty$.
\end{enumerate}
\subsubsection{$C^0$-functions}
We first give the definition of a pointwise convergence of continuous functions with respect to the Gromov-Hausdorff topology:
\begin{definition}
Let $f_i \in C^0(B_R(x_i))$ for every $i \le \infty$.
We say that \textit{$f_i$ converges to $f_{\infty}$ at $z_{\infty} \in B_R(x_{\infty})$} if $f_i(z_i) \to f_{\infty}(z_{\infty})$ holds for every $z_i \to z_{\infty}$.
Then we denote it by $f_i \to f_{\infty}$ at $z_{\infty}$.
\end{definition}
We also say that \textit{$f_i$ converges to $f_{\infty}$ on a subset $A$ of $B_R(x_{\infty})$} if $f_i$ converges to $f_{\infty}$ at every $z_{\infty} \in A$ (then we denote it by $f_i \to f_{\infty}$ on $A$).
\begin{definition}
Let $f_i \in C^0(B_R(x_i))$ for every $i < \infty$.
We say that \textit{$\{f_i\}_i$ is asymptotically uniformly equicontinuous on $B_R(x_{\infty})$} if for every $\epsilon>0$ there exist $i_0$ and $\delta>0$ such that 
$|f_j(\alpha_j)-f_j(\beta_j)|<\epsilon$ holds for every $j \ge i_0$ and every $\alpha_j, \beta_j \in B_R(x_j)$ with $\overline{\alpha_j, \beta_j}<\delta$.
\end{definition}
The following compactness result performs a crucial role in the next subsection:
\begin{proposition}\label{conti com}
Let $f_i \in C^0(B_R(x_i))$ for every $i<\infty$ with $\sup_{i<\infty}||f_i||_{L^{\infty}}< \infty$.
Assume that $\{f_i\}_{i< \infty}$ is asymptotically uniformly equicontinuous on $B_R(x_{\infty})$.
Then there exist $f_{\infty} \in C^0(B_R(x_{\infty}))$ and a subsequence $\{f_{i(j)}\}_j$ of $\{f_i\}_i$ such that $f_{i(j)} \to f_{\infty}$ on $B_R(x_{\infty})$.
\end{proposition}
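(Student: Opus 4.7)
The plan is to run a diagonal Arzel\`a--Ascoli argument adapted to the Gromov--Hausdorff setting, using the asymptotic uniform equicontinuity as the compactness ingredient in place of ordinary equicontinuity. First I would fix a countable dense subset $D = \{w_n\}_{n \in \mathbf{N}} \subset B_R(x_{\infty})$, which exists because the proper geodesic space $X_{\infty}$ is separable. For each $n$, pick $w_{n, i} \in B_R(x_i)$ with $w_{n, i} \to w_n$, which is possible by the definition of Gromov--Hausdorff convergence via the almost isometries $\psi_i$. The uniform bound $\sup_{i < \infty}\|f_i\|_{L^{\infty}} < \infty$ makes each sequence $\{f_i(w_{n, i})\}_i$ bounded in $\mathbf{R}$, so a Cantor diagonal extraction produces a subsequence $\{f_{i(j)}\}_j$ along which $f_{\infty}(w_n) := \lim_{j \to \infty} f_{i(j)}(w_{n, i(j)})$ exists for every $n$.

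Next, I invoke asymptotic uniform equicontinuity to upgrade this pointwise limit on $D$ to a continuous function on $B_R(x_{\infty})$. Given $\epsilon > 0$ and the corresponding $i_0, \delta$ from the definition, if $w_n, w_m \in D$ satisfy $\overline{w_n, w_m} < \delta/2$ then $\overline{w_{n, i(j)}, w_{m, i(j)}} < \delta$ for all $j$ large (since $\psi_{i(j)}$ is an $\epsilon_{i(j)}$-almost isometry with $\epsilon_{i(j)} \to 0$), hence $|f_{i(j)}(w_{n, i(j)}) - f_{i(j)}(w_{m, i(j)})| < \epsilon$ for $j$ large. Letting $j \to \infty$ gives $|f_\infty(w_n) - f_\infty(w_m)| \le \epsilon$, so $f_\infty|_D$ is uniformly continuous and extends uniquely to a continuous function $f_\infty \in C^0(B_R(x_\infty))$.

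Finally I would verify pointwise convergence at every $z \in B_R(x_{\infty})$. Given any $z_i \in B_R(x_i)$ with $z_i \to z$ and $\epsilon > 0$, I choose $w_n \in D$ so that $\overline{w_n, z} < \delta/4$ and $|f_\infty(w_n) - f_\infty(z)| < \epsilon$, with $\delta$ taken from the equicontinuity definition at level $\epsilon$. For $j$ large the almost-isometry estimates force $\overline{w_{n, i(j)}, z_{i(j)}} < \delta$, hence $|f_{i(j)}(w_{n, i(j)}) - f_{i(j)}(z_{i(j)})| < \epsilon$; combining with the already established convergence $f_{i(j)}(w_{n, i(j)}) \to f_\infty(w_n)$ yields $\limsup_j |f_{i(j)}(z_{i(j)}) - f_\infty(z)| \le 2\epsilon$, and letting $\epsilon \to 0$ finishes the proof. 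I do not expect a substantive obstacle; the only thing to track carefully is the triangle-inequality bookkeeping against the almost-isometry errors $\epsilon_{i(j)}$, which is routine since $\epsilon_{i(j)} \to 0$.
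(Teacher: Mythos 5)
Your argument is correct and follows essentially the same route as the paper: diagonal extraction over a countable dense subset using the uniform $L^\infty$ bound, then asymptotic uniform equicontinuity to get a uniformly continuous limit on the dense set, extend continuously, and verify convergence at arbitrary points (the step the paper leaves as "not difficult to check"). No issues.
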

\begin{proof}
Let $\{z_i\}_i$ be a countable dense subset of $B_R(x_{\infty})$.
Since $\sup_{i<\infty}||f_i||_{L^{\infty}}< \infty$, there exists a subsequence $\{i(j)\}_j$ such that $\{f_{i(j)}(z_k)\}_j$ is a convergent sequence in $\mathbf{R}$ for every $k$.
Let us denote by $a(x_k)$ the limit.
The assumption yields that the function $a: \{z_k\}_k \to \mathbf{R}$ is uniformly continuous.
Therefore there exists a unique $f_{\infty} \in C^0(B_R(x_{\infty}))$ such that $f_{\infty}(x_k)=a(x_k)$ holds for every $k$. 
Then it is not difficult to check that $f_{i(j)} \to f_{\infty}$ on $B_R(x_{\infty})$.
\end{proof}
\subsubsection{$L^1_{\mathrm{loc}}$-functions and $L^{\infty}$-functions.}
Let $f_i \in L^{1}_{\mathrm{loc}}(B_R(x_i))$ for every $i \le \infty$.
We start this subsection by giving the definition of a pointwise convergence of \textbf{(W)} as in Section $1$ for $L^1_{\mathrm{loc}}$-functions:
\begin{definition}\label{weak1}
We say that \textit{$\{f_i\}_i$ is weakly upper semicontinuous at $z_{\infty} \in B_R(x_{\infty})$} if 
\[\liminf_{r \to 0}\left(\frac{1}{\upsilon_{\infty}(B_r(z_{\infty}))}\int_{B_r(z_{\infty})}f_{\infty}d\upsilon_{\infty}-\limsup_{i \to \infty}\frac{1}{\upsilon_{i}(B_r(z_i))}\int_{B_r(z_i)}f_id\upsilon_i \right) \ge 0\] 
holds for every $z_i \to z_{\infty}$.
We say that \textit{$\{f_i\}_i$ is weakly lower semicontinuous at $z_{\infty} \in B_R(x_{\infty})$} if 
\[\liminf_{r \to 0} \left( \liminf_{i \to \infty}\frac{1}{\upsilon_i(B_r(z_i))}\int_{B_r(z_i)}f_id\upsilon_i- \frac{1}{\upsilon_{\infty}(B_r(z_{\infty}))}\int_{B_r(z_{\infty})}f_{\infty}d\upsilon_{\infty}\right) \ge 0\] 
holds for every $z_i \to z_{\infty}$.
We say that \textit{$f_i$ converges weakly to $f_{\infty}$ at $z_{\infty}$} if $\{f_i\}_i$ is weakly upper and lower semicontinuous at $z_{\infty}$.  
\end{definition}
We first give a fundamental property of the lower semicontinuity:
\begin{proposition}\label{135}
Assume that $f_i \ge 0$ holds for every $i \le \infty$, and that $\{f_i\}_i$ is weakly lower semicontinuous at a.e. $z_{\infty} \in B_R(x_{\infty})$.
Then
\[\liminf_{i \to \infty}\int_{B_R(x_i)}f_id\upsilon_i \ge \int_{B_R(x_{\infty})}f_{\infty}d\upsilon_{\infty}.\] 
\end{proposition}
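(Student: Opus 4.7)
The plan is a Vitali-type covering argument on $B_R(x_\infty)$. Fix $\epsilon>0$. For every $z_\infty\in B_R(x_\infty)$ where weak lower semicontinuity holds, the hypothesis produces arbitrarily small radii $r>0$ (with $B_r(z_\infty)\subset B_R(x_\infty)$) such that, for any $z_i\to z_\infty$,
\[
\liminf_{i\to\infty}\frac{1}{\upsilon_i(B_r(z_i))}\int_{B_r(z_i)}f_i\,d\upsilon_i \;\ge\; \frac{1}{\upsilon_\infty(B_r(z_\infty))}\int_{B_r(z_\infty)}f_\infty\,d\upsilon_\infty -\epsilon.
\]
Because $\upsilon_\infty$ inherits the doubling exponent $\kappa$ from the hypothesis (in particular every $\upsilon_\infty$-ball has finite positive measure) and the exceptional set is $\upsilon_\infty$-null, the collection of such admissible balls forms a Vitali cover of a full-measure subset of $B_R(x_\infty)$. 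By the Vitali covering theorem for doubling measures I extract a countable pairwise disjoint family $\{B_{r_k}(z_k)\}_{k\in\mathbf{N}}\subset B_R(x_\infty)$ with $\upsilon_\infty\!\left(B_R(x_\infty)\setminus\bigsqcup_k B_{r_k}(z_k)\right)=0$. Shrinking each $r_k$ slightly within the admissible set (avoiding the countably many radii where the ball boundary is $\upsilon_\infty$-charged), I also ensure $\upsilon_i(B_{r_k}(z_{k,i}))\to \upsilon_\infty(B_{r_k}(z_k))$ along every $z_{k,i}\to z_k$.

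Now I truncate to finitely many balls. Suppose first $\int_{B_R(x_\infty)}f_\infty\,d\upsilon_\infty<\infty$. Choose $N$ so that $\sum_{k>N}\int_{B_{r_k}(z_k)}f_\infty\,d\upsilon_\infty<\epsilon$. Applying the displayed estimate on each of the first $N$ balls and summing yields
\[
\int_{B_R(x_\infty)}f_\infty\,d\upsilon_\infty-\epsilon \;\le\; \sum_{k=1}^N \upsilon_\infty(B_{r_k}(z_k))\left(\liminf_{i}\frac{1}{\upsilon_i(B_{r_k}(z_{k,i}))}\int_{B_{r_k}(z_{k,i})}f_i\,d\upsilon_i+\epsilon\right).
\]
Using $\upsilon_i(B_{r_k}(z_{k,i}))\to\upsilon_\infty(B_{r_k}(z_k))$, together with the elementary fact that a \emph{finite} sum of $\liminf$s is bounded by the $\liminf$ of the sum, the main term on the right is at most $\liminf_i\sum_{k=1}^N\int_{B_{r_k}(z_{k,i})}f_i\,d\upsilon_i$.

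For all sufficiently large $i$ the approximating balls $B_{r_k}(z_{k,i})$ ($k\le N$) remain pairwise disjoint and are contained in $B_R(x_i)$, because the finite family $\{\overline{B}_{r_k}(z_k)\}_{k\le N}$ sits inside $B_R(x_\infty)$ with strictly positive pairwise distances and positive distance to $X_\infty\setminus B_R(x_\infty)$, while $\psi_i$ is an $\epsilon_i$-almost isometry. Since $f_i\ge 0$, disjointness gives $\sum_{k=1}^N\int_{B_{r_k}(z_{k,i})}f_i\,d\upsilon_i\le\int_{B_R(x_i)}f_i\,d\upsilon_i$; letting $\epsilon\to 0$ closes this case. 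If instead $\int_{B_R(x_\infty)}f_\infty\,d\upsilon_\infty=+\infty$, the same chain shows that for every $M>0$ one can pick $N$ with $\sum_{k=1}^N\int_{B_{r_k}(z_k)}f_\infty\,d\upsilon_\infty\ge M$, forcing $\liminf_i\int_{B_R(x_i)}f_i\,d\upsilon_i\ge M$, and the inequality holds trivially.

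The main obstacle will be bookkeeping at the Vitali step: I must verify that the radii satisfying simultaneously the lower-semicontinuity inequality (at the required precision $\epsilon$) and the boundary measure-continuity $\upsilon_i(B_{r}(z_{i}))\to\upsilon_\infty(B_{r}(z_\infty))$ remain dense near $0$ at a.e.\ $z_\infty$, and then control the transition from the disjoint limit family to the approximating family in $X_i$ via $\psi_i$, so that both disjointness and inclusion in $B_R(x_i)$ persist for large $i$.
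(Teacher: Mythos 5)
Your proposal is correct and follows essentially the same route as the paper: a Vitali/$5r$-type covering of a full-measure set by disjoint balls on which the averaged lower-semicontinuity inequality holds, truncation to finitely many balls with a small tail, and then disjointness plus $f_i\ge 0$ to compare with $\int_{B_R(x_i)}f_i\,d\upsilon_i$; your extra care about boundary-charged radii is unnecessary here since the limit-measure definition already gives $\upsilon_i(B_r(z_i))\to\upsilon_\infty(B_r(z_\infty))$ for every $r>0$. The only cosmetic difference is bookkeeping: the paper controls the tail by $\sum_{i>N_0}\upsilon_\infty(B_{5r_i}(w_i))<\epsilon$ after a WLOG $L^1$-bound, whereas you control the tail of $\sum_k\int_{B_{r_k}}f_\infty\,d\upsilon_\infty$ and treat the case $\int f_\infty=\infty$ separately.
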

\begin{proof}
Without loss of generality we can assume $\sup_{i<\infty}||f_i||_{L^1(B_R(m_i))}<\infty$.
There exists $K \subset B_R(x_{\infty})$ with $\upsilon_{\infty}(B_R(x_{\infty}) \setminus K)=0$ such that for every $z_{\infty} \in K$ and every $\epsilon>0$ there exists $r=r(z_{\infty}, \epsilon)>0$ such that  
\[\liminf_{i \to \infty}\frac{1}{\upsilon_i(B_t(z_i))}\int_{B_t(z_i)}f_id\upsilon_i \ge \frac{1}{\upsilon_{\infty}(B_t(z_{\infty}))}\int_{B_t(z_{\infty})}f_{\infty}d\upsilon_{\infty}-\epsilon\]
holds for every $t<r$.
Fix $\epsilon>0$.
A standard covering argument (c.f. \cite[Proposiiton $2.2$]{Ho}) yields that there exists a countable pairwise disjoint collection 
$\{\overline{B}_{r_i}(w_i)\}_i$ such that $\overline{B}_{5r_i}(w_i) \subset B_R(x_{\infty})$, $w_i \in K$, $5r_i<r(w_i, \epsilon)$ and that 
$K \setminus \bigcup_{i=1}^N\overline{B}_{r_i}(w_i) \subset \bigcup_{i=N+1}^{\infty}\overline{B}_{5r_i}(w_i)$ holds for every $N$.
Let $N_0$ with $\sum_{i=N_0 +1}^{\infty}\upsilon_{\infty}(B_{5r_i}(w_i))<\epsilon$ and $K^{\epsilon}=K \cap \bigcup_{i=1}^{N_0}\overline{B}_{r_i}(w_i)$.
Then we see that
\begin{align*}
\int_{K^{\epsilon}}f_{\infty}d\upsilon_{\infty}\le \sum_{i=1}^{N_0}\int_{B_{r_i}(w_i)}f_{\infty}d\upsilon_{\infty}&\le \sum_{i=1}^{N_0}\left(\int_{B_{r_i}(w_{i, j})}f_j\upsilon_j + \epsilon \upsilon_j(B_{r_i}(w_{i, j}))\right) \\
&\le \int_{B_R(x_j)}f_jd\upsilon_j +\epsilon \upsilon_j(B_R(x_j))
\end{align*}
holds for every sufficiently large $j$, where $w_{i, j} \to w_{i}$ as $j \to \infty$.
Since $\upsilon_{\infty}(B_R(x_{\infty}) \setminus K^{\epsilon})<\epsilon$, by letting $j \to \infty$ and $\epsilon \to 0$, the dominated convergence theorem yields the assertion.
\end{proof}
\begin{corollary}\label{ell1}
Assume that $f_i$ converges weakly to $f_{\infty}$ at a.e. $z_{\infty} \in B_R(x_{\infty})$.
Then $\liminf_{i \to \infty}||f_i||_{L^1(B_R(x_i))} \ge ||f_{\infty}||_{L^1(B_R(x_{\infty}))}$.
\end{corollary}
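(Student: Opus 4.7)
My plan is to reduce to Proposition~\ref{135} applied to the nonnegative functions $|f_i|$: since $||f_i||_{L^1(B_R(x_i))} = \int_{B_R(x_i)}|f_i|\,d\upsilon_i$, it suffices to verify that $\{|f_i|\}_i$ is weakly lower semicontinuous at almost every $z_\infty \in B_R(x_\infty)$.

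Fix any $z_\infty$ at which (i) $f_i$ converges weakly to $f_\infty$ and (ii) $z_\infty$ is a Lebesgue point of both $f_\infty$ and $|f_\infty|$; condition (ii) holds $\upsilon_\infty$-a.e.\ because the uniform doubling assumption yields the Lebesgue differentiation theorem on $L^1_{\mathrm{loc}}$. For an arbitrary sequence $z_i \to z_\infty$, set $A_r^{(i)} := \upsilon_i(B_r(z_i))^{-1}\int_{B_r(z_i)}f_i\,d\upsilon_i$ and $B_r^{(i)} := \upsilon_i(B_r(z_i))^{-1}\int_{B_r(z_i)}|f_i|\,d\upsilon_i$. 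Combining Definition~\ref{weak1} with Lebesgue differentiation, I obtain both $\liminf_{r\to 0}\liminf_{i\to\infty}A_r^{(i)} \ge s$ and $\limsup_{r\to 0}\limsup_{i\to\infty}A_r^{(i)} \le s$, where $s := f_\infty(z_\infty)$.

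The key elementary input is that $B_r^{(i)} \ge A_r^{(i)}$ and $B_r^{(i)} \ge -A_r^{(i)}$. If $s \ge 0$, the first inequality gives $\liminf_r\liminf_i B_r^{(i)} \ge \liminf_r\liminf_i A_r^{(i)} \ge s = |s|$; if $s < 0$, the second yields $\liminf_i B_r^{(i)} \ge -\limsup_i A_r^{(i)}$ and hence $\liminf_r\liminf_i B_r^{(i)} \ge -\limsup_r\limsup_i A_r^{(i)} \ge -s = |s|$. Since the $\upsilon_\infty$-average of $|f_\infty|$ over $B_r(z_\infty)$ tends to $|s|$ by (ii), in both cases $\{|f_i|\}_i$ is weakly lower semicontinuous at $z_\infty$; Proposition~\ref{135} applied to $\{|f_i|\}_i$ then completes the argument.

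The only real bookkeeping issue is the sign flip: one must remember that $\liminf_i(-A_r^{(i)}) = -\limsup_i A_r^{(i)}$ and $\liminf_r(-C_r) = -\limsup_r C_r$, so that the outer $\liminf$ interacts correctly with the $\limsup$ in the $s<0$ case. No further structure of the limit space is needed.
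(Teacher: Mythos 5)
Your proposal is correct and follows essentially the same route as the paper: both reduce to Proposition \ref{135} applied to $\{|f_i|\}_i$, verifying its weak lower semicontinuity at a.e. point via Lebesgue differentiation and the inequality between the average of $|f_i|$ and (the absolute value of) the average of $f_i$. Your sign case-split is just a slightly more explicit bookkeeping of the paper's chain $\mathrm{avg}|f_\infty| \approx |\mathrm{avg}\,f_\infty| \approx |\mathrm{avg}\,f_i| \le \mathrm{avg}|f_i|$.
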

\begin{proof}
Lebesgue's differentiation theorem yields that there exists $K_{\infty} \subset B_R(x_{\infty})$ such that $\upsilon_{\infty}(B_R(x_{\infty}) \setminus K_{\infty})=0$ and that
\[\lim_{r \to 0}\left|\frac{1}{\upsilon_{\infty}(B_r(z_{\infty}))}\int_{B_r(z_{\infty})}f_{\infty}d\upsilon_{\infty}\right|=\lim_{r \to 0}\frac{1}{\upsilon_{\infty}(B_r(z_{\infty}))}\int_{B_r(z_{\infty})}|f_{\infty}|d\upsilon_{\infty}\]
holds for every $z_{\infty} \in K_{\infty}$.
Thus for a.e. $z_{\infty} \in K_{\infty}$ and every $\epsilon>0$ there exists $r>0$ such that for every $t<r$ we see that 
\begin{align*}
\frac{1}{\upsilon_{\infty}(B_t(z_{\infty}))}\int_{B_t(z_{\infty})}|f_{\infty}|d\upsilon_{\infty} &\le \left| \frac{1}{\upsilon_{\infty}(B_t(z_{\infty}))}\int_{B_t(z_{\infty})}f_{\infty}d\upsilon_{\infty} \right|+\epsilon \\
&\le \left| \frac{1}{\upsilon_{i}(B_t(z_{i}))}\int_{B_t(z_{i})}f_{i}d\upsilon_i \right|+2\epsilon \le \frac{1}{\upsilon_{i}(B_t(z_{i}))}\int_{B_t(z_{i})}|f_{i}|d\upsilon_i +2\epsilon
\end{align*}
holds for every sufficiently large $i$, i.e., $\{|f_i|\}_i$ is weakly lower semicontinuous at a.e. $z_{\infty} \in B_R(x_{\infty})$.
Thus the assertion follows directly from Proposition \ref{135}.
\end{proof}
Note that in general the weak convergence of $f_i \to f_{\infty}$ does NOT imply the weak convergence of $|f_i| \to |f_{\infty}|$. 
See for instance Remark \ref{zigzag}.
We will give more fundamental properties about the weak convergence in the next subsection.

We now give the definition of a strong convergence of $L^1_{\mathrm{loc}}$-functions:
\begin{definition}\label{8}
We say that \textit{$f_i$ converges strongly to $f_{\infty}$ at $z_{\infty}$} if 
\[\lim_{t \to 0}\left(\limsup_{i \to \infty} \frac{1}{\upsilon_i (B_t(z_i))}\int_{B_t(z_i)}\left|f_i-\frac{1}{\upsilon_{\infty} (B_t(z_{\infty}))}\int_{B_t(z_{\infty})}f_{\infty}d\upsilon_{\infty}\right|d\upsilon_i \right)=0 \]
and 
\[\lim_{t \to 0}\left( \limsup_{i \to \infty} \frac{1}{\upsilon_{\infty} (B_t(z_{\infty}))}\int_{B_t(z_{\infty})}\left|f_{\infty}-\frac{1}{\upsilon_{i} (B_t(z_{i}))}\int_{B_t(z_{i})}f_{i}d\upsilon_{i}\right|d\upsilon_{\infty}\right)=0\]
hold for every $z_i \to z_{\infty}$.
\end{definition}
\begin{remark}\label{1234}
Let $g_i \in C^0(B_R(x_i))$ and $z_i \in B_R(x_i)$ for every $i \le \infty$ with $z_i \to z_{\infty}$.
Assume that there exists $r>0$ such that $\{g_i|_{B_r(z_i)}\}_{i<\infty}$ is asymptotically uniformly equicontinuous at $z_{\infty}$.
Then it is not difficult to check that $g_i$ converges strongly $g_{\infty}$ at $z_{\infty}$ if and only if $g_i$ converges weakly to $g_{\infty}$ at $z_{\infty}$ if and only if $g_i \to g_{\infty}$ at $z_{\infty}$.
\end{remark}
\begin{remark}\label{easy}
It is easy to check that if $f_i$ converges strongly to $f_{\infty}$ at $z_{\infty}$, then the following hold:
\begin{enumerate}
\item $f_i$ converges weakly to $f_{\infty}$ at $z_{\infty}$.
\item $|f_i|$ converges strongly to $|f_{\infty}|$ at $z_{\infty}$.
\end{enumerate}
\end{remark}
\begin{remark}\label{zigzag}
Let $g_n$ be a smooth function on $\mathbf{R}$ satisfying that
\[g_n(x)=(-1)^i\left(\frac{2n^2-2}{n-2}x-\frac{(n^2-1)(2i+1)}{n(n-2)}\right)\]
holds for every $x \in [i/n+1/n^2, (i+1)/n-1/n^2]$ and every $i \in \mathbf{Z}$, and that 
\[|g_n(x)-(-1)^{i-1}|\le \frac{100}{n^2}\]
holds for every $x \in [i/n-1/n^2, i/n+1/n^2]$ and every $i \in \mathbf{Z}$.
Note that $g_n((i+1)/n-1/n^2)=(-1)^i(1-1/n^2)=-g_n(i/n+1/n^2)$ holds for every $i \in \mathbf{Z}$.
Then it is easy to check that under the canonical convergence $(\mathbf{R}, 0,  H^1) \stackrel{(id_{\mathbf{R}}, \epsilon_i, R_i)}{\to} (\mathbf{R}, 0, H^1)$, for every $t \in \mathbf{R}$ and every $p>0$, we have the following:
\begin{enumerate}
\item $g_n$ converges weakly to $0$ at $t$.
\item $g_n$ does NOT converges strongly to $0$ at $t$.
\item $|g_n|^p$ converges weakly to $1/(p+1)$ at $t$.
\item $|g_n|^p$ does NOT converges strongly to $1/(p+1)$ at $t$.
\end{enumerate}
\end{remark}
We now recall a fundamental property of the strong convergence for $L^{\infty}$-functions given in \cite{Ho}:
\begin{proposition}\cite[Proposition $4.1$]{Ho}\label{strong3}
Let $k \in \mathbf{N}$, $R>0$, $w_{\infty} \in B_R(x_{\infty})$, $\{F_i\}_{1 \le i \le \infty} \subset C^0(\mathbf{R}^k)$ and let $\{f_i^l\}_{1 \le l \le k, i \le \infty}$ be a sequence of $L^{\infty}$-functions $f_i^l$ on $B_R(x_i)$ with
$\sup_{i, l} ||f_i^l||_{L^{\infty}(B_R(x_i))} < \infty$.
Assume that $f_i^l$ converges strongly to $f_{\infty}^l$ at $w_{\infty}$ for every $l$, and that 
$F_i$ converges to $F_{\infty}$ with respect to the compact uniform topology.
Then $F_i(f_i^1, \ldots, f_i^k)$ converges strongly  
to $F_{\infty}(f_{\infty}^1, \ldots, f_{\infty}^k)$ at $w_{\infty}$.
\end{proposition}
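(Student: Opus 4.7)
The plan is to use the uniform boundedness of the $f_i^l$'s to restrict the argument of $F_i$ to the fixed compact cube $K := [-M, M]^k$, where $M := \sup_{i,l} \|f_i^l\|_{L^\infty}$. On $K$, uniform continuity of $F_\infty$ supplies a modulus $\omega$, and the uniform convergence on compacta yields $\eta_i := \sup_K|F_i - F_\infty| \to 0$. The basic quantitative bound will be
\[|F_i(a) - F_\infty(b)| \le \omega(|a - b|) + \eta_i, \qquad a, b \in K,\]
combined with the truncation $\omega(s) \le \omega(\delta) + \omega(2\sqrt{k}M)\,\mathbf{1}_{\{s \ge \delta\}}$ on $[0, 2\sqrt{k}M]$ and Markov's inequality. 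Throughout I fix a sequence $z_i \to w_\infty$ and set $c_t^l := \upsilon_\infty(B_t(w_\infty))^{-1}\int_{B_t(w_\infty)} f_\infty^l \, d\upsilon_\infty$, $c_t := (c_t^1, \ldots, c_t^k) \in K$, $f_i := (f_i^1, \ldots, f_i^k)$, $G_i := F_i(f_i)$, $G_\infty := F_\infty(f_\infty)$.

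For the first condition of Definition \ref{8} applied to $G_i$, I would bound $|G_i - F_\infty(c_t)| \le \eta_i + \omega(|f_i - c_t|)$ pointwise on $B_t(z_i)$ and then, after truncation and Markov,
\[\frac{1}{\upsilon_i(B_t(z_i))}\int_{B_t(z_i)} \omega(|f_i - c_t|) \, d\upsilon_i \le \omega(\delta) + \frac{\omega(2\sqrt{k}M)}{\delta} \sum_{l=1}^k \frac{1}{\upsilon_i(B_t(z_i))}\int_{B_t(z_i)} |f_i^l - c_t^l| \, d\upsilon_i.\]
Strong convergence of each $f_i^l$ at $w_\infty$ (first condition of Definition \ref{8}) kills the sum in the iterated limit $\limsup_{i \to \infty}$ then $\lim_{t \to 0}$, leaving the arbitrary quantity $\omega(\delta)$. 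Replacing the constant $F_\infty(c_t)$ by the true comparison $\upsilon_\infty(B_t(w_\infty))^{-1}\int_{B_t(w_\infty)} G_\infty\, d\upsilon_\infty$ costs a term controlled by the same truncation, modulo the auxiliary concentration fact $\upsilon_\infty(B_t(w_\infty))^{-1}\int_{B_t(w_\infty)} |f_\infty^l - c_t^l|\, d\upsilon_\infty \to 0$ as $t \to 0$.

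Since $w_\infty$ is arbitrary and may fail to be a Lebesgue point, this auxiliary fact does not follow from Lebesgue differentiation. I would instead extract it from both conditions of Definition \ref{8}: inserting $c_t^{l,i} := \upsilon_i(B_t(z_i))^{-1}\int_{B_t(z_i)} f_i^l \, d\upsilon_i$ between $f_\infty^l$ and $c_t^l$, the second condition controls the integral of $|f_\infty^l - c_t^{l,i}|$ over $B_t(w_\infty)$, while $|c_t^l - c_t^{l,i}| \le \upsilon_i(B_t(z_i))^{-1}\int_{B_t(z_i)}|f_i^l - c_t^l|\, d\upsilon_i$ is controlled by the first. The second condition of Definition \ref{8} for $G_i$ is then obtained by the symmetric argument with $c_t^{l,i}$ playing the role of $c_t^l$ and the roles of the two parts of Definition \ref{8} interchanged. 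The main obstacle will be the careful choreography of the nested limits $\delta \to 0$, $t \to 0$, $i \to \infty$ (and implicitly $\eta_i \to 0$) together with this auxiliary concentration claim at an arbitrary base point; once those are handled, every remaining step is a direct application of the triangle inequality, uniform continuity on $K$, and Markov's inequality.
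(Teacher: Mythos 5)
Your argument is correct: the reduction to the compact cube $[-M,M]^k$, the bound $|F_i(a)-F_\infty(b)|\le \omega(|a-b|)+\eta_i$ with the truncation/Markov step, and in particular your derivation of the concentration fact $\upsilon_\infty(B_t(w_\infty))^{-1}\int_{B_t(w_\infty)}|f_\infty^l-c_t^l|\,d\upsilon_\infty \to 0$ from the two halves of Definition \ref{8} (needed precisely because $w_\infty$ need not be a Lebesgue point) are exactly the required ingredients, and the symmetric argument with $c_t^{l,i}$ for the second half of the definition goes through as you indicate. The present paper does not reproduce a proof of this statement but cites \cite[Proposition 4.1]{Ho}; your route is essentially the standard one used there, so there is nothing to flag.
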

For $L^2_{\mathrm{loc}}$-functions we have the following:
\begin{proposition}\label{strong}
Let $A \subset B_R(x_{\infty})$.
Assume that $f_i \in L^2_{\mathrm{loc}}(B_R(x_i))$ holds for every $i \le \infty$, $f_i$ converges weakly to $f_{\infty}$ at a.e. $z_{\infty} \in A$, and that $\{(f_i)^2\}_i$ is weakly upper semicontinuous at a.e. $z_{\infty} \in A$. 
Then
$f_i$ converges strongly to $f_{\infty}$ at a.e. $z_{\infty} \in A$.
\end{proposition}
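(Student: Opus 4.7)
The plan is to reduce strong convergence at a point $z_\infty \in A$ to the fact that both the variance of $f_\infty$ on small balls about $z_\infty$ and the excess $\limsup_i \text{avg}_{B_t(z_i)} f_i^2 - \text{avg}_{B_t(z_\infty)}f_\infty^2$ vanish in an appropriate joint limit. The full-measure set of good points will consist of those $z_\infty \in A$ which are simultaneously Lebesgue points of $f_\infty$ and of $f_\infty^2$ (this is automatic for $L^2_{\mathrm{loc}}$ functions, e.g.\ by the doubling hypothesis on $\upsilon_\infty$), at which the weak convergence $f_i \to f_\infty$ and the weak upper semicontinuity of $\{f_i^2\}$ both hold.

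Fix such a $z_\infty$ and set $c_t := \upsilon_\infty(B_t(z_\infty))^{-1}\int_{B_t(z_\infty)}f_\infty\,d\upsilon_\infty$. The first step is the algebraic identity
\[
\frac{1}{\upsilon_i(B_t(z_i))}\int_{B_t(z_i)}(f_i-c_t)^2\,d\upsilon_i
= \frac{1}{\upsilon_i(B_t(z_i))}\int_{B_t(z_i)}f_i^2\,d\upsilon_i
- 2c_t\cdot \frac{1}{\upsilon_i(B_t(z_i))}\int_{B_t(z_i)}f_i\,d\upsilon_i
+ c_t^2.
\]
Taking $\limsup_{i\to\infty}$ and using weak upper semicontinuity on the first term together with weak convergence on the middle term, for any $\epsilon>0$ one can choose $t_0>0$ so small that for all $t<t_0$,
\[
\limsup_{i\to\infty}\frac{1}{\upsilon_i(B_t(z_i))}\int_{B_t(z_i)}(f_i-c_t)^2\,d\upsilon_i
\le \frac{1}{\upsilon_\infty(B_t(z_\infty))}\int_{B_t(z_\infty)}f_\infty^2\,d\upsilon_\infty - c_t^2 + (1+2|c_t|)\epsilon.
\]
The Lebesgue-point property forces the first difference on the right (which equals the variance of $f_\infty$ on $B_t(z_\infty)$) to tend to $0$ as $t\to 0$, while $c_t\to f_\infty(z_\infty)$ stays bounded. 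Letting $t\to 0$ and then $\epsilon\to 0$ yields $\lim_{t\to 0}\limsup_{i\to\infty}\upsilon_i(B_t(z_i))^{-1}\int_{B_t(z_i)}(f_i-c_t)^2\,d\upsilon_i=0$. Cauchy--Schwarz upgrades this to the corresponding $L^1$ statement, which is precisely the first limit in Definition \ref{8}.

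For the symmetric condition, write $d_{i,t}:=\upsilon_i(B_t(z_i))^{-1}\int_{B_t(z_i)}f_i\,d\upsilon_i$. The triangle inequality gives
\[
\frac{1}{\upsilon_\infty(B_t(z_\infty))}\int_{B_t(z_\infty)}|f_\infty-d_{i,t}|\,d\upsilon_\infty
\le \frac{1}{\upsilon_\infty(B_t(z_\infty))}\int_{B_t(z_\infty)}|f_\infty-c_t|\,d\upsilon_\infty + |c_t-d_{i,t}|,
\]
and the first term tends to $0$ as $t\to 0$ again by Cauchy--Schwarz applied to the variance of $f_\infty$, while $\limsup_i|c_t-d_{i,t}|\le\epsilon$ by weak convergence of $f_i$ at $z_\infty$; this finishes the second limit in Definition \ref{8}. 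The only place requiring care is the passage from the two separate $\liminf_{r\to 0}$ statements in the definition of weak convergence and weak upper semicontinuity to a single radius $t_0$ for which both estimates are simultaneously valid; this is the mildly non-trivial observation that $\liminf_{r\to 0}$ of a pointwise bound gives a uniform bound on an interval $(0,t_0)$, which I expect to be the only technical step beyond bookkeeping.
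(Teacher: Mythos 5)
Your proposal is correct and follows essentially the same route as the paper's proof: expand the square of $f_i$ minus the ball average of $f_{\infty}$, control the quadratic term by the weak upper semicontinuity of $\{f_i^2\}$ and the cross term by weak convergence, use the Lebesgue point property of $f_{\infty}$ and $f_{\infty}^2$ (valid a.e.\ by the doubling hypothesis) to kill the variance, and finish with Cauchy--Schwarz to pass from the $L^2$ to the $L^1$ statement of Definition \ref{8}; your observation that $\liminf_{r\to 0}\ge 0$ gives a bound on a whole interval $(0,t_0)$ is exactly how the paper obtains a single radius. The only cosmetic difference is that for the second limit you use a triangle inequality at the $L^1$ level with $|c_t-d_{i,t}|$, while the paper repeats the squared expansion ``similarly.''
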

\begin{proof}
Lebesgue's differentiation theorem yields that the following holds for a.e. $z_{\infty} \in A$: For every $\epsilon >0$ there exists $r>0$ such that for every $t<r$ there exists $i_0 \in \mathbf{N}$ such that   
\begin{align*}
&\frac{1}{\upsilon_i(B_t(z_i))}\int_{B_t(z_i)}\left|f_i-\frac{1}{\upsilon_{\infty}(B_t(z_{\infty}))}\int_{B_t(z_{\infty})}f_{\infty}d\upsilon_{\infty}\right|^2d\upsilon_i\\
&=\frac{1}{\upsilon_i(B_t(z_i))}\int_{B_t(z_i)}|f_i|^2d\upsilon_i-2\left(\frac{1}{\upsilon_i(B_t(z_i))}\int_{B_t(z_i)}f_id\upsilon_i\right)
\left(\frac{1}{\upsilon_{\infty}(B_t(z_{\infty}))}\int_{B_t(z_{\infty})}f_{\infty}d\upsilon_{\infty}\right)\\
& \,\,\,+\left(\frac{1}{\upsilon_{\infty}(B_t(z_{\infty}))}\int_{B_t(z_{\infty})}f_{\infty}d\upsilon_{\infty}\right)^2\\
&\le \frac{1}{\upsilon_{\infty}(B_t(z_{\infty}))}\int_{B_t(z_{\infty})}|f_{\infty}|^2d\upsilon_{\infty}-2\left(\frac{1}{\upsilon_{\infty}(B_t(z_{\infty}))}\int_{B_t(z_{\infty})}f_{\infty}d\upsilon_{\infty}\right)
\left(\frac{1}{\upsilon_{\infty}(B_t(z_{\infty}))}\int_{B_t(z_{\infty})}f_{\infty}d\upsilon_{\infty}\right)\\
& \,\,\,+\left(\frac{1}{\upsilon_{\infty}(B_t(z_{\infty}))}\int_{B_t(z_{\infty})}f_{\infty}d\upsilon_{\infty}\right)^2 +\epsilon < 2\epsilon
\end{align*}
holds for every $i \ge i_0$.
Similarly, for every $i \ge i_0$ we have 
\[\frac{1}{\upsilon_{\infty}(B_t(z_{\infty}))}\int_{B_t(z_{\infty})}\left|f_{\infty}-\frac{1}{\upsilon_{i}(B_t(z_{i}))}\int_{B_t(z_{i})}f_{i}d\upsilon_{i}\right|^2d\upsilon_{\infty}<2\epsilon.\]
Thus the assertion follows from the Cauchy-Schwartz inequality.
\end{proof}
\begin{remark}\label{77}
Propositions \ref{strong3} and \ref{strong} yield that if $f_i \in L^{\infty}(B_R(x_i))$ holds for every $i \le \infty$ with $\sup_{i \le \infty}||f_i||_{L^{\infty}}<\infty$, then the assumptions of Proposition \ref{strong} hold if and only if $f_i$ converges strongly to $f_{\infty}$ at a.e. $z_{\infty} \in A$.
\end{remark}
The following proposition performs a crucial rule in Section $4$.
Note that it was proved essentially in \cite{Ho, Ho3}:
\begin{proposition}\label{angle2}
Assume that there exist $n \in \mathbf{N}$ and $K \in \mathbf{R}$ such that every $(X_i, x_i, \upsilon_i)$ is an $(n, K)$-Ricci limit space.
Let $p_i, q_i \in X_i$ for every $i \le \infty$ with $p_i \to p_{\infty}$ and $q_i \to q_{\infty}$, and put $h_i(x):=\cos \angle p_ixq_i$ for every $x \not \in C_{p_i} \cup C_{q_i}$.
Then $h_i$ converges strongly to $h_{\infty}$ on $X_{\infty} \setminus (C_{p_{\infty}} \cup C_{q_{\infty}})$.
\end{proposition}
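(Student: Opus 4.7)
The plan is to verify the two symmetric averaging conditions of Definition \ref{8} for $\{h_i\}$ at an arbitrary base point $z_\infty \in X_\infty\setminus(C_{p_\infty}\cup C_{q_\infty})$. Two observations will do most of the work: first, by the identity recalled in subsection $2.5.5$ we have $h_i(x)=\langle dr_{p_i},dr_{q_i}\rangle(x)$ for a.e.\ $x\in X_i$, so $|h_i|\le 1$ uniformly; second, the condition $z_\infty\notin C_{p_\infty}\cup C_{q_\infty}$ forces uniqueness of minimal geodesics from $z_\infty$ to $p_\infty$ and $q_\infty$, which will propagate pointwise convergence to the sequence $\{h_i\}$.

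I would first show that $h_\infty$ is continuous at every $z_\infty\in X_\infty\setminus(C_{p_\infty}\cup C_{q_\infty})$. Given $y_k\to z_\infty$ with $y_k\notin C_{p_\infty}\cup C_{q_\infty}$, pick minimal geodesics $\gamma_k^p,\gamma_k^q$ from $y_k$ to $p_\infty,q_\infty$. By Arzel\`a--Ascoli a subsequence converges to minimal geodesics from $z_\infty$ to $p_\infty,q_\infty$, and uniqueness at a non-cut-locus point forces the limiting geodesics to be independent of the subsequence; feeding this into the defining comparison-angle formula
\[
\cos\angle p_\infty y_k q_\infty=\lim_{t\to 0}\frac{2t^2-\overline{\gamma_k^p(t),\gamma_k^q(t)}^2}{2t^2}
\]
(well-posed on a Ricci limit by \cite[Theorem $1.2$]{Ho3}) yields $h_\infty(y_k)\to h_\infty(z_\infty)$. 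The same Arzel\`a--Ascoli argument, applied now across the Gromov--Hausdorff convergence $(X_i,x_i,\upsilon_i)\to(X_\infty,x_\infty,\upsilon_\infty)$ and using that GH-subsequential limits of minimal geodesics in $X_i$ are minimal geodesics in $X_\infty$, upgrades this to: for every $y_i\in X_i\setminus(C_{p_i}\cup C_{q_i})$ with $y_i\to z_\infty$, one has $h_i(y_i)\to h_\infty(z_\infty)$.

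By a standard contradiction-extraction argument, this pointwise convergence is uniform in a shrinking-neighborhood sense: for every $\epsilon>0$ there exist $t_0>0$ and $i_0\in\mathbf{N}$ with $|h_i(y)-h_\infty(z_\infty)|<\epsilon$ for all $i\ge i_0$ and all $y\in B_{t_0}(z_i)\setminus(C_{p_i}\cup C_{q_i})$. Since $\upsilon_i(C_{p_i}\cup C_{q_i})=0$, this immediately gives $\upsilon_i(B_t(z_i))^{-1}\int_{B_t(z_i)}h_i\,d\upsilon_i\to h_\infty(z_\infty)$ as $i\to\infty$ then $t\to 0$, and combining with the continuity of $h_\infty$ at $z_\infty$ (hence $\upsilon_\infty(B_t(z_\infty))^{-1}\int_{B_t(z_\infty)}h_\infty\,d\upsilon_\infty\to h_\infty(z_\infty)$) both expressions in Definition \ref{8} reduce to controlling the oscillation of $h_i$ or $h_\infty$ in a small ball around $z_\infty$ relative to the value $h_\infty(z_\infty)$; these are both $o(1)$.

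The main obstacle will be the second half of Step~$2$: making rigorous that Gromov--Hausdorff subsequential limits of minimal geodesics $\gamma^p_i$ in $X_i$ from $y_i$ to $p_i$ are minimal geodesics in $X_\infty$ from $z_\infty$ to $p_\infty$, and that the inner $t\to 0$ limit in the comparison-angle formula is stable under this subconvergence. Both of these are essentially contained in \cite{Ho,Ho3}; the non-cut-locus hypothesis on $z_\infty$ is precisely what prevents subsequential ambiguity and yields the full (non-subsequential) pointwise convergence that the uniform-equicontinuity step requires.
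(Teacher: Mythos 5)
There is a genuine gap at the heart of your argument: the step where you feed Arzel\`a--Ascoli convergence of minimal geodesics into the comparison formula to conclude $h_\infty(y_k)\to h_\infty(z_\infty)$, and likewise $h_i(y_i)\to h_\infty(z_\infty)$ across the Gromov--Hausdorff convergence. The angle is defined by an \emph{inner} limit $t\to 0$: convergence of the geodesics gives $\overline{\gamma^p_k(t),\gamma^q_k(t)}\to\overline{\gamma^p(t),\gamma^q(t)}$ for each fixed $t$, but nothing makes the $t\to 0$ limit uniform in $k$ (or in $i$), so the two limits cannot be interchanged. In a Ricci limit space the function $x\mapsto\cos\angle p x q$ is, off the cut loci, in general only measurable, not continuous -- the paper's pointer to \cite[Theorems $1.2$ and $1.3$]{co-na} right after the definition of angles is a warning about exactly this kind of pathology -- and the proposition only claims the averaged strong convergence of Definition \ref{8} precisely because the pointwise and locally uniform convergence you assert is not available (your "standard contradiction-extraction" upgrade therefore has nothing to extract from). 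A secondary gap: you use that $z_\infty\notin C_{p_\infty}\cup C_{q_\infty}$ forces \emph{uniqueness} of minimal geodesics to $p_\infty,q_\infty$. The definition of $C_p$ in the paper only gives extendability of geodesics through $z_\infty$, and deducing uniqueness from that requires non-branching, which is not part of the framework here; \cite[Theorem $1.2$]{Ho3} asserts that the angle is independent of the chosen geodesics, not that the geodesics are unique.

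The paper's actual proof is entirely different and much shorter: since $\langle dr_{p_i},dr_{q_i}\rangle(x)=\cos\angle p_i x q_i$ for a.e.\ $x\in X_i$ (an identity you do state), the claim is exactly the strong convergence of the inner products of differentials of distance functions, which is \cite[Proposition $4.3$]{Ho}; that result is proved by Cheeger--Colding type integral estimates (harmonic approximation of distance functions and the segment inequality), not by convergence of individual geodesics. If you want a self-contained argument, you need that analytic route: pointwise control of angles under convergence is strictly stronger than what is true.
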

\begin{proof}
Since $\langle dr_{p_i}, dr_{q_i} \rangle (x) = \cos \angle p_ixq_i$ holds for a.e. $x \in X_i$, 
the assertion follows from \cite[Proposition $4.3$]{Ho}. 
\end{proof}
\subsubsection{$L^p$-functions}
Let $1 < p \le \infty$, $x_i \in X_i$, $f_i \in L^p(B_R(x_i))$ for every $i \le \infty$ with
$L:=\sup_{i \le \infty}||f_i||_{L^p(B_R(x_i))} < \infty$ and let $q$ be the conjugate exponent of $p$ (i.e., $p^{-1}+q^{-1}=1$ holds).
\begin{proposition}\label{weak0}
Let $z_i, \hat{z}_i \in B_R(x_i)$ for every $i\le \infty$ with $z_i \to z_{\infty}$, $\hat{z}_i \to \hat{z}_{\infty}$ and $z_{\infty}=\hat{z}_{\infty}$.
Then 
\[\lim_{i \to \infty}\left|\frac{1}{\upsilon_i(B_r(z_i))}\int_{B_r(z_i)}f_id\upsilon_i-\frac{1}{\upsilon_i(B_r(\hat{z}_i))}\int_{B_r(\hat{z}_i)}f_id\upsilon_i\right|=0\]
holds for every $r>0$ with $B_r(z_{\infty}) \subset B_R(x_{\infty})$.
\end{proposition}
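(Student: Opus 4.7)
The plan is to split the quantity into two pieces and control each by the fact that $d_i := \overline{z_i,\hat z_i} \to 0$. Writing $q$ for the conjugate exponent of $p$, setting $A_i := B_r(z_i)\triangle B_r(\hat z_i)$ and $L := \sup_{i\le\infty}\|f_i\|_{L^p(B_R(x_i))}$, and applying the triangle inequality, one has
\[
\left|\frac{1}{\upsilon_i(B_r(z_i))}\int_{B_r(z_i)}f_i\,d\upsilon_i - \frac{1}{\upsilon_i(B_r(\hat z_i))}\int_{B_r(\hat z_i)}f_i\,d\upsilon_i\right| \le \frac{\int_{A_i}|f_i|\,d\upsilon_i}{\upsilon_i(B_r(z_i))} + \frac{\bigl|\upsilon_i(B_r(z_i))-\upsilon_i(B_r(\hat z_i))\bigr|\cdot \bigl|\int_{B_r(\hat z_i)}f_i\,d\upsilon_i\bigr|}{\upsilon_i(B_r(z_i))\,\upsilon_i(B_r(\hat z_i))}.
\]
The limit measure hypothesis yields $\upsilon_i(B_r(z_i)),\,\upsilon_i(B_r(\hat z_i)) \to \upsilon_{\infty}(B_r(z_{\infty}))>0$, so the denominators are bounded below. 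In the second term, $|\upsilon_i(B_r(z_i))-\upsilon_i(B_r(\hat z_i))|$ vanishes in the limit, while H\"older's inequality gives $|\int_{B_r(\hat z_i)}f_i\,d\upsilon_i| \le L\,\upsilon_i(B_r(\hat z_i))^{1/q}$, which stays bounded; hence the second term tends to $0$.

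The essential point is therefore to show $\upsilon_i(A_i) \to 0$, since then H\"older once more gives $\int_{A_i}|f_i|\,d\upsilon_i \le L\,\upsilon_i(A_i)^{1/q} \to 0$ and the first term also vanishes. To prove this I would use the identity
\[\upsilon_i(A_i) = \upsilon_i(B_r(z_i)) + \upsilon_i(B_r(\hat z_i)) - 2\,\upsilon_i\bigl(B_r(z_i)\cap B_r(\hat z_i)\bigr)\]
and sandwich the intersection. The trivial upper bound is $\upsilon_i(B_r(z_i)\cap B_r(\hat z_i)) \le \upsilon_i(B_r(z_i)) \to \upsilon_{\infty}(B_r(z_{\infty}))$. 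For the matching lower bound, the triangle inequality gives $B_{r-d_i}(z_i) \subset B_r(\hat z_i)$, hence $B_{r-d_i}(z_i) \subset B_r(z_i)\cap B_r(\hat z_i)$; fixing $\epsilon>0$ and noting that eventually $d_i<\epsilon$, one gets $\liminf_i \upsilon_i(B_r(z_i)\cap B_r(\hat z_i)) \ge \liminf_i \upsilon_i(B_{r-\epsilon}(z_i)) = \upsilon_{\infty}(B_{r-\epsilon}(z_{\infty}))$. Letting $\epsilon\to 0$ and invoking the left-continuity of $s\mapsto\upsilon_{\infty}(B_s(z_{\infty}))$ (automatic from $B_s = \bigcup_{t<s}B_t$ and countable additivity), the intersection measure converges to $\upsilon_{\infty}(B_r(z_{\infty}))$, so $\upsilon_i(A_i)\to 0$.

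I do not anticipate a genuine obstacle; the only point worth emphasising is that the naive outer bound $A_i \subset B_{r+d_i}(z_i)\setminus B_{r-d_i}(z_i)$ would force one to prove $\upsilon_{\infty}(\partial B_r(z_{\infty}))=0$, which need not follow from the abstract doubling hypotheses (1)--(3) alone. The inner-approximation strategy above sidesteps this by exploiting monotonicity from inside and the automatic left-continuity of $s\mapsto\upsilon_{\infty}(B_s(z_{\infty}))$, so no atom-on-sphere issue arises.
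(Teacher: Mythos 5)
Your proof is correct, and it follows the same skeleton as the paper's (reduce to the symmetric difference $B_r(z_i)\triangle B_r(\hat z_i)$ and finish with H\"older against the uniform $L^p$-bound, the normalizing volumes being harmless since both converge to $\upsilon_{\infty}(B_r(z_{\infty}))>0$), but you handle the key step differently. The paper simply cites \cite[Lemma $3.3$]{co-mi7} for $\upsilon_i\bigl(B_r(z_i)\triangle B_r(\hat z_i)\bigr)\to 0$ and does not spell out the normalization; you instead prove this fact from scratch, using the inclusion $B_{r-\epsilon}(z_i)\subset B_r(z_i)\cap B_r(\hat z_i)$ for large $i$, the defining property of the limit measure, and continuity from below of $s\mapsto\upsilon_{\infty}(B_s(z_{\infty}))$, which sandwiches the intersection measure and hence kills the symmetric difference. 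What your route buys is self-containedness and generality: it uses only the abstract hypotheses of this subsection (limit measure plus doubling, in fact only the limit-measure property), whereas the cited Colding--Minicozzi lemma lives in the Ricci-bounded manifold setting, so your argument is arguably the more natural one at this level of generality; what the paper's route buys is brevity and, in the geometric setting, a quantitative bound on the symmetric difference rather than a mere limit statement. Your closing remark is also well taken: the outer-annulus bound would require knowing that spheres are $\upsilon_{\infty}$-null, which the inner approximation avoids. The only points left implicit in your write-up --- that $\overline{z_i,\hat z_i}\to 0$ follows from the almost isometries, and that $\upsilon_{\infty}(B_r(z_{\infty}))>0$ follows from doubling together with $\upsilon_{\infty}(B_1(x_{\infty}))=1$ rather than being part of the limit-measure hypothesis itself --- are routine and do not affect correctness.
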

\begin{proof}
\cite[Lemma $3.3$]{co-mi7} yields $\lim_{i \to \infty}\upsilon_i(B_r(z_i) \Delta B_r(\hat{z}_i))=0$, where $A \Delta B :=(A \setminus B) \cup (B \setminus A)$.
Thus the H$\ddot{\text{o}}$lder inequality yields
\[\left|\int_{B_r(z_i)}f_id\upsilon_i-\int_{B_r(\hat{z}_i)}f_id\upsilon_i\right| \le \left(\upsilon_i(B_r(z_i) \Delta B_r(\hat{z}_i))\right)^{1/q}||f_i||_{L^p(B_R(x_i))} \to 0\]
as $i \to \infty$.
\end{proof}
It is easy to check the following proposition.
Compare with \cite[Proposition $4.5$]{Ho}:
\begin{proposition}\label{linear}
Let $g_i \in L^{\infty}(B_R(x_i))$ for every $i \le \infty$ with $\sup_{i\le \infty}||g_i||_{L^{\infty}}<\infty$.
Assume that $g_i$ converges strongly to $g_{\infty}$ at $z_{\infty} \in B_R(x_{\infty})$ and that $f_i$ converges weakly to $f_{\infty}$ at $z_{\infty}$.
Then $g_if_i$ converges weakly to $g_{\infty}f_{\infty}$ at $z_{\infty}$.  
\end{proposition}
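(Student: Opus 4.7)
The plan is to pivot through the constant $a_r:=\frac{1}{\upsilon_\infty(B_r(z_\infty))}\int_{B_r(z_\infty)}g_\infty\,d\upsilon_\infty$, which satisfies $|a_r|\le M:=\sup_{i\le\infty}\|g_i\|_{L^\infty}$. Fix any sequence $z_i\to z_\infty$ and write $\phi_i(r),\phi_\infty(r)$ for the normalized ball integrals of $g_if_i$ on $B_r(z_i)$ and of $g_\infty f_\infty$ on $B_r(z_\infty)$. The aim is to prove $\lim_{r\to 0}\limsup_{i\to\infty}|\phi_i(r)-\phi_\infty(r)|=0$, since this immediately yields both semicontinuity conditions of Definition \ref{weak1} for $\{g_if_i\}$.

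The first step is the pivot decomposition $\phi_i(r)-\phi_\infty(r)=A_i(r)+B_i(r)-C(r)$, where
\[A_i(r)=\frac{1}{\upsilon_i(B_r(z_i))}\int_{B_r(z_i)}(g_i-a_r)f_i\,d\upsilon_i,\]
\[B_i(r)=a_r\left(\frac{1}{\upsilon_i(B_r(z_i))}\int_{B_r(z_i)}f_i\,d\upsilon_i-\frac{1}{\upsilon_\infty(B_r(z_\infty))}\int_{B_r(z_\infty)}f_\infty\,d\upsilon_\infty\right),\]
\[C(r)=\frac{1}{\upsilon_\infty(B_r(z_\infty))}\int_{B_r(z_\infty)}(g_\infty-a_r)f_\infty\,d\upsilon_\infty.\]
The term $B_i(r)$ is handled directly: weak convergence of $f_i$ at $z_\infty$ (applied along $z_i\to z_\infty$) together with the uniform bound $|a_r|\le M$ yields $\lim_{r\to 0}\limsup_i|B_i(r)|=0$.

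For $A_i(r)$ I would apply H\"older's inequality with conjugate exponents $p,q$; the pointwise bound $|g_i-a_r|^q\le(2M)^{q-1}|g_i-a_r|$ reduces the first H\"older factor to $(2M)^{1/p}\alpha_i(r)^{1/q}$, where $\alpha_i(r)$ is the $L^1$-mean of $|g_i-a_r|$ on $B_r(z_i)$, and strong convergence of $g_i$ at $z_\infty$ (Definition \ref{8}, first condition) gives $\lim_{r\to 0}\limsup_i\alpha_i(r)=0$. The second H\"older factor is controlled for each fixed $r>0$ by $L/\upsilon_i(B_r(z_i))^{1/p}$, which remains finite as $i\to\infty$ since $\upsilon_i(B_r(z_i))\to\upsilon_\infty(B_r(z_\infty))>0$. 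The term $C(r)$ is estimated by the parallel H\"older argument after establishing $\lim_{r\to 0}\frac{1}{\upsilon_\infty(B_r(z_\infty))}\int_{B_r(z_\infty)}|g_\infty-a_r|\,d\upsilon_\infty=0$; this follows from the second condition in Definition \ref{8} combined with the weak-convergence corollary $b_{i,r}:=\frac{1}{\upsilon_i(B_r(z_i))}\int_{B_r(z_i)}g_i\,d\upsilon_i\to a_r$ and the triangle inequality, transferring the $L^1$-smallness from the $M_i$-side to the limit side.

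The principal obstacle is the interaction of the two H\"older factors in $A_i(r)$ and $C(r)$: the first collapses as $r\to 0$ while the second carries a factor $\upsilon_\infty(B_r(z_\infty))^{-1/p}$ that could grow. This is reconciled by the strict order of limits, $i\to\infty$ first and $r\to 0$ second, which keeps the second factor finite at each stage, while the strong-convergence vanishing of the first factor (together with the $L^p$-bound $\|f_i\|_{L^p}\le L$ and the local doubling of $\upsilon_\infty$) provides the uniform control needed to push the product to zero in the final limit.
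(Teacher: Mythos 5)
Your reduction of the statement to $\lim_{r\to 0}\limsup_{i\to\infty}|\phi_i(r)-\phi_\infty(r)|=0$, the pivot decomposition through $a_r$, and the treatment of $B_i(r)$ are sound, as is your observation that the two conditions of Definition \ref{8} combine to give $\lim_{r\to 0}\frac{1}{\upsilon_\infty(B_r(z_\infty))}\int_{B_r(z_\infty)}|g_\infty-a_r|\,d\upsilon_\infty=0$. The gap is in $A_i(r)$ and $C(r)$, and it is exactly the ``principal obstacle'' you name but do not resolve. After H\"older and the $\limsup_{i\to\infty}$ at fixed $r$, your estimate reads
\[
\limsup_{i\to\infty}|A_i(r)|\le (2M)^{1/p}\,L\,\Bigl(\limsup_{i\to\infty}\alpha_i(r)\Bigr)^{1/q}\,\upsilon_\infty(B_r(z_\infty))^{-1/p},
\]
and the subsequent limit $r\to 0$ pits an unquantified decay of $\limsup_i\alpha_i(r)$ against the blow-up of $\upsilon_\infty(B_r(z_\infty))^{-1/p}$; the ``strict order of limits'' does not help, because the problematic factor survives the $i$-limit untouched and explodes precisely in the $r$-limit, where no rate for the other factor is available. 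The same defect affects $C(r)$: its second H\"older factor is the normalized $L^p$-mean of $f_\infty$ over $B_r(z_\infty)$, which need not stay bounded as $r\to 0$ at an arbitrary point $z_\infty$.

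Moreover, this is not a technicality fixable by a sharper inequality from the same data: smallness of $\frac{1}{\upsilon_i(B_r(z_i))}\int_{B_r(z_i)}|g_i-a_r|\,d\upsilon_i$ only says that the deviation set of $g_i$ is small in measure, and an $L^p$-bounded $f_i$ may concentrate its mass exactly there. For instance, on the constant sequence $(\mathbf{R},0,\mathcal{L}^1/2)$ with $p=2$, $z_\infty=0$, $f_\infty=g_\infty=0$: at scales $r_k=2^{-k}$ place inside $(r_k/2,r_k)$ two finely interleaved sets $A_k^{+},A_k^{-}$ of measure $r_k^{3/2}$ each, and for infinitely many $i$ (for each $k$) set $f_i=r_k^{-1/2}(1_{A_k^{+}}-1_{A_k^{-}})$ and $g_i=1_{A_k^{+}}-1_{A_k^{-}}$; then $\sup_i||f_i||_{L^2}<\infty$, $f_i$ converges weakly to $0$ at $0$ (the interleaving kills the averages), $g_i$ converges strongly to $0$ at $0$ (its support has relative measure $\le C\sqrt{r}$ in $B_r$), yet the average of $g_if_i$ over $B_{r_k}(0)$ equals $1$ for those $i$, so the iterated limit does not vanish. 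A complete argument therefore has to use information your proof never invokes and which is available only at a.e.\ point, e.g.\ that $z_\infty$ is a Lebesgue point of $|f_\infty|^p$ and that the local means $\frac{1}{\upsilon_i(B_r(z_i))}\int_{B_r(z_i)}|f_i|^p\,d\upsilon_i$ remain bounded as $r\to 0$ along the sequence, which is how the paper controls analogous pairings elsewhere (the sets $\hat{K}_{\hat{L}}$ in the proofs of Propositions \ref{tensor com}, \ref{compat} and \ref{contr}); once such a bound is in hand, your H\"older step closes immediately, since the factor $\upsilon_i(B_r(z_i))^{-1/p}$ is replaced by a constant.
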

The next proposition is a fundamental result of the weak convergence.
Compare with Corollary \ref{ell1}:
\begin{proposition}\label{weak2}
Assume  that 
\[\liminf_{r \to 0}\left(\limsup_{i \to \infty}\left| \frac{1}{\upsilon_i (B_r(z_i))}\int_{B_r(z_i)}f_id\upsilon_i-\frac{1}{\upsilon_{\infty} (B_r(z_{\infty}))}\int_{B_r(z_{\infty})}f_{\infty}d\upsilon_{\infty} \right|\right)=0\]
holds for a.e. $z_{\infty} \in B_R(x_{\infty})$, where $z_i \to z_{\infty}$.
Then 
\[\lim_{i \to \infty}\int_{B_R(x_i)}f_id\upsilon_i=\int_{B_R(x_{\infty})}f_{\infty}d\upsilon_{\infty}.\]
\end{proposition}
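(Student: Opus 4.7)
The plan is to adapt the Vitali-style covering argument in the proof of Proposition \ref{135}, now for signed $f_i$, using H\"older's inequality and the uniform $L^p$-bound $L := \sup_{i \le \infty}\|f_i\|_{L^p(B_R(x_i))} < \infty$ to control tail integrals. Let $q$ denote the conjugate exponent of $p$.

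First, let $K \subset B_R(x_\infty)$ be the full-measure subset on which the hypothesis holds. Fix $\epsilon > 0$. For every $w \in K$ the $\liminf$ hypothesis yields arbitrarily small radii $r$ --- which we may additionally take to satisfy $\upsilon_\infty(\partial B_r(w)) = 0$, since only countably many $r$ can fail this --- such that for every $w_i \to w$,
\[
\limsup_{i \to \infty}\left| \frac{1}{\upsilon_i(B_r(w_i))}\int_{B_r(w_i)} f_i\, d\upsilon_i - \frac{1}{\upsilon_\infty(B_r(w))}\int_{B_r(w)} f_\infty\, d\upsilon_\infty \right| < \epsilon.
\]
As in the proof of Proposition \ref{135}, a standard Vitali covering argument produces a pairwise disjoint countable collection $\{\overline{B}_{r_j}(w_j)\}_j$ with $w_j \in K$, $\overline{B}_{5r_j}(w_j) \subset B_R(x_\infty)$, each $r_j$ chosen from the above, and $K \setminus \bigcup_{j=1}^N \overline{B}_{r_j}(w_j) \subset \bigcup_{j > N}\overline{B}_{5r_j}(w_j)$ for every $N$. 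By doubling, $\sum_j \upsilon_\infty(B_{5r_j}(w_j)) < \infty$, so pick $N_0$ with $\sum_{j > N_0}\upsilon_\infty(B_{5r_j}(w_j)) < \epsilon$ and put $E := \bigcup_{j=1}^{N_0}\overline{B}_{r_j}(w_j)$, giving $\upsilon_\infty(B_R(x_\infty) \setminus E) < \epsilon$.

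Second, for each $j \le N_0$ pick $w_{j,i} \to w_j$ and set $E_i := \bigcup_{j=1}^{N_0}\overline{B}_{r_j}(w_{j,i})$. Decompose
\[
\int_{B_R(x_i)} f_i\, d\upsilon_i - \int_{B_R(x_\infty)} f_\infty\, d\upsilon_\infty = \mathrm{I}_i + \mathrm{II}_i - \mathrm{III}_i,
\]
with $\mathrm{I}_i := \int_{B_R(x_i)\setminus E_i} f_i\,d\upsilon_i$, $\mathrm{II}_i := \sum_{j=1}^{N_0}\bigl(\int_{B_{r_j}(w_{j,i})} f_i\,d\upsilon_i - \int_{B_{r_j}(w_j)} f_\infty\,d\upsilon_\infty\bigr)$, and $\mathrm{III}_i := \int_{B_R(x_\infty)\setminus E} f_\infty\,d\upsilon_\infty$. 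H\"older's inequality yields $|\mathrm{III}_i| \le \|f_\infty\|_{L^p}\upsilon_\infty(B_R(x_\infty)\setminus E)^{1/q} \le \|f_\infty\|_{L^p}\epsilon^{1/q}$. Writing ball integrals as (mean)$\cdot$(measure) and using both the per-ball asymptotic equality above and the convergence $\upsilon_i(B_{r_j}(w_{j,i})) \to \upsilon_\infty(B_{r_j}(w_j))$, which holds since $\upsilon_\infty(\partial B_{r_j}(w_j)) = 0$, we obtain $|\mathrm{II}_i| \le C\epsilon$ for all large $i$. By \cite[Lemma $3.3$]{co-mi7}, as in the proof of Proposition \ref{weak0}, the balls $B_{r_j}(w_{j,i})$ are asymptotically pairwise disjoint and $\upsilon_i(E_i) \to \upsilon_\infty(E)$, so $\upsilon_i(B_R(x_i)\setminus E_i) < 2\epsilon$ for all large $i$; H\"older then bounds $|\mathrm{I}_i| \le L\,\upsilon_i(B_R(x_i)\setminus E_i)^{1/q} \le L(2\epsilon)^{1/q}$. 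Sending $i \to \infty$ and then $\epsilon \to 0$ concludes the proof.

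The main obstacle is the estimate of the tail term $\mathrm{I}_i$: since $f_i$ is signed, the monotone discarding in Proposition \ref{135} is unavailable, and the hypothesis $p > 1$ is essential in providing, via H\"older's inequality, a uniform-in-$i$ modulus of absolute continuity for $\int f_i\,d\upsilon_i$ with respect to $\upsilon_i$. The doubling assumption keeps the uncovered $\upsilon_\infty$-measure small, and measure convergence on balls with negligible boundary transports this smallness to the $\upsilon_i$-side.
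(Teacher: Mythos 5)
Your argument is correct and follows essentially the same route as the paper's proof: a Lebesgue/Vitali covering by small balls on which the averaged quantities nearly agree, convergence of ball measures to pass the per-ball estimates to the $\upsilon_i$-side, and H\"older with the uniform $L^p$-bound to control the uncovered tails (the paper invokes Proposition \ref{weak0} at the outset to make the hypothesis independent of the choice of approximating points, which is the only step you leave implicit). Your extra requirement $\upsilon_\infty(\partial B_r(w))=0$ is harmless but unnecessary here, since the definition of the limit measure already gives $\upsilon_i(B_r(y_i)) \to \upsilon_\infty(B_r(y_\infty))$ for every $r>0$ and every $y_i \to y_\infty$.
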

\begin{proof}
By Proposition \ref{weak0}, there exists $K_{\infty} \subset B_R(x_{\infty})$ such that $\upsilon_{\infty}(B_R(x_{\infty}) \setminus K_{\infty})=0$ and that for every $z_{\infty} \in K_{\infty}$, every $\epsilon>0$ and every $\delta>0$ there exists $r:=r(z_{\infty}, \epsilon, \delta)>0$ with $r<\delta$ such that
\[\limsup_{i \to \infty}\left|\frac{1}{\upsilon_i(B_r(z_i))}\int_{B_r(z_i)}f_id\upsilon_i-\frac{1}{\upsilon_{\infty}(B_r(z_{\infty}))}\int_{B_r(z_{\infty})}f_{\infty}d\upsilon_{\infty}\right|<\epsilon \]
holds for every $z_i \to z_{\infty}$. 
Fix $\epsilon>0$.
Applying a standard covering argument to $\mathcal{B}:=\{\overline{B}_{r(z_{\infty}, \epsilon, 1/k)}(z_{\infty})\}_{z_{\infty} \in K_{\infty}, k \in \mathbf{N}}$ yields that
there exists a countable pairwise disjoint collection $\{\overline{B}_{r_i}(z_i)\}_i \subset \mathcal{B}$ such that  $K_{\infty} \setminus \bigcup_{i=1}^N\overline{B}_{r_i}(z_i) \subset \bigcup_{i=N+1}^{\infty}\overline{B}_{5r_i}(z_i) \subset B_R(x_{\infty})$ holds for every $N$.
Fix $N_0$ with $\sum_{i=N_0+1}^{\infty}\upsilon_{\infty}(B_{5r_i}(z_i)) <\epsilon$.
Then the H$\ddot{\text{o}}$lder inequality yields that 
\begin{align*}
\int_{B_R(x_j)\setminus \bigcup_{i=1}^{N_0}B_{r_i}(z_{i, j})}|f_{j}|d\upsilon_j &\le ||f_j||_{L^p(B_R(x_j))}\left( \upsilon_j \left(B_R(x_j)\setminus \bigcup_{i=1}^{N_0}B_{r_i}(z_{i, j})\right) \right)^{1/q} \\
& \le L\epsilon^{1/q}
\end{align*}
holds for every sufficiently large $j \le \infty$, where $z_{i, j} \to z_i$ as $j \to \infty$.
Thus 
\begin{align*}
\int_{B_R(x_{\infty})}f_{\infty}d\upsilon_{\infty}&=\sum_{i=1}^{N_0}\int_{B_{r_i}(z_i)}f_{\infty}d\upsilon_{\infty} \pm \Psi (\epsilon; L, p)\\
&=\sum_{i=1}^{N_0}\int_{B_{r_i}(z_{i,j})}f_{j}d\upsilon_{j} \pm \Psi (\epsilon; L, p, \upsilon_{\infty}(B_R(x_{\infty})))\\
&=\int_{B_R(x_{j})}f_{j}d\upsilon_{j} \pm \Psi (\epsilon; L, p, \upsilon_{\infty}(B_R(x_{\infty})))
\end{align*}
holds for every sufficiently large $j$.
Therefore we have the assertion.
\end{proof}
The next corollary is a direct consequence of Proposition \ref{weak2}.
\begin{corollary}\label{098}
If $f_i$ converges weakly to $f_{\infty}$ at a.e. $z_{\infty} \in B_R(x_{\infty})$, then $f_i$ converges weakly to $f_{\infty}$ on $B_R(x_{\infty})$.
\end{corollary}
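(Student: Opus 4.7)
The plan is to deduce weak convergence at an arbitrary point $z_\infty \in B_R(x_\infty)$ by applying Proposition \ref{weak2} to small balls centered at $z_\infty$ and then dividing by the corresponding volumes, which converge by measured Gromov-Hausdorff convergence.

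Fix $z_\infty \in B_R(x_\infty)$ and a sequence $z_i \to z_\infty$. Choose $r>0$ small with $B_r(z_\infty) \subset B_R(x_\infty)$. I first verify that the hypothesis of Proposition \ref{weak2}, applied to $B_r(z_\infty)$ in place of $B_R(x_\infty)$, holds at a.e.\ point. Write $A_t(w_\infty) := \upsilon_\infty(B_t(w_\infty))^{-1}\int_{B_t(w_\infty)}f_\infty\,d\upsilon_\infty$ and $A^i_t(w_i) := \upsilon_i(B_t(w_i))^{-1}\int_{B_t(w_i)}f_i\,d\upsilon_i$. The a.e.\ weak convergence hypothesis gives, at a.e.\ $w_\infty \in B_R(x_\infty)$, the two one-sided $\liminf_{t \to 0}$ inequalities of Definition \ref{weak1}. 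Because both inequalities control $\liminf$ (not $\limsup$) as $t \to 0$, they jointly yield the following: for any $\eta > 0$ there exists $\delta > 0$ such that for every $t < \delta$ and every $w_i \to w_\infty$,
\[ A_t(w_\infty) - \eta \le \liminf_{i \to \infty}A^i_t(w_i) \le \limsup_{i \to \infty}A^i_t(w_i) \le A_t(w_\infty) + \eta, \]
so $\limsup_{i \to \infty}|A^i_t(w_i) - A_t(w_\infty)| \le \eta$. Hence $\liminf_{t \to 0}\limsup_{i \to \infty}|A^i_t(w_i) - A_t(w_\infty)| = 0$, which is precisely the hypothesis of Proposition \ref{weak2} at $w_\infty$.

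The argument of Proposition \ref{weak2} is a Vitali-type covering on the ambient ball combined with a H\"older estimate, and it transfers verbatim from $B_R(x_\infty)$ to the open ball $B_r(z_\infty)$, using that $f_i \in L^p(B_r(z_i))$ with norms bounded by $L$. Applying it yields
\[ \lim_{i \to \infty}\int_{B_r(z_i)}f_i\,d\upsilon_i = \int_{B_r(z_\infty)}f_\infty\,d\upsilon_\infty. \]
Combined with the measured Gromov-Hausdorff convergence $\upsilon_i(B_r(z_i)) \to \upsilon_\infty(B_r(z_\infty))$ built into the standing setup, and the positivity of this limit (the doubling condition forces $\upsilon_\infty(B_r(z_\infty))>0$ for any $z_\infty$ in a bounded region, since $\upsilon_\infty(B_1(x_\infty))=1$), division gives $\lim_{i \to \infty}A^i_r(z_i) = A_r(z_\infty)$. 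Thus both semicontinuity differences in Definition \ref{weak1} vanish at $z_\infty$ for every admissible $r$, so their $\liminf_{r \to 0}$ is trivially nonnegative. As $z_i \to z_\infty$ was arbitrary, $f_i$ converges weakly to $f_\infty$ at $z_\infty$.

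The main obstacle is the translation between the one-sided $\liminf_{t\to 0}$ inequalities defining weak convergence and the absolute-value $\liminf_{t\to 0}\limsup_i$ condition in the hypothesis of Proposition \ref{weak2}. This works cleanly only because each one-sided inequality is a $\liminf$ (which forces the inequality for \emph{every} sufficiently small $t$), allowing the two signs to be combined into a uniform two-sided bound as above. Once this is in place, the remaining transfer of the proof of Proposition \ref{weak2} from $B_R(x_\infty)$ to $B_r(z_\infty)$ and the final volume division are routine.
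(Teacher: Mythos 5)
Your proof is correct and follows essentially the route the paper intends: Corollary \ref{098} is stated there as a direct consequence of Proposition \ref{weak2}, and your argument supplies exactly that deduction by converting the two one-sided $\liminf_{t\to 0}$ semicontinuity conditions at a.e.\ point into the absolute-value hypothesis of Proposition \ref{weak2}, then applying it on arbitrary balls $B_r(z_{\infty}) \subset B_R(x_{\infty})$ and dividing by the ball volumes, which converge by the measured Gromov--Hausdorff convergence of the standing setting. No gaps; the combination of the two one-sided bounds into a two-sided bound for all sufficiently small $t$, which you flag as the delicate point, is handled correctly.
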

We now give a compactness result for the weak convergence:
\begin{proposition}\label{weak com}
Let $g_i \in L^p(B_R(x_i))$ for every $i<\infty$ with $\sup_{i<\infty}||g_i||_{L^p}<\infty$.
Then there exist $g_{\infty} \in L^p(B_R(x_{\infty}))$ and a subsequence $\{g_{i(j)}\}_j$ of $\{g_i\}_i$ such that $g_{i(j)}$ converges weakly to $g_{\infty}$ on $B_R(x_{\infty})$. 
\end{proposition}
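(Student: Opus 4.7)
The plan is a diagonal extraction on a countable family of balls, followed by identifying the limiting ball-averages as those of a single $L^p$-function obtained as the Radon-Nikodym density of a limit signed measure.

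First I would fix a countable dense subset $\{z_k\}_k\subset B_R(x_\infty)$ and sequences $z_{k,i}\in X_i$ with $z_{k,i}\to z_k$. For each admissible pair $(z_k,r)$ with $r\in\mathbf{Q}_{>0}$ and $\overline{B}_r(z_k)\subset B_R(x_\infty)$, H\"older together with the uniform doubling of $\upsilon_i$ gives $|\int_{B_r(z_{k,i})}g_i\,d\upsilon_i|\le \|g_i\|_{L^p}\upsilon_i(B_r(z_{k,i}))^{1/q}$, which is uniformly bounded in $i$. A diagonal argument then yields a subsequence $\{i(j)\}$ such that $\alpha(z_k,r):=\lim_j\int_{B_r(z_{k,i(j)})}g_{i(j)}\,d\upsilon_{i(j)}$ exists for every such pair, and Proposition \ref{weak0} shows the value is independent of the approximating sequence. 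Since $r\mapsto \upsilon_\infty(B_r(z))$ is continuous outside a countable set of radii, $\alpha$ extends consistently to all balls $B_r(z_\infty)\subset B_R(x_\infty)$ at continuity radii via density in both $z$ and $r$.

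Taking limits in the H\"older bound gives $|\alpha(z_\infty,r)|\le L\,\upsilon_\infty(B_r(z_\infty))^{1/q}$ with $L:=\sup_i\|g_i\|_{L^p}$. A Carath\'eodory/Vitali extension, available in the doubling setting, promotes $\alpha$ to a signed Radon measure $\mu$ on $B_R(x_\infty)$ obeying the same ball-wise bound; in particular $\mu\ll\upsilon_\infty$, and I set $g_\infty:=d\mu/d\upsilon_\infty$. Propagating the ball inequality to arbitrary Borel sets by a Vitali covering and invoking the maximal-function characterization of $L^p$ in doubling metric measure spaces yields $g_\infty\in L^p(B_R(x_\infty))$ with $\|g_\infty\|_{L^p}\le L$.

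Finally, Lebesgue's differentiation theorem applied to both $\mu$ and $g_\infty\,d\upsilon_\infty$ shows that for $\upsilon_\infty$-a.e. $z_\infty$, as $r\to 0$ both $\alpha(z_\infty,r)/\upsilon_\infty(B_r(z_\infty))$ and $(1/\upsilon_\infty(B_r(z_\infty)))\int_{B_r(z_\infty)}g_\infty\,d\upsilon_\infty$ tend to $g_\infty(z_\infty)$. This verifies the condition in Definition \ref{weak1} at $\upsilon_\infty$-a.e. $z_\infty\in B_R(x_\infty)$, and Corollary \ref{098} upgrades this to weak convergence on all of $B_R(x_\infty)$. The main obstacle I foresee is the $L^p$-integrability of $g_\infty$: absolute continuity and $L^1$-density follow directly from the abstract Radon-Nikodym theorem, but obtaining the sharp $L^p$-bound requires propagating the reverse H\"older-type inequality $|\mu(B)|\le L\,\upsilon_\infty(B)^{1/q}$ from balls to general Borel sets via a covering argument and doubling.
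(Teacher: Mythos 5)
Your architecture (diagonal extraction of ball averages, a limit measure $\mu$, then Lebesgue differentiation plus Corollary \ref{098}/Proposition \ref{weak2}) is reasonable, but the pivotal step --- that the limit object is absolutely continuous with an $L^p$ density --- is exactly where your justification fails, and it is the analytic heart of the proposition. First, the ball-wise bound $|\mu(B_r(z))|\le L\,\upsilon_{\infty}(B_r(z))^{1/q}$ does \emph{not} imply $\mu\ll\upsilon_{\infty}$ once $q>1$: the restriction of $\alpha$-dimensional Hausdorff measure to an Ahlfors $\alpha$-regular Cantor set in $\mathbf{R}$ with $\alpha\ge 1/q$ satisfies such a bound with respect to Lebesgue measure yet is singular, so "in particular $\mu\ll\upsilon_{\infty}$" does not follow, and the Radon--Nikodym theorem cannot be invoked (it needs absolute continuity as a hypothesis). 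Second, even if you upgrade the bound to all Borel sets, the inequality $|\mu|(A)\le L\,\upsilon_{\infty}(A)^{1/q}$ only characterizes weak-$L^p$: on $(0,1)$ the function $x^{-1/p}$ satisfies $\int_A x^{-1/p}dx\le q\,|A|^{1/q}$ but is not in $L^p$. So the Vitali covering/maximal-function step cannot deliver $g_{\infty}\in L^p$ (note also that the ball bound gives no control of the maximal function at small radii, since $\upsilon_{\infty}(B_r(z))^{-1/p}\to\infty$ as $r\to 0$). A further, more minor, point: a set function defined only on balls carries no additivity, so a Carath\'eodory extension is not automatic; the natural construction is a weak-$*$ limit of the push-forwards of $g_i\,\upsilon_i$ under $\psi_i$, which exists by the uniform $L^1$-bound coming from H\"older and doubling.

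The statement is saved because $\mu$ is not an arbitrary measure obeying the ball bound, but you must use that. One repair is duality: for every Lipschitz $\phi$ with compact support in $B_R(x_{\infty})$, H\"older gives $\bigl|\int_{B_R(x_i)}(\phi\circ\psi_i)\,g_i\,d\upsilon_i\bigr|\le L\,\|\phi\circ\psi_i\|_{L^q(\upsilon_i)}$, and measured Gromov--Hausdorff convergence gives $\|\phi\circ\psi_i\|_{L^q(\upsilon_i)}\to\|\phi\|_{L^q(\upsilon_{\infty})}$; hence the limit functional is bounded on a dense subspace of $L^q(B_R(x_{\infty}))$ and is represented by some $g_{\infty}\in L^p$ with $\|g_{\infty}\|_{L^p}\le L$, after which your differentiation step and Corollary \ref{098} finish the proof. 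The paper instead avoids measures altogether: it reduces to $g_i\ge 0$, mollifies by setting $g_i^r(z):=\upsilon_i(B_r(z))^{-1}\int_{B_r(z)}g_i\,d\upsilon_i$, shows via \cite[Lemma $3.3$]{co-mi7} and H\"older that $\{g_i^r\}_i$ is asymptotically uniformly equicontinuous and uniformly bounded, extracts limits $g_{\infty}^r$ by Proposition \ref{conti com}, defines $g_{\infty}:=\liminf_{r\to 0}g_{\infty}^r$, obtains the $L^p$-bound through Jensen/H\"older, Fubini and doubling ($\|g_{\infty}^r\|_{L^p}^p\le 2^{2\kappa}\liminf_i\|g_i\|_{L^p}^p$) followed by Fatou, and then concludes with Proposition \ref{weak2}. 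Without one of these mechanisms your argument has a genuine gap, precisely at the point you yourself flagged as the main obstacle.
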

\begin{proof}
We only give a proof of the case $p<\infty$ because the proof of the case $p=\infty$ is similar.
Define $g_i(w)\equiv 0$ on $M_i \setminus B_R(x_i)$.
By a decomposition $g\equiv g_+ - g_-$, where $g_+:= \max \{g, 0\}$ and $g_-:=\max \{-g, 0\}$,
without loss of generality we can assume that $g_i \ge 0$ holds for every $i<\infty$.
For every $r>0$ and every $i<\infty$, we define a function $g_i^r$ on $B_R(m_i)$ by
\[g_i^r(z):=\frac{1}{\upsilon_i(B_r(z))}\int_{B_r(z)}g_id\upsilon_i.\]
By \cite[Lemma $3.3$]{co-mi7} and the H$\ddot{\text{o}}$lder inequality, for every $r>0$ it is easy to check that $\{g_i^r\}_i$ 
is asymptotically uniformly equicontinuous on $B_R(x_{\infty})$ and that  $\sup_{i<\infty}||g_i^r||_{L^{\infty}}<\infty$.
Thus Proposition \ref{conti com} yields that there exist $\{g_{\infty}^r\}_{r \in \mathbf{Q}_{>0}} \subset C^0(B_R(x_{\infty}))$ and a subsequence $\{i(j)\}_j$ such that $g_{i(j)}^r \to g_{\infty}^r$ on $B_{R}(x_{\infty})$ for every $r \in \mathbf{Q}_{>0}$.
Let $g_{\infty}(z_{\infty}) := \liminf_{r \to 0}g^r_{\infty}(z_{\infty})$.
The H$\ddot{\text{o}}$lder inequality, Fubini's theorem,  Remark \ref{1234}, Proposition \ref{strong3} and \ref{weak2} yield that
\begin{align*}
\int_{B_R(x_{\infty})}|g^r_{\infty}|^pd\upsilon_{\infty}&=\lim_{j \to \infty}\int_{B_R(x_{i(j)})}|g^r_{i(j)}|^pd\upsilon_{i(j)}\\
&\le\liminf_{j \to \infty}\int_{X_{i(j)}}\int_{X_{i(j)}} \frac{1_{B_r(z)}(x)}{\upsilon_{i(j)}(B_r(z))}|g_{i(j)}|^p(x)d\upsilon_{i(j)}(x)d\upsilon_{i(j)}(z)\\
&=\liminf_{j \to \infty}\int_{X_{i(j)}}\int_{X_{i(j)}} \frac{1_{B_r(z)}(x)}{\upsilon_{i(j)}(B_r(z))}|g_{i(j)}|^p(x)d\upsilon_{i(j)}(z)d\upsilon_{i(j)}(x)\\
&=\liminf_{j \to \infty}\int_{X_{i(j)}}|g_{i(j)}|^p(x)\int_{X_{i(j)}} \frac{1_{B_r(z)}(x)}{\upsilon_{i(j)}(B_r(z))}d\upsilon_{i(j)}(z)d\upsilon_{i(j)}(x)\\
&\le \liminf_{j \to \infty}\int_{X_{i(j)}}|g_{i(j)}|^p(x)\int_{X_{i(j)}} \frac{2^{2\kappa}1_{B_r(x)}(z)}{\upsilon_{i(j)}(B_{r}(x))}d\upsilon_{i(j)}(z)d\upsilon_{i(j)}(x)\\
&\le 2^{2\kappa}\liminf_{j \to \infty}\int_{X_{i(j)}}|g_{i(j)}|^pd\upsilon_{i(j)},
\end{align*}
holds for every $r \in \mathbf{Q}_{>0}$ with $r<1$, where $\kappa=\kappa(1)$.
Therefore Fatou's lemma yields $g_{\infty} \in L^p(B_R(x_{\infty}))$.
Since it is easy to check that a sequence $\{g_{i(j)}\}_{j \le \infty}$ satisfies the assumption of Proposition \ref{weak2}, the assertion follows directly from Proposition \ref{weak2}. 
\end{proof}
\begin{corollary}\label{1}
Let $g_i \in L^p(B_R(x_i))$ for every $i<\infty$ and $g_{\infty} \in L^1_{\mathrm{loc}}(B_R(x_{\infty}))$.
Assume that $\sup_{i < \infty} ||g_i||_{L^p}< \infty$ and that for a.e. $z_{\infty} \in B_R(x_{\infty})$, we see that
\[\liminf_{t \to \infty}\left(\limsup_{i \to \infty} \left| \frac{1}{\upsilon_i (B_t(z_i))}\int_{B_t(z_i)}g_id\upsilon_i-\frac{1}{\upsilon_{\infty} (B_t(z_{\infty}))}\int_{B_t(z_{\infty})}g_{\infty}d\upsilon_{\infty}\right| \right)=0\]
holds for every $z_i \to z_{\infty}$.
Then $g_{\infty} \in L^p(B_R(x_{\infty}))$.
\end{corollary}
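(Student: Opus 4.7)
The plan is to apply the weak compactness result Proposition \ref{weak com} to extract a weak limit $\hat{g}_{\infty} \in L^p(B_R(x_{\infty}))$ along a subsequence, and then show that $\hat{g}_{\infty}$ coincides with $g_{\infty}$ almost everywhere, thereby upgrading $g_{\infty}$ from $L^1_{\mathrm{loc}}$ to $L^p$.

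First, since $\sup_{i<\infty}\|g_i\|_{L^p(B_R(x_i))}<\infty$, Proposition \ref{weak com} produces a subsequence $\{g_{i(j)}\}_j$ and $\hat{g}_{\infty}\in L^p(B_R(x_{\infty}))$ such that $g_{i(j)}$ converges weakly to $\hat{g}_{\infty}$ on $B_R(x_{\infty})$. In particular, for every $z_{\infty}\in B_R(x_{\infty})$ and every $z_i\to z_{\infty}$, Definition \ref{weak1} yields
\[
\liminf_{r\to 0}\left|\frac{1}{\upsilon_{i(j)}(B_r(z_{i(j)}))}\int_{B_r(z_{i(j)})}g_{i(j)}\,d\upsilon_{i(j)}-\frac{1}{\upsilon_{\infty}(B_r(z_{\infty}))}\int_{B_r(z_{\infty})}\hat{g}_{\infty}\,d\upsilon_{\infty}\right|=0,
\]
where we take $\limsup_{j\to\infty}$ inside the $\liminf_{r\to 0}$ as in Definition \ref{weak1}.

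Next, combine this with the hypothesis on $g_{\infty}$ (applied along the same subsequence $\{i(j)\}_j$ and the same approximating sequence $z_{i(j)}\to z_{\infty}$). By the triangle inequality, for almost every $z_{\infty}\in B_R(x_{\infty})$ we obtain
\[
\liminf_{r\to 0}\left|\frac{1}{\upsilon_{\infty}(B_r(z_{\infty}))}\int_{B_r(z_{\infty})}\hat{g}_{\infty}\,d\upsilon_{\infty}-\frac{1}{\upsilon_{\infty}(B_r(z_{\infty}))}\int_{B_r(z_{\infty})}g_{\infty}\,d\upsilon_{\infty}\right|=0.
\]
Since $\upsilon_{\infty}$ is doubling (condition (2) in the setting), Lebesgue's differentiation theorem applies to both $\hat{g}_{\infty}\in L^1_{\mathrm{loc}}$ and $g_{\infty}\in L^1_{\mathrm{loc}}$, so the two averages above tend to $\hat{g}_{\infty}(z_{\infty})$ and $g_{\infty}(z_{\infty})$ respectively at a.e. $z_{\infty}$. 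Therefore $g_{\infty}=\hat{g}_{\infty}$ almost everywhere on $B_R(x_{\infty})$, which yields $g_{\infty}\in L^p(B_R(x_{\infty}))$.

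The only delicate point is to ensure the identification of the weak limit with $g_{\infty}$: the hypothesis is stated for \emph{every} approximating sequence $z_i\to z_{\infty}$, which is exactly what is needed so that we may plug in whichever approximating sequence is supplied by the Gromov-Hausdorff convergence when invoking Definition \ref{weak1} for $\hat{g}_{\infty}$. Once that matching is made, the conclusion follows from Lebesgue's differentiation theorem and requires no additional estimate.
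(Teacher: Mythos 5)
Your proposal follows essentially the same route as the paper's proof: extract an $L^p$ weak limit $\hat{g}_{\infty}$ via Proposition \ref{weak com} and identify it with $g_{\infty}$ a.e.\ by Lebesgue's differentiation theorem. Note only that the weak convergence of $g_{i(j)}$ to $\hat{g}_{\infty}$ on $B_R(x_{\infty})$ gives convergence of the ball averages for \emph{every} sufficiently small radius (since the measures of balls also converge), which is what makes your triangle-inequality step safe despite the hypothesis on $g_{\infty}$ involving only a $\liminf$ over radii.
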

\begin{proof}
Proposition \ref{weak com} yields that there exist $\hat{g}_{\infty} \in L^p(B_R(x_{\infty}))$ and a subsequence $\{g_{i(j)}\}_j$ of $\{g_i\}_i$ such that $g_{i(j)}$ converges weakly to $\hat{g}_{\infty}$ on $B_R(x_{\infty})$.
The assumption and Lebesgue's differentiation theorem yield that $\hat{g}_{\infty}(z_{\infty})=g_{\infty}(z_{\infty})$ holds for a.e. $z_{\infty} \in B_R(x_{\infty})$.
Therefore we have the assertion.
\end{proof}
\begin{definition}\label{app}
Assume $p<\infty$.
Let $f_{i,j} \in L^{\infty}(B_R(x_i))$ for every $i \le \infty$ and every $j<\infty$.
We say that \textit{$\{f_{i,j}\}_{i,j}$ is an $L^p$-approximate sequence of $f_{\infty}$} if the following three conditions hold:
\begin{enumerate}
\item $\sup_{i\le \infty}||f_{i,j}||_{L^{\infty}}<\infty$ for every $j$.
\item $f_{i, j}$ converges strongly to $f_{\infty, j}$ at a.e. $z_{\infty} \in B_R(x_{\infty})$ as $i \to \infty$ for every $j$.
\item $||f_{\infty}-f_{\infty, j}||_{L^p} \to 0$ as $j \to \infty$.
\end{enumerate}
\end{definition}
\begin{proposition}\label{876}
Assume $p<\infty$.
Then for every $g_{\infty} \in L^p(B_R(x_{\infty}))$ there exists an $L^p$-approximate sequence of $g_{\infty}$.
\end{proposition}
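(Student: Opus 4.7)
The plan is to approximate $g_{\infty}$ in $L^p$ by Lipschitz bounded functions on $X_{\infty}$ and then transfer these to the approximating spaces $X_i$ by composition with the almost-isometries $\psi_i$. Since the definition of an $L^p$-approximate sequence allows us to choose $f_{\infty,j}$ freely, choosing it to be Lipschitz reduces the strong-convergence requirement to the easy Remark \ref{1234}.

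First I would use that $p<\infty$, that $\upsilon_{\infty}$ is a Radon measure, and that $X_{\infty}$ is a proper geodesic space (so $\overline{B_R(x_{\infty})}$ is compact), to pick a sequence $h_j \in \mathrm{Lip}(X_{\infty}) \cap L^{\infty}(X_{\infty})$ with bounded support satisfying $\|g_{\infty} - h_j\|_{L^p(B_R(x_{\infty}))} \to 0$. This furnishes condition $(3)$ of Definition \ref{app} once we set $f_{\infty,j} := h_j|_{B_R(x_{\infty})}$.

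Next, for each $j$ and each $i<\infty$ with $R_i > R$, I would define $f_{i,j}(y) := h_j(\psi_i(y))$ on $B_R(x_i)$ (and set $f_{i,j} \equiv 0$ for the finitely many exceptional $i$). Condition $(1)$ is immediate from $\|f_{i,j}\|_{L^{\infty}} \le \|h_j\|_{L^{\infty}}$. For condition $(2)$, if $L_j$ is the Lipschitz constant of $h_j$, then the almost-isometry property of $\psi_i$ gives
\[
|f_{i,j}(y) - f_{i,j}(z)| \le L_j \bigl(\overline{y,z} + \epsilon_i\bigr) \quad \text{for all } y,z \in B_R(x_i),
\]
so $\{f_{i,j}\}_{i<\infty}$ is asymptotically uniformly equicontinuous on $B_R(x_{\infty})$. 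For $y_i \to y_{\infty}$ we also have $\psi_i(y_i) \to y_{\infty}$, hence $f_{i,j}(y_i) = h_j(\psi_i(y_i)) \to h_j(y_{\infty}) = f_{\infty,j}(y_{\infty})$. Remark \ref{1234} then upgrades this to the required pointwise strong convergence, finishing condition $(2)$.

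The only nontrivial input here is the density, in $L^p(B_R(x_{\infty}),\upsilon_{\infty})$, of bounded Lipschitz functions with bounded support. I expect this to be the main (mild) obstacle, but it is a standard fact for Radon measures on proper metric spaces: approximate an arbitrary $L^p$ function by bounded functions via truncation, by simple functions via the usual construction, and indicator functions of Borel sets by Lipschitz cutoffs built from distance-to-complement, passing to the limit by dominated convergence. Everything else is mechanical given the definitions and Remark \ref{1234}.
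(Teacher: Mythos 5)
Your proof is correct, but it takes a genuinely different route from the paper's. The paper first reduces to $g_{\infty} \in L^{\infty}$, then uses Lebesgue's differentiation theorem and a standard $5r$-covering argument to approximate $g_{\infty}$ in $L^p$ by a finite step function $\sum_k c_k 1_{B_{r_k}(z_k)}$, and transfers this to $X_i$ by replacing each ball with $B_{r_k}(z_{k,i})$ for $z_{k,i} \to z_k$; the required strong convergence is then checked through the convergence of ball measures and the H\"older inequality, so the whole construction stays inside the intrinsic ball-average formalism used throughout Section $3$. You instead use the density of bounded Lipschitz functions in $L^p(B_R(x_{\infty}), \upsilon_{\infty})$ (standard for a Radon measure on a proper space, as you indicate) and pull them back through the almost isometries $\psi_i$; this is essentially the Kuwae--Shioya-style approximation, and your estimate $|f_{i,j}(y)-f_{i,j}(z)| \le L_j(\overline{y,z}+\epsilon_i)$, together with $\psi_i(z_i) \to z_{\infty}$, gives strong convergence of $f_{i,j}$ to $f_{\infty,j}$ at \emph{every} point of $B_R(x_{\infty})$, not just almost every point. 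What each approach buys: the paper's step functions avoid any explicit use of $\psi_i$ and are reused elsewhere, while your construction is shorter and makes the comparison with Kuwae--Shioya's definition (Corollary \ref{KSequiv}) nearly tautological.

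One small caveat: since $\psi_i$ is only assumed to be a Borel map, $f_{i,j}=h_j\circ\psi_i$ need not be continuous, so you cannot literally invoke Remark \ref{1234}, which is stated for $C^0$ functions (and likewise the notion of asymptotic uniform equicontinuity is formulated there for continuous functions). This is harmless: the displayed almost-Lipschitz bound, applied to points of $B_t(z_i)$ and compared with the average of $h_j$ over $B_t(z_{\infty})$, yields both limits in Definition \ref{8} directly, which is exactly the computation behind Remark \ref{1234}. Either cite that computation instead of the remark, or note that the remark's proof only uses the asymptotic modulus of continuity.
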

\begin{proof}
Since $L^{\infty}(B_R(x_{\infty}))$ is dense in $L^p(B_R(x_{\infty}))$, without loss of generality we can assume $g_{\infty} \in L^{\infty}$.
Lebesgue's differentiation theorem yields that  
there exists $K_{\infty} \subset B_R(x_{\infty})$ such that $\upsilon_{\infty}(B_R(x_{\infty})\setminus K_{\infty})=0$ and that 
\[\lim_{r \to 0}\frac{1}{\upsilon_{\infty}(B_r(z_{\infty}))}\int_{B_r(z_{\infty})}\left|g_{\infty}-\frac{1}{\upsilon_{\infty}(B_r(z_{\infty}))}\int_{B_r(z_{\infty})}g_{\infty}d\upsilon_{\infty}\right|^pd\upsilon_{\infty}=0\]
holds for every $z_{\infty} \in K_{\infty}$.
Fix $j$.
A standard covering argument yields that there exists a pairwise disjoint collection $\{\overline{B}_{r_i}(z_i)\}_i$ such that $z_i \in K_{\infty}$, $\overline{B}_{5r_i}(z_i) \subset B_R(x_{\infty})$,
\[\frac{1}{\upsilon_{\infty}(B_{r_i}(z_{i}))}\int_{B_{r_i}(z_{i})}\left|g_{\infty}-\frac{1}{\upsilon_{\infty}(B_{r_i}(z_{i}))}\int_{B_{r_i}(z_{i})}g_{\infty}d\upsilon_{\infty}\right|^pd\upsilon_{\infty}<j^{-1}\]
and that $K_{\infty} \setminus \bigcup_{i=1}^N\overline{B}_{r_i}(z_i) \subset \bigcup_{i=N+1}^{\infty}\overline{B}_{5r_i}(z_i)$  holds for every $N$.
Fix $N_0$ with $\sum_{i=N_0+1}^{\infty}\upsilon_{\infty}(B_{5r_i}(z_i))<j^{-1}$.
Let 
\[g_{i,j}:=\sum_{k=1}^{N_0}\frac{1_{B_{r_i}(z_{i, j})}}{\upsilon_{\infty}(B_{r_i}(z_{i}))}\int_{B_{r_i}(z_{i})}g_{\infty}d\upsilon_{\infty}\]
where $z_{i,j} \to z_i$.
Then by the H$\ddot{\text{o}}$lder inequality, it is easy to check that $\{g_{i, j}\}_{i,j}$ is an $L^p$-approximate sequence of $g_{\infty}$.
\end{proof}
\begin{remark}\label{7890}
By the proof of Proposition \ref{876} and using suitable cutoff functions, it is easy to check that there exists an $L^p$-approximate sequence $\{g_{i, j}\}_{i, j}$ of $g_{\infty}$ such that $g_{i, j} \in C^0(B_R(x_i))$ holds for every $i, j$ and that $\{g_{i, j}\}_i$ is asymptotically uniformly equicontinuous on $B_R(x_{\infty})$ for every $j$.
\end{remark}
The following is a direct consequence of Proposition \ref{strong3}.
It means that roughly speaking, the $L^p$-approximate sequence is `unique':
\begin{proposition}\label{55559}
Assume $p<\infty$.
Let $g_{\infty} \in L^p(B_R(x_{\infty}))$ and $\{g_{i,j}\}_{i,j}, \{\hat{g}_{i,j}\}_{i, j}$ be $L^p$-approximate sequences of $g_{\infty}$.
Then 
\[\lim_{j \to \infty}\left(\limsup_{i \to \infty}||g_{i,j}-\hat{g}_{i,j}||_{L^p(B_R(x_i))}\right)=0.\]
\end{proposition}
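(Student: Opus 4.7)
The plan is to apply Proposition \ref{strong3} to the continuous function $F(a,b) = |a-b|^p$ and then convert the resulting pointwise strong convergence into convergence of $L^p$-norms. First, fix $j$. Since $\{g_{i,j}\}_{i,j}$ and $\{\hat g_{i,j}\}_{i,j}$ are $L^p$-approximate sequences of $g_\infty$, we have $\sup_{i \le \infty} \|g_{i,j}\|_{L^\infty} < \infty$ and $\sup_{i \le \infty}\|\hat g_{i,j}\|_{L^\infty} < \infty$, and moreover $g_{i,j}$ (resp.\ $\hat g_{i,j}$) converges strongly to $g_{\infty,j}$ (resp.\ $\hat g_{\infty,j}$) at a.e.\ $z_\infty \in B_R(x_\infty)$. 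Since $F(a,b):=|a-b|^p$ is continuous on $\mathbf{R}^2$, Proposition \ref{strong3} gives that $|g_{i,j}-\hat g_{i,j}|^p$ converges strongly to $|g_{\infty,j}-\hat g_{\infty,j}|^p$ at a.e.\ $z_\infty \in B_R(x_\infty)$ as $i \to \infty$.

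Next I would upgrade this pointwise strong convergence to convergence of integrals. By Remark \ref{easy}, strong convergence at a point implies weak convergence at that point, hence $|g_{i,j}-\hat g_{i,j}|^p$ converges weakly to $|g_{\infty,j}-\hat g_{\infty,j}|^p$ at a.e.\ $z_\infty$. The uniform $L^\infty$-bound (for fixed $j$) together with the boundedness of $\upsilon_i(B_R(x_i))$ (which follows from the doubling hypothesis $(2)$) ensures the $L^q$-bound required by Proposition \ref{weak2} for, say, $q = 2$. Applying Proposition \ref{weak2} to the sequence $\{|g_{i,j}-\hat g_{i,j}|^p\}_i$ yields
\[\lim_{i \to \infty}\int_{B_R(x_i)}|g_{i,j}-\hat g_{i,j}|^p\,d\upsilon_i = \int_{B_R(x_\infty)}|g_{\infty,j}-\hat g_{\infty,j}|^p\,d\upsilon_\infty,\]
i.e., $\limsup_{i \to \infty}\|g_{i,j}-\hat g_{i,j}\|_{L^p(B_R(x_i))} = \|g_{\infty,j}-\hat g_{\infty,j}\|_{L^p(B_R(x_\infty))}$.

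Finally, condition $(3)$ in Definition \ref{app} gives $\|g_\infty - g_{\infty,j}\|_{L^p}\to 0$ and $\|g_\infty - \hat g_{\infty,j}\|_{L^p}\to 0$ as $j\to\infty$. The triangle inequality
\[\|g_{\infty,j}-\hat g_{\infty,j}\|_{L^p(B_R(x_\infty))} \le \|g_{\infty,j}-g_\infty\|_{L^p(B_R(x_\infty))}+\|g_\infty-\hat g_{\infty,j}\|_{L^p(B_R(x_\infty))}\]
therefore forces the right-hand side to tend to $0$ as $j\to\infty$, yielding the desired double-limit identity.

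The argument is essentially routine once the pieces are assembled; the only point that requires a little care is verifying that Proposition \ref{strong3} is indeed applicable (which needs the uniform $L^\infty$-bound in $i$ for each fixed $j$, and the continuity of $F(a,b)=|a-b|^p$ on a fixed compact set containing the range of the sequences), and that Proposition \ref{weak2} applies to the bounded nonnegative sequence $|g_{i,j}-\hat g_{i,j}|^p$. No delicate cancellation or new estimate is needed; this is the reason the proposition was stated as a direct consequence of Proposition \ref{strong3}.
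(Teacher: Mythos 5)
Your proof is correct and follows the route the paper intends: the paper simply declares the proposition "a direct consequence of Proposition \ref{strong3}", and your argument (apply Proposition \ref{strong3} to $F(a,b)=|a-b|^p$, pass from a.e.\ pointwise strong/weak convergence to convergence of the integrals via Proposition \ref{weak2}, then conclude with condition $(3)$ of Definition \ref{app} and the triangle inequality) is exactly the natural filling-in of that one-line proof. The side checks you make — the uniform $L^\infty$-bound for fixed $j$ and the uniform bound on $\upsilon_i(B_R(x_i))$ coming from doubling and the normalization $\upsilon_i(B_1(x_i))=1$ — are the right ones, so nothing is missing.
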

We are now in a position to give the definition of \textbf{(S)} as in Section $1$ for functions:
\begin{definition}\label{strongdeff}
We say that \textit{$f_i$ $L^p$-converges strongly to $f_{\infty}$ on $B_R(x_{\infty})$} if 
\[\lim_{j \to \infty}\left(\limsup_{i \to \infty}||f_i-f_{i,j}||_{L^p(B_R(x_i))}\right)=0\]
holds for every (or some) $L^p$-approximate sequence $\{f_{i,j}\}_{i,j}$ of $f_{\infty}$.
\end{definition}
\begin{corollary}\label{61746174}
Let $g_i \in L^p(B_R(x_i))$ and $a_i, b_i \in L^{\infty}(B_R(x_i))$ for every $i \le \infty$ with $\sup_{i \le \infty}(||g_i||_{L^p}+||a_i||_{L^{\infty}}+||b_i||_{L^{\infty}})<\infty$.
Assume that $p<\infty$, $f_i, g_i$ $L^p$-converge strongly to $f_{\infty}, g_{\infty}$ on $B_R(x_{\infty})$, respectively and that
$a_i, b_i$ converge strongly to $a_{\infty}, b_{\infty}$ at a.e. $z_{\infty} \in B_R(x_{\infty})$, respectively.
Then $a_if_i + b_ig_i$ $L^p$-converges strongly to $a_{\infty}f_{\infty} + b_{\infty}g_{\infty}$ on $B_R(x_{\infty})$. 
\end{corollary}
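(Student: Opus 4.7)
The plan is to exhibit an explicit $L^p$-approximate sequence for $h_{\infty}:=a_{\infty}f_{\infty}+b_{\infty}g_{\infty}$ built from approximate sequences for $f_{\infty}$ and $g_{\infty}$, and then reduce the claim to the definition of strong convergence for $f_i$ and $g_i$ via the triangle inequality.

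First I would invoke Proposition \ref{876} to pick $L^p$-approximate sequences $\{f_{i,j}\}_{i,j}$ and $\{g_{i,j}\}_{i,j}$ of $f_{\infty}$ and $g_{\infty}$, respectively. Set $h_{i,j}:=a_if_{i,j}+b_ig_{i,j}$. I claim $\{h_{i,j}\}_{i,j}$ is an $L^p$-approximate sequence of $h_{\infty}$ in the sense of Definition \ref{app}. The uniform $L^{\infty}$-bound for fixed $j$ is immediate from
\[\|h_{i,j}\|_{L^{\infty}}\le \|a_i\|_{L^{\infty}}\|f_{i,j}\|_{L^{\infty}}+\|b_i\|_{L^{\infty}}\|g_{i,j}\|_{L^{\infty}},\]
combined with the uniform $L^{\infty}$-bounds on $a_i,b_i$ and (for fixed $j$) on $f_{i,j},g_{i,j}$. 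The pointwise strong convergence $h_{i,j}\to h_{\infty,j}$ at a.e.\ $z_{\infty}\in B_R(x_{\infty})$ follows by applying Proposition \ref{strong3} with $F(u,v,w,t)=uv+wt$ to the tuple $(a_i,f_{i,j},b_i,g_{i,j})$, whose four components all converge strongly at a.e.\ $z_{\infty}$ and are uniformly $L^{\infty}$-bounded. Finally, the $L^p$-approximation
\[\|h_{\infty}-h_{\infty,j}\|_{L^p}\le \|a_{\infty}\|_{L^{\infty}}\|f_{\infty}-f_{\infty,j}\|_{L^p}+\|b_{\infty}\|_{L^{\infty}}\|g_{\infty}-g_{\infty,j}\|_{L^p}\to 0\]
as $j\to\infty$ uses only H\"older's inequality and the defining property of the approximate sequences.

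Second, I would apply Definition \ref{strongdeff} using this particular approximate sequence. Setting $C:=\sup_{i\le\infty}(\|a_i\|_{L^{\infty}}+\|b_i\|_{L^{\infty}})<\infty$, the triangle inequality gives
\[\|(a_if_i+b_ig_i)-h_{i,j}\|_{L^p(B_R(x_i))}\le C\bigl(\|f_i-f_{i,j}\|_{L^p(B_R(x_i))}+\|g_i-g_{i,j}\|_{L^p(B_R(x_i))}\bigr).\]
Taking $\limsup_{i\to\infty}$ followed by $\lim_{j\to\infty}$ and invoking the strong $L^p$-convergence of $f_i$ and $g_i$ (through Definition \ref{strongdeff} applied with the sequences $\{f_{i,j}\},\{g_{i,j}\}$) shows that the right-hand side tends to $0$, proving that $a_if_i+b_ig_i$ $L^p$-converges strongly to $h_{\infty}$.

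There is no serious obstacle here: the argument is essentially a linearity/product check. The only point requiring care is that Definition \ref{strongdeff} is phrased in terms of ``every (or some)'' approximate sequence; this equivalence is guaranteed by Proposition \ref{55559}, so exhibiting the single approximate sequence $\{h_{i,j}\}$ is enough. The role of the hypothesis $p<\infty$ appears only through the availability of Proposition \ref{876}; everything else is a routine combination of the H\"older inequality with Proposition \ref{strong3}.
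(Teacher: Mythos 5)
Your proof is correct and follows essentially the same route as the paper: the paper also forms the sequence $a_if_{i,j}+b_ig_{i,j}$, verifies via Proposition \ref{strong3} that it is an $L^p$-approximate sequence of $a_{\infty}f_{\infty}+b_{\infty}g_{\infty}$, and then concludes through Proposition \ref{55559} (the ``every/some'' equivalence in Definition \ref{strongdeff}) together with the same triangle-inequality estimate you write out. Your version merely spells out the details that the paper leaves implicit.
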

\begin{proof}
Let $\{f_{i, j}\}_{i, j}, \{g_{i, j}\}_{i, j}$ be $L^p$-approximate sequences of $f_{\infty}, g_{\infty}$, respectively.
Then Proposition \ref{strong3} yields that $\{a_if_{i, j}+b_ig_{i, j}\}_{i, j}$ is a $L^p$-approximate sequence of $a_{\infty}f_{\infty} + b_{\infty}g_{\infty}$.
Therefore the assertion follows from Proposition \ref{55559}.
\end{proof}
\begin{proposition}\label{weak3}
Let $g_i \in L^q(B_R(x_i))$ for every $i \le \infty$ with $\sup_{i \le \infty}||g_i||_{L^q}<\infty$.
Assume that $p < \infty$, $f_i$ converges weakly to $f_{\infty}$ on $B_R(x_{\infty})$ and that $g_i$ $L^q$-converges strongly to $g_{\infty}$ on $B_R(x_{\infty})$.
Then
\[\lim_{i \to \infty}\int_{B_R(x_i)}f_ig_id\upsilon_i=\int_{B_R(x_{\infty})}f_{\infty}g_{\infty}d\upsilon_{\infty}.\]
\end{proposition}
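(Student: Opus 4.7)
The plan is to reduce everything to the $L^\infty$-case of Proposition \ref{linear} via an approximation of $g_\infty$ by bounded functions, and then integrate using Proposition \ref{weak2}. Since $1<p<\infty$ and hence $1<q<\infty$, Proposition \ref{876} provides an $L^q$-approximate sequence $\{g_{i,j}\}_{i,j}$ of $g_\infty$; by Definition \ref{strongdeff} and the hypothesis that $g_i$ $L^q$-converges strongly to $g_\infty$,
\[
\lim_{j\to\infty}\limsup_{i\to\infty}\|g_i-g_{i,j}\|_{L^q(B_R(x_i))}=0,
\]
while condition (3) of Definition \ref{app} gives $\|g_\infty-g_{\infty,j}\|_{L^q(B_R(x_\infty))}\to 0$ as $j\to\infty$.

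The next step is to decompose
\[
\int_{B_R(x_i)}f_ig_i\,d\upsilon_i - \int_{B_R(x_\infty)}f_\infty g_\infty\,d\upsilon_\infty = I_{i,j} + II_{i,j} + III_j,
\]
where
\[
I_{i,j}:=\int_{B_R(x_i)}f_i(g_i-g_{i,j})\,d\upsilon_i,\quad III_j:=\int_{B_R(x_\infty)}f_\infty(g_{\infty,j}-g_\infty)\,d\upsilon_\infty,
\]
and $II_{i,j}:=\int_{B_R(x_i)}f_ig_{i,j}\,d\upsilon_i-\int_{B_R(x_\infty)}f_\infty g_{\infty,j}\,d\upsilon_\infty$. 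H\"older's inequality, together with $L:=\sup_{i\le\infty}\|f_i\|_{L^p}<\infty$, controls $|I_{i,j}|\le L\,\|g_i-g_{i,j}\|_{L^q}$ and $|III_j|\le \|f_\infty\|_{L^p}\|g_\infty-g_{\infty,j}\|_{L^q}$, so both terms can be made arbitrarily small by choosing $j$ large (and, for $I_{i,j}$, passing to $\limsup$ in $i$ first).

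The main point is to show $\lim_{i\to\infty} II_{i,j}=0$ for each fixed $j$. By condition (2) of Definition \ref{app}, $g_{i,j}$ converges strongly to $g_{\infty,j}$ at a.e. $z_\infty\in B_R(x_\infty)$, and $\sup_i\|g_{i,j}\|_{L^\infty}<\infty$ by condition (1). Since $f_i$ converges weakly to $f_\infty$ at every (hence a.e.) point of $B_R(x_\infty)$, Proposition \ref{linear} yields that $f_ig_{i,j}$ converges weakly to $f_\infty g_{\infty,j}$ at a.e. $z_\infty\in B_R(x_\infty)$. Combined with the uniform bound $\sup_i\|f_ig_{i,j}\|_{L^p}\le\sup_i\|g_{i,j}\|_{L^\infty}\cdot L<\infty$, Proposition \ref{weak2} applies to the sequence $\{f_ig_{i,j}\}_i$ and gives $II_{i,j}\to 0$ as $i\to\infty$. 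A standard $\varepsilon/3$-argument (choose $j$ so that $\limsup_i|I_{i,j}|$ and $|III_j|$ are each less than $\varepsilon/3$, then take $i$ large so $|II_{i,j}|<\varepsilon/3$) completes the proof.

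I do not expect a serious obstacle here: the tools of subsection 3.1.3, in particular Proposition \ref{linear} (stability of pointwise weak convergence under multiplication by a uniformly bounded pointwise-strongly convergent factor) and the approximation/uniqueness statements for $L^q$-approximate sequences, are exactly designed for this duality argument. The only delicate point to verify en route is the uniform $L^p$-bound on the product $f_ig_{i,j}$, which is immediate from the $L^\infty$-boundedness built into Definition \ref{app}.
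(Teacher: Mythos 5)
Your proposal is correct and follows essentially the same route as the paper: approximate $g_\infty$ by an $L^q$-approximate sequence $\{g_{i,j}\}$, handle the middle term via Propositions \ref{linear} and \ref{weak2}, and control the error terms by H\"older's inequality before letting $i\to\infty$ and then $j\to\infty$. Your write-up just makes the $j$-limit on the limit-space side (your term $III_j$) explicit, which the paper leaves implicit.
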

\begin{proof}
Let $\{g_{i,j}\}_{i,j}$ be an $L^q$-approximate sequence of $g_{\infty}$.
Propositions \ref{linear} and \ref{weak2} yield that
\[\lim_{i \to \infty}\int_{B_R(x_i)}f_ig_{i, j}d\upsilon_i=\int_{B_R(x_{\infty})}f_{\infty}g_{\infty, j}d\upsilon_{\infty}\]
holds for every $j$.
On the other hand, the H$\ddot{\text{o}}$lder inequality yields
\[\left| \int_{B_R(x_i)}f_ig_{i, j}d\upsilon_i-\int_{B_R(x_i)}f_ig_{i}d\upsilon_i \right| \le ||f_i||_{L^p}||g_i-g_{i,j}||_{L^q}.\]
Therefore by letting $i \to \infty$ and $j \to \infty$, we have the assertion.
\end{proof}
The following is a direct consequence of Proposition \ref{weak3} and the triangle inequality.
Compare with Remark \ref{easy}:
\begin{corollary}\label{rere}
Assume that $p<\infty$ and that $f_i$ $L^p$-converges strongly to $f_{\infty}$ on $B_R(x_{\infty})$.
Then we have the following:
\begin{enumerate}
\item $f_i$ converges weakly to $f_{\infty}$ on $B_R(x_{\infty})$.
\item $|f_i|$ $L^p$-converges strongly to $|f_{\infty}|$ on $B_R(x_{\infty})$.
\end{enumerate}
\end{corollary}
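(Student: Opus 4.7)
The strategy is to handle the two statements separately, treating (2) first because it admits a completely direct construction, and then reducing (1) to Proposition \ref{weak3} by a symmetric argument.

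For (2), I would take $\{|f_{i,j}|\}_{i,j}$ as the candidate $L^p$-approximate sequence of $|f_{\infty}|$, where $\{f_{i,j}\}_{i,j}$ is any $L^p$-approximate sequence of $f_{\infty}$. The three conditions of Definition \ref{app} are easy to verify: $L^{\infty}$-bounds transfer because $\||f_{i,j}|\|_{L^{\infty}} = \|f_{i,j}\|_{L^{\infty}}$; strong pointwise convergence $|f_{i,j}| \to |f_{\infty,j}|$ at a.e. $z_{\infty}$ follows from Proposition \ref{strong3} applied to the continuous map $F(t) = |t|$; and $\||f_{\infty}| - |f_{\infty,j}|\|_{L^p} \le \|f_{\infty} - f_{\infty,j}\|_{L^p} \to 0$ by the pointwise reverse triangle inequality integrated in $L^p$. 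Then the pointwise inequality $\bigl||f_i| - |f_{i,j}|\bigr| \le |f_i - f_{i,j}|$ integrates to $\||f_i| - |f_{i,j}|\|_{L^p} \le \|f_i - f_{i,j}\|_{L^p}$, and Definition \ref{strongdeff} applied to this approximate sequence concludes (2).

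For (1), I first plan to establish the ``swapped'' form of Proposition \ref{weak3}: if $f_i$ $L^p$-converges strongly to $f_{\infty}$ and $g_i \in L^q(B_R(x_i))$ with $\sup_i \|g_i\|_{L^q} < \infty$ converges weakly to $g_{\infty}$, then $\int f_i g_i\, d\upsilon_i \to \int f_{\infty} g_{\infty}\, d\upsilon_{\infty}$. The proof writes $\int f_i g_i = \int (f_i - f_{i,j}) g_i + \int f_{i,j} g_i$; the first term is $\le \|f_i - f_{i,j}\|_{L^p} \sup_i \|g_i\|_{L^q}$ by H\"older, while the second is handled by Proposition \ref{weak3} applied with $g_i$ in the weak role and $f_{i,j}$ in the strong role — the $L^{\infty}$-bounded, a.e.\ pointwise strongly convergent sequence $\{f_{i,j}\}_i$ is itself $L^p$-strongly convergent to $f_{\infty,j}$ (use the constant-in-$k$ approximate sequence $h_{i,k} := f_{i,j}$ in Definition \ref{app}). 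Letting $j \to \infty$ and using $\|f_{\infty,j} - f_{\infty}\|_{L^p} \to 0$ on the limit piece gives the claim.

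Having this, I specialize to $g_i = 1_{B_r(z_i)}$ for a.e. $r > 0$ (namely those $r$ with $\upsilon_{\infty}(\partial B_r(z_{\infty})) = 0$), which weakly converges to $1_{B_r(z_{\infty})}$. Combined with $\upsilon_i(B_r(z_i)) \to \upsilon_{\infty}(B_r(z_{\infty}))$ from the measured Gromov-Hausdorff convergence, this produces $A_i(r) := \upsilon_i(B_r(z_i))^{-1} \int_{B_r(z_i)} f_i\, d\upsilon_i \to A_{\infty}(r)$ for a.e.\ $r > 0$. At any Lebesgue point $z_{\infty}$ of $f_{\infty}$, i.e.\ at a.e.\ $z_{\infty} \in B_R(x_{\infty})$, we have $A_{\infty}(r) \to f_{\infty}(z_{\infty})$ as $r \to 0$, and taking $r_n \to 0$ through the exceptional-free set yields both inequalities in Definition \ref{weak1}, giving pointwise weak convergence a.e. Corollary \ref{098} then upgrades this to weak convergence on $B_R(x_{\infty})$. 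The main technical obstacle is the boundary issue — $\partial B_r(z_{\infty})$ may carry positive $\upsilon_{\infty}$-mass for isolated $r$ — but this is harmless since only countably many such $r$ occur and one only needs $A_i(r) \to A_{\infty}(r)$ on a set of $r$'s arbitrarily close to $0$.
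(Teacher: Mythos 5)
Part (2) of your proposal is correct and is exactly the paper's ``triangle inequality'' argument: $\{|f_{i,j}|\}_{i,j}$ is an $L^p$-approximate sequence of $|f_{\infty}|$ (Proposition \ref{strong3} with $F(t)=|t|$), and $\bigl\||f_i|-|f_{i,j}|\bigr\|_{L^p}\le \|f_i-f_{i,j}\|_{L^p}$ closes it via Definition \ref{strongdeff}. Your ``swapped'' form of Proposition \ref{weak3} is also correct as proved (the constant-in-$k$ approximate sequence argument shows each $f_{i,j}$ is itself $L^p$-strongly convergent, so Proposition \ref{weak3} applies with the exponents interchanged, and the error term is handled by H\"older).

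The gap is in the last step of (1). Definition \ref{weak1} requires $\liminf_{r\to 0}$ of the two brackets over \emph{all} radii $r\to 0$, not along a chosen sequence of radii; so knowing $A_i(r)\to A_{\infty}(r)$ only for those $r$ with $\upsilon_{\infty}(\partial B_r(z_{\infty}))=0$ does not verify weak convergence at $z_{\infty}$. At an excluded radius you have no control whatsoever on $A_i(r)=\upsilon_i(B_r(z_i))^{-1}\int_{B_r(z_i)}f_i\,d\upsilon_i$ beyond the H\"older bound $\upsilon_i(B_r(z_i))^{-1/p}\|f_i\|_{L^p}$, which degenerates as $r\to 0$; since such radii can accumulate at $0$, the liminf in Definition \ref{weak1} is a priori not bounded below by your estimates, and the remark that ``only countably many such $r$ occur'' does not by itself make them harmless. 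The repair is short and stays within your framework: instead of testing against $1_{B_r(z_i)}$ inside $B_R(x_i)$, apply your swapped pairing on the ball $B_r(z_{\infty})$ itself with $g_i\equiv 1$ --- the restriction of an $L^p$-strongly convergent sequence to a sub-ball is again $L^p$-strongly convergent, because an $L^p$-approximate sequence restricts to one --- which gives $\int_{B_r(z_i)}f_i\,d\upsilon_i\to\int_{B_r(z_{\infty})}f_{\infty}\,d\upsilon_{\infty}$ for \emph{every} ball $B_r(z_{\infty})\subset B_R(x_{\infty})$ and every $z_i\to z_{\infty}$; combined with $\upsilon_i(B_r(z_i))\to\upsilon_{\infty}(B_r(z_{\infty}))$ this makes both brackets in Definition \ref{weak1} tend to $0$ for all $r$, so $f_i$ converges weakly at every point and Corollary \ref{098} is not even needed. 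Equivalently, one can write $\int_{B_r(z_i)}f_i=\int_{B_r(z_i)}(f_i-f_{i,j})+\int_{B_r(z_i)}f_{i,j}$, control the first term by H\"older and the second by Proposition \ref{weak2} applied to $f_{i,j}$ on $B_r(z_{\infty})$; this is what the paper's one-line proof (``Proposition \ref{weak3} and the triangle inequality'') amounts to.
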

Next we give a lower semicontinuity of $L^p$-norms with respect to the weak convergence:
\begin{proposition}\label{lower}
If $f_i$ converges weakly to $f_{\infty}$ on $B_R(x_{\infty})$, then 
$\liminf_{i \to \infty}||f_i||_{L^p(B_R(x_{i}))}\ge ||f_{\infty}||_{L^p(B_R(x_{\infty}))}.$
\end{proposition}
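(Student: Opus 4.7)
The natural approach is duality. The plan is to exploit the identity
\[
\|f_\infty\|_{L^p(B_R(x_\infty))} = \sup\left\{\int_{B_R(x_\infty)} f_\infty g_\infty\, d\upsilon_\infty : g_\infty \in L^q(B_R(x_\infty)),\ \|g_\infty\|_{L^q} \le 1\right\},
\]
where $q$ is the conjugate exponent of $p$ (for the borderline case $p=\infty$, $q=1$, one restricts the sup to bounded compactly supported test functions, which are dense in $L^1$). So it suffices to produce, for each admissible test function $g_\infty$, a bound
\[
\int_{B_R(x_\infty)} f_\infty g_\infty\, d\upsilon_\infty \le \liminf_{i\to\infty} \|f_i\|_{L^p(B_R(x_i))}.
\]

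Given such a $g_\infty$, I would invoke Proposition \ref{876} to construct an $L^q$-approximate sequence $\{g_{i,j}\}_{i,j}$ of $g_\infty$. By Proposition \ref{weak3} applied with the weakly convergent $f_i$ and the strongly convergent $g_{i,j}$, for each fixed $j$,
\[
\int_{B_R(x_\infty)} f_\infty g_{\infty,j}\, d\upsilon_\infty = \lim_{i\to\infty} \int_{B_R(x_i)} f_i g_{i,j}\, d\upsilon_i.
\]
Apply Hölder's inequality on the right to get $\int f_i g_{i,j} \le \|f_i\|_{L^p}\|g_{i,j}\|_{L^q}$, and take $\liminf_{i\to\infty}$:
\[
\int_{B_R(x_\infty)} f_\infty g_{\infty,j}\, d\upsilon_\infty \le \liminf_{i\to\infty}\|f_i\|_{L^p(B_R(x_i))} \cdot \limsup_{i\to\infty}\|g_{i,j}\|_{L^q(B_R(x_i))}.
\]
Finally let $j\to\infty$: the left side converges to $\int f_\infty g_\infty\, d\upsilon_\infty$ because $g_{\infty,j}\to g_\infty$ in $L^q$ (Definition \ref{app}(3)) and $f_\infty \in L^p$; and the second factor on the right tends to $\|g_\infty\|_{L^q}\le 1$.

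The only delicate point is controlling $\limsup_{i\to\infty}\|g_{i,j}\|_{L^q(B_R(x_i))}$ in terms of $\|g_{\infty,j}\|_{L^q(B_R(x_\infty))}$. Inspecting the explicit construction in the proof of Proposition \ref{876}, $g_{i,j}$ is a finite linear combination $\sum_{k=1}^{N_0} c_k \mathbf{1}_{B_{r_k}(z_{k,i})}$ whose coefficients $c_k$ depend only on $g_\infty$ and on the chosen balls (not on $i$), and the centers $z_{k,i}\to z_k$. Therefore
\[
\|g_{i,j}\|_{L^q(B_R(x_i))}^q = \sum_{k=1}^{N_0} |c_k|^q\, \upsilon_i(B_{r_k}(z_{k,i})) \longrightarrow \sum_{k=1}^{N_0} |c_k|^q\, \upsilon_\infty(B_{r_k}(z_k)) = \|g_{\infty,j}\|_{L^q(B_R(x_\infty))}^q
\]
by the defining property of measured Gromov-Hausdorff convergence. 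This gives $\limsup_i \|g_{i,j}\|_{L^q} = \|g_{\infty,j}\|_{L^q}$, completing the chain.

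The main obstacle, if any, is this last computation: it requires that the specific approximate sequence from Proposition \ref{876} has $L^q$-norms converging under GH convergence, rather than just strong pointwise convergence. Since that proposition produces explicit piecewise-constant representatives, the argument reduces to continuity of the measure on balls. Once $p=\infty$ is handled by testing against $L^1\cap L^\infty$ functions (which are still in the scope of Proposition \ref{876} with $q=1$ replaced by testing against $L^\infty$-approximations), the proof is complete by taking the supremum over $g_\infty$.
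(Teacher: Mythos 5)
Your argument for $1<p<\infty$ is essentially the paper's own proof: $L^p$--$L^q$ duality, an $L^q$-approximate sequence from Proposition \ref{876}, the pairing convergence of Proposition \ref{weak3}, H\"older's inequality, and the convergence $\|g_{i,j}\|_{L^q}\to\|g_{\infty,j}\|_{L^q}$ (which you justify from the explicit piecewise-constant construction and continuity of the measures on balls; the paper gets the same fact from strong convergence of the bounded approximants, e.g.\ via Propositions \ref{strong3}, \ref{weak2} and \ref{low}). The only caveat is the case $p=\infty$: the paper disposes of it in one line by Lebesgue's differentiation theorem applied to the defining averages, whereas your sketch invokes Proposition \ref{876} and Proposition \ref{weak3}, which are stated only for finite exponents; to make your route work there you would have to pass through an auxiliary finite exponent (the $f_i$ are uniformly bounded in, say, $L^2$ of the balls) before applying H\"older with the pair $(\infty,1)$, or simply use the averages directly as the paper does.
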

\begin{proof}
If $p=\infty$, then the assertion follows directly from Lebesgue's differentiation theorem.
Assume $p<\infty$.
Since $(L^p)^*=L^q$, there exists $g_{\infty} \in L^q$ such that $||g_{\infty}||_{L^q}\le 1$ and 
\[\int_{B_R(x_{\infty})}f_{\infty}g_{\infty}d\upsilon_{\infty} =||f_{\infty}||_{L^p}.\]
Let $\{g_{i,j}\}_{i,j}$ be an $L^q$-approximate sequence of $g_{\infty}$.
Proposition \ref{weak3} yields
\[\lim_{i \to \infty}\int_{B_R(x_i)}f_ig_{i,j}d\upsilon_i=\int_{B_R(x_{\infty})}f_{\infty}g_{\infty, j}d\upsilon_{\infty}.\]
On the other hand, since the H$\ddot{\text{o}}$lder inequality yields
\[\lim_{i \to \infty}\left|\int_{B_R(x_i)}f_ig_{i,j}d\upsilon_i\right| \le \liminf_{i \to \infty}\left(||f_i||_{L^p}||g_{i,j}||_{L^q}\right) = \left(\liminf_{i \to \infty}||f_i||_{L^p}\right)||g_{\infty, j}||_{L^q},\]
by letting $j \to \infty$, we have the assertion.
\end{proof}
On the other hand, Propositions \ref{strong3} and \ref{weak2} yield:
\begin{proposition}\label{low}
Assume $p<\infty$.
If $f_i$ $L^p$-converges strongly to $f_{\infty}$ on $B_R(x_{\infty})$, then
$\lim_{i \to \infty}||f_i||_{L^p(B_R(x_i))}= ||f_{\infty}||_{L^p(B_R(x_{\infty}))}$.
\end{proposition}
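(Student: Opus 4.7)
The plan is to reduce the claim to the analogous statement for a nice approximating sequence, using the reverse triangle inequality for $L^p$-norms. Let $\{f_{i,j}\}_{i,j}$ be any $L^p$-approximate sequence of $f_{\infty}$, whose existence is guaranteed by Proposition \ref{876}. The starting point is
\[\bigl|\,\|f_i\|_{L^p(B_R(x_i))} - \|f_{i,j}\|_{L^p(B_R(x_i))}\,\bigr| \le \|f_i - f_{i,j}\|_{L^p(B_R(x_i))},\]
whose right-hand side vanishes in the iterated limit $\lim_{j\to\infty}\limsup_{i\to\infty}$ by the very definition of strong $L^p$-convergence (Definition \ref{strongdeff}).

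The main step I would then carry out is to verify that for each fixed $j$,
\[\lim_{i\to\infty}\|f_{i,j}\|_{L^p(B_R(x_i))}=\|f_{\infty,j}\|_{L^p(B_R(x_{\infty}))}.\]
Since $\{f_{i,j}\}_i$ is uniformly bounded in $L^{\infty}$ and $f_{i,j}$ converges strongly to $f_{\infty,j}$ at a.e.\ point (conditions (1) and (2) of Definition \ref{app}), applying Proposition \ref{strong3} with the continuous function $F(t)=|t|^p$ shows that $|f_{i,j}|^p$ converges strongly, hence weakly by Remark \ref{easy}, to $|f_{\infty,j}|^p$ at a.e.\ point. The uniform $L^{\infty}$-bound then provides a uniform $L^p$-bound on $|f_{i,j}|^p$, which puts us in the setting of Proposition \ref{weak2} and yields
\[\int_{B_R(x_i)}|f_{i,j}|^p\,d\upsilon_i \longrightarrow \int_{B_R(x_{\infty})}|f_{\infty,j}|^p\,d\upsilon_{\infty}.\]

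Combining these two ingredients with $\|f_{\infty,j}\|_{L^p}\to\|f_{\infty}\|_{L^p}$, which follows from condition (3) of Definition \ref{app} together with the ordinary $L^p$-triangle inequality, and letting $i\to\infty$ followed by $j\to\infty$ in the reverse triangle inequality above, I obtain the upper bound $\limsup_i\|f_i\|_{L^p(B_R(x_i))}\le\|f_{\infty}\|_{L^p(B_R(x_{\infty}))}$. The matching lower bound is essentially free: by Corollary \ref{rere} strong $L^p$-convergence implies weak convergence on $B_R(x_{\infty})$, and Proposition \ref{lower} then supplies $\liminf_i\|f_i\|_{L^p}\ge\|f_{\infty}\|_{L^p}$. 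I do not expect any genuine obstacle here; the proposition is essentially a bookkeeping consequence of the continuity of integrals for $L^{\infty}$-bounded, a.e.\ strongly convergent approximants, packaged via the triangle inequality and the lower semicontinuity already recorded in Proposition \ref{lower}.
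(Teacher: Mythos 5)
Your argument is correct and is essentially the paper's intended proof: the paper justifies Proposition \ref{low} precisely by Propositions \ref{strong3} and \ref{weak2}, i.e.\ by applying $F(t)=|t|^p$ to an $L^p$-approximate sequence, passing to the limit of the integrals, and closing with the definition of strong convergence and the triangle inequality, exactly as you do. Your appeal to Corollary \ref{rere} and Proposition \ref{lower} for the lower bound is harmless but redundant, since the reverse triangle inequality you set up already yields both bounds.
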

Conversely, we have the following.
\begin{proposition}\label{222}
Assume $p<\infty$.
Then  $f_i$ $L^p$-converges strongly to $f_{\infty}$ on $B_R(x_{\infty})$ if and only if the following two conditions hold: 
\begin{enumerate}
\item $\limsup_{i \to \infty}||f_i||_{L^p(B_R(x_i))}\le ||f_{\infty}||_{L^p(B_R(x_{\infty}))}$.
\item $f_i$ converges weakly to $f_{\infty}$ on $B_R(x_{\infty})$.
\end{enumerate}
\end{proposition}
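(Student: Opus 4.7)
The forward implication is immediate: if $f_i$ $L^p$-converges strongly to $f_{\infty}$ on $B_R(x_{\infty})$, then Proposition \ref{low} gives $\|f_i\|_{L^p}\to\|f_{\infty}\|_{L^p}$, so (1) holds with equality, and Corollary \ref{rere}(1) gives (2). For the converse, I fix an $L^p$-approximate sequence $\{f_{i,j}\}_{i,j}$ of $f_{\infty}$ (supplied by Proposition \ref{876}) and aim to show
\[\lim_{j\to\infty}\limsup_{i\to\infty}\|f_i-f_{i,j}\|_{L^p(B_R(x_i))}=0,\]
which is exactly Definition \ref{strongdeff}.

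The plan is to exploit uniform convexity of $L^p$ by applying a Clarkson-type inequality inside each fixed metric measure space $(B_R(x_i),\upsilon_i)$. For $p\ge 2$,
\[\left\|\frac{f_i-f_{i,j}}{2}\right\|_{L^p}^p\le\frac{1}{2}\|f_i\|_{L^p}^p+\frac{1}{2}\|f_{i,j}\|_{L^p}^p-\left\|\frac{f_i+f_{i,j}}{2}\right\|_{L^p}^p.\]
Taking $\limsup_{i\to\infty}$, I control the three right-hand terms by: hypothesis (1), which bounds the first by $\|f_{\infty}\|_{L^p}^p$; the approximate-sequence properties (Definition \ref{app}, Remark \ref{easy}(2) and Propositions \ref{strong3}, \ref{weak2} applied to $|f_{i,j}|^p$), which yield $\|f_{i,j}\|_{L^p}^p\to\|f_{\infty,j}\|_{L^p}^p$; and Proposition \ref{lower} applied to the weak limit of $f_i+f_{i,j}$, which gives $\liminf_i\|(f_i+f_{i,j})/2\|_{L^p}^p\ge\|(f_{\infty}+f_{\infty,j})/2\|_{L^p}^p$. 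The resulting bound
\[\limsup_{i}\left\|\frac{f_i-f_{i,j}}{2}\right\|_{L^p}^p\le\frac{1}{2}\|f_{\infty}\|_{L^p}^p+\frac{1}{2}\|f_{\infty,j}\|_{L^p}^p-\left\|\frac{f_{\infty}+f_{\infty,j}}{2}\right\|_{L^p}^p\]
tends to $0$ as $j\to\infty$ since $f_{\infty,j}\to f_{\infty}$ in $L^p$ by Definition \ref{app}(3). For $1<p<2$ the same scheme goes through with the dual Clarkson inequality
\[\left\|\frac{f_i-f_{i,j}}{2}\right\|_{L^p}^q\le\left(\frac{1}{2}\|f_i\|_{L^p}^p+\frac{1}{2}\|f_{i,j}\|_{L^p}^p\right)^{q-1}-\left\|\frac{f_i+f_{i,j}}{2}\right\|_{L^p}^q,\]
where $q=p/(p-1)$; the identity $p(q-1)=q$ makes the analogous right-hand side collapse to zero in the limit.

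The main obstacle is justifying the weak convergence of the sum $f_i+f_{i,j}$ to $f_{\infty}+f_{\infty,j}$ on $B_R(x_{\infty})$ that is needed to invoke Proposition \ref{lower}. The pieces are in place: Definition \ref{app}(2) yields strong, hence weak by Remark \ref{easy}(1), convergence of $f_{i,j}$ to $f_{\infty,j}$ at a.e.\ $z_{\infty}$, while hypothesis (2) provides weak convergence of $f_i$ to $f_{\infty}$ on $B_R(x_{\infty})$; the ball-average defining inequalities of Definition \ref{weak1} are linear in the function, so they pass to finite sums at every common point of validity, and Corollary \ref{098} then lifts the resulting a.e.\ statement back to weak convergence on $B_R(x_{\infty})$. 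Once this verification is in hand, the remainder of the argument is routine Clarkson bookkeeping that is entirely pointwise in $i$ and hence does not interact with the Gromov-Hausdorff geometry of the ambient sequence.
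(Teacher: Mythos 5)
Your proof is correct and follows essentially the same route as the paper: both directions rest on Proposition \ref{low} and Corollary \ref{rere} for the "only if" part, and on Clarkson's inequalities applied to $f_i$ and an $L^p$-approximate sequence $\{f_{i,j}\}$, together with Proposition \ref{lower} applied to the weak limit of $f_i+f_{i,j}$ and the identity $p(q-1)=q$, for the "if" part. The only differences are cosmetic: you write out the $p\ge 2$ case while the paper writes out $p<2$, and you spell out the additivity of weak convergence (via Remark \ref{easy}, Corollary \ref{098}) that the paper merely asserts.
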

\begin{proof}
It suffices to check `if' part.
First assume $p<2$.
Let $\{f_{i, j}\}_{i, j}$ be an $L^p$-approximate sequence of $f_{\infty}$.
Then Clarkson's inequality \cite[Theorem $2$]{clark} for $p <2$ yields
\[2\left(||f_{i, j}||_{L^p}^p + ||f_i||_{L^p}^p\right)^{q-1} \ge ||f_{i, j}+f_i||_{L^p}^q+||f_{i, j}-f_i||_{L^p}^q.\]
Since $f_i+f_{i, j}$ converges weakly to $f_{\infty}+f_{\infty, j}$ on $B_R(m_{\infty})$ as $i \to \infty$, by letting $i \to \infty$ and $j \to \infty$, Proposition \ref{lower} yields
\[2\left(||f_{\infty}||_{L^p}^p + ||f_{\infty}||_{L^p}^p\right)^{q-1} \ge 2^q||f_{\infty}||_{L^p}^q + \limsup_{j \to \infty}\left( \limsup_{i \to \infty}||f_{i, j}-f_i||_{L^p}^q\right).\]
Thus we have $\lim_{j \to \infty}\left( \limsup_{i \to \infty}||f_{i, j}-f_i||_{L^p}^q\right)=0$, i.e., $f_i$ $L^p$-converges strongly to $f_{\infty}$ on $B_R(x_{\infty})$.
Similarly the assertion of the case $p \ge 2$ follows from Clarkson's inequality \cite[Theorem $2$]{clark} for $p \ge 2$.
\end{proof}
The following result is a compatibility result for the case of $L^{\infty}$-functions:
\begin{proposition}\label{compatibility}
Let $g_i \in L^{\infty}(B_R(x_i))$ for every $i \le \infty$ with $\sup_{i \le \infty}||g_i||_{L^{\infty}}<\infty$.
Then $g_i$ converges strongly to $g_{\infty}$ at a.e. $z_{\infty} \in B_R(x_{\infty})$ if and only if $g_i$ $L^{\hat{p}}$-converges strongly to $g_{\infty}$ on $B_R(x_{\infty})$ for some (or every) $1<\hat{p}<\infty$.
\end{proposition}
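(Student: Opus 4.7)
The plan is to establish the two directions separately, using the uniform $L^\infty$-bound in each.

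For the forward direction, given pointwise strong convergence at a.e.\ $z_\infty$, Remark~\ref{easy} yields weak convergence at a.e.\ $z_\infty$, hence weak convergence on $B_R(x_\infty)$ by Corollary~\ref{098}. Applying Proposition~\ref{strong3} to $F(t)=|t|^{\hat{p}}$ gives strong, hence weak, convergence at a.e.\ $z_\infty$ of $|g_i|^{\hat{p}}\to|g_\infty|^{\hat{p}}$; the $L^\infty$-bound (which on the bounded set $B_R$ upgrades to any $L^p$-bound, $p>1$) allows Proposition~\ref{weak2} to conclude $||g_i||_{L^{\hat{p}}}\to ||g_\infty||_{L^{\hat{p}}}$. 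Proposition~\ref{222} then yields $L^{\hat{p}}$-strong convergence for every $1<\hat{p}<\infty$.

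For the reverse direction, fix the $L^{\hat{p}}$-approximate sequence $\{g_{i,j}\}_{i,j}$ built in Proposition~\ref{876}; its explicit disjoint-balls step-function construction automatically gives $\sup_{i,j}||g_{i,j}||_{L^\infty}\le ||g_\infty||_{L^\infty}$. Extract a subsequence in $j$ so that $g_{\infty,j}\to g_\infty$ pointwise $\upsilon_\infty$-a.e., and set $\varepsilon_j:=\limsup_i ||g_i-g_{i,j}||_{L^{\hat{p}}}\to 0$. By Remark~\ref{77} it suffices to verify (a) weak convergence of $g_i$ to $g_\infty$ at a.e.\ $z_\infty$, and (b) weak upper semicontinuity of $(g_i)^2$ at a.e.\ $z_\infty$. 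Part (a) is immediate from Corollary~\ref{rere}(1) and Proposition~\ref{weak0}: $L^{\hat{p}}$-strong convergence yields weak convergence on $B_R(x_\infty)$, i.e., $\lim_i\int_{B_r(z_i)}g_i\,d\upsilon_i = \int_{B_r(z_\infty)}g_\infty\,d\upsilon_\infty$ for every $z_\infty$, every $z_i\to z_\infty$, and almost every $r$; dividing by $\upsilon_i(B_r(z_i))\to \upsilon_\infty(B_r(z_\infty))$ and invoking Lebesgue differentiation at Lebesgue points of $g_\infty$ forces weak convergence at $z_\infty$.

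For (b), writing $A_r^i(F):=\upsilon_i(B_r(z_i))^{-1}\int_{B_r(z_i)}F\,d\upsilon_i$ and applying the $L^2$-triangle inequality to $g_i=g_{i,j}+(g_i-g_{i,j})$, one obtains
\[\sqrt{A_r^i((g_i)^2)}\le \sqrt{A_r^i((g_{i,j})^2)}+\sqrt{A_r^i((g_i-g_{i,j})^2)}.\]
The first term converges, as $i\to\infty$, to $A_r^\infty((g_{\infty,j})^2)$ since $g_{i,j}$ is a sum of indicators on balls whose $\upsilon_i$-measures converge by \cite[Lemma~$3.3$]{co-mi7}, and then to $A_r^\infty((g_\infty)^2)$ as $j\to\infty$ via $||g_{\infty,j}-g_\infty||_{L^{\hat{p}}}\to 0$ combined with the $L^\infty$-bound. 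The second term is controlled, using $|g_i-g_{i,j}|\le 2||g_\infty||_{L^\infty}$ and then H\"older, by
\[A_r^i((g_i-g_{i,j})^2)\le 2||g_\infty||_{L^\infty}\,\frac{\varepsilon_j}{\upsilon_i(B_r(z_i))^{1/\hat{p}}}\to 0\]
after taking $i\to\infty$ and then $j\to\infty$. Combining these yields $\limsup_i A_r^i((g_i)^2)\le A_r^\infty((g_\infty)^2)$ for every fixed $r>0$, which at Lebesgue points of $(g_\infty)^2$ delivers the desired weak upper semicontinuity.

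The main obstacle is (b): three limits $i\to\infty$, $j\to\infty$, $r\to 0$ must be interchanged against the $r$-dependent factor $\upsilon_i(B_r(z_i))^{-1/\hat{p}}$ produced by the H\"older estimate. The crucial point is the ordering -- for each fixed $r$, sending $i\to\infty$ and $j\to\infty$ drives the $L^{\hat{p}}$-error $\varepsilon_j$ to zero \emph{before} the small-ball denominator can blow up, and only afterwards is $r\to 0$ taken via Lebesgue differentiation.
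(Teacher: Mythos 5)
Your proposal is correct. For the (easier) direction that pointwise a.e.\ strong convergence implies $L^{\hat{p}}$-strong convergence, you spell out exactly the reduction the paper leaves implicit when it says it suffices to prove the ``if'' part: Remark \ref{easy} and Corollary \ref{098} give weak convergence on $B_R(x_{\infty})$, Proposition \ref{strong3} with $F(t)=|t|^{\hat{p}}$ plus Proposition \ref{weak2} give convergence of the $L^{\hat{p}}$-norms, and Proposition \ref{222} concludes. For the ``if'' direction your route is genuinely different in its intermediate target: you verify the two hypotheses of Proposition \ref{strong} (equivalently Remark \ref{77}), namely weak convergence at a.e.\ point and weak upper semicontinuity of $(g_i)^2$, obtaining the latter by splitting $g_i$ against the explicit step-function approximate sequence of Proposition \ref{876} and controlling the error on each fixed ball by H\"older before sending $r\to 0$. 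The paper avoids any approximate sequence: it uses Corollary \ref{rere} (so that $g_i$ minus its ball average, and then its absolute value, still converges $L^{\hat{p}}$-strongly, hence weakly) together with Proposition \ref{weak2} to show that for each fixed $r$ the $L^1$ mean oscillation of $g_i$ over $B_r(z_i)$ converges to that of $g_{\infty}$ over $B_r(z_{\infty})$, and then reads off Definition \ref{8} directly at Lebesgue points of $g_{\infty}$. The skeletons agree (a fixed-radius limit in $i$, then $r\to 0$), but the paper's first-moment argument is shorter, while yours buys a direct verification of the square-upper-semicontinuity criterion at the cost of the three-limit bookkeeping. Two minor points: since your fixed-$r$ statements already have the right sign, the appeals to Lebesgue points in your steps (a) and (b) are superfluous (Lebesgue differentiation is only needed where the paper uses it, to kill the mean oscillation of $g_{\infty}$ as $r\to 0$); and the convergence of the ball averages of $(g_{i,j})^2$ is justified most cleanly not via measures of ball intersections but by noting that $(g_{i,j})^2$ again converges strongly at a.e.\ point (Proposition \ref{strong3}) and applying Proposition \ref{weak2} on the ball, which sidesteps any worry about exceptional radii.
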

\begin{proof}
It suffices to check ``if'' part.
Assume that $g_i$ $L^{\hat{p}}$-converges strongly to $g_{\infty}$ on $B_R(x_{\infty})$ for some $1<\hat{p}<\infty$.
Since Proposition \ref{weak2} and Corollary \ref{rere} yield that
\begin{align*}
&\lim_{i \to \infty}\frac{1}{\upsilon_i(B_r(z_{i}))}\int_{B_r(z_{i})}\left|g_{i}-\frac{1}{\upsilon_i (B_r(z_{i}))}\int_{B_r(z_{i})}g_{i}d\upsilon_i\right|d\upsilon_i \\
&=\frac{1}{\upsilon_{\infty}(B_r(z_{\infty}))}\int_{B_r(z_{\infty})}\left|g_{\infty}-\frac{1}{\upsilon_{\infty} (B_r(z_{\infty}))}\int_{B_r(z_{\infty})}g_{\infty}d\upsilon_{\infty} \right|d\upsilon_{\infty}\\
\end{align*}
holds for every $r>0$, where $z_i \to z_{\infty}$,  we see that $g_i$ converges strongly to $g_{\infty}$ at every $z_{\infty} \in B_R(x_{\infty})$ satisfying 
\[\lim_{r \to \infty}\frac{1}{\upsilon_{\infty}(B_r(z_{\infty}))}\int_{B_r(z_{\infty})}\left|g_{\infty}-\frac{1}{\upsilon_{\infty} (B_r(z_{\infty}))}\int_{B_r(z_{\infty})}g_{\infty}d\upsilon_{\infty} \right|d\upsilon_{\infty}=0.\]
\end{proof}
On the other hand, we recall the definition of $L^p$-convergence of functions with respect to the Gromov-Hausdorff topology given by Kuwae-Shioya in \cite{KS, KS2}.
Note that without loss of generality we can assume that every $\epsilon_i$-isometry $\psi_i: B_{R_i}(x_i) \to B_{R_i}(x_{\infty})$ is a Borel map.
\begin{definition}\cite[Definition $3.14$]{KS2}
Assume $p<\infty$.
\begin{enumerate}
\item We say that \textit{$f_i$ $L^p$-converges to $f_{\infty}$ on $B_R(x_{\infty})$ in the sense of Kuwae-Shioya} if there exists $\{\phi_j\}_{j<\infty} \subset C^0(B_R(x_{\infty}))$ such that 
\[\lim_{j \to \infty}(\limsup_{i \to \infty}||f_i - \phi_j \circ \psi_i||_{L^p(B_R(x_i))})=\lim_{j \to \infty}||f_{\infty}-\phi_j||_{L^p(B_R(x_{\infty}))}=0.\]
\item We say that \textit{$f_i$ $L^p$-converges weakly to $f_{\infty}$ on $B_R(x_{\infty})$ in the sense of Kuwae-Shioya} if 
\[\lim_{i \to \infty}\int_{B_R(x_i)}f_ig_id\upsilon_i=\int_{B_R(x_{\infty})}f_{\infty}g_{\infty}d\upsilon_{\infty}\]
holds for every $L^q$-convergent sequence $g_i \to g_{\infty}$ on $B_R(x_{\infty})$ in the sense of Kuwae-Shioya.
\end{enumerate}
\end{definition}
We end this subsection by giving an equivalence between Kuwae-Shioya's formulation and our setting:
\begin{corollary}\label{KSequiv}
We have the following:
\begin{enumerate}
\item $f_i$ $L^p$-converges strongly to $f_{\infty}$ on $B_R(x_{\infty})$ if and only if $f_i$ $L^p$-converges to $f_{\infty}$ on $B_R(x_{\infty})$ in the sense of Kuwae-Shioya.
\item $f_i$ converges weakly to $f_{\infty}$ on $B_R(x_{\infty})$ if and only if $f_i$ $L^p$-converges weakly to $f_{\infty}$ on $B_R(x_{\infty})$ in the sense of Kuwae-Shioya.
\end{enumerate}
\end{corollary}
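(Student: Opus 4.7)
The plan is to prove (1) first by a canonical pullback construction, and then to derive (2) from (1) via Proposition \ref{weak3}. The bridge is the following observation: for any bounded $\phi \in C^0(B_R(x_\infty))$, the pullbacks $\{\phi \circ \psi_i\}_i$ along the $\epsilon_i$-almost isometries form an $L^p$-approximate sequence of $\phi$ in the sense of Definition \ref{app}. The $L^\infty$-bound is automatic; pointwise strong convergence at every interior $z_\infty$ follows from Remark \ref{1234}, because uniform continuity of $\phi$ on a compact neighborhood of $z_\infty$ combined with the distortion bound $|\overline{\psi_i(\alpha),\psi_i(\beta)} - \overline{\alpha,\beta}| < \epsilon_i \to 0$ makes $\{\phi \circ \psi_i\}_i$ asymptotically uniformly equicontinuous near $z_\infty$, while pointwise convergence is immediate from $\psi_i(z_i) \to z_\infty$.

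Granting this bridge, (1) unwinds in two symmetric steps. Given a Kuwae--Shioya approximation $\{\phi_j\} \subset C^0(B_R(x_\infty))$ of $f_\infty$, the double-indexed family $\{\phi_j \circ \psi_i\}_{i,j}$ is an $L^p$-approximate sequence of $f_\infty$, so the Kuwae--Shioya condition is exactly the ``for some'' version of Definition \ref{strongdeff}, which Proposition \ref{55559} promotes to the ``for every'' version. Conversely, starting from the author's strong convergence, Remark \ref{7890} provides a continuous, asymptotically uniformly equicontinuous $L^p$-approximate sequence $\{g_{i,j}\}$ whose limits $\phi_j := g_{\infty,j}$ lie in $C^0(B_R(x_\infty))$; Proposition \ref{55559} shows the two approximate sequences $\{g_{i,j}\}$ and $\{\phi_j \circ \psi_i\}$ of $f_\infty$ are asymptotically $L^p$-close, so the author's strong convergence transfers to the Kuwae--Shioya condition via the triangle inequality.

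For (2), the forward direction is immediate from (1) and Proposition \ref{weak3}: any Kuwae--Shioya $L^q$-strong sequence $g_i \to g_\infty$ becomes $L^q$-strong in the author's sense by (1), and Proposition \ref{weak3} delivers the pairing limit. For the converse I will fix $z_\infty \in B_R(x_\infty)$ and a radius $r>0$ with $\overline{B}_r(z_\infty) \subset B_R(x_\infty)$ and $\upsilon_\infty(\partial B_r(z_\infty))=0$, take continuous cutoffs $\phi_\epsilon^\pm$ sandwiching $1_{B_r(z_\infty)}$ with $\phi_\epsilon^- \equiv 1$ on $B_{r-\epsilon}(z_\infty)$ and $\phi_\epsilon^+$ supported in $B_{r+\epsilon}(z_\infty)$, and test the Kuwae--Shioya hypothesis against the strong sequences $\phi_\epsilon^\pm \circ \psi_i$. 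Sending $\epsilon \to 0$ converts the pairing limit into the averaged ball limit characterizing Definition \ref{weak1}.

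The main obstacle will be the sandwiching step in the converse of (2): comparing $\int_{B_r(z_i)} f_i\,d\upsilon_i$ with $\int f_i(\phi_\epsilon^\pm \circ \psi_i)\,d\upsilon_i$ reduces to the boundary estimate $\upsilon_i(B_r(z_i) \,\triangle\, \psi_i^{-1}(B_r(z_\infty))) \to 0$ along generic $r$, in the spirit of the argument used in the proof of Proposition \ref{weak0}. Combined with the H\"older inequality and a uniform $L^p$-bound on $f_i$ (arranged by truncation, or by invoking Proposition \ref{weak com} to extract a weak $L^p$-limit), this estimate handles the boundary terms and closes the proof.
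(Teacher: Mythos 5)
Your argument is correct and is essentially the paper's own route: the paper dispatches this corollary in one line as a direct consequence of Remark \ref{7890} and Proposition \ref{weak3}, and your write-up simply unwinds that --- the pullback bridge $\phi\circ\psi_i$ combined with Remark \ref{1234} and Proposition \ref{55559} for part (1), Proposition \ref{weak3} for the forward half of (2), and the cutoff/annulus testing (in the spirit of Proposition \ref{weak0}) for the converse half. The remaining details you flag are indeed routine: the boundary terms are controlled by the H\"older inequality, the uniform $L^p$-bound on $f_i$, and the fact that $\upsilon_i(B_{r+\delta}(z_i))-\upsilon_i(B_{r-\delta}(z_i))$ becomes small (first $i\to\infty$, then $\delta\to 0$, with arbitrary radii recovered by inner approximation), so no genuinely new idea beyond the paper's is needed.
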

\begin{proof}
It is a direct consequence of Remark \ref{7890} and Proposition \ref{weak3}.
\end{proof}
\begin{remark}
In \cite{KS, KS2} Kuwae-Shioya proved a compactness result which corresponds to Proposition \ref{weak com} for the case $p=2$.
For instance they showed that $\{L^p(B_R(x_i))\}_i$ satisfies an \textit{asymptotic relation} (see \cite[Definition $3.1$, Theorem $3.27$]{KS2} for the precise definition and statement), and gave a compactness result for a sequence of $\mathrm{CAT}(0)$ spaces having an asymptotic relation.
However in general, $L^p(B_R(x_i))$ is NOT a $\mathrm{CAT}(0)$-space, more precisely, $L^p$-space is $\mathrm{CAT}(0)$ if and only if $p=2$.
Therefore we can not apply directly \cite[Lemma $5.5$]{KS2} to our setting for $p \neq 2$.
\end{remark}
\begin{remark}\label{normal1}
Assume $p<\infty$.
Then it is easy to check that if $(X_i, x_i, \upsilon_i)\equiv (X, x, \upsilon)$ and $\psi_i=id_X$ hold for every $i\le \infty$, then $f_i$ $L^p$-converges strongly to $f_{\infty}$ with respect to the convergence $(X, x, \upsilon) \stackrel{(id_X, \epsilon_i, R_i)}{\to} (X, x, \upsilon)$ if and only if $||f_i-f_{\infty}||_{L^p(B_R(x))} \to 0$.
\end{remark}
\begin{remark}
By Proposition \ref{strong3} and the same way as in Definition \ref{strongdeff},  we can give the definition of $L^1$-strong convergence of functions with respect to the Gromov-Hausdorff topology and also show that it is equivalent to that in the sense of Kuwae-Shioya given in \cite{KS2}.
\end{remark}
\subsubsection{Poincar\'e inequality and $L^p$-strong compactness.}
In this subsection we will prove a compactness result about $L^p$-strong convergence.
The following proposition is an essential tool to get it:
\begin{proposition}\label{mosco}
Let $(X, \upsilon)$ be a proper geodesic metric measure space, $x \in X$ with $\upsilon (B_1(x))=1$ and $1<p<\infty$.
Assume that the following two conditions hold:
\begin{enumerate}
\item (The doubling property on $(X, \upsilon)$ for $\kappa=\kappa (r)$). For every $r>0$ there exists $\kappa=\kappa(r) \ge 0$ such that $\upsilon(B_{2t}(y))\le 2^{\kappa}\upsilon (B_t(y))$ holds for every $y \in X$ and every $t<r$.
\item (The weak Poincar\'e inequality of type $(1, p)$ on $(X, \upsilon)$ for $\tau=\tau(r)$). For every $r>0$ there exists $\tau=\tau(r)>0$ such that for every $y \in X$, every $t\le r$ and every $f \in H_{1, p}(B_t(x))$, we have
\[\frac{1}{\upsilon (B_t(y))}\int_{B_t(y)}\left|f-\frac{1}{\upsilon (B_t(y))}\int_{B_t(y)}fd\upsilon \right| d\upsilon \le \tau t\left(\frac{1}{\upsilon (B_t(y))}\int_{B_t(y)}|g_f|^pd\upsilon\right)^{1/p},\]
where $g_f$ is the minimal generalized upper gradient for $f$ (see \cite[Theorem $2.10$]{ch1} for the precise definition of $g_f$). 
\end{enumerate}
Then for every $T>0$, every $R>0$, every $L \ge 1$ and every $f \in H_{1, p}(B_R(x))$ with $||f||_{H_{1, p}(B_R(x))}\le T$, there exists an $L$-Lipschitz function $f_L$ on $B_{R/5}(x)$ such that $||f_L||_{H_{1, p}(B_{R/5}(x))} \le C(\kappa (R), \tau(R), p, T, R)$ and $||f-f_L||_{L^p(B_{R/5}(x))}\le C(\kappa (R), \tau(R), p, T, R)L^{-\alpha}$, where $\alpha:=\alpha (\kappa(R), \tau(R), p, R)>0$. 
\end{proposition}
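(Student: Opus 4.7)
The natural strategy is a Haj\l asz/Lusin-type Lipschitz truncation of $f$, built from a maximal-function good set and a McShane extension. First I would invoke (or reprove locally) the Haj\l asz--Koskela pointwise inequality, a standard consequence of doubling plus a $(1,p)$-Poincar\'e inequality: there exist $C_0 = C_0(\kappa(R), \tau(R), p)$ and a representative of $f$ such that
\[
|f(y) - f(z)| \le C_0\, \overline{y,z}\, \Bigl( \bigl( M g_f^p \bigr)^{1/p}(y) + \bigl( M g_f^p \bigr)^{1/p}(z) \Bigr)
\]
for $\upsilon$-a.e. $y,z \in B_{R/4}(x)$, where $M$ denotes the localized Hardy--Littlewood maximal operator associated with $B_R(x)$. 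The classical proof is a telescoping/chaining argument along a geometric sequence of balls combined with the $(1,p)$-Poincar\'e estimate on each link.

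Next I would define the good set
\[
E_L := \bigl\{ y \in B_{R/5}(x) : (M g_f^p)^{1/p}(y) \le L/(2C_0) \bigr\},
\]
so that $f|_{E_L}$ is $L$-Lipschitz, and let $f_L(y) := \inf_{z \in E_L} \bigl( f(z) + L \overline{y,z}\bigr)$ be the McShane extension; it is $L$-Lipschitz on $B_{R/5}(x)$ and agrees with $f$ on $E_L$. The doubling hypothesis yields the weak $(1,1)$ inequality for $M$, whence
\[
\upsilon(B_{R/5}(x) \setminus E_L) \le C(\kappa(R))\, L^{-p} ||g_f||_{L^p(B_R(x))}^p \le C(\kappa(R), p)\, T^p L^{-p}.
\]
Since the minimal upper gradient of $f_L$ is pointwise bounded by $L$ on $B_{R/5}(x)\setminus E_L$ and by $g_f$ on $E_L$, one gets $||f_L||_{H_{1,p}(B_{R/5}(x))} \le C(\kappa(R), \tau(R), p, T, R)$ directly: the bad-set contribution to $\int |g_{f_L}|^p$ is at most $L^p \cdot \upsilon(B_{R/5}(x)\setminus E_L) \le C T^p$, and $||f_L||_{L^p}$ is controlled using that $\upsilon(E_L)$ is bounded below once $L$ is large (for small $L$ the bound is trivial).

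For the $L^p$-decay I would invoke the Sobolev embedding $H_{1,p}(B_{R/4}(x)) \hookrightarrow L^{p^*}(B_{R/4}(x))$ for some $p^* = p^*(\kappa(R), \tau(R), p, R) > p$, which under doubling plus $(1,p)$-Poincar\'e is the self-improved Sobolev--Poincar\'e inequality of Haj\l asz--Koskela (with $p^* = pQ/(Q-p)$ when the doubling exponent $Q$ exceeds $p$, and any finite exponent otherwise). Since $f_L = f$ on $E_L$, H$\ddot{\text{o}}$lder's inequality gives
\[
\int_{B_{R/5}(x)\setminus E_L} |f - f_L|^p\, d\upsilon \le 2^p \bigl( ||f||_{L^{p^*}}^p + ||f_L||_{L^{p^*}}^p \bigr)\, \upsilon(B_{R/5}(x)\setminus E_L)^{1-p/p^*} \le C T^p L^{-p\alpha}
\]
with $\alpha := 1 - p/p^* > 0$, which yields $||f - f_L||_{L^p(B_{R/5}(x))} \le C L^{-\alpha}$ as claimed. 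The main obstacle is the first step: extracting the Haj\l asz--Koskela pointwise inequality and the self-improved Sobolev exponent $p^* > p$ from the hypotheses with explicit control of the constants in terms of $\kappa(R)$, $\tau(R)$, $p$ and $R$; both are known under doubling $+$ $(1,p)$-Poincar\'e but their local versions on $B_R(x)$ require care so that all chaining stays inside $B_R(x)$, and in particular that the weak $(1,p)$-Poincar\'e inequality self-improves to exponent $p^* > p$ in the given local setting.
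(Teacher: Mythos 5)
Your proposal follows essentially the same route as the paper's proof: a maximal-function good set on which $f$ is Lipschitz (via the chaining/telescope argument with the $(1,p)$-Poincar\'e inequality), a McShane extension, a weak-type estimate for the measure of the bad set, the splitting argument for the $H_{1,p}$-bound, and the Haj\l asz--Koskela Sobolev--Poincar\'e exponent $\hat p>p$ combined with H\"older to get the $L^{-\alpha}$ decay, which is exactly the paper's use of \cite[Theorem $1$]{HK}. The only differences are cosmetic (you threshold the maximal function of $g_f^p$ directly, while the paper uses the restricted averages defining $K_L$, and it controls $\|f_L\|_{L^\infty}$ through a Lebesgue point plus a telescope estimate where you argue via the lower bound on $\upsilon(E_L)$), so the argument is correct in the same sense as the paper's.
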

\begin{proof}
Define $f \equiv 0$ on $X \setminus B_R(x)$.
Let 
\[K_L:=\left\{ z \in B_R(x); \frac{1}{\upsilon (B_r(z))}\int_{B_r(z)}|df|^pd\upsilon \le L^p\, \mathrm{for\, every\,}r\le R\right\}.\]
By the proof of \cite[Lemma $3.1$]{Ho}, we have 
$\upsilon (B_R(x) \setminus K_L)\le  C(\kappa (R), T)L^{-p}$.
On the other hand, the Poincar\'e inequality yields that 
\[\frac{1}{\upsilon (B_r(w))}\int_{B_r(w)}\left| f- \frac{1}{\upsilon (B_r(w))}\int_{B_r(w)}fd\upsilon \right|d\upsilon \le \tau(R) rL\]
holds for every $w \in K_L$ and every $r\le R$ with $B_r(w) \subset B_R(x)$. 
An argument similar to the proof of \cite[Theorem $4.14$]{ch1} yields that there exists $\hat{K}_L\subset K_L \cap B_{R/5}(x)$ such that 
$\upsilon (K_L \cap B_{R/5}(x) \setminus \hat{K}_L)=0$ and that $f|_{\hat{K}_L}$ is $C(\kappa(R), \tau(R))L$-Lipschitz.
MacShane's lemma (c.f. $(8.2)$ or $(8.3)$ in \cite{ch1}) yields that there exists a $C(\kappa (R), \tau(R))L$-Lipschitz function $f_L$ on $X$ such that $f_L|_{\hat{K}_L}\equiv f|_{\hat{K}_L}$.
Then \cite[Corollary $2.25$]{ch1} yields
\begin{align*}
\int_{B_{R/5}(x)}|df_L|^pd\upsilon &=\int_{B_{R/5}(x)\setminus \hat{K}_L}|df_L|^pd\upsilon+\int_{\hat{K}_L}|df_L|^pd\upsilon \\
&\le \int_{B_{R/5}(x) \setminus \hat{K}_L}C(\kappa (R), \tau(R), p)L^pd\upsilon + \int_{\hat{K}_L}|df|^pd\upsilon \\
&\le C(\kappa (R), \tau(R), p)\upsilon (B_{R}(x)\setminus K_L)L^p+T^p \le C\left(\kappa (R), \tau(R), p, T\right).
\end{align*}
Fix $y_0 \in \hat{K}_{L} \cap B_{R/5}(x)$ with 
\[f_L(y_0)=f(y_0)=\lim_{r \to 0}\frac{1}{\upsilon (B_r(y_0))}\int_{B_r(y_0)}fd\upsilon.\]
Then for every $y \in B_R(x)$ we have 
$|f_L(y)|\le |f_L(y_0)|+C(\kappa(R), \tau(R))L.$
On the other hand, the Poincar\'e inequality and a `telescope argument' (see the proof of \cite[Theorem $4.14$]{ch1}) yield
\[\left|f_L(y_0)-\frac{1}{\upsilon (B_{R/5}(y_0))}\int_{B_{R/5}(y_0)}fd\upsilon \right| \le C(\kappa(R), \tau(R), R)L.\]
Therefore we have $||f_L||_{L^{\infty}(B_R(x))}\le C(\kappa(R), \tau(R), T, R)L$.
In particular we have 
\begin{align*}
\int_{B_{R/5}(x)}|f_L|d\upsilon&=\int_{B_{R/5}(x)\setminus \hat{K}_L}|f_L|d\upsilon + \int_{\hat{K}_L}|f|d\upsilon \\
&\le \upsilon (B_{R/5}(x) \setminus \hat{K}_L)C(\kappa(R), \tau(R), T, R)L +C(\kappa(R), p, T, R) \\
&\le C(\kappa(R), \tau(R), p, T, R).
\end{align*}
On the other hand, the Poincar\'e-Sobolev inequality \cite[Theorem $1$]{HK} yields that there exists $\hat{p}:=\hat{p}(\kappa(R), \tau(R), p)>p$ such that 
\begin{align*}
&\left(\frac{1}{\upsilon (B_{R/5}(x))}\int_{B_{R/5}(x)}\left|f_L-\frac{1}{\upsilon (B_{R/5}(x))}\int_{B_{R/5}(x)}f_Ld\upsilon \right|^{\hat{p}}d\upsilon\right)^{1/\hat{p}}\\
&\le C(\kappa(R), \tau(R), R, p)\left(\frac{1}{\upsilon (B_{R/5}(x))}\int_{B_{R/5}(x)}|df_L|^pd\upsilon \right)^{1/p} \le C\left(\kappa(R), \tau(R), p, T, R\right).
\end{align*}
In particular we have $||f_L||_{L^{\hat{p}}(B_{R/5}(x))} \le C(\kappa(R), \tau(R), p, T, R)$.
Similarly we have $||f||_{L^{\hat{p}}(B_R(x))}\le C(\kappa(R), \tau(R), p, T, R)$.
Therefore the H$\ddot{\text{o}}$lder inequality yields 
$||f-f_L||_{L^p(B_{R/5}(x))}\le (\upsilon (B_{R/5}(x) \setminus \hat{K}_L))^{1/\beta}||f-f_L||_{L^{\hat{p}}(B_{R/5}(x))} \le C(\kappa(R), \tau(R), p, T, R)L^{-p/\beta}$, 
where $\beta$ is the conjugate exponent of $\hat{p}/p >1$.
Therefore we have the assertion.
\end{proof}
We are now in a position to give a compactness result about $L^p$-strong convergence.
Compare with \cite[Theorem $4.5$]{KS2}:
\begin{proposition}\label{strong com}
Let $1<p<\infty$.
Assume that there exists $\{\tau=\tau(r)\}_{r>0} \subset \mathbf{R}$ such that the weak Poincar\'e inequality of type $(1, p)$ for $\tau$ holds on $(X_i, \upsilon_i)$ for every $i < \infty$.
Then for every sequence $\{f_i\}_{i<\infty}$ of $f_i \in H_{1, p}(B_R(x_i))$ with $\sup_{i<\infty}||f_i||_{H_{1, p}}<\infty$,
there exist $f_{\infty} \in L^{p}(B_R(x_{\infty}))$ and a subsequence $\{f_{i(j)}\}_j$ of $\{f_i\}_i$ such that $f_{i(j)}$ $L^p$-converges strongly to $f_{\infty}$ on $B_R(x_{\infty})$.
\end{proposition}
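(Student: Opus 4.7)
The strategy is to reduce $L^p$-strong convergence to uniform approximation by Lipschitz functions via Proposition~\ref{mosco}, then combine $C^0$-pointwise compactness (Proposition~\ref{conti com}) with a diagonal extraction and the $L^p$-approximate sequence machinery of Definition~\ref{app}.

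First, for each $L \ge 1$ I produce $C L$-Lipschitz approximations $f_{i,L}$ of $f_i$ on $B_R(x_\infty)$ satisfying, uniformly in $i$, the bounds $\|f_i - f_{i,L}\|_{L^p(B_R(x_i))} \le C L^{-\alpha}$, $\|f_{i,L}\|_{L^\infty(B_R(x_i))} \le C_L$, and $\mathbf{Lip}(f_{i,L}) \le C L$, where $C, C_L, \alpha > 0$ depend only on $\kappa, \tau, p, R$ and $\sup_i \|f_i\|_{H_{1,p}}$. Since Proposition~\ref{mosco} only yields approximations on $B_{R/5}(x_i)$ from the $H_{1,p}$-norm on $B_R(x_i)$, a covering argument is required: I cover $B_R(x_\infty)$ by countably many balls $B_{\rho_k/5}(y_k)$ with $B_{\rho_k}(y_k) \subset B_R(x_\infty)$, apply Proposition~\ref{mosco} on each $B_{\rho_k}(y_{k,i})$ with $y_{k,i} \to y_k$, and either patch via a measurable partition of unity or exhaust $B_R(x_\infty)$ by $B_{R-\epsilon}(x_\infty)$ and let $\epsilon \to 0$ after the diagonal extraction.

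Next, for each fixed $L \in \mathbf{N}$, the family $\{f_{i,L}\}_i$ is asymptotically uniformly equicontinuous (from the uniform Lipschitz bound) and uniformly $L^\infty$-bounded; Proposition~\ref{conti com} then gives a subsequence converging to some $f_{\infty,L} \in C^0(B_R(x_\infty))$, and by Remark~\ref{1234} this pointwise convergence is equivalent to strong convergence at every point. A diagonal argument over $L \in \mathbf{N}$ extracts a single subsequence $\{f_{i(j)}\}_j$ such that $f_{i(j),L} \to f_{\infty,L}$ on $B_R(x_\infty)$ for every $L$. Lower semicontinuity of the $L^p$-norm (Proposition~\ref{lower}) then yields
\[
\|f_{\infty,L} - f_{\infty,L'}\|_{L^p(B_R(x_\infty))} \le \liminf_{j \to \infty} \|f_{i(j),L} - f_{i(j),L'}\|_{L^p} \le C\bigl(L^{-\alpha} + (L')^{-\alpha}\bigr),
\]
so $\{f_{\infty,L}\}_L$ is Cauchy in $L^p$ and converges to some $f_\infty \in L^p(B_R(x_\infty))$.

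Finally, the sequence $\{f_{i(j),L}\}_{j,L}$ satisfies all three conditions of Definition~\ref{app} by construction, and is therefore an $L^p$-approximate sequence of $f_\infty$; combining this with the uniform bound $\limsup_j \|f_{i(j)} - f_{i(j),L}\|_{L^p} \le C L^{-\alpha}$ and applying Definition~\ref{strongdeff} gives the desired $L^p$-strong convergence $f_{i(j)} \to f_\infty$ on $B_R(x_\infty)$. The main obstacle is the globalization in the first step: Proposition~\ref{mosco} shrinks the domain by a factor of $5$, so one must carefully glue or exhaust to produce $f_{i,L}$ on all of $B_R(x_i)$ while preserving the quantitative bounds $C, C_L, \alpha$ independently of~$i$; once this is in place, the remaining arguments are routine applications of the framework developed in Section~$3.1$.
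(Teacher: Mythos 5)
Your local mechanism (Proposition~\ref{mosco} to get $L$-Lipschitz approximations, Proposition~\ref{conti com} plus Remark~\ref{1234} to pass to a limit for each fixed $L$, lower semicontinuity to see that the limits $f_{\infty,L}$ form an $L^p$-Cauchy family, then Definition~\ref{app}/Definition~\ref{strongdeff}) is essentially the engine the paper uses inside its Claim about small balls. The genuine gap is exactly the step you flag as ``the main obstacle'' and then leave unresolved: the construction of a single Lipschitz $f_{i,L}$ on all of $B_R(x_i)$ with $\mathbf{Lip}(f_{i,L})\le CL$ and $\|f_i-f_{i,L}\|_{L^p(B_R(x_i))}\le CL^{-\alpha}$ uniformly in $i$. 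A ``measurable partition of unity'' (indicator-based patching over a countable cover) destroys continuity, so the asserted asymptotic uniform equicontinuity of $\{f_{i,L}\}_i$ and hence the application of Proposition~\ref{conti com} fail; moreover, since the error in Proposition~\ref{mosco} is a fixed constant $C(\kappa,\tau,p,T,R)L^{-\alpha}$ (not proportional to the local norm of $f_i$), summing it over countably many balls does not give a finite, $L$-uniform bound. A genuine Lipschitz partition of unity forces finitely many balls, i.e.\ you only reach $B_{R-\epsilon}(x_\infty)$; then the exhaustion alternative leaves the annulus $B_R(x_i)\setminus B_{R-\epsilon}(x_i)$, and to make $\sup_i\|f_i\|_{L^p}$ there small you need equi-integrability uniform in $i$ --- plain $L^p$-boundedness from $\sup_i\|f_i\|_{H_{1,p}}<\infty$ is not enough. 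The missing ingredient is the uniform higher-integrability bound $\sup_i\|f_i\|_{L^{\hat p}(B_R(x_i))}\le C$ with $\hat p>p$ coming from the Poincar\'e--Sobolev inequality (it is produced inside the proof of Proposition~\ref{mosco}), which your proposal never invokes.

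For comparison, the paper avoids any global Lipschitz patching: it first extracts a weakly convergent subsequence by Proposition~\ref{weak com}, proves $L^p$-strong convergence on each ball $B_r(z_\infty)$ with $B_{5r}(z_\infty)\subset B_R(x_\infty)$ by exactly your local argument (Proposition~\ref{mosco}, Proposition~\ref{conti com}, Proposition~\ref{lower}), and then globalizes at the level of norms: choosing a countable pairwise disjoint collection $\{\overline{B}_{r_l}(w_l)\}_l$ exhausting the measure of $B_R(x_\infty)$, using the uniform $L^{\hat p}$ bound and H\"older to control the contribution outside finitely many of these balls, it obtains $\limsup_j\|f_{i(j)}\|_{L^p(B_R(x_{i(j)}))}\le\|f_\infty\|_{L^p(B_R(x_\infty))}$ and concludes with Proposition~\ref{222} (weak convergence plus this norm inequality implies strong convergence). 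If you want to salvage your route, you should replace the patching step by this norm-level globalization, or at least import the $L^{\hat p}$ estimate to control the boundary annulus; as written, the claimed global bounds in your first step are not established.
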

\begin{proof}
Let $T:=\sup_{i<\infty}||f_i||_{H_{1, p}}$.
Proposition \ref{weak com} yields that there exist $f_{\infty} \in L^{p}(B_R(x_{\infty}))$ and a subsequence $\{f_{i(j)}\}_j$ of $\{f_i\}_i$ such that $f_{i(j)}$ converges weakly to $f_{\infty}$ on $B_R(x_{\infty})$.
\begin{claim}\label{jjjj}
Let $r>0$ and $z_{\infty} \in B_R(x_{\infty})$ with $B_{5r}(z_{\infty}) \subset B_R(x_{\infty})$.
Then $f_{i(j)}$ $L^p$-converges strongly to $f_{\infty}$ on $B_r(z_{\infty})$. 
\end{claim}
The proof is as follows.
By Proposition \ref{mosco}, for every $L \ge 1$ and every $j<\infty$, there exists an $L$-Lipschitz function $(f_{i(j)})_{L}$ on $B_{r}(z_{i(j)})$ such that $||f_{i(j)}-(f_{i(j)})_{L}||_{L^p(B_{r}(z_{i(j)}))}\le  C(\kappa(R), \tau(R), p, T, R)L^{-\alpha}$, where $\alpha=\alpha (\kappa(R), \tau(R), p)>0$ and $z_{i(j)} \to z_{\infty}$.
By Proposition \ref{conti com}, without loss of generality we can assume that there exists an $L$-Lipschitz function $(f_{\infty})_{L}$ on $B_r(z_{\infty})$ such that 
$(f_{i(j)})_L \to (f_{\infty})_L$ on $B_r(z_{\infty})$.
Then Proposition \ref{lower} yields $||f_{\infty}-(f_{\infty})_L||_{L^p(B_r(z_{\infty}))}\le \liminf_{j \to \infty}||f_{i(j)}-(f_{i(j)})_L||_{L^p(B_r(z_{i(j)}))}
\le C(\kappa(R), \tau(R), p, T, R)L^{-\alpha}$.
Since $L$ is arbitrary, we have Claim \ref{jjjj}.

Let $\hat{p}, \beta$ as in the proof of Proposition \ref{mosco} and let $\{\overline{B}_{r_l}(w_l)\}_l$ be a countable pairwise disjoint collection with $B_{5r_l}(w_l) \subset B_R(x_{\infty})$ and $\upsilon_{\infty}(B_R(x_{\infty}) \setminus \bigcup_l B_{r_l}(w_l))=0$.
Fix $\epsilon >0$. 
Let $N_0$ with $\upsilon_{\infty}(B_R(x_{\infty}) \setminus \bigcup_{l=1}^{N_0} B_{r_l}(w_l))<\epsilon$.
By the proof of Proposition \ref{mosco} we have $||f_{i(j)}||_{L^{\hat{p}}(B_R(x_{i(j)}))}\le C(\kappa(R), \tau(R), p, T, R)$ for every $j<\infty$.
In particular the H$\ddot{\text{o}}$lder inequality yields 
\begin{align*}
&\left| \int_{B_R(x_{i(j)})}|f_{i(j)}|^pd\upsilon_{i(j)}- \sum_{l=1}^{N_0}\int_{B_{r_l}(w_{l, i(j)})}|f_{i(j)}|^pd\upsilon_{i(j)}\right|\\
&\le \left(\upsilon_{i(j)}\left(B_R(x_{i(j)}) \setminus \bigcup_{l=1}^{N_0} B_{r_l}(w_{l, i(j)})\right)\right)^{1/\beta}||f_{i(j)}||_{L^{\hat{p}}(B_R(x_{i(j)}))}^p<\Psi(\epsilon; \kappa(R), \tau(R), p, T, R)
\end{align*}
for every sufficiently large $j<\infty$, where $w_{l, i(j)} \to w_l$.
Thus Claim \ref{jjjj} yields 
\begin{align*}
\int_{B_R(x_{i(j)})}|f_{i(j)}|^pd\upsilon_{i(j)}&=\sum_{l=1}^{N_0}\int_{B_{r_l}(w_{l, i(j)})}|f_{i(j)}|^pd\upsilon_{i(j)} \pm \Psi(\epsilon; \kappa(R), \tau(R), p, T, R) \\
&=\sum_{l=1}^{N_0}\int_{B_{r_l}(w_l)}|f_{\infty}|^pd\upsilon_{\infty} \pm \Psi(\epsilon; \kappa(R), \tau(R), p, T, R)\\
& \le \int_{B_R(x_{\infty})}|f_{\infty}|^pd\upsilon_{\infty} + \Psi(\epsilon; \kappa(R), \tau(R), p, T, R)
\end{align*}
for every sufficiently large $j<\infty$.
Since $\epsilon$ is arbitrary, we have $\limsup_{j \to \infty}||f_{i(j)}||_{L^p(B_R(x_{i(j)}))} \le ||f_{\infty}||_{L^p(B_R(x_{\infty}))}$.
Thus Proposition \ref{222} yields that $f_{i(j)}$ $L^p$-converges strongly to $f_{\infty}$ on $B_R(x_{\infty})$.
\end{proof}
\subsection{Tensor fields.}
Throughout this subsection we will always consider the following setting:
\begin{enumerate}
\item $R>0$, $n \in \mathbf{N}, 1<p \le \infty$ and $r,s \in \mathbf{Z}_{\ge 0}$.
\item $\{(M_i, m_i)\}_{i <\infty}$ is a sequence of pointed $n$-dimensional complete Riemannian manifolds with $\mathrm{Ric}_{M_i} \ge -(n-1)$.
\item $(M_{\infty}, m_{\infty}, \upsilon)$ is the Ricci limit space of $\{(M_i, m_i, \underline{\mathrm{vol}})\}_{i <\infty}$ with $M_{\infty} \neq \{m_{\infty}\}$.
\end{enumerate}
Let $k:=\mathrm{dim} M_{\infty}$.
\subsubsection{$L^1_{w-\mathrm{loc}}, L^1_{\mathrm{loc}}$-tensor fields.}
\begin{definition}
We say that \textit{$T_{\infty} \in \Gamma_{\mathrm{Bor}}(T^r_sB_R(m_{\infty}))$ is a weakly locally $L^1$-tensor field} if $\langle T_{\infty},  \bigotimes_{j=1}^r\nabla r_{x_{j}} \otimes \bigotimes_{j=r+1}^{r+s}dr_{x_{j}}\rangle \in L^1_{\mathrm{loc}}(B_R(m_{\infty}))$ for every $\{x_j\}_{j} \subset M_{\infty}$.
We also say that \textit{$T_{\infty} \in \Gamma_{\mathrm{Bor}}(T^r_sB_R(m_{\infty}))$ is a locally $L^1$-tensor field} if $|T_{\infty}| \in L^1_{\mathrm{loc}}(B_R(m_{\infty}))$.
\end{definition}
Let us denote by $L^1_{w-\mathrm{loc}}(T^r_sB_R(m_{\infty}))$ the set of weakly locally $L^1$-tensor fields of type $(r, s)$ on $B_R(m_{\infty})$ and by $L^1_{\mathrm{loc}}(T^r_sB_R(m_{\infty}))$ the set of locally $L^1$-tensor fields of type $(r, s)$ on $B_R(m_{\infty})$.
We end this subsection by giving the definition of the pointwise convergence of $\textbf{(W)}$ for $L^1_{w-\mathrm{loc}}$-tensor fields as in Section $1$:
\begin{definition}\label{tensor weak conv}
Let $T_i \in L^1_{w-\mathrm{loc}}(T^r_sB_R(m_i))$ for every $i \le \infty$. 
We say that \textit{$T_i$ converges weakly to $T_{\infty}$ at $z_{\infty} \in B_R(m_{\infty})$} if
$\langle T_i, \bigotimes_{j=1}^r\nabla r_{x_{j, i}} \otimes \bigotimes_{j=r+1}^{r+s}dr_{x_{j, i}}\rangle$ converges weakly to 
$\langle T_{\infty}, \bigotimes_{j=1}^r\nabla r_{x_{j}} \otimes \bigotimes_{j=r+1}^{r+s}dr_{x_{j}}\rangle$ at $z_{\infty}$ for every $x_{l, i} \to x_l$ as $i \to \infty$.
\end{definition}
\subsubsection{$L^{\infty}$-tensor fields.}
Let $T_i \in L^{\infty}(T^r_sB_R(m_i))$ for every $i \le \infty$ with $\sup_{i \le \infty}||T_i||_{L^{\infty}}<\infty$.
\begin{definition}\label{inftyinfty}
We say that \textit{$T_i$ converges strongly to $T_{\infty}$ at $z_{\infty} \in B_R(m_{\infty})$} if the following two conditions hold:
\begin{enumerate}
\item $T_i$ converges weakly to $T_{\infty}$ at $z_{\infty}$.
\item $\{|T_i|^2\}_i$ is weakly upper semicontinuous at $z_{\infty}$. 
\end{enumerate}
\end{definition}
Compare with Definition \ref{8}, Proposition \ref{strong}, Remark \ref{77} and \cite[Definition $4.4$]{Ho}.
\begin{proposition}\label{dist}
We see that $\bigotimes_{i=1}^r\nabla r_{x_{i, j}} \otimes \bigotimes_{i=r+1}^{r+s}dr_{x_{i, j}}$ converges strongly to 
$\bigotimes_{i=1}^r\nabla r_{x_{i, \infty}} \otimes \bigotimes_{i=r+1}^{r+s}dr_{x_{i, \infty}}$ at every $z_{\infty} \in M_{\infty}$ as $j \to \infty$ for every $x_{i, j} \to x_{i, \infty}$.
\end{proposition}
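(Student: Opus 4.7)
The plan is to verify the two conditions in Definition \ref{inftyinfty} separately, reducing everything to the angle-convergence result of Proposition \ref{angle2}. Set
\[
T_j := \bigotimes_{i=1}^r \nabla r_{x_{i,j}} \otimes \bigotimes_{i=r+1}^{r+s} dr_{x_{i,j}}
\]
for $j \le \infty$, and note that $\sup_{j \le \infty} \|T_j\|_{L^\infty} \le 1$ since each distance function is $1$-Lipschitz. Fix $z_\infty \in M_\infty$.

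\textbf{Weak convergence.} To check the weak convergence at $z_\infty$ in the sense of Definition \ref{tensor weak conv}, take arbitrary sequences $y_{l, j} \to y_{l, \infty}$ for $1 \le l \le r + s$. The pairing expands as
\[
\left\langle T_j,\, \bigotimes_{l=1}^r \nabla r_{y_{l, j}} \otimes \bigotimes_{l=r+1}^{r+s} dr_{y_{l, j}} \right\rangle = \prod_{l=1}^{r+s} \langle \cdot , \cdot \rangle_{x_{l,j}, y_{l,j}},
\]
where each factor is $\langle \nabla r_{x_{l,j}}, \nabla r_{y_{l,j}} \rangle$ or $\langle dr_{x_{l,j}}, dr_{y_{l,j}} \rangle$, and by the identity $\langle dr_p, dr_q \rangle(w) = \cos\angle p w q$ recalled in subsection $2.5.5$, each factor equals $\cos\angle x_{l,j}\, w\, y_{l,j}$ for a.e.\ $w$. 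By Proposition \ref{angle2}, each factor converges strongly to $\cos\angle x_{l,\infty}\, \cdot\, y_{l,\infty}$ at $z_\infty$. Applying Proposition \ref{strong3} to the continuous map $F(a_1, \dots, a_{r+s}) = \prod_l a_l$ on $\mathbf{R}^{r+s}$, the full product converges strongly, and in particular weakly (by Remark \ref{easy}), to the corresponding product for the $\infty$ index, which is exactly $\langle T_\infty, \bigotimes \nabla r_{y_{l,\infty}} \otimes \bigotimes dr_{y_{l,\infty}} \rangle$ by the same identity.

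\textbf{Upper semicontinuity of $|T_j|^2$.} Since each $r_x$ is $1$-Lipschitz we have $|dr_x| \le 1$ a.e.\ and $|\nabla r_x| \le 1$ a.e., while on the complement of the cut locus $C_x$ (which satisfies $\upsilon(C_x) = 0$ as recalled in subsection $2.5.5$) these norms equal $1$. Hence
\[
|T_j|^2 = \prod_{i=1}^r |\nabla r_{x_{i,j}}|^2 \prod_{i=r+1}^{r+s} |dr_{x_{i,j}}|^2 = 1 \quad \text{for a.e.\ point},
\]
for every $j \le \infty$. Therefore $|T_j|^2$ converges strongly to $|T_\infty|^2 \equiv 1$ at every point of $M_\infty$; in particular $\{|T_j|^2\}_j$ is weakly upper semicontinuous at $z_\infty$. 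Combining the two conditions yields the strong convergence claimed.

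The only non-routine ingredient is Proposition \ref{angle2}, which is already available; once this is in hand the argument is a direct application of Proposition \ref{strong3} together with the a.e.\ normalization $|dr_x| = 1$. Thus there is no genuine obstacle beyond assembling these facts.
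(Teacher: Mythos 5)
Your overall route is the same as the paper's: pair the tensor with an arbitrary test tensor built from distance functions, rewrite the pairing a.e.\ as a product of angle cosines, and invoke Proposition \ref{angle2} together with Proposition \ref{strong3}, while the normalization $|T_j|^2=1$ a.e.\ disposes of the second condition in Definition \ref{inftyinfty}. However, there is a gap in the way you handle the claim ``at \emph{every} $z_\infty \in M_\infty$''. Proposition \ref{angle2} only asserts that $\cos\angle x_{l,j}\,\cdot\,y_{l,j}$ converges strongly to $\cos\angle x_{l,\infty}\,\cdot\,y_{l,\infty}$ on $M_\infty \setminus (C_{x_{l,\infty}} \cup C_{y_{l,\infty}})$; it says nothing at points of the cut loci, and the fixed $z_\infty$ (as well as the test points $y_{l,\infty}$, which are arbitrary) may well lie there. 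So the step ``each factor converges strongly at $z_\infty$'' is not justified as written, and your argument only establishes the weak convergence of the paired functions at a.e.\ $z_\infty$, not at every $z_\infty$ as the statement requires.

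The missing ingredient is exactly the third proposition the paper cites, namely Proposition \ref{weak2}: since the product of cosines converges weakly (indeed strongly) at a.e.\ point, Proposition \ref{weak2} applied to each ball $B_r(z_i) \to B_r(z_\infty)$ gives convergence of the integrals, hence of the ball averages (the measures of the balls converge by the definition of the limit measure), and this is precisely the weak convergence of the paired functions at the arbitrary point $z_\infty$ in the sense of Definition \ref{weak1} (this is the mechanism behind Corollary \ref{098}). Together with your observation that $|T_j|^2 \equiv 1$ a.e.\ for all $j \le \infty$, which makes the weak upper semicontinuity of $\{|T_j|^2\}_j$ trivial at every point, this closes the argument. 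So the proof is salvageable with one additional citation and a short a.e.-to-everywhere upgrade, but as it stands the everywhere claim does not follow from Propositions \ref{angle2} and \ref{strong3} alone.
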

\begin{proof}
For every $\hat{x}_{i, j} \to \hat{x}_{i, \infty}$, since 
\[\left\langle \bigotimes_{i=1}^r\nabla r_{x_{i, j}} \otimes \bigotimes_{i=r+1}^{r+s}dr_{x_{i, j}}, \bigotimes_{i=1}^r\nabla r_{\hat{x}_{i, j}} \otimes \bigotimes_{i=r+1}^{r+s}dr_{\hat{x}_{i, j}}\right\rangle (w) = \prod_{i=1}^{r+s} \cos \angle x_{i, j}w\hat{x}_{i, j}\]
holds for a.e. $w \in M_{j}$, 
the assertion follows from Propositions \ref{strong3}, \ref{angle2} and \ref{weak2}.
\end{proof}
The following is a main property of the strong convergence:
\begin{proposition}\label{strong2}
Let $\hat{r}, \hat{s} \in \mathbf{Z}_{\ge 0}$, $S_i \in L^{\infty}(T^{\hat{r}}_{\hat{s}}B_R(m_i))$ for every $i \le \infty$ with $\sup_{i \le \infty}||S_i||_{L^{\infty}}<\infty$ and $A \subset B_R(m_{\infty})$.
Assume that $S_i, T_i$ converge strongly to $S_{\infty}, T_{\infty}$ at a.e. $z_{\infty} \in A$, respectively.
Then we have the following:
\begin{enumerate}
\item If $(r, s)=(\hat{r}, \hat{s})$, then $\langle S_i, T_i \rangle$ converges strongly to $\langle S_{\infty}, T_{\infty} \rangle$ at a.e. $z_{\infty} \in A$. 
\item $S_i \otimes T_i (\in L^{\infty}(T^{r+\hat{r}}_{s+\hat{s}}B_R(m_i)))$  converges strongly  to $S_{\infty} \otimes T_{\infty}$ at a.e. $z_{\infty} \in A$.
\item If $(r, s)=(\hat{r}, \hat{s})$, then $S_i + T_i$ converges strongly  to $S_{\infty} + T_{\infty}$  at a.e. $z_{\infty} \in A$.
\item If $\hat{r} \le r$ and $\hat{s} \le s$, then $T_i(S_i) (\in L^{\infty}(T^{r-\hat{r}}_{s-\hat{s}}B_R(m_i)))$  converges strongly to $T_{\infty}(S_{\infty})$ at a.e. $z_{\infty} \in A$.
\end{enumerate}
\end{proposition}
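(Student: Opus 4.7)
The plan is to reduce each of the four statements to the scalar case by expanding both tensors in a local $\epsilon$-orthogonal basis constructed from distance-function gradients, and then invoke the scalar tools already available (Propositions \ref{strong3}, \ref{strong}, \ref{weak2}, Remark \ref{77}). Fix $z_\infty \in A$ at which both $S_i$ and $T_i$ converge strongly and which is a Lebesgue point for the relevant scalars. Given $\epsilon > 0$, Theorem \ref{dist3} and Remark \ref{radial} furnish points $y_1, \ldots, y_k \in M_\infty$ ($k = \dim M_\infty$) such that $\{\nabla r_{y_j}\}_{j=1}^k$ is $\epsilon$-orthogonal at $z_\infty$. Choose $y_{j, i} \in M_i$ with $y_{j, i} \to y_j$ and form the basis tensors
\[
e_{\alpha, i} := \bigotimes_{j=1}^r \nabla r_{y_{\alpha(j), i}} \otimes \bigotimes_{j=r+1}^{r+s} dr_{y_{\alpha(j), i}}
\]
for $\alpha \in \mathrm{Map}(\{1, \ldots, r+s\}, \{1, \ldots, k\})$. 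By Propositions \ref{angle2} and \ref{dist}, the $\{\nabla r_{y_{j, i}}\}_j$ are approximately $\epsilon$-orthogonal near $z_i$ in the averaged sense, and each $e_{\alpha, i}$ converges strongly to $e_{\alpha, \infty}$ at $z_\infty$. Proposition \ref{ep} then yields, at every point where the basis is $\epsilon$-orthogonal,
\[
|S|^2 = \sum_\alpha \langle S, e_\alpha\rangle^2 \pm \Psi(\epsilon; n, r, s, |S|), \qquad \langle S, T\rangle = \sum_\alpha \langle S, e_\alpha\rangle \langle T, e_\alpha\rangle \pm \Psi(\epsilon; n, r, s, |S|, |T|),
\]
together with the analogous expansion for a contraction $T(S)$.

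The heart of the argument is to upgrade the weak convergence of the scalar components $f_{i, \alpha} := \langle S_i, e_{\alpha, i}\rangle$ and $g_{i, \alpha} := \langle T_i, e_{\alpha, i}\rangle$ (which is immediate from Definition \ref{tensor weak conv}) to strong scalar convergence at $z_\infty$. Jensen's inequality applied to ball averages, combined with the weak convergence of $f_{i, \alpha}$ and Lebesgue differentiation, shows that $f_{i, \alpha}^2$ is weakly lower semicontinuous at a.e.\ $z_\infty$. On the other hand, the $|S|^2$-expansion together with the hypothesis that $|S_i|^2$ is weakly upper semicontinuous at $z_\infty$ forces $\sum_\alpha f_{i, \alpha}^2$ to be weakly upper semicontinuous up to a $\Psi(\epsilon)$ error. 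Non-negativity of each $f_{i, \alpha}^2$, together with the term-by-term lower semicontinuity and the upper semicontinuity of the sum, squeezes each $f_{i, \alpha}^2$ to be weakly convergent up to $\Psi(\epsilon)$. Remark \ref{77} then yields strong scalar convergence of $f_{i, \alpha}$ up to $\Psi(\epsilon)$, and letting $\epsilon \to 0$ gives genuine strong convergence; the same argument applies to $g_{i, \alpha}$.

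Given strong convergence of the scalar components, each of the four assertions follows by elementary manipulation in the basis. For (1), $\langle S_i, T_i\rangle = \sum_\alpha f_{i, \alpha} g_{i, \alpha} \pm \Psi(\epsilon)$; products of uniformly bounded strongly convergent scalars converge strongly by Proposition \ref{strong3}, finite sums preserve strong scalar convergence (via the weak-plus-upper-semicontinuity-of-squares characterization of Remark \ref{77}), and letting $\epsilon \to 0$ concludes. For (3), the basis components of $S_i + T_i$ are $f_{i, \alpha} + g_{i, \alpha}$ and one applies (1) together with the expansion $|S+T|^2 = |S|^2 + 2\langle S, T\rangle + |T|^2$ to obtain the required upper semicontinuity. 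For (2), the components of $S_i \otimes T_i$ in the product basis factor as $f_{i, \alpha} g_{i, \beta}$, and $|S_i \otimes T_i|^2 = |S_i|^2 |T_i|^2$ is weakly upper semicontinuous since $|S_i|^2$ and $|T_i|^2$ are themselves strongly convergent (a consequence of the key step applied to $\sum_\alpha f_{i,\alpha}^2$). Finally, (4) writes $T_i(S_i)$ in the basis as a sum of products of components, so the same machinery applies.

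The main obstacle will be the careful bookkeeping of the $\Psi(\epsilon)$ errors: every semicontinuity and convergence statement holds only modulo such an error, and one must verify that the overall estimate survives the passage $\epsilon \to 0$ uniformly. A related technical point is the conversion of pointwise $\epsilon$-orthogonality at $z_\infty$ into an averaged $\Psi(\epsilon)$-orthogonality on small balls $B_r(z_i)$, which is supplied by the strong convergence of $\cos \angle y_j \cdot y_l$ from Proposition \ref{angle2} together with Lebesgue differentiation at $z_\infty$.
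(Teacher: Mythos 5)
Your overall strategy (expand both tensors in a frame of distance-function gradients, reduce to the scalar machinery of Propositions \ref{strong3}, \ref{strong}, \ref{weak2}, and use Proposition \ref{dist} for the frame) is the same family of argument as the paper's, and your Jensen/squeeze device for upgrading componentwise weak convergence is a reasonable variant. But there is a concrete gap at the central step. You invoke Proposition \ref{ep} to write, \emph{pointwise on $M_i$}, $|S_i|^2=\sum_\alpha\langle S_i,e_{\alpha,i}\rangle^2\pm\Psi$ and $\langle S_i,T_i\rangle=\sum_\alpha f_{i,\alpha}g_{i,\alpha}\pm\Psi$ ``at every point where the basis is $\epsilon$-orthogonal.'' Proposition \ref{ep}(1) requires the collection to be a full basis ($k=n$), whereas your frame has $k=\dim M_\infty$ elements while $\dim M_i=n$; in the collapsed case $k<n$ the $k$ gradients $\nabla r_{y_{j,i}}$ do not span $T M_i$, and only the Bessel-type inequality $\sum_\alpha f_{i,\alpha}^2\le |S_i|^2+\Psi$ survives. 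The claimed identity for $\langle S_i,T_i\rangle$ can fail completely (take $S_i=T_i$ a unit vector orthogonal to the span of the frame: the left side is $1$, the right side is $0$), so your proof of assertion (1) — and hence of (2)--(4), which you derive from it — rests on a false identity precisely in the collapsed situation the proposition is meant to cover.

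The missing ingredient is an estimate showing that the components of $S_i$ and $T_i$ orthogonal to the span of the frame tensors are small in local $L^2$-average on $B_t(z_i)$; this is exactly where the hypothesis that $|S_i|^2$ and $|T_i|^2$ are weakly upper semicontinuous must be used, and it is the content of the paper's proof, which approximates $S_\infty$ near a.e.\ $z_\infty$ by a bounded-coefficient combination of frame tensors in local $L^2$-mean (as in \cite[Lemma 3.15]{Ho}) and transfers this approximation to $S_i$ by the argument of \cite[Theorem 4.4]{Ho}, after which $\langle S_i,T_i\rangle$ reduces to products of cosines of angles handled by Proposition \ref{dist}. Your own squeeze in fact produces the needed bound — it shows that the averages of $|S_i|^2-\sum_\alpha f_{i,\alpha}^2$ (and likewise for $T_i$) are at most $\Psi(\epsilon)$, and writing $\langle S_i,T_i\rangle-\sum_\alpha f_{i,\alpha}g_{i,\alpha}$ as an inner product of the off-frame parts plus $\Psi$-errors, Cauchy--Schwarz then controls the discrepancy in average — but this link is never made in your write-up, and without it the expansion step is unjustified. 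Once (1) is secured, your deductions of (2), (3), (4) are essentially the paper's (``follow from (1), Propositions \ref{strong3} and \ref{strong}'') and are fine.
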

\begin{proof}
Let $\Lambda:=\mathrm{Map}(\{1, \ldots, r+s\} \to \{1, \ldots, k\})$ and $\hat{L}:=\sup_i||S_i||_{L^{\infty}}$.
By an argument similar to the proof of \cite[Lemma $3. 15$]{Ho}, for a.e. $z_{\infty} \in B_R(m_{\infty})$ there exist $\{x_i\}_{1 \le i \le k} \subset M_{\infty}$ and $\{C(a)\}_{a \in \Lambda} \subset \mathbf{R}$ with $|C(a)|\le C(\hat{L}, n)$ such that
\[\lim_{r \to 0}\frac{1}{\upsilon (B_r(z_{\infty}))}\int_{B_r(z_{\infty})}\left|S_{\infty}-\sum_{a \in \Lambda} C(a)\bigotimes_{j=1}^r \nabla r_{x_{a(j)}} \otimes \bigotimes_{j=r+1}^{r+s}dr_{x_{a(j)}}\right|^2d\upsilon=0.\]
Thus $(1)$ follows from Proposition \ref{dist} and an argument similar to the proof of \cite[Theorem $4.4$]{Ho}.
On the other hand, $(2), (3), (4)$ follow directly from $(1)$, Propositions \ref{strong3} and \ref{strong}.
\end{proof}
\begin{corollary}
If $T_i$ converges strongly to $T_{\infty}$ at a.e. $z_{\infty} \in B_R(m_{\infty})$, then $T_i$ converges strongly to $T_{\infty}$ at every $z_{\infty} \in B_R(m_{\infty})$.
\end{corollary}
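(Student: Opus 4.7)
The plan is to extend the two defining conditions of strong convergence (Definition \ref{inftyinfty})---weak convergence and weak upper semicontinuity of $\{|T_i|^2\}_i$---from an a.e.\ statement to a pointwise statement, by combining the tensor stability of strong convergence (Proposition \ref{strong2}) with the extension principle for weak convergence of $L^p$-functions (Corollary \ref{098}).

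First I will verify weak convergence of $T_i$ to $T_\infty$ at every $z_\infty \in B_R(m_\infty)$. Given arbitrary $\{x_l\}_{1 \le l \le r+s} \subset M_\infty$ and $x_{l, i} \to x_l$, set $\omega_i := \bigotimes_{j=1}^r \nabla r_{x_{j, i}} \otimes \bigotimes_{j=r+1}^{r+s} dr_{x_{j, i}}$. Proposition \ref{dist} yields strong convergence $\omega_i \to \omega_\infty$ at every point of $M_\infty$; since $|\omega_i| \le 1$ uniformly and $\sup_{i \le \infty}\|T_i\|_{L^\infty} < \infty$, Proposition \ref{strong2}(1) applied on the full-measure set where $T_i \to T_\infty$ strongly gives strong convergence of the scalar functions $\langle T_i, \omega_i \rangle$ to $\langle T_\infty, \omega_\infty \rangle$ at a.e.\ $z_\infty \in B_R(m_\infty)$. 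By Remark \ref{easy}(1) this is also weak convergence at a.e.\ point, and Corollary \ref{098} upgrades it to weak convergence at every point of $B_R(m_\infty)$. Because the reference points were arbitrary, Definition \ref{tensor weak conv} then delivers weak convergence of $T_i$ to $T_\infty$ at every $z_\infty$.

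For the weak upper semicontinuity of $\{|T_i|^2\}_i$, my strategy is to avoid attacking this one-sided condition directly (it does not a priori extend from a.e.\ to every point by the argument behind Corollary \ref{098}) and instead to derive full weak convergence of $|T_i|^2$. Applying Proposition \ref{strong2}(1) with $S_i := T_i$ on the same full-measure set gives strong convergence of $|T_i|^2 = \langle T_i, T_i \rangle$ to $|T_\infty|^2$ at a.e.\ $z_\infty$; Remark \ref{easy}(1) then provides weak convergence at a.e.\ point, and Corollary \ref{098} again promotes this to weak convergence at every point. This yields the required weak upper semicontinuity of $\{|T_i|^2\}_i$ at every $z_\infty$, and combined with the first step it verifies both clauses of Definition \ref{inftyinfty}, giving strong convergence of $T_i$ to $T_\infty$ at every $z_\infty \in B_R(m_\infty)$. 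The main subtlety---the non-extendability of one-sided semicontinuity in isolation---is thus resolved by routing through the genuinely stronger convergence of $|T_i|^2$.
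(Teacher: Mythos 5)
Your proposal is correct and takes essentially the same route as the paper, whose proof is precisely a citation of Propositions \ref{strong}, \ref{weak2}, \ref{strong2} and Corollary \ref{098}: one applies Proposition \ref{strong2} (with the frames $\bigotimes\nabla r_{x_{j,i}}\otimes\bigotimes dr_{x_{j,i}}$ of Proposition \ref{dist}, and with $S_i=T_i$) to get a.e.\ convergence of the scalar functions $\langle T_i,\omega_i\rangle$ and $|T_i|^2$, and then Corollary \ref{098} (resting on Proposition \ref{weak2}) to upgrade these to every point, verifying both clauses of Definition \ref{inftyinfty}. No gaps; the only cosmetic difference is that your first step could skip Proposition \ref{strong2}, since weak convergence of $\langle T_i,\omega_i\rangle$ at a.e.\ point is already contained in the hypothesis.
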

\begin{proof}
It follows from Propositions \ref{strong}, \ref{weak2}, Corollary \ref{098} and Proposition \ref{strong2}.
\end{proof}
We now recall a main result of \cite{Ho}:
\begin{proposition}\cite[Corollary $4.5$]{Ho}\label{harm5}
Let $f_i$ be a Lipschitz function on $B_R(m_i)$ for every $i \le \infty$ with $\sup_{i \le \infty}\mathbf{Lip}f_i<\infty$.
Assume that $f_i \in C^2(B_R(m_i))$ holds for every $i < \infty$ with $\sup_{i<\infty}||\Delta f_i||_{L^2(B_R(m_i))}<\infty$, and that $f_i \to f_{\infty}$ on $B_R(m_{\infty})$.
Then $df_i \to df_{\infty}$ on $B_R(m_{\infty})$ (which means that $df_i$ converges strongly to $df_{\infty}$ on $B_R(m_{\infty})$).
\end{proposition}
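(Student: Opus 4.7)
The plan is to verify, at a.e.\ $z_\infty \in B_R(m_\infty)$, the two conditions of Definition \ref{inftyinfty} applied to $T_i := df_i$. Once this is done, the corollary immediately preceding this proposition upgrades strong convergence at a.e.\ point to strong convergence at every point. The uniform Lipschitz bound supplies $\sup_i \||df_i|\|_{L^\infty} < \infty$, so what remains is (W) the weak convergence $df_i \to df_\infty$ at a.e.\ $z_\infty$ in the sense of Definition \ref{tensor weak conv}, and (U) the weak upper semicontinuity of $\{|df_i|^2\}$ at a.e.\ $z_\infty$ in the sense of Definition \ref{weak1}.

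For (W), fix $z \in M_\infty$ and $z_i \to z$; by Definition \ref{tensor weak conv} (here $r=0,\ s=1$) it suffices to show that the scalar function $\langle df_i, dr_{z_i}\rangle$ converges weakly to $\langle df_\infty, dr_z\rangle$ in the sense of Definition \ref{weak1}. Since $\langle df_i, dr_{z_i}\rangle(y)$ agrees, for a.e.\ $y$, with the directional derivative of $f_i$ along the unit-speed minimal geodesic emanating from $y$ away from $z_i$, integrating against $\upsilon_i(B_t(w_i))^{-1}\mathbf{1}_{B_t(w_i)}$ and invoking the Cheeger-Colding segment inequality recasts the integral as an average of differences $f_i(y)-f_i(y')$ over pairs $(y,y')$ joined by radial segments from $z_i$. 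The pointwise convergence $f_i \to f_\infty$, the uniform Lipschitz bound, and the Gromov-Hausdorff approximations $\psi_i$ let one pass to the limit at every Lebesgue point, while Proposition \ref{angle2} identifies the resulting limit integrand as $\langle df_\infty, dr_z\rangle$.

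For (U), exploit the smooth identity $|df_i|^2 = f_i \Delta f_i - \tfrac{1}{2}\Delta(f_i^2)$ on each $M_i$. Testing against a Lipschitz cutoff $\phi_i := \chi \circ r_{w_i}$ built from a fixed smooth $\chi: \mathbf{R} \to [0,1]$, and transferring the second Laplacian onto $\phi_i$ via self-adjointness, yields
\[
\int_{M_i} \phi_i |df_i|^2 \, d\underline{\mathrm{vol}} = \int_{M_i} \phi_i f_i \Delta f_i \, d\underline{\mathrm{vol}} - \frac{1}{2}\int_{M_i} f_i^2 \Delta \phi_i \, d\underline{\mathrm{vol}}.
\]
Using the $C^0$-convergence $f_i \to f_\infty$ and the $L^2$-bound on $\Delta f_i$ (which after passing to a subsequence gives $L^2$-weak convergence by Proposition \ref{weak com}), one passes to the limit on the right-hand side. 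Choosing $\chi$ to approximate $\mathbf{1}_{[0,t]}$ and invoking Lebesgue's differentiation theorem at a.e.\ $w_\infty$ then extracts the desired upper semicontinuity of the averages of $|df_i|^2$ over $B_t(w_i)$.

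The hardest step will be controlling $\Delta \phi_i$ uniformly across $i$: in general $\Delta r_{w_i}$ is only a measure (bounded above distributionally by Laplace comparison, but with possibly nontrivial singular negative part on the cut locus), so Cheng-Yau-style smoothings of the distance function are required to ensure $\|\Delta \phi_i\|_{L^p}$ is bounded uniformly in $i$ and $\Delta \phi_i$ converges weakly to $\Delta^\upsilon \phi_\infty$. A secondary subtlety is that identifying the weak $L^2$-limit of $\Delta f_i$ with $\Delta^\upsilon f_\infty$ already relies on (W); the logical order is therefore to establish (W) first, deduce from it that the weak $L^2$-limit of $\Delta f_i$ coincides with $\Delta^\upsilon f_\infty$, and then feed this identification into the integration-by-parts step underlying (U).
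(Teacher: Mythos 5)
This proposition is not proved in the paper at all: it is recalled verbatim from \cite[Corollary $4.5$]{Ho}, so there is no in-paper argument to compare your proposal with; I can only judge it on its own merits. Your overall strategy (verify the two conditions of Definition \ref{inftyinfty} at a.e.\ point, proving the weak convergence (W) by a radial difference-quotient/segment-inequality argument and the energy upper semicontinuity (U) by the identity $|df_i|^2=f_i\Delta f_i-\tfrac12\Delta(f_i^2)$ plus integration by parts) is in the right spirit, and (W) is plausible modulo the unaddressed exchange of the limits $s\to 0$ and $i\to\infty$ in the difference-quotient step.

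The genuine gap is in (U), at the step you dismiss as a ``secondary subtlety'': the claim that (W) alone lets you identify the weak $L^2$-limit $h_\infty$ of $\Delta f_i$ with $\Delta^{\upsilon}f_\infty$. To make that identification you must pass to the limit in $\int \langle df_i, dg_i\rangle\, d\underline{\mathrm{vol}}=\int g_i\Delta f_i\, d\underline{\mathrm{vol}}$, and since $df_i$ converges only \emph{weakly} at this stage, the test differentials $dg_i$ must converge \emph{strongly}. Before the proposition is proved, the only functions at your disposal with strongly convergent differentials are those built from distance functions (Proposition \ref{angle2}); this yields $\int\langle df_\infty,dg\rangle\, d\upsilon=\int g\,h_\infty\, d\upsilon$ only for $g$ in the algebra generated by truncated distance functions. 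To conclude $h_\infty=\Delta^{\upsilon}f_\infty$ --- equivalently, to perform the limit integration by parts converting $\int\phi_\infty f_\infty h_\infty-\tfrac12\int f_\infty^2 h^{\phi}_\infty$ into $\int\phi_\infty|df_\infty|^2$ --- you need this class to be dense in the relevant $H_{1,2}$-space on $B_R(m_\infty)$, i.e.\ precisely the approximation theorem \cite[Theorem $4.2$]{Ho} or an equivalent density statement, which you neither prove nor cite; note the same identification problem recurs for the weak limit of $\Delta\phi_i$. Without this ingredient the key equality $\lim_i\int\phi_i|df_i|^2\, d\underline{\mathrm{vol}}=\int\phi_\infty|df_\infty|^2\, d\upsilon$, hence (U), is not established. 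A secondary remark: the difficulty you flag about $\Delta(\chi\circ r_{w_i})$ being only a measure is not where the real problem lies and is handled by standard tools --- either use the cutoffs of \cite[Theorem $6.33$]{ch-co}, which are smooth with uniformly bounded Laplacian, or keep the radial cutoff and integrate by parts once more, $\int f_i^2\Delta\phi_i \,d\underline{\mathrm{vol}}=2\int f_i\langle df_i, d\phi_i\rangle\, d\underline{\mathrm{vol}}$, so that only the strong convergence $d\phi_i\to d\phi_\infty$ of Proposition \ref{angle2} is needed; ``Cheng--Yau-style smoothing'' is not the relevant device, and even with it the identification gap above remains.
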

In Section $4$ we will prove that the assumption of uniform $L^2$-bounds as in Proposition \ref{harm5} can be replaced by \textit{uniform $L^1$-bounds}.
See Corollary \ref{l1}.
We will also easily check that this assumption of uniform $L^1$-bounds is sharp in some sense.
See Remarks \ref{qqqqq} and \ref{ppppp}. 
\subsubsection{$L^p$-tensor fields.}
Let $1<p \le \infty$, $T_i \in L^p(T^r_sB_R(m_i))$ for every $i \le \infty$ with $L:=\sup_{i \le \infty}||T_i||_{L^p}<\infty$ and let $q$ is the conjugate exponent of $p$.
\begin{proposition}\label{weak4}
Let $\hat{r}, \hat{s} \in \mathbf{Z}_{\ge 0}, S_i \in L^{\infty}(T^{\hat{r}}_{\hat{s}}B_R(m_i))$ for every $i \le \infty$ with $\sup_{i \le \infty}||S_i||_{L^{\infty}}<\infty$ and $A \subset B_R(m_{\infty})$.
Assume that $S_i$ converges strongly to $S_{\infty}$ at a.e. $z_{\infty} \in A$ and that $T_i$ converges weakly to $T_{\infty}$ at a.e. $z_{\infty} \in A$.
Then we have the following:
\begin{enumerate}
\item If $(r, s)=(\hat{r}, \hat{s})$, then $\langle S_i, T_i \rangle$ converges weakly to $\langle S_{\infty}, T_{\infty} \rangle$ at a.e. $z_{\infty} \in A$.
\item $S_i \otimes T_i$ converges weakly to $S_{\infty} \otimes T_{\infty}$ at a.e. $z_{\infty} \in A$.
\item If $\hat{r} \le r$ and $\hat{s} \le s$, then $T_i(S_i)$ converges weakly to $T_{\infty}(S_{\infty})$ at a.e. $z_{\infty} \in A$.
\item If $r \le \hat{r}$ and $s \le \hat{s}$, then $S_i(T_i)$ converges weakly to $T_{\infty}(S_{\infty})$ at a.e. $z_{\infty} \in A$.
\end{enumerate}
\end{proposition}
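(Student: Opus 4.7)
Assertion (1) is the principal claim; once it is established, (2), (3), (4) follow by rewriting each contraction in the form $\langle \cdot, T_i \rangle$ and absorbing the remaining strongly convergent factors into the $S$-side via Proposition \ref{strong2}. For example, to handle (2) at $z_\infty$, one pairs $S_i \otimes T_i$ with an arbitrary test tensor $\bigotimes \nabla r_{y_{j,i}} \otimes \bigotimes dr_{y_{j,i}}$ and uses the associativity of the contraction to rewrite the result as a product $\langle S_i, E_i\rangle \cdot \langle T_i, F_i\rangle$, where $E_i$ and $F_i$ are appropriate elementary tensors; Propositions \ref{strong3} and \ref{strong} combined with Proposition \ref{weak2} then pinch the product. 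Parts (3) and (4) are entirely analogous after rearrangement.

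For (1) the argument is pointwise. Fix a Borel point $z_\infty \in A$ at which $S_i \to S_\infty$ strongly, $T_i \to T_\infty$ weakly in the sense of Definition \ref{tensor weak conv}, and at which the local approximation underlying the proof of Proposition \ref{strong2} (itself an adaptation of \cite[Lemma $3.15$]{Ho}) applies to $S_\infty$: there exist $\{x_j\}_{j=1}^{r+s} \subset M_\infty$ and constants $C(a) \in \mathbf{R}$ with $|C(a)| \le C(n, \|S_\infty\|_{L^\infty})$ such that, setting
\[
\hat S_\infty := \sum_{a \in \Lambda} C(a) \bigotimes_{j=1}^{r} \nabla r_{x_{a(j)}} \otimes \bigotimes_{j=r+1}^{r+s} dr_{x_{a(j)}},
\]
one has $\lim_{t \to 0} \tfrac{1}{\upsilon(B_t(z_\infty))}\int_{B_t(z_\infty)} |S_\infty - \hat S_\infty|^2 d\upsilon = 0$. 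Lifting each $x_j$ to any $x_{j,i} \to x_j$ and defining $\hat S_i$ by the same formula on $M_i$, Propositions \ref{dist} and \ref{strong2} yield that $\hat S_i \to \hat S_\infty$ and $S_i - \hat S_i \to S_\infty - \hat S_\infty$ strongly at almost every point, in particular at $z_\infty$.

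Decompose $\langle S_i, T_i \rangle = \langle \hat S_i, T_i \rangle + \langle S_i - \hat S_i, T_i \rangle$. The leading term $\langle \hat S_i, T_i \rangle$ equals $\sum_{a} C(a) \langle T_i, \bigotimes \nabla r_{x_{a(j),i}} \otimes \bigotimes dr_{x_{a(j),i}} \rangle$, a finite linear combination of scalar pairings whose weak convergence at $z_\infty$ is exactly the content of Definition \ref{tensor weak conv}; hence $\langle \hat S_i, T_i \rangle$ converges weakly to $\langle \hat S_\infty, T_\infty \rangle$ at $z_\infty$.

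The main obstacle is to show that the residual $\langle S_i - \hat S_i, T_i \rangle$ contributes negligibly to the averages in the $\liminf_{t \to 0}\limsup_i$ sense of Definition \ref{weak1}. H\"older's inequality gives
\[
\tfrac{1}{\upsilon_i(B_t(z_i))} \int_{B_t(z_i)} |\langle S_i - \hat S_i, T_i\rangle|\, d\upsilon_i \le \Bigl(\tfrac{1}{\upsilon_i(B_t(z_i))}\int_{B_t(z_i)} |S_i - \hat S_i|^q d\upsilon_i\Bigr)^{1/q} \Bigl(\tfrac{1}{\upsilon_i(B_t(z_i))}\int_{B_t(z_i)} |T_i|^p d\upsilon_i\Bigr)^{1/p}.
\]
For each fixed $t > 0$ the second factor is bounded as $i \to \infty$ by $(L^p/\upsilon_\infty(B_t(z_\infty)))^{1/p}$, since $\upsilon_i(B_t(z_i)) \to \upsilon_\infty(B_t(z_\infty)) > 0$; the first factor, by Propositions \ref{strong2} and \ref{strong3}, converges to the $L^q$-average of $|S_\infty - \hat S_\infty|^q$ over $B_t(z_\infty)$, which by the uniform $L^\infty$-bound on $S - \hat S$ is dominated by an appropriate power of its $L^2$-average -- and this last quantity tends to $0$ as $t \to 0$ by the choice of $\hat S_\infty$. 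The delicate point is that the $\upsilon_\infty(B_t(z_\infty))^{-1/p}$-blowup of the second factor competes against only qualitative Lebesgue-type vanishing of the first; we resolve this by a diagonal argument in $(t, i)$ -- choosing $t = t(i) \to 0$ sufficiently slowly, with the doubling property guaranteeing a polynomial lower bound on $\upsilon_\infty(B_t(z_\infty))$ -- so that the product tends to $0$ along a subsequence of radii, yielding $\liminf_{t \to 0} = 0$. An alternative cleaner route, which avoids the diagonal balancing, is to pass to the global statement: approximate $S_\infty$ in $L^q(T^r_s B_R(m_\infty))$-norm by finite linear combinations of elementary tensors with Borel coefficients (via a countable cover using the local approximation above) and then combine Proposition \ref{weak3} with Corollary \ref{KSequiv} to reduce (1) to pointwise weak convergence a.e., which delivers the conclusion on $A$.
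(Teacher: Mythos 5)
Your decomposition and your treatment of the leading term coincide with the paper's proof: at a.e. $z_{\infty}$ one approximates $S_{\infty}$ in the mean-$L^2$ sense by a bounded finite combination $\hat{S}_{\infty}$ of elementary tensors built from distance functions, transfers it to $M_i$ via Propositions \ref{dist} and \ref{strong2}, and the pairing $\langle \hat{S}_i, T_i\rangle$ converges weakly by Definition \ref{tensor weak conv}. The gap is in your residual estimate. You bound the second H\"older factor, the normalized $p$-average of $|T_i|$ over $B_t(z_i)$, by $(L^p/\upsilon_{\infty}(B_t(z_{\infty})))^{1/p}$, which blows up as $t \to 0$, and you try to beat this blow-up with the vanishing of the first factor through a diagonal choice $t=t(i)$. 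This cannot work as stated: Definition \ref{weak1} requires, for radii $t$ along some sequence tending to $0$, control of $\limsup_{i \to \infty}$ of the ball averages at each \emph{fixed} $t$, and a diagonal $t(i) \to 0$ controls neither that quantity nor the iterated limit $\liminf_{t \to 0}\limsup_{i \to \infty}$. Since the first factor vanishes with no rate (it is only Lebesgue-type vanishing of the mean of $|S_{\infty}-\hat{S}_{\infty}|$) while your bound on the second factor grows polynomially in $t^{-1}$ by doubling, the product of your two bounds at fixed small $t$ is simply not small, and no choice of diagonal repairs the order of quantifiers.

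The paper closes exactly this point differently: one works at points $z_{\infty}$ lying in a set $\hat{K}_{\hat{L}}$ on which $\frac{1}{\mathrm{vol}\,B_t(z_i)}\int_{B_t(z_i)}|T_i|^p d\mathrm{vol} \le \hat{L}$ holds for all small $t$ and all large $i$ (for suitable $z_i \to z_{\infty}$); by the maximal-function argument of \cite[Lemma $3.1$]{Ho} these sets exhaust $B_R(m_{\infty})$ up to a null set as $\hat{L} \to \infty$, which is how the sets $K_{\hat{L}}$ are used in Propositions \ref{contr}, \ref{compat} and \ref{tensor com}. Then the second H\"older factor is bounded by $\hat{L}^{1/p}$ independently of $t$, while the uniform $L^{\infty}$-bound on $S_i - \hat{S}_i$ upgrades the $L^2$-smallness on $B_t(z_i)$ to $L^q$-smallness (via the good set $K_{t,l}$ of large relative measure), so the residual is at most $\Psi(\epsilon; n, p, L, \hat{L})$ uniformly in small $t$; letting $\epsilon \to 0$ and then $\hat{L} \to \infty$ gives the statement at a.e. $z_{\infty} \in A$. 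Your alternative global route is not a repair either: Proposition \ref{weak3} and Corollary \ref{KSequiv} concern integrals over all of $B_R(m_{\infty})$, and passing from such global statements back to pointwise weak convergence at a.e. point of an arbitrary subset $A$ is exactly the unavailable direction (Corollary \ref{098} goes the other way), so as described it is circular. A minor point: in (2)--(4) the product of a bounded strongly convergent scalar with a weakly convergent scalar should be handled by Proposition \ref{linear}, as in the paper, rather than by Propositions \ref{strong3} and \ref{strong}.
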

\begin{proof}
We first check $(1)$.
With the same notation as in the proof of Proposition \ref{strong2}, for a.e. $z_{\infty} \in B_R(m_{\infty})$ and every $\epsilon>0$
there exists $r>0$ such that
\[\frac{1}{\upsilon (B_t(z_{\infty}))}\int_{B_t(z_{\infty})}\left|S_{\infty}-\sum_{a \in \Lambda} C(a)\bigotimes_{j=1}^r \nabla r_{x_{a(j)}} \otimes \bigotimes_{j=r+1}^{r+s}dr_{x_{a(j)}}\right|^2d\upsilon<\epsilon\]
and
\begin{align*}
&\limsup_{i \to \infty}\Biggl| \frac{1}{\mathrm{vol}\,B_t(z_{l})}\int_{B_t(z_{l})}\left\langle T_i, \bigotimes_{j=1}^r \nabla r_{x_{a(j), i}} \otimes \bigotimes_{j=r+1}^{r+s}dr_{x_{a(j), i}} \right\rangle d\mathrm{vol} \\
&- \frac{1}{\upsilon (B_t(z_{\infty}))}\int_{B_t(z_{\infty})}\left\langle T_{\infty}, \bigotimes_{j=1}^r \nabla r_{x_{a(j)}} \otimes \bigotimes_{j=r+1}^{r+s}dr_{x_{a(j)}} \right\rangle d\upsilon \Biggl| <\epsilon
\end{align*}
hold for every $t<r$ and every $a \in \Lambda$.
By an argument similar to the proof of \cite[Theorem $4.4$]{Ho}, we see that 
\[\frac{1}{\mathrm{vol}\,B_t(z_{l})}\int_{B_t(z_{l})}\left|S_{l}-\sum_{a \in \Lambda} C(a)\bigotimes_{j=1}^r \nabla r_{x_{a(j), l}} \otimes \bigotimes_{j=r+1}^{r+s}dr_{x_{a(j), l}}\right|^2d\mathrm{vol}<\Psi(\epsilon ;n, \hat{L})\]
holds for every sufficiently large $l$, where $x_{a(j), l} \to x_{a(j)}$ and $z_l \to z_{\infty}$.
Let
\[K_{t, l}:=\left\{w \in B_t(z_l); \left|S_{l}-\sum_{a \in \Lambda} C(a)\bigotimes_{j=1}^r \nabla r_{x_{a(j), l}} \otimes \bigotimes_{j=r+1}^{r+s}dr_{x_{a(j), l}}\right|(w) \le \Psi(\epsilon ;n, \hat{L}) \right\}.\]
Then we have $\mathrm{vol}\,K_{t, l}/\mathrm{vol}\,B_t(z_l) \ge 1-\Psi(\epsilon; n, \hat{L}, R)$.
In particular we see that 
\[\frac{1}{\mathrm{vol}\,B_t(z_{l})}\int_{B_t(z_{l})}\left|S_{l}-\sum_{a \in \Lambda} C(a)\bigotimes_{j=1}^r \nabla r_{x_{a(j), l}} \otimes \bigotimes_{j=r+1}^{r+s}dr_{x_{a(j), l}}\right|^qd\mathrm{vol}<\Psi(\epsilon ;n, \hat{L}, p)\]
holds for every sufficiently large $l\le \infty$.
Therefore the H$\ddot{\text{o}}$lder inequality and Proposition \ref{dist} yield that
\begin{align*}
&\frac{1}{\mathrm{vol}\,B_t(z_{l})}\int_{B_t(z_{l})}\langle S_l, T_l \rangle d\mathrm{vol}\\
&=\frac{1}{\mathrm{vol}\,B_t(z_{l})}\int_{B_t(z_{l})}\left\langle \sum_{a \in \Lambda} C(a)\bigotimes_{j=1}^r \nabla r_{x_{a(j), l}} \otimes \bigotimes_{j=r+1}^{r+s}dr_{x_{a(j), l}}, T_l \right\rangle d\mathrm{vol} \pm \Psi(\epsilon; n, p, L, \hat{L})\\
&=\frac{1}{\upsilon (B_t(z_{\infty}))}\int_{B_t(z_{\infty})}\left\langle \sum_{a \in \Lambda} C(a)\bigotimes_{j=1}^r \nabla r_{x_{a(j)}} \otimes \bigotimes_{j=r+1}^{r+s}dr_{x_{a(j)}}, T_{\infty} \right\rangle d\mathrm{\upsilon} \pm \Psi(\epsilon; n, p, L, \hat{L})\\
&=\frac{1}{\upsilon (B_t(z_{\infty}))}\int_{B_t(z_{\infty})}\langle S_{\infty}, T_{\infty} \rangle d\upsilon \pm \Psi(\epsilon; n, p, L, \hat{L})\\
\end{align*}
holds for every sufficiently large $l$.
Therefore we have $(1)$.
On the other hand, $(2)$ follows directly from Proposition \ref{linear} and $(1)$.
If $\hat{r} \le r$ and $\hat{s} \le s$, then since 
\[\left\langle T_j(S_j), \bigotimes_{i=1}^{r-\hat{r}}\nabla r_{x_{i, j}} \otimes \bigotimes_{i=r-\hat{r}+1}^{r-\hat{r}+s-\hat{s}}dr_{x_{i, j}} \right\rangle=\left\langle T_j, S_j \otimes \left(\bigotimes_{i=1}^{r-\hat{r}}\nabla r_{x_{i, j}} \otimes \bigotimes_{i=r-\hat{r}+1}^{r-\hat{r}+s-\hat{s}}dr_{x_{i, j}}\right)\right\rangle,\]
$(3)$ follows directly from $(2)$.
Similarly we have $(4)$.
\end{proof}
The following is a direct consequence of Corollary \ref{098}, Propositions \ref{dist} and \ref{weak4}:
\begin{corollary}\label{weakkk}
If $T_i$ converges weakly to $T_{\infty}$ at a.e. $z_{\infty} \in B_R(m_{\infty})$, then $T_i$ converges weakly to $T_{\infty}$ at every $z_{\infty} \in B_R(m_{\infty})$.
\end{corollary}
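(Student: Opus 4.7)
The plan is to reduce the tensor-field statement to its scalar analogue, Corollary \ref{098}. I fix an arbitrary $z_\infty \in B_R(m_\infty)$ and, to verify Definition \ref{tensor weak conv} at $z_\infty$, I also fix an arbitrary family $\{x_j\}_{1\le j\le r+s}\subset M_\infty$ together with sequences $x_{j,i}\to x_j$. Set $S_i := \bigotimes_{j=1}^{r}\nabla r_{x_{j,i}}\otimes \bigotimes_{j=r+1}^{r+s}dr_{x_{j,i}}$ and $f_i := \langle T_i, S_i\rangle$. By Proposition \ref{dist}, $S_i$ converges strongly to $S_\infty$ at every point of $M_\infty$; moreover, since distance functions are $1$-Lipschitz we have $|S_i|\le 1$ almost everywhere, so that $\|f_i\|_{L^p(B_R(m_i))}\le \|T_i\|_{L^p(B_R(m_i))}\le L$, giving a uniform $L^p$-bound on the scalar contractions.

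Next, the hypothesis that $T_i$ converges weakly to $T_\infty$ at a.e.\ $z_\infty$, combined with Proposition \ref{weak4}(1) (or, equivalently, directly with Definition \ref{tensor weak conv}), yields that $f_i$ converges weakly to $f_\infty$ in the scalar sense of Definition \ref{weak1} at a.e.\ $z_\infty\in B_R(m_\infty)$. With the uniform $L^p$-bound in hand, Corollary \ref{098} then upgrades this almost-everywhere weak convergence of scalar functions to weak convergence at every $z_\infty\in B_R(m_\infty)$.

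Since $\{x_j\}$ and the approximating sequences $x_{j,i}\to x_j$ were arbitrary, unwinding Definition \ref{tensor weak conv} shows that $T_i$ converges weakly to $T_\infty$ at the originally fixed point $z_\infty$, completing the proof. There is no genuine obstacle: the only technical point requiring attention is that the pointwise bound $|S_i|\le 1$ transfers the $L^p$-bound on $T_i$ to the scalar contractions $f_i$, placing us squarely inside the hypotheses of Corollary \ref{098} (rather than merely those of Proposition \ref{weak2}, which only delivers convergence of integrals).
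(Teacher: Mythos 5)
Your proposal is correct and takes essentially the same route as the paper, which obtains the corollary precisely as a direct consequence of Corollary \ref{098} together with Propositions \ref{dist} and \ref{weak4}, i.e., the reduction to scalar contractions $\langle T_i, \bigotimes_{j=1}^{r}\nabla r_{x_{j,i}}\otimes \bigotimes_{j=r+1}^{r+s}dr_{x_{j,i}}\rangle$ with the uniform bound $|S_i|\le 1$ that you spell out. The only cosmetic point is that Corollary \ref{098} literally gives weak convergence ``on'' $B_R(m_{\infty})$ (convergence of integrals over sub-balls), from which weak convergence at each individual point in the sense of Definition \ref{weak1} follows at once using $\upsilon_i(B_r(z_i))\to \upsilon_{\infty}(B_r(z_{\infty}))$ --- a step the paper also leaves implicit.
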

We now give a compactness result for the weak convergence of $L^p$-tensor fields similar to Proposition \ref{weak com}:
\begin{proposition}\label{tensor com}
Let $S_i \in L^p(T^r_sB_R(m_i))$ for every $i<\infty$ with $\sup_{i < \infty}||S_i||_{L^p} < \infty$.
Then there exist $S_{\infty} \in L^p(T^r_sB_R(m_{\infty}))$ and a subsequence $\{S_{i(j)}\}_j$ of $\{S_i\}_i$ such that 
$S_{i(j)}$ converges weakly to $S_{\infty}$ on $B_R(m_{\infty})$.
\end{proposition}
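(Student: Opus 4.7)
The plan is to adapt the argument for Proposition \ref{weak com} to tensor fields by a diagonal extraction over countably many scalar contractions, and then to reconstruct $S_{\infty}$ by means of the rectifiable coordinate system by distance functions supplied by Theorem \ref{dist3} and Remark \ref{radial}.

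First I would fix a countable dense subset $D=\{a_{l}\}_{l} \subset M_{\infty}$ together with sequences $a_{l,i} \to a_{l}$ in $M_{i}$. For every $\alpha\in\mathrm{Map}(\{1,\dots,r+s\}\to\mathbf{N})$ set
\[ h_{\alpha,i}:=\left\langle S_{i},\bigotimes_{j=1}^{r}\nabla r_{a_{\alpha(j),i}}\otimes\bigotimes_{j=r+1}^{r+s}dr_{a_{\alpha(j),i}}\right\rangle \in L^{p}(B_{R}(m_{i})). \]
Since the contracting tensor has pointwise norm at most $1$, we have $\|h_{\alpha,i}\|_{L^{p}}\le L$ uniformly in $i$ and $\alpha$. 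Applying Proposition \ref{weak com} together with a standard diagonal argument over the countable index set of $\alpha$'s, I extract a subsequence (still denoted $\{i\}$) and functions $h_{\alpha}\in L^{p}(B_{R}(m_{\infty}))$ such that $h_{\alpha,i}$ converges weakly to $h_{\alpha}$ on $B_{R}(m_{\infty})$ for every $\alpha$ simultaneously.

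Next I would invoke Theorem \ref{dist3}, Remark \ref{radial}, and the Colding-Naber result recalled in subsection $2.5.4$ to fix a rectifiable coordinate system $\mathcal{A}_{d}=\{(C_{l},\phi_{l})\}_{l}$ of $(M_{\infty},\upsilon)$ in which each patch is $k$-dimensional and $\phi_{l}=(r_{b_{l,1}},\dots,r_{b_{l,k}})$ for some $b_{l,s}\in D$. The bi-Lipschitz character of $\phi_{l}$ forces the Gram matrix $G_{l}(x):=(\cos\angle b_{l,s}xb_{l,t})_{s,t}$ to be invertible for a.e.\ $x\in C_{l}$, so $\{\nabla r_{b_{l,s}}(x)\}_{s=1}^{k}$ is a basis of $T_{x}M_{\infty}$ a.e. I would then define $S_{\infty}$ on $C_{l}$ by declaring its components in this basis to be the unique solution of the linear system
\[ \left\langle S_{\infty},\bigotimes_{j=1}^{r}\nabla r_{b_{l,\alpha(j)}}\otimes\bigotimes_{j=r+1}^{r+s}dr_{b_{l,\alpha(j)}}\right\rangle(x)=h_{\alpha}(x) \]
ranging over all $\alpha:\{1,\dots,r+s\}\to\{1,\dots,k\}$, which amounts to inverting suitable tensor powers of $G_{l}(x)$. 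Compatibility on overlaps $C_{l}\cap C_{l'}$ is then automatic, since any contraction with $\nabla r_{x_{j}}$ or $dr_{x_{j}}$ for $x_{j}\in D$ is determined by the local components via bilinearity and such contractions coincide with $h_{\alpha}$ independently of the chart.

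It remains to verify $S_{\infty}\in L^{p}$ and the weak convergence at every $z_{\infty}\in B_{R}(m_{\infty})$. Following the $\epsilon$-approximation step used in the proof of Proposition \ref{strong2}, at a.e.\ $z_{\infty}$ on the $k$-dimensional regular set one can pick a finite tuple drawn from $D$ whose gradients form an $\epsilon$-orthonormal basis of $T_{z_{\infty}}M_{\infty}$, so that Proposition \ref{ep}(1) applied to the dual frame and coframe yields the pointwise bound $|S_{\infty}(z_{\infty})|^{2}\le (1+\Psi(\epsilon;n,r,s))\sum_{\alpha}h_{\alpha}(z_{\infty})^{2}$ with the sum ranging over the corresponding finite set of $\alpha$'s. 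Applying the same identity to $S_{i}$ on $M_{i}$, using Proposition \ref{lower} for each $h_{\alpha,i}\to h_{\alpha}$, and patching together countably many $\epsilon$-good sets covering $B_{R}(m_{\infty})$ up to measure zero, I obtain $\|S_{\infty}\|_{L^{p}}\le\liminf_{i}\|S_{i}\|_{L^{p}}<\infty$. Finally, to verify Definition \ref{tensor weak conv} for an arbitrary tuple $\{x_{j}\}\subset M_{\infty}$, I approximate each $x_{j}$ by elements of $D$ and combine the strong convergence of distance differentials from Proposition \ref{dist} with the weak/strong pairing in Proposition \ref{weak4}(1). I expect the main obstacle to lie in the $L^{p}$-bound: the Gram matrix $G_{l}$ is only uniformly invertible on subsets of each patch, so the reconstruction of $|S_{\infty}|$ from the $h_{\alpha}$ must be assembled carefully across countably many $\epsilon$-good pieces, in the spirit of the covering arguments in the proofs of Propositions \ref{876} and \ref{weak com}.
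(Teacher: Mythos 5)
Your overall architecture is the same as the paper's proof: a diagonal extraction via Proposition \ref{weak com} of weak limits $h_{\alpha}$ of the scalar contractions against distance-function frames from a countable dense set, reconstruction of $S_{\infty}$ on the distance-function rectifiable atlas of Theorem \ref{dist3} and Remark \ref{radial}, an $L^p$-bound via $\epsilon$-orthogonal bases, Proposition \ref{ep}, a covering argument and lower semicontinuity (Proposition \ref{lower}), and finally an approximation step to handle arbitrary reference points.

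The gap is the step you declare ``automatic.'' Your chart-wise definition of $S_{\infty}$ on $C_{l}$ only uses the $h_{\alpha}$ whose reference points are the $k$ distance functions of that chart; to conclude that the resulting tensor has the correct contractions against frames built from \emph{other} points of $D$ (equivalently, that the definitions agree on overlaps $C_{l}\cap C_{l'}$), you must show that the family $\{h_{\alpha}\}$ respects pointwise linear relations among the frames: whenever $\sum_{\alpha}c_{\alpha}\,\mathrm{frame}_{\alpha}(z_{\infty})=0$, then $\sum_{\alpha}c_{\alpha}h_{\alpha}(z_{\infty})=0$ a.e. Weak limits extracted separately for each $\alpha$ do not satisfy such pointwise relations for free; this is exactly Claim \ref{honhon} in the paper, and its proof needs an ingredient your proposal never introduces, namely a uniform bound at a.e. $z_{\infty}$ on the local $L^{p}$-energy $\frac{1}{\mathrm{vol}\,B_{r}(z_{i})}\int_{B_{r}(z_{i})}|S_{i}|^{p}d\mathrm{vol}$ (obtained from the maximal-function argument of \cite[Lemma $3.1$]{Ho}), which is then paired by the H\"older inequality against the $L^{q}$-smallness on small balls of $\sum_{\alpha}c_{\alpha}\,\mathrm{frame}_{\alpha,i}$ coming from Proposition \ref{dist}. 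The same local bound is what allows the passage from reference points in $D$ to arbitrary tuples at the end; as written, your appeal to Proposition \ref{weak4}(1) there is circular, since its hypothesis is weak convergence of the tensor fields at a.e. point --- the very statement being proved --- so one must instead repeat its H\"older/approximation mechanism directly (and then pass from a.e. pointwise convergence to convergence on $B_{R}(m_{\infty})$ via Corollary \ref{098}). A minor further point: your covering argument yields $||S_{\infty}||_{L^{p}}^{p}\le C(n,p,r,s)\liminf_{i}||S_{i}||_{L^{p}}^{p}$, which suffices for membership in $L^{p}$, but not the constant-one bound you assert; that sharper statement is Proposition \ref{lower semi}, proved later by duality.
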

\begin{proof}
We only give a proof of the case $p<\infty$ because the proof of the case $p=\infty$ is similar.
Let $A:=\{x_i\}_{i \in \mathbf{N}}$ be a dense subset of $M_{\infty}$ and $\Lambda:=\mathrm{Map}(\{1, \ldots, r+s\} \to \mathbf{N})$. 
Proposition \ref{weak com} yields that there exist a subsequence $\{i(j)\}_j$ and $\{F_{a}\}_{a \in \Lambda} \subset L^p(B_R(x_{\infty}))$ such that
$\langle S_{i(j)}, \bigotimes_{l=1}^r\nabla r_{x_{a(l), i(j)}} \otimes \bigotimes_{l=r+1}^{r+s}dr_{x_{a(l), i(j)}} \rangle$ converges weakly to $F_{a}$ on $B_R(x_{\infty})$ for every $x_{k, i(j)} \to x_{k}$ as $j \to \infty$.
By the assumption and \cite[Lemma $3.1$]{Ho}, there exists $K_{\infty} \subset B_R(m_{\infty})$ such that $\upsilon (B_R(m_{\infty}) \setminus K_{\infty})=0$, $K_{\infty} \subset M_{\infty} \setminus \bigcup_i C_{x_i}$, 
\[F_{a}(w_{\infty})=\lim_{r \to 0}\frac{1}{\upsilon (B_r(w_{\infty}))}\int_{B_r(w_{\infty})}F_{a}d\upsilon\]
holds for every $a \in \Lambda$ and every $w_{\infty} \in K_{\infty}$, 
and that for every $z_{\infty} \in K_{\infty}$, 
\[\limsup_{r \to 0}\left( \limsup_{j \to \infty}\frac{1}{\mathrm{vol}\,B_r(z_{i(j)})}\int_{B_r(z_{i(j)})}|S_{i(j)}|^pd\mathrm{vol}\right)<\infty\]
holds for some $z_{i(j)} \to z_{\infty}$ (see also the proof of Proposition \ref{contr}).
\begin{claim}\label{honhon}
Let $z_{\infty} \in K_{\infty}$ and $\{C_i(a)\}_{a \in \Lambda, i=1,2} \subset \mathbf{R}$. 
Assume that there exists a finite subset $\hat{\Lambda} \subset \Lambda$ such that $C_i(a) = 0$ holds for every $a \in \Lambda \setminus \hat{\Lambda}$ and every $i=1, 2$, and that 
\begin{align*}
&\sum_{a, b \in \Lambda} (C_1(a)-C_2(b))\prod_{l=1}^{r+s} \cos \angle x_{a(l)}z_{\infty}x_{b(l)}=0
\end{align*}
holds. Then $\sum_{a}C_1(a)F_{a}(z_{\infty})=\sum_{a}C_2(a)F_{a}(z_{\infty})$.
\end{claim}
The proof is as follows.
By the proof of \cite[Theorem $4.3$]{Ho3}, we see that
\begin{align*}
&\left|\frac{1}{\mathrm{vol}\,B_r(z_{i(j)})}\int_{B_{r}(z_{i(j)})}\left\langle S_{i(j)}, X_{i(j)}^1\right\rangle d\mathrm{vol}-\frac{1}{\mathrm{vol}\,B_r(z_{i(j)})}\int_{B_{r}(z_{i(j)})}\left\langle S_{i(j)}, X_{i(j)}^2\right\rangle d\mathrm{vol}  \right|\\
&\le \left( \frac{1}{\mathrm{vol}\,B_r(z_{i(j)})}\int_{B_r(z_{i(j)})}|S_{i(j)}|^pd\mathrm{vol}\right)^{1/p}\left( \frac{1}{\mathrm{vol}\,B_r(z_{i(j)})}\int_{B_r(z_{i(j)})}|X_{i(j)}^1-X^2_{i(j)}|^qd\mathrm{vol}\right)^{1/q} \to 0
\end{align*}
holds as $j \to \infty$ and $r \to 0$, where $X_{i(j)}^m=\sum_{a \in \Lambda} C_m(a)\bigotimes_{l=1}^{r}\nabla r_{x_{a(l), i(j)}}\otimes \bigotimes_{l=r+1}^{r+s}dr_{x_{a(l), i(j)}} \in L^{\infty}(T^r_sM_{i(j)})$.
Therefore we have Claim \ref{honhon}.

Theorem \ref{dist3}, \cite[Remark $4.3$]{Ho} and Claim \ref{honhon} yield that there exists a unique $S_{\infty} \in \Gamma_{\mathrm{Bor}}(T^r_sB_R(m_{\infty}))$ such that for  every $a \in \Lambda$ we see that
$\langle S_{\infty},  \bigotimes_{l=1}^r\nabla r_{x_{a(l)}} \otimes \bigotimes_{l=r+1}^{r+s}dr_{x_{a(l)}}\rangle (w_{\infty})=F_{a}(w_{\infty})$ holds for a.e. $w_{\infty} \in B_R(m_{\infty})$.
In particular $S_{\infty} \in L^1_{w-\mathrm{loc}}(T^r_sB_R(m_{\infty}))$.
\begin{claim}\label{lp}
$S_{\infty} \in L^p(T^r_sB_R(m_{\infty}))$.
\end{claim}
The proof is as follows.
By Theorem \ref{dist3}, Remark \ref{radial} and Lebesgue's differentiation theorem, without loss of generality we can assume that there exist sequences $\{K_j\}_{j < \infty}$ of $K_j \subset K_{\infty}$ and $\{I_j\}_{j<\infty}$ of $I_j \subset \mathbf{N}$ such that $\sharp I_j=k  (=\mathrm{dim}\,M_{\infty})$, $\mathrm{Leb}\,K_j=K_j$, $K_{\infty}=\bigcup_jK_j$
 and that 
for every $\epsilon>0$ and every $z_{\infty} \in K_{\infty}$, there exists $j:=j(z_{\infty}, \epsilon)$ such that $z_{\infty} \in K_j$ and that $\{\bigotimes_{l=1}^{r}\nabla r_{x_{a(l)}}\otimes \bigotimes_{l=r+1}^{r+s}dr_{x_{a(l)}}(w_{\infty})\}_{a \in \Lambda_j}$ is an $\epsilon$-orthogonal basis on $(T^r_{s}M_{\infty})_{w_{\infty}}$ for every $w_{\infty} \in K_j$, where $\Lambda_j:=\mathrm{Map}(\{1, \ldots, r+s\} \to I_j)$.
Thus Proposition \ref{ep} yields that 
\begin{align*}
|S_{\infty}|^2(w_{\infty})&\le (1+\Psi(\epsilon; r, s, n))\sum_{a \in \Lambda_j}\langle S_{\infty}, \bigotimes_{l=1}^{r}\nabla r_{x_{a(l)}}\otimes \bigotimes_{l=r+1}^{r+s}dr_{x_{a(l)}}\rangle^2 (w_{\infty})\\
&=(1+\Psi(\epsilon; r, s, n))\sum_{a \in \Lambda_j}(F_{a}(w_{\infty}))^2
\end{align*}
holds for every $w_{\infty} \in K_{j}$, in particular, 
$|S_{\infty}|^p(w_{\infty})\le C(n, p, r, s)\sum_{a \in \Lambda_j}|F_{a}(w_{\infty})|^p$.

Fix a sufficiently small $\epsilon>0$.
A standard covering argument yields that there exists a countable pairwise disjoint collection $\{\overline{B}_{r_i}(w_i)\}_i$ such that 
$w_i \in K_{j(i)}$ where $j(i):=j(w_i, \epsilon)$, $\overline{B}_{5r_i}(w_i) \subset B_R(m_{\infty})$, $\upsilon (B_{r_i}(w_i) \cap K_{j(i)})/\upsilon (B_{r_i}(w_i)) \ge 1 - \epsilon$
and that $K_{\infty} \setminus \bigcup_{i=1}^N \overline{B}_{r_i}(w_i) \subset \bigcup_{i=N+1}^{\infty}\overline{B}_{5r_i}(w_i) \subset B_R(m_{\infty})$ holds for every $N$.
Fix $N_0$ with $\sum_{i=N_0+1}^{\infty}\upsilon (B_{5r_i}(w_i))<\epsilon$.
Let $K^{\epsilon}_{\infty}:=\bigcup_{i=1}^{N_0}(\overline{B}_{r_i}(w_i) \cap K_{j(i)})$. 
Note 
$\upsilon (B_R(m_{\infty}) \setminus K_{\infty}^{\epsilon})\le (1 + \Psi(\epsilon; n))\upsilon \left(K_{\infty} \setminus \bigcup_{i=1}^{N_0}\overline{B}_{r_i}(w_i)\right)<\Psi(\epsilon ; n)$.
Then Proposition \ref{lower} yields that
\begin{align*}
&\int_{K_{\infty}^{\epsilon}}|S_{\infty}|^pd\upsilon \\
&= \sum_{l=1}^{N_0}\int_{B_{r_l}(w_l) \cap K_{j(l)}}|S_{\infty}|^pd\upsilon\\
&\le C(n, p, r, s)\sum_{l=1}^{N_0}\sum_{a \in \Lambda_{j(l)}}\int_{B_{r_l}(w_l) \cap K_{j(l)}}|F_{a}|^pd\upsilon + \Psi(\epsilon;n, R)\\
&\le C(n, p, r, s)\sum_{l=1}^{N_0} \sum_{a \in \Lambda_{j(l)}}\int_{B_{r_l}(w_{l, i(m)})}\left|\left\langle S_{i(m)}, \bigotimes_{t=1}^r\nabla r_{x_{a(t), i(m)}} \otimes \bigotimes_{t=r+1}^{r+s}dr_{x_{a(t), i(m)}} \right\rangle \right|^pd\underline{\mathrm{vol}}+\Psi(\epsilon;n, R) \\
&\le C(n, p, r, s)\sum_{l=1}^{N_0}\int_{B_{r_l}(w_{l, i(m)})}|S_{i(m)}|^pd\underline{\mathrm{vol}}+\Psi(\epsilon;n, R) \\
&\le C(n, p, r, s)\int_{B_R(m_{i(m)})}|S_{i(m)}|^pd\underline{\mathrm{vol}}+\Psi(\epsilon;n, R)
\end{align*}
holds for every sufficiently large $m$, where $w_{l, i(m)} \to w_{l}$ as $m \to \infty$.
Thus by the dominated convergence theorem, we have Claim \ref{lp}.

Thus we have the assertion.
\end{proof}
The next corollary follows from Proposition \ref{tensor com} and an argument similar to the proof of Corollary \ref{1}:
\begin{corollary}
Let $S_i \in L^p(T^r_sB_R(m_i))$ for every $i < \infty$ with $\sup_{i<\infty}||S_i||_{L^p}<\infty$,  and $S_{\infty} \in L^1_{w-\mathrm{loc}}(T^r_sB_R(m_{\infty}))$.
Assume that $S_i$ converges weakly to 
$S_{\infty}$ at a.e. $z_{\infty} \in B_R(m_{\infty})$.
Then $S_{\infty} \in L^p(T^r_sB_R(m_{\infty}))$.
\end{corollary}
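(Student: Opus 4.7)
The plan is to mimic the proof of Corollary \ref{1}, the key step being the identification of two weak limits of the same subsequence via Lebesgue differentiation and the fact that a tensor field is determined a.e.~by its pairings with the canonical tensor products built from differentials/gradients of distance functions.

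First I would invoke Proposition \ref{tensor com} to pass to a subsequence $\{S_{i(j)}\}_j$ and produce $\hat S_\infty \in L^p(T^r_s B_R(m_\infty))$ with $S_{i(j)}$ converging weakly to $\hat S_\infty$ on $B_R(m_\infty)$. It then remains to show $\hat S_\infty = S_\infty$ almost everywhere, which will give $S_\infty \in L^p$ as required.

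For this identification, fix any sequence $\{x_l\}_{1 \le l \le r+s} \subset M_\infty$ with liftings $x_{l,j} \to x_l$. By the definition of weak convergence for tensor fields (Definition \ref{tensor weak conv}) together with the assumption on $\{S_i\}_i$ and Corollary \ref{weakkk}, both
\[
\left\langle S_{i(j)}, \bigotimes_{l=1}^r \nabla r_{x_{l,i(j)}} \otimes \bigotimes_{l=r+1}^{r+s} dr_{x_{l,i(j)}}\right\rangle
\]
and the analogous paired functions converge weakly (in the function sense of Definition \ref{weak1}) to the respective pairings with $S_\infty$ and with $\hat S_\infty$. Applying Lebesgue's differentiation theorem on both sides, I conclude that for a.e.\ $z_\infty \in B_R(m_\infty)$,
\[
\left\langle S_\infty, \bigotimes_{l=1}^r \nabla r_{x_l} \otimes \bigotimes_{l=r+1}^{r+s} dr_{x_l}\right\rangle(z_\infty) = \left\langle \hat S_\infty, \bigotimes_{l=1}^r \nabla r_{x_l} \otimes \bigotimes_{l=r+1}^{r+s} dr_{x_l}\right\rangle(z_\infty).
\]

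Finally, taking $\{x_l\}$ running through a countable dense subset of $M_\infty$ and using Theorem \ref{dist3} together with Remark \ref{radial} (and the $\epsilon$-orthogonal basis argument already used in the proof of Proposition \ref{tensor com}, in particular in Claim \ref{lp}), these pairings determine the tensor field a.e.\ on $B_R(m_\infty)$. Hence $\hat S_\infty = S_\infty$ a.e., so $S_\infty \in L^p(T^r_s B_R(m_\infty))$. The only place requiring any care is the last step of identifying the two tensor fields from their scalar pairings, but this is exactly the content already established in the course of proving Proposition \ref{tensor com}, so no new obstacle arises.
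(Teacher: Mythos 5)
Your proposal is correct and follows essentially the same route as the paper, which proves this corollary by combining Proposition \ref{tensor com} with the argument of Corollary \ref{1}: extract a weak $L^p$-limit $\hat S_\infty$ along a subsequence, then identify $\hat S_\infty=S_\infty$ a.e.\ via Lebesgue differentiation of the pairings with distance-function tensor products. The identification step you flag is indeed already covered by the machinery in the proof of Proposition \ref{tensor com} (Theorem \ref{dist3}, Remark \ref{radial} and the $\epsilon$-orthogonal basis argument), so no gap remains.
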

\begin{remark}\label{000}
Similarly, we can prove the following:
Let $\{(C_{j}, \phi_{j})\}_{j}$ be a rectifiable coordinate system of $(M_{\infty}, \upsilon)$ associated with $\{(M_i, m_i)\}_i$, $S_i \in L^p(T^r_sB_R(m_i))$ for every $i < \infty$ with $\sup_{i<\infty}||S_i||_{L^p}<\infty$, and $S_{\infty} \in \Gamma_{\mathrm{Bor}}(T^r_sB_R(m_{\infty}))$.
Assume that the following hold:
\begin{enumerate}
\item $\langle S_{\infty}, \bigotimes _{l=1}^r\nabla \phi_{j, a(l), \infty} \otimes \bigotimes _{l=r+1}^{r+s}d\phi_{j, a(l), \infty}\rangle \in L^1_{\mathrm{loc}}$ holds for every $j$ and every $a \in \mathrm{Map}(\{1, \ldots, r+s\} \to \{1, \ldots, k\})$.
\item $\langle S_i, \bigotimes _{l=1}^r\nabla \phi_{j, a(l), i} \otimes \bigotimes _{l=r+1}^{r+s}d\phi_{j, a(l), i}\rangle$ converges weakly to $\langle S_{\infty}, \bigotimes _{l=1}^r\nabla \phi_{j, a(l), \infty} \otimes \bigotimes _{l=r+1}^{r+s}d\phi_{j, a(l), \infty}\rangle$ at a.e. $z_{\infty} \in C_{j}$ for every $j$ and every $a \in \mathrm{Map}(\{1, \ldots, r+s\} \to \{1, \ldots, k\})$.
\end{enumerate}
Then $S_{\infty} \in L^p(T^r_sB_R(m_{\infty}))$.
Moreover, in Proposition \ref{compat} we will prove that  $S_i$ converges weakly to $S_{\infty}$ on $B_R(m_{\infty})$.
\end{remark}
\begin{definition}
Assume $p<\infty$.
Let $S_{\infty} \in L^p(T^r_sB_R(m_{\infty}))$ and $S_{i, j} \in L^{\infty}(T^r_sB_R(m_{i}))$ for every  $i\le \infty$ and every $j< \infty$.
We say that \textit{$\{S_{i, j}\}_{i, j}$ is an $L^p$-approximate sequence of $S_{\infty}$} if the following three conditions hold:
\begin{enumerate}
\item $\sup_{i \le \infty}||S_{i,j}||_{L^{\infty}}<\infty$ for every $j$.
\item $S_{i, j}$ converges strongly to $S_{\infty, j}$ on $B_R(m_{\infty})$ as $i \to \infty$ for every $j$.
\item $||S_{\infty}-S_{\infty, j}||_{L^p} \to 0$ as $j \to \infty$.
\end{enumerate}
\end{definition}
\begin{proposition}\label{app8}
Assume $p<\infty$.
For every $S_{\infty} \in L^p(T^r_sB_R(m_{\infty}))$ there exists an $L^p$-approximate sequence of $S_{\infty}$.
\end{proposition}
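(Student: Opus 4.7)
The plan is to imitate the scalar construction in the proof of Proposition \ref{876}, replacing the role of $L^p$-averages over small balls by a ``frame expansion'' built from gradients of distance functions from finitely many points. The key extra ingredient is that, by the rectifiability of $(M_\infty,\upsilon)$ together with \cite[Theorem $4.4$]{Ho} and the argument in the proof of Proposition \ref{strong2}, at almost every $z_\infty\in B_R(m_\infty)$ and for every $\epsilon>0$ there exist points $x_1,\dots,x_k\in M_\infty$ and real numbers $\{C(a)\}_{a\in\Lambda}$ ($\Lambda=\mathrm{Map}(\{1,\dots,r+s\}\to\{1,\dots,k\})$) with $|C(a)|\le C(n,r,s,|S_\infty|(z_\infty))$ such that
\[
\lim_{t\to 0}\frac{1}{\upsilon(B_t(z_\infty))}\int_{B_t(z_\infty)}\Bigl|S_\infty-\sum_{a\in\Lambda}C(a)\bigotimes_{l=1}^r\nabla r_{x_{a(l)}}\otimes\bigotimes_{l=r+1}^{r+s}dr_{x_{a(l)}}\Bigr|^p\,d\upsilon<\epsilon .
\]
This is the tensorial analogue of the Lebesgue density point property used for scalars.

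First I would reduce to the case $S_\infty\in L^\infty(T^r_sB_R(m_\infty))$ by truncation on the level set $\{|S_\infty|\le N\}$: since $p<\infty$ the truncated tensor fields converge to $S_\infty$ in $L^p$, and an $L^p$-approximate sequence of the limit is obtained from $L^p$-approximate sequences of the truncations by a diagonal argument. So assume $|S_\infty|\le L$ a.e.\ on $B_R(m_\infty)$. Fix $j\in\mathbf{N}$. For each $z_\infty$ in a full $\upsilon$-measure set $K_\infty\subset B_R(m_\infty)$ pick a radius $r(z_\infty)>0$ and points $x_1(z_\infty),\dots,x_k(z_\infty)\in M_\infty$ and coefficients $C_{z_\infty}(a)$ as above with the $L^p$-mean error on $B_t(z_\infty)$ bounded by $j^{-1}$ for every $t<r(z_\infty)$. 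A standard Vitali covering argument (as in the proofs of Propositions \ref{135} and \ref{876}) yields a finite pairwise disjoint collection $\{\overline B_{r_l}(z_l)\}_{l=1}^{N_0}$ with $\overline B_{5r_l}(z_l)\subset B_R(m_\infty)$, $z_l\in K_\infty$, the local approximation error bounded by $j^{-1}$ on each $B_{r_l}(z_l)$, and
\[
\upsilon\Bigl(B_R(m_\infty)\setminus\bigcup_{l=1}^{N_0}\overline B_{r_l}(z_l)\Bigr)<j^{-1}.
\]
Set
\[
S_{\infty,j}:=\sum_{l=1}^{N_0}\mathbf{1}_{B_{r_l}(z_l)}\sum_{a\in\Lambda}C_{z_l}(a)\bigotimes_{t=1}^r\nabla r_{x_{a(t)}(z_l)}\otimes\bigotimes_{t=r+1}^{r+s}dr_{x_{a(t)}(z_l)}.
\]
By the bound on $|C_{z_l}(a)|$ this $S_{\infty,j}$ is in $L^\infty$ with norm depending only on $n,r,s,L$, and $\|S_\infty-S_{\infty,j}\|_{L^p}\to 0$ as $j\to\infty$.

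Now I would transfer this formula to $M_i$. Choose sequences $z_{l,i}\to z_l$ and $x_{a(t)}(z_l)_i\to x_{a(t)}(z_l)$ in $M_i$, and define
\[
S_{i,j}:=\sum_{l=1}^{N_0}\mathbf{1}_{B_{r_l}(z_{l,i})}\sum_{a\in\Lambda}C_{z_l}(a)\bigotimes_{t=1}^r\nabla r_{x_{a(t)}(z_l)_i}\otimes\bigotimes_{t=r+1}^{r+s}dr_{x_{a(t)}(z_l)_i}.
\]
The uniform $L^\infty$-bound in condition (1) is immediate. For condition (2), Proposition \ref{dist} gives strong convergence of each tensor product $\bigotimes\nabla r_{\cdot}\otimes\bigotimes dr_{\cdot}$ on $B_R(m_\infty)$, and for a.e.\ choice of the radii $r_l$ the indicator $\mathbf{1}_{B_{r_l}(z_{l,i})}$ converges strongly to $\mathbf{1}_{B_{r_l}(z_l)}$ (since $\upsilon(\partial B_{r_l}(z_l))=0$ for a.e.\ $r_l$, by monotonicity of $r\mapsto\upsilon(B_r(z_l))$). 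Combining these with Propositions \ref{strong3} and \ref{strong2} yields $S_{i,j}$ converges strongly to $S_{\infty,j}$ on $B_R(m_\infty)$, as required.

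The main obstacle I expect is the careful justification of the strong convergence of the indicator functions $\mathbf{1}_{B_{r_l}(z_{l,i})}$ to $\mathbf{1}_{B_{r_l}(z_l)}$ in the $L^\infty$-strong sense of Definition \ref{inftyinfty}, rather than merely in some weaker sense; this is handled by choosing the $r_l$'s off a countable bad set of radii where $\upsilon(\partial B_{r_l}(z_l))>0$ and by Proposition \ref{weak0}. A minor point to check is that the rectifiable expansion of $S_\infty$ at a density point, which is stated for the $L^2$-mean in the proof of Proposition \ref{strong2}, can be promoted to an $L^p$-mean; this follows from the $L^\infty$-truncation step together with the elementary inequality $|\cdot|^p\le(2L)^{p-2}|\cdot|^2$ on vectors of norm at most $2L$, so no new ingredient is needed.
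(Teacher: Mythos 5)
Your argument is essentially the paper's own proof: reduce to $S_{\infty}\in L^{\infty}$ by density/truncation, approximate $S_{\infty}$ at a.e.\ point by a finite combination $\sum_{a}C(a)\bigotimes_{l}\nabla r_{x_{a(l)}}\otimes\bigotimes_{l}dr_{x_{a(l)}}$ with small $L^p$-mean error on small balls (the paper gets this by the argument of Proposition \ref{weak4}), run a covering argument to build the piecewise expression $S_{\infty,j}$, and transfer it to $M_i$ via indicator functions of balls and Proposition \ref{dist}. One minor correction: the inequality $|\cdot|^p\le(2L)^{p-2}|\cdot|^2$ you invoke is valid only for $p\ge 2$; for $1<p<2$ the promotion from the $L^2$-mean to the $L^p$-mean follows directly from the H\"older (Jensen) inequality for the normalized measure on each ball, so the conclusion is unaffected.
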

\begin{proof}
It is easy to check that $L^{\infty}(T^r_sB_R(m_{\infty}))$ is dense in $L^p(T^r_sB_R(m_{\infty}))$.
Thus without loss of generality we can assume that $S_{\infty} \in L^{\infty}$.
Let $\Lambda:=\mathrm{Map}(\{1, \ldots, r+s\} \to \{1, \ldots, k\})$.
By an argument similar to the proof of Proposition \ref{weak4}, there exists $K_{\infty} \subset B_R(m_{\infty})$ such that $\upsilon (B_R(m_{\infty}) \setminus K_{\infty})=0$ and that for every $z_{\infty} \in K_{\infty}$ there exist $\{C(a, z_{\infty})\}_{a \in \Lambda} \subset \mathbf{R}$ and $\{x_{j}(z_{\infty})\}_{1 \le j \le k} \subset M_{\infty}$ such that for every $\epsilon>0$, there exists $r(z_{\infty}, \epsilon)>0$ such that for every $t<r(z_{\infty}, \epsilon)$, we see that
\[\frac{1}{\mathrm{vol}\,B_t(z_{l})}\int_{B_t(z_{l})}\left|S_{l}-\sum_{a \in \Lambda} C(a, z_{\infty})\bigotimes_{j=1}^r \nabla r_{x_{a(j), l}(z_{\infty})} \otimes \bigotimes_{j=r+1}^{r+s}dr_{x_{a(j), l}(z_{\infty})}\right|^pd\mathrm{vol}<\epsilon\]
holds for every sufficiently large $l \le \infty$, where $z_l \to z_{\infty}$ and $x_{j, l}(z_{\infty}) \to x_j(z_{\infty})$ as $l \to \infty$. 
Fix $j \in \mathbf{N}$.
A standard covering argument yields that there exists a finite pairwise disjoint collection $\{\overline{B}_{r_i}(w_i)\}_{1 \le i \le N}$ such that 
$w_i \in K_{\infty}$, $r_i<r(w_i, j^{-1})$, $\overline{B}_{r_i}(w_i) \subset B_R(m_{\infty})$, and $\upsilon \left(K_{\infty} \setminus \bigcup_{i=1}^N\overline{B}_{r_i}(w_i)\right)<j^{-1}$.
Let $S_{l, j} := \sum_{a \in \Lambda, 1 \le i \le N} C(a, w_{i})1_{B_{r_i}(w_{i, l})}\bigotimes_{j=1}^r \nabla r_{x_{a(j), l}(w_{i})} \otimes \bigotimes_{j=r+1}^{r+s}dr_{x_{a(j), l}(w_{i})}$, where $w_{i, l} \to w_i$ as $l \to \infty$.
Then Proposition \ref{dist} yields that $\{S_{l, j}\}_{l, j}$ is an $L^p$-approximate sequence of $S_{\infty}$.
\end{proof}
Proposition \ref{strong2} yields:
\begin{proposition}
Assume $p<\infty$.
Let $S_{\infty} \in L^p(T^r_sB_R(m_i))$ and let $\{S_{i, j}\}_{i, j}, \{\hat{S}_{i, j}\}_{i, j}$ be $L^p$-approximate sequences of $S_{\infty}$.
Then 
\[\lim_{j \to \infty}\left( \limsup_{i \to \infty}||S_{i, j}-\hat{S}_{i, j}||_{L^p}\right)=0.\]
\end{proposition}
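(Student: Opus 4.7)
The plan is to reduce the problem to the already-proved uniqueness statement for approximate sequences of functions (Proposition \ref{55559}) by converting the tensorial $L^p$-norm into an ordinary $L^p$-norm of a scalar function. The key observation is that, for fixed $j$, strong convergence of tensor fields is preserved under the bilinear operations catalogued in Proposition \ref{strong2}, so we can extract a scalar analogue to which the results of subsection $3.1$ apply directly.

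First I would fix $j$ and consider the tensor field $S_{i,j} - \hat{S}_{i,j} \in L^{\infty}(T^r_sB_R(m_i))$. By hypothesis and Proposition \ref{strong2}(3), as $i \to \infty$ this converges strongly to $S_{\infty, j} - \hat{S}_{\infty, j}$ at a.e. $z_{\infty} \in B_R(m_{\infty})$. Next, applying Proposition \ref{strong2}(1) to $\langle S_{i, j} - \hat{S}_{i, j}, S_{i, j} - \hat{S}_{i, j}\rangle$, the scalar function $|S_{i,j} - \hat{S}_{i,j}|^2$ converges strongly to $|S_{\infty, j} - \hat{S}_{\infty, j}|^2$ at a.e. $z_{\infty}$. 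Composing with the continuous map $t \mapsto |t|^{p/2}$ via Proposition \ref{strong3} (which is legitimate because $\sup_{i \le \infty}(||S_{i,j}||_{L^{\infty}}+||\hat{S}_{i,j}||_{L^{\infty}})<\infty$), the function $|S_{i, j} - \hat{S}_{i, j}|^p$ converges strongly at a.e. $z_{\infty}$ to $|S_{\infty, j} - \hat{S}_{\infty, j}|^p$.

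Since this sequence of scalar functions is uniformly bounded in $L^{\infty}$, it is also uniformly bounded in $L^p$ for every $p$. Strong pointwise convergence implies weak convergence (Remark \ref{easy}), so Proposition \ref{weak2} applies and yields
\[
\lim_{i \to \infty}\int_{B_R(m_i)}|S_{i, j} - \hat{S}_{i, j}|^p\,d\underline{\mathrm{vol}}
= \int_{B_R(m_{\infty})}|S_{\infty, j} - \hat{S}_{\infty, j}|^p\,d\upsilon.
\]
In other words, $\limsup_{i\to\infty}||S_{i,j}-\hat{S}_{i,j}||_{L^p} = ||S_{\infty, j} - \hat{S}_{\infty, j}||_{L^p}$ for every fixed $j$.

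Finally, by the triangle inequality combined with the third defining condition of an $L^p$-approximate sequence,
\[
||S_{\infty, j} - \hat{S}_{\infty, j}||_{L^p}
\le ||S_{\infty} - S_{\infty, j}||_{L^p} + ||S_{\infty} - \hat{S}_{\infty, j}||_{L^p} \longrightarrow 0
\]
as $j \to \infty$, which gives the desired conclusion. There is no serious obstacle: the whole argument is essentially a dictionary between the tensorial and scalar formalisms, and once the passage to $|S_{i,j}-\hat{S}_{i,j}|^p$ is made via Proposition \ref{strong2} and Proposition \ref{strong3}, the remaining step is just the scalar uniqueness (Proposition \ref{55559}) in disguise. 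The only mild care needed is to verify that the composition with $t \mapsto |t|^{p/2}$ stays within the hypotheses of Proposition \ref{strong3}, which is immediate from the uniform $L^{\infty}$-bounds built into the definition of an $L^p$-approximate sequence.
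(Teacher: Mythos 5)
Your proposal is correct and follows essentially the same route as the paper, which simply cites Proposition \ref{strong2}: you reduce $S_{i,j}-\hat{S}_{i,j}$ to a scalar via Proposition \ref{strong2}, compose with $t \mapsto |t|^{p/2}$ using Proposition \ref{strong3}, pass to the integrals with Proposition \ref{weak2}, and finish with the triangle inequality and condition (3) of the definition of an $L^p$-approximate sequence. This is exactly the argument the paper leaves implicit (mirroring the scalar case, Proposition \ref{55559}), just written out in full detail.
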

We now are in a position to give the definition of \textbf{(S)} for tensor fields as in Section $1$:
\begin{definition}\label{strongdef}
Assume $p<\infty$.
We say that \textit{$T_i$ $L^p$-converges strongly to $T_{\infty}$ on $B_R(m_{\infty})$} if 
\[\lim_{j \to \infty}\left( \limsup_{i \to \infty}||T_i - T_{i, j}||_{L^p}\right)=0\]
for every (or some) $L^p$-approximate sequence $\{T_{i, j}\}_{i, j}$ of $T_{\infty}$. 
\end{definition}
By Proposition \ref{strong2} and an argument similar to the proof of Corollary \ref{61746174} we have the following.
See also Proposition \ref{strong83}.
\begin{corollary}\label{617461746174}
Let $S_i \in L^p(T^r_sB_R(m_i))$ for every $i \le \infty$ with $\sup_{i \le \infty}||S_i||_{L^p}<\infty$.
Assume that $S_i, T_i$ $L^p$-converge strongly to $S_{\infty}, T_{\infty}$ on $B_R(m_{\infty})$, respectively.
Then $S_i + T_i$ $L^p$-converges strongly to $S_{\infty}+T_{\infty}$ on $B_R(m_{\infty})$.
\end{corollary}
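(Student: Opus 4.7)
The plan is to mimic the proof of Corollary \ref{61746174} (the function version), replacing the role of Proposition \ref{strong3} by Proposition \ref{strong2}(3), which is the statement that the sum of two strongly convergent $L^{\infty}$-tensor fields is strongly convergent. The strategy is to build an $L^p$-approximate sequence of $S_{\infty}+T_{\infty}$ out of $L^p$-approximate sequences of $S_{\infty}$ and $T_{\infty}$, and then use the triangle inequality to reduce the desired strong convergence to the given ones.

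More concretely, I would first invoke Proposition \ref{app8} to fix $L^p$-approximate sequences $\{S_{i,j}\}_{i,j}$ and $\{T_{i,j}\}_{i,j}$ of $S_{\infty}$ and $T_{\infty}$, respectively. I claim that $\{S_{i,j}+T_{i,j}\}_{i,j}$ is an $L^p$-approximate sequence of $S_{\infty}+T_{\infty}$. Indeed, the uniform $L^{\infty}$-bound (condition (1) in the definition) follows from the triangle inequality $\|S_{i,j}+T_{i,j}\|_{L^{\infty}}\le \|S_{i,j}\|_{L^{\infty}}+\|T_{i,j}\|_{L^{\infty}}$; the pointwise-a.e.\ strong convergence $S_{i,j}+T_{i,j}\to S_{\infty,j}+T_{\infty,j}$ as $i\to\infty$ (condition (2)) is exactly the content of Proposition \ref{strong2}(3); and the $L^p$-approximation $\|(S_{\infty}+T_{\infty})-(S_{\infty,j}+T_{\infty,j})\|_{L^p}\le\|S_{\infty}-S_{\infty,j}\|_{L^p}+\|T_{\infty}-T_{\infty,j}\|_{L^p}\to 0$ (condition (3)) is again the triangle inequality.

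Now, by the hypothesis that $S_i$ and $T_i$ $L^p$-converge strongly to $S_{\infty}$ and $T_{\infty}$ respectively, applied to the chosen approximate sequences, we have
\[
\lim_{j\to\infty}\Bigl(\limsup_{i\to\infty}\|S_i-S_{i,j}\|_{L^p}\Bigr)=0,\qquad
\lim_{j\to\infty}\Bigl(\limsup_{i\to\infty}\|T_i-T_{i,j}\|_{L^p}\Bigr)=0.
\]
The triangle inequality gives
\[
\|(S_i+T_i)-(S_{i,j}+T_{i,j})\|_{L^p}\le \|S_i-S_{i,j}\|_{L^p}+\|T_i-T_{i,j}\|_{L^p},
\]
whence $\lim_{j\to\infty}\bigl(\limsup_{i\to\infty}\|(S_i+T_i)-(S_{i,j}+T_{i,j})\|_{L^p}\bigr)=0$. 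Since $\{S_{i,j}+T_{i,j}\}_{i,j}$ was shown to be an $L^p$-approximate sequence of $S_{\infty}+T_{\infty}$, Definition \ref{strongdef} yields that $S_i+T_i$ $L^p$-converges strongly to $S_{\infty}+T_{\infty}$ on $B_R(m_{\infty})$, as desired. There is essentially no difficult step here: the only nontrivial input is Proposition \ref{strong2}(3), and the rest is algebra with the triangle inequality; the mild subtlety is just checking that the tensor-field analogue of the ``uniqueness of approximate sequences'' result (the unlabeled proposition preceding Definition \ref{strongdef}) lets us use ``for some'' instead of ``for every'' $L^p$-approximate sequence when verifying the definition.
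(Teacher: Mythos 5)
Your proposal is correct and follows essentially the same route as the paper: the paper's proof simply cites Proposition \ref{strong2} together with the argument of Corollary \ref{61746174}, which is exactly what you have spelled out (Proposition \ref{strong2}(3) plus the triangle inequality to show $\{S_{i,j}+T_{i,j}\}_{i,j}$ is an $L^p$-approximate sequence of $S_{\infty}+T_{\infty}$, then the uniqueness of approximate sequences underlying the ``every (or some)'' clause in Definition \ref{strongdef}).
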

\begin{remark}\label{83}
The H$\ddot{\text{o}}$lder inequality yields that if $T_i$ $L^p$ converges strongly to $T_{\infty}$ on $B_R(m_{\infty})$, then $T_i$ $L^{\hat{p}}$ converges strongly to $T_{\infty}$ on $B_R(m_{\infty})$ for every $1<\hat{p}\le p$.
\end{remark}
By Proposition \ref{strong2} and an argument similar to the proof of Proposition \ref{weak3}, we have the following:
\begin{proposition}\label{conv678}
Let $S_i \in L^q(T^r_sB_R(m_i))$ for every $i \le \infty$ with $\sup_{i \le \infty}||S_i||_{L^q}<\infty$.
Assume that $p<\infty$, $T_{i}$ converges weakly to $T_{\infty}$ on $B_R(m_{\infty})$ and that $S_i$ $L^q$-converges strongly to $S_{\infty}$ on $B_R(m_{\infty})$.
Then 
\[\lim_{i \to \infty}\int_{B_R(m_i)}\langle S_i, T_i \rangle d\underline{\mathrm{vol}}=\int_{B_R(m_{\infty})}\langle S_{\infty}, T_{\infty}\rangle d\upsilon.\] 
\end{proposition}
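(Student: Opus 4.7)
The plan is to mirror the proof of Proposition \ref{weak3}, promoting that scalar argument to the tensor level via Proposition \ref{app8} (existence of $L^q$-approximate sequences) together with Proposition \ref{weak4}(1) (continuity of the pairing under mixed weak/strong convergence).

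First I would invoke Proposition \ref{app8} to fix an $L^q$-approximate sequence $\{S_{i,j}\}_{i,j}$ of $S_\infty$, so that for each $j$ one has $\sup_{i \le \infty}||S_{i,j}||_{L^\infty}<\infty$, $S_{i,j}$ converges strongly to $S_{\infty,j}$ on $B_R(m_\infty)$ as $i \to \infty$, and $||S_\infty - S_{\infty,j}||_{L^q} \to 0$ as $j \to \infty$. With $j$ fixed, Proposition \ref{weak4}(1) gives that the scalar $\langle S_{i,j}, T_i\rangle$ converges weakly to $\langle S_{\infty,j}, T_\infty\rangle$ at a.e. $z_\infty \in B_R(m_\infty)$; since $\{\langle S_{i,j}, T_i\rangle\}_i$ is uniformly $L^p$-bounded (by $||S_{i,j}||_{L^\infty}||T_i||_{L^p}$), Proposition \ref{weak2} upgrades this pointwise weak convergence to
\[\lim_{i \to \infty}\int_{B_R(m_i)}\langle S_{i,j}, T_i\rangle\, d\underline{\mathrm{vol}} = \int_{B_R(m_\infty)}\langle S_{\infty,j}, T_\infty\rangle\, d\upsilon.\]

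To pass from $S_{i,j}$ back to $S_i$, and from $S_{\infty,j}$ back to $S_\infty$, I would use the H$\ddot{\text{o}}$lder inequality on both sides: $|\int_{B_R(m_i)}\langle S_i - S_{i,j}, T_i\rangle\, d\underline{\mathrm{vol}}| \le ||S_i - S_{i,j}||_{L^q}||T_i||_{L^p}$, and $|\int_{B_R(m_\infty)}\langle S_\infty - S_{\infty,j}, T_\infty\rangle\, d\upsilon| \le ||S_\infty - S_{\infty,j}||_{L^q}||T_\infty||_{L^p}$, where $T_\infty \in L^p$ by the corollary following Proposition \ref{tensor com}. Letting $i \to \infty$ first and $j \to \infty$ afterwards, the definition of $L^q$-strong convergence for tensor fields (Definition \ref{strongdef}) yields $\lim_{j \to \infty}\limsup_{i \to \infty}||S_i - S_{i,j}||_{L^q} = 0$, so both H$\ddot{\text{o}}$lder error terms vanish and the claimed identity is established.

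I do not expect any serious obstacle here, since the argument is a direct tensor-level transcription of Proposition \ref{weak3}: every ingredient (approximation, bilinear continuity, H$\ddot{\text{o}}$lder) has already been set up in this subsection. The only point requiring mild care is to verify that the hypotheses of Proposition \ref{weak4}(1) are exactly what an $L^q$-approximate sequence supplies, namely strong convergence of $S_{i,j}$ together with a uniform $L^\infty$-bound in $i$.
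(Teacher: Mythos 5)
Your argument is correct and is essentially the paper's own proof: the paper disposes of this proposition by citing Proposition \ref{strong2} together with "an argument similar to the proof of Proposition \ref{weak3}", and what you have written out — approximate $S_\infty$ by an $L^q$-approximate sequence via Proposition \ref{app8}, pair the bounded strongly convergent approximants with the weakly convergent $T_i$ (Proposition \ref{weak4}(1) plus Proposition \ref{weak2}), and control both error terms by H\"older and Definition \ref{strongdef} — is exactly that transcription of \ref{weak3} to the tensor setting. No gap; the only cosmetic difference is that you invoke Proposition \ref{weak4}(1) where the paper points to Proposition \ref{strong2}, and that $T_\infty\in L^p$ is already part of the standing assumptions rather than needing the corollary after Proposition \ref{tensor com}.
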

The following is a direct consequence of Proposition \ref{conv678} and triangle inequality:
\begin{proposition}\label{normconv}
Assume that $p<\infty$ and that
$T_i$ $L^p$-converges strongly to $T_{\infty}$ on $B_R(m_{\infty})$.
Then we have the following:
\begin{enumerate}
\item $T_i$ converges weakly to $T_{\infty}$ on $B_R(m_{\infty})$.
\item $|T_i|$ $L^p$-converges strongly to $|T_{\infty}|$ on $B_R(m_{\infty})$.
\end{enumerate}
\end{proposition}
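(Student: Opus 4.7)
The plan is to reduce both parts to facts already established at the approximate level. Fix an $L^p$-approximate sequence $\{T_{i,j}\}_{i,j}$ of $T_{\infty}$ (provided by Proposition \ref{app8}); the hypothesis of $L^p$-strong convergence is exactly that $\limsup_{i \to \infty} ||T_i - T_{i,j}||_{L^p(B_R(m_i))} \to 0$ as $j \to \infty$.

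For part (1), I would fix $z_{\infty} \in B_R(m_{\infty})$, points $\{z_l\}_{1 \le l \le r+s} \subset M_{\infty}$ with $z_{l,i} \to z_l$, and $\rho>0$ with $B_{\rho}(z_{\infty}) \subset B_R(m_{\infty})$, and set $S_i := \bigotimes_{l=1}^{r} \nabla r_{z_{l,i}} \otimes \bigotimes_{l=r+1}^{r+s} dr_{z_{l,i}}$. By Proposition \ref{dist} the $S_i$ converge strongly at every point to $S_{\infty}$, with $|S_i|\le 1$. Fixing $j$, Proposition \ref{strong2}(1) yields strong pointwise convergence of $\langle T_{i,j}, S_i\rangle$ to $\langle T_{\infty,j}, S_{\infty}\rangle$, and Proposition \ref{weak2} then delivers convergence of the ball integrals. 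The tail is controlled by H\"older's inequality:
\[
\left| \int_{B_{\rho}(z_i)} \langle T_i - T_{i,j}, S_i\rangle d\underline{\mathrm{vol}}\right| \le ||T_i - T_{i,j}||_{L^p}\cdot (\underline{\mathrm{vol}}(B_{\rho}(z_i)))^{1/q},
\]
whose $\limsup_i$ vanishes as $j \to \infty$; the analogous bound on the $M_{\infty}$ side is immediate. A triangle inequality then produces $\int_{B_{\rho}(z_i)} \langle T_i, S_i\rangle d\underline{\mathrm{vol}} \to \int_{B_{\rho}(z_{\infty})}\langle T_{\infty}, S_{\infty}\rangle d\upsilon$ for each small $\rho>0$, and division by $\underline{\mathrm{vol}}(B_{\rho}(z_i)) \to \upsilon(B_{\rho}(z_{\infty}))$ gives the averaging criterion of Definition \ref{weak1}, hence weak convergence at $z_{\infty}$ in the sense of Definition \ref{tensor weak conv}.

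For part (2), the key observation is that $\{|T_{i,j}|\}_{i,j}$ is itself an $L^p$-approximate sequence of $|T_{\infty}|$: the uniform $L^{\infty}$ bound is immediate, pointwise strong convergence of $|T_{i,j}|$ to $|T_{\infty,j}|$ follows by applying Proposition \ref{strong2}(1) to $\langle T_{i,j}, T_{i,j}\rangle$ and then Proposition \ref{strong3} with the continuous function $\sqrt{\,\cdot\,}$, and the fiberwise reverse triangle inequality $\bigl| |T_{\infty}|-|T_{\infty,j}| \bigr| \le |T_{\infty} - T_{\infty,j}|$ delivers $L^p$-convergence of $|T_{\infty,j}|$ to $|T_{\infty}|$. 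The same fiberwise inequality on each $M_i$, raised to the $p$-th power and integrated, bounds the $L^p$-norm of $|T_i|-|T_{i,j}|$ by $||T_i-T_{i,j}||_{L^p}$, and the desired $L^p$-strong convergence of $|T_i|$ to $|T_{\infty}|$ then falls out of the definition.

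The main obstacle is purely bookkeeping in part (1): translating the ball-integral convergence into the pointwise weak-convergence criterion of Definition \ref{tensor weak conv}, which is phrased via the averaging limits of Definition \ref{weak1}. Because the normalizing measures $\underline{\mathrm{vol}}(B_{\rho}(z_i))$ and $\upsilon(B_{\rho}(z_{\infty}))$ converge together and remain positive for each small $\rho$, both the weak upper and weak lower semicontinuity conditions drop out simultaneously once the integrals are shown to converge.
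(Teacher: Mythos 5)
Your proof is correct and is essentially the argument the paper intends: the paper dismisses this proposition as "a direct consequence of Proposition \ref{conv678} and triangle inequality," and your write-up is exactly that mechanism made explicit — pair the $L^p$-approximate sequence with the bounded, strongly convergent tensors $\bigotimes \nabla r_{z_{l,i}} \otimes \bigotimes dr_{z_{l,i}}$, control the tails $T_i-T_{i,j}$ and $T_\infty-T_{\infty,j}$ by H\"older, and for (2) use the fiberwise reverse triangle inequality to see that $\{|T_{i,j}|\}_{i,j}$ is an $L^p$-approximate sequence of $|T_\infty|$. No gaps worth flagging.
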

As a corollary of Propositions \ref{low} and \ref{normconv}, we have the following:
\begin{corollary}\label{stnorm}
Assume that $p<\infty$ and
that $T_i$ $L^p$-converges strongly to $T_{\infty}$ on $B_R(m_{\infty})$.
Then $\lim_{i \to \infty}||T_i||_{L^p}=||T_{\infty}||_{L^p}$.
\end{corollary}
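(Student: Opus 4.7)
The plan is to reduce the statement for tensor fields to the already-established statement for scalar functions, using the two cited propositions as a two-step bridge. First, applying Proposition \ref{normconv} to the given $L^p$-strong convergence $T_i \to T_\infty$, we extract that the nonnegative scalar functions $|T_i|$ converge $L^p$-strongly to $|T_\infty|$ on $B_R(m_\infty)$. This is the content of part (2) of Proposition \ref{normconv}, and it is the key structural step: it converts a statement about tensor-field norms, which are defined via the canonical inner product on the fibers of $T^r_s M_\infty$, into a statement purely about the nonnegative function $|T_\infty|$ on the base, for which the scalar theory applies.

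Next, I would invoke Proposition \ref{low} with $f_i := |T_i| \in L^p(B_R(m_i))$ and $f_\infty := |T_\infty| \in L^p(B_R(m_\infty))$. Since $L^p$-strong convergence of functions implies convergence of $L^p$-norms (this is exactly Proposition \ref{low}, which uses the hypothesis $p<\infty$), we obtain
\[
\lim_{i \to \infty}\bigl\|\,|T_i|\,\bigr\|_{L^p(B_R(m_i))} \;=\; \bigl\|\,|T_\infty|\,\bigr\|_{L^p(B_R(m_\infty))}.
\]
Finally, by definition of the $L^p$-norm on tensor fields, $\|T\|_{L^p} = \|\,|T|\,\|_{L^p}$ for every $T \in L^p(T^r_s(\cdot))$, so the identity above is precisely the desired conclusion $\lim_{i\to\infty}\|T_i\|_{L^p} = \|T_\infty\|_{L^p}$.

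There is no genuine obstacle here: all the analytic content has been absorbed into Propositions \ref{normconv} and \ref{low}. The only thing one needs to be careful about is that the hypothesis $p<\infty$ is used twice, once in Proposition \ref{normconv} (whose proof relies on Proposition \ref{conv678}, requiring $p<\infty$) and once in Proposition \ref{low} (whose proof requires $p<\infty$ to apply Proposition \ref{strong3} and Proposition \ref{weak2} via an $L^p$-approximate sequence). Both are already available in the paper, so the corollary is an immediate two-line consequence and requires no additional argument beyond chaining the two propositions.
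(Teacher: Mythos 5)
Your proof is correct and is exactly the paper's argument: the paper states Corollary \ref{stnorm} as an immediate consequence of Propositions \ref{normconv} and \ref{low}, i.e.\ pass to $|T_i|$ via Proposition \ref{normconv}(2), apply Proposition \ref{low} to these scalar functions, and use $\|T\|_{L^p}=\|\,|T|\,\|_{L^p}$. No differences worth noting.
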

We give a lower semicontinuity of $L^p$-norms with respect to the weak convergence:
\begin{proposition}\label{lower semi}
If $T_i$ converges weakly to $T_{\infty}$ on $B_{R}(m_{\infty})$,
then $\liminf_{i \to \infty}||T_i||_{L^p} \ge ||T_{\infty}||_{L^p}$.
\end{proposition}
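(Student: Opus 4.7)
The plan is to adapt the duality argument of Proposition \ref{lower} to tensor fields using the convergence machinery assembled above; I treat $1<p<\infty$ as the main case and then reduce $p=\infty$ to it.

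For $1<p<\infty$, assume $T_\infty\not\equiv 0$ (otherwise the inequality is trivial) and define the dual field $S_\infty := T_\infty^{(p-1)}/||T_\infty||_{L^p}^{p-1}$, where $T_\infty^{(p-1)}$ is $|T_\infty|^{p-2}T_\infty$ taken fiberwise (with $v^{(p-1)}=0$ if $v=0$, as in Section 2.1). The fiberwise Hilbert-space identities $\langle T_\infty, T_\infty^{(p-1)}\rangle=|T_\infty|^p$ and $|T_\infty^{(p-1)}|^q=|T_\infty|^{(p-1)q}=|T_\infty|^p$ show that $S_\infty\in L^q(T^r_sB_R(m_\infty))$ with $||S_\infty||_{L^q}=1$ and
\[
\int_{B_R(m_\infty)}\langle S_\infty, T_\infty\rangle\, d\upsilon=||T_\infty||_{L^p}.
\]
By Proposition \ref{app8}, fix an $L^q$-approximate sequence $\{S_{i,j}\}_{i,j}$ of $S_\infty$; the uniform $L^\infty$-bound together with Bishop-Gromov volume comparison give $\sup_i||S_{i,j}||_{L^q}<\infty$ for each $j$. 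Applying Proposition \ref{conv678} to the pair $(S_{i,j}, T_i)$ yields, for each fixed $j$,
\[
\lim_{i\to\infty}\int_{B_R(m_i)}\langle S_{i,j}, T_i\rangle\, d\underline{\mathrm{vol}}=\int_{B_R(m_\infty)}\langle S_{\infty, j}, T_\infty\rangle\, d\upsilon.
\]
Combining the H\"older inequality with Corollary \ref{stnorm} (which gives $\lim_i||S_{i,j}||_{L^q}=||S_{\infty, j}||_{L^q}$, valid since $1<q<\infty$) bounds the absolute value of the right-hand side by $||S_{\infty, j}||_{L^q}\cdot\liminf_{i\to\infty}||T_i||_{L^p}$. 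Letting $j\to\infty$, the definition of an $L^q$-approximate sequence forces $||S_{\infty, j}||_{L^q}\to 1$ and the integral to $||T_\infty||_{L^p}$, giving the claim.

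For $p=\infty$, Bishop-Gromov provides a uniform bound $\underline{\mathrm{vol}}(B_R(m_i))\le C_R$, hence $\sup_i||T_i||_{L^{p'}(B_R(m_i))}\le C_R^{1/p'}\sup_i||T_i||_{L^\infty}<\infty$ for every finite $p'>1$, and weak convergence of $T_i$ to $T_\infty$ is independent of the exponent. The finite case just proved gives $||T_\infty||_{L^{p'}(B_R(m_\infty))}\le C_R^{1/p'}\liminf_i||T_i||_{L^\infty}$, and letting $p'\to\infty$ (using $||T_\infty||_{L^{p'}}\to ||T_\infty||_{L^\infty}$ on the finite-measure ball) yields the conclusion. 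The only nontrivial bookkeeping is the verification of the pointwise Hilbert-space identities defining $S_\infty$, which are standard and introduce no new analytic difficulty; I expect no serious obstacle beyond this.
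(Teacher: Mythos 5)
Your proposal is correct and follows essentially the same route as the paper: the dual field $T_\infty^{(p-1)}$ (which you merely normalize), an $L^q$-approximate sequence, the weak--strong pairing convergence plus the H\"older inequality and convergence of the approximants' $L^q$-norms, and for $p=\infty$ a reduction to finite exponents (the paper rescales the metric to make $\underline{\mathrm{vol}}\,B_R(m_i)=1$ where you invoke a uniform Bishop--Gromov volume bound, which is equivalent). No gaps beyond routine invocations of Propositions \ref{compati7} and \ref{conv678}, which are exactly the tools the paper relies on.
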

\begin{proof}
First assume $p<\infty$.
Recall $T_{\infty}^{(p-1)}(x):= |T_{\infty}(x)|^{p-2}T_{\infty}(x)$.
Since $|T^{(p-1)}_{\infty}(x)|=|T_{\infty}(x)|^{p-1}$, 
we see that $T^{(p-1)}_{\infty} \in L^q(T^r_sB_R(m_{\infty}))$ and 
\[\int_{B_R(m_{\infty})}\langle T_{\infty}, T^{(p-1)}_{\infty} \rangle d\upsilon=||T_{\infty}||_{L^p}^p=||T^{(p-1)}_{\infty}||_{L^q}^q\]
hold.
Let $\{\hat{T}_{i, j}\}_{i, j}$ be an $L^q$-approximate sequence of $T^{(p-1)}_{\infty}$.
Then Proposition \ref{weak4} and the H$\ddot{\text{o}}$lder inequality yield that 
\[\lim_{i \to \infty}\int_{B_R(m_i)}\langle T_i, \hat{T}_{i, j}\rangle d\underline{\mathrm{vol}}=\int_{B_R(m_{\infty})}\langle T_{\infty}, \hat{T}_{\infty, j}\rangle d\upsilon,\]
\[\lim_{j \to \infty}\int_{B_R(m_{\infty})}\langle T_{\infty}, \hat{T}_{\infty, j}\rangle d\upsilon=\int_{B_R(m_{\infty})}\langle T_{\infty}, T^{(p-1)}_{\infty} \rangle d\upsilon=||T_{\infty}||_{L^p}^p,\]
\[\int_{B_R(m_i)}\langle T_i, \hat{T}_{i, j}\rangle d\underline{\mathrm{vol}}\le ||T_i||_{L^p}||\hat{T}_{i, j}||_{L^q} \]
hold,
$||\hat{T}_{i, j}||_{L^q} \to ||\hat{T}_{\infty, j}||_{L^q}$ holds as $i \to \infty$, and that
$||\hat{T}_{\infty, j}||_{L^q} \to ||T^{(p-1)}_{\infty}||_{L^q}=||T_{\infty}||_{L^p}^{p/q}$ holds as $j \to \infty$. 
Therefore we have 
\[||T_{\infty}||_{L^p}^{p/q} \liminf_{i \to \infty}||T_i||_{L^p} \ge ||T_{\infty}||_{L^p}^p.\]
Therefore we have the assertion for the case $p < \infty$.

Next assume $p = \infty$.
By considering rescaled metrics $R^{-2}g_{M_i}$, without loss of generality we can assume that $\underline{\mathrm{vol}}\,B_R(m_{i})=1$ holds for every $i \le \infty$. 
Then since $||T_i||_{L^{\infty}} \ge ||T_{i}||_{L^{\hat{p}}}$ holds for every $\hat{p}<\infty$, we see that $\liminf_{i \to \infty}||T_i||_{L^{\infty}} \ge ||T_{\infty}||_{L^{\hat{p}}}$ holds for every $\hat{p}<\infty$.
Letting $\hat{p} \to \infty$ gives the assertion for the case $p = \infty$.
\end{proof}
\begin{remark}\label{normal2}
It is easy to check that for $p<\infty$, if $(M_i, m_i, \underline{\mathrm{vol}}) \equiv (M_{\infty}, m_{\infty}, \upsilon)$ and $\psi_i\equiv  id_{M_{\infty}}$ hold for every $i<\infty$, then $T_i$ $L^p$-converges strongly to $T_{\infty}$ on $B_R(m_{\infty})$ with respect to the convergence $(M_{\infty}, m_{\infty}, \upsilon) \stackrel{(id_{M_{\infty}}, \epsilon_i, R_i)}{\to} (M_{\infty}, m_{\infty}, \upsilon)$ if and only if $||T_i-T_{\infty}||_{L^p(B_R(m_{\infty}))} \to 0$.
Compare with Remark \ref{normal1}.
\end{remark}
\begin{proposition}\label{hon}
Assume $p<\infty$.
Then $T_i$ $L^p$-converges strongly to $T_{\infty}$ on $B_R(m_{\infty})$ if and only if the following two conditions hold:
\begin{enumerate}
\item $\limsup_{i \to \infty}||T_i||_{L^p} \le ||T_{\infty}||_{L^p}$.
\item $T_i$ converges weakly to $T_{\infty}$ on $B_R(m_{\infty})$.
\end{enumerate}
\end{proposition}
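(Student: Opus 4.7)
The ``only if'' direction is immediate: Proposition \ref{normconv} gives (2), and Corollary \ref{stnorm} gives (1) with equality.

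For the ``if'' direction, the plan is to mimic the proof of Proposition \ref{222} verbatim, with the scalar $L^p$ functions there replaced by the tensor fields $T_i$ and an $L^p$-approximate sequence $\{T_{i,j}\}_{i,j}$ of $T_\infty$ provided by Proposition \ref{app8}. The two ingredients that transfer without change are: (a) weak convergence of combinations, namely $T_i \pm T_{i,j}$ converges weakly to $T_\infty \pm T_{\infty,j}$ as $i \to \infty$, which follows by pairing against test tensors of the form $\bigotimes_{l=1}^r \nabla r_{z_l} \otimes \bigotimes_{l=r+1}^{r+s} dr_{z_l}$ and using weak convergence of $T_i$ together with strong convergence of $T_{i,j}$ (Proposition \ref{weak4} or a direct calculation); and (b) lower semicontinuity of the $L^p$-norm under weak convergence (Proposition \ref{lower semi}).

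The remaining ingredient is Clarkson's inequality for the tensor-valued $L^p$ sections $T_i \pm T_{i,j}$. Since each fiber of $T^r_s M_\infty$ is a Hilbert space, the parallelogram identity $|T+S|^2 + |T-S|^2 = 2(|T|^2 + |S|^2)$ holds pointwise; combining it with superadditivity of $x \mapsto x^{p/2}$ on $[0,\infty)$ and convexity of the same function for $p \ge 2$ gives the pointwise Clarkson inequality $|T+S|^p + |T-S|^p \le 2^{p-1}(|T|^p + |S|^p)$, which integrates to the $L^p$-version used in the proof of Proposition \ref{222}. For $1 < p < 2$, the corresponding second Clarkson inequality for Hilbert-valued $L^p$ maps follows from \cite[Theorem 2]{clark} by the same duality-free argument as in the scalar case (each step of Clarkson's original proof uses only the triangle inequality in the fiber and pointwise manipulations of the norm).

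With (a), (b), and Clarkson in hand, the algebraic manipulation of Proposition \ref{222} goes through verbatim: apply Clarkson to $T_{i,j} \pm T_i$, take $\liminf_i$ on the left and $\limsup_i$ on the right (using (a), (b) and hypothesis (1)), then let $j \to \infty$ using $\|T_{\infty,j}\|_{L^p} \to \|T_\infty\|_{L^p}$ and $\|T_\infty + T_{\infty,j}\|_{L^p} \to 2\|T_\infty\|_{L^p}$ from the definition of an $L^p$-approximate sequence; the inequality collapses to $\lim_{j \to \infty}\limsup_{i \to \infty} \|T_i - T_{i,j}\|_{L^p} = 0$, which is the definition of strong $L^p$-convergence (Definition \ref{strongdef}). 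The only subtle point is the second Clarkson inequality for Hilbert-valued functions when $1 < p < 2$, but this is a routine adaptation of \cite[Theorem 2]{clark} introducing no new geometric difficulty.
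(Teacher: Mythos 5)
Your proof is correct and takes essentially the same route as the paper's: the paper also reduces the ``if'' direction to Clarkson's inequalities for tensor-valued $L^p$ sections (Claims \ref{1223} and \ref{1225}, where the pointwise fiber inequality is obtained by isometrically embedding $\mathrm{span}\{u,v\}$ into $\mathbf{R}^2$ and invoking \cite[Theorem $2$]{clark}) and then repeats the argument of Proposition \ref{222}, with Proposition \ref{lower semi} supplying lower semicontinuity and the weak convergence of $T_i\pm T_{i,j}$ playing the same role as in the scalar case. Your parallelogram-plus-convexity derivation for $p\ge 2$ and your two-vector reduction for $1<p<2$ are just explicit versions of those claims, so there is no substantive difference in approach.
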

\begin{proof}
First we recall Clarkson's inequalities:
\begin{claim}\label{1223}
Let $v, u \in \mathbf{R}^l$.
\begin{enumerate}
\item If $p<2$, then $|u+v|^q+|u-v|^q\le 2(|u|^p+|v|^p)^{q-1}$.
\item If $p \ge 2$, then $|u+v|^p +|u-v|^p \le 2^{p-1}(|u|^p+|v|^p)$.
\end{enumerate}
\end{claim}
The proof is as follows.
It is known that Claim \ref{1223} holds for $l=2$.
See for instance the proof of Clarkson's inequalities \cite[Theorem $2$]{clark}.
Since there exists an isometric embedding linear map from $\mathrm{span}\{u, v\}$ to $\mathbf{R}^2$, we have Claim \ref{1223}.

The next claim follows from Claim \ref{1223} and an argument similar to the proof of Clarkson's inequalities \cite[Theorem $2$]{clark}:
\begin{claim}\label{1225}
Let $T, S \in L^p(T^r_sB_R(m_{\infty}))$.
\begin{enumerate}
\item If $p<2$, then $||T+S||_{L^p}^q+||T-S||_{L^p}^q\le 2(||T||_{L^p}^p+||S||_{L^p}^p)^{q-1}$.
\item If $p \ge 2$, then $||T+S||_{L^p}^q+||T-S||_{L^p}^q\le 2^{p-1}(||T||_{L^p}^p+||S||_{L^p}^p)$.
\end{enumerate}
\end{claim}
Then Proposition \ref{hon} follows from Claim \ref{1225} and an argument similar to the proof of Proposition \ref{222}.
\end{proof}
\begin{proposition}\label{compati7}
Let $S_i \in L^{\infty}(T^r_sB_R(m_i))$ for every $i \le \infty$ with  $\sup_{i \le \infty} ||S_i||_{L^{\infty}}<\infty$.
Then $S_i$ converges strongly to $S_{\infty}$ on $B_R(m_{\infty})$ if and only if $S_i$ $L^{\hat{p}}$-converges strongly to $S_{\infty}$ on $B_R(m_{\infty})$ for some (or every) $1<\hat{p}<\infty$.
\end{proposition}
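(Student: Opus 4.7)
The plan is to reduce both directions to the functional counterpart, Proposition~\ref{compatibility}, by passing through the scalar norm field $|S_i|$ and exploiting the algebraic preservation results of subsection~$3.2$.

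For the ``only if'' direction, I would start from the assumption that $S_i$ converges strongly to $S_\infty$ at a.e.\ $z_\infty\in B_R(m_\infty)$. Proposition~\ref{strong2}\,(1) applied to the pair $(S_i,S_i)$ yields strong convergence of the scalar function $|S_i|^2=\langle S_i,S_i\rangle$ to $|S_\infty|^2$ at a.e.\ $z_\infty$; since these functions are uniformly bounded in $L^\infty$, Proposition~\ref{strong3} with $F(t)=\sqrt{\max\{t,0\}}$ upgrades this to strong convergence of $|S_i|$ to $|S_\infty|$ as functions. Proposition~\ref{compatibility} now produces $L^{\hat p}$-strong convergence of $|S_i|$ to $|S_\infty|$ for every $1<\hat p<\infty$, so that $\|S_i\|_{L^{\hat p}}\to\|S_\infty\|_{L^{\hat p}}$ via Proposition~\ref{low}. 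Meanwhile, strong convergence at a.e.\ point forces pointwise weak convergence via Definition~\ref{inftyinfty}, and Corollary~\ref{weakkk} promotes this to weak convergence of $S_i$ to $S_\infty$ on $B_R(m_\infty)$. Combining these two inputs with Proposition~\ref{hon} yields $L^{\hat p}$-strong convergence of $S_i$ to $S_\infty$ on $B_R(m_\infty)$ for every $1<\hat p<\infty$.

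For the ``if'' direction, assume $S_i$ $L^{\hat p}$-converges strongly to $S_\infty$ on $B_R(m_\infty)$ for some $1<\hat p<\infty$. Proposition~\ref{normconv} supplies simultaneously (a) weak convergence of $S_i$ to $S_\infty$ on $B_R(m_\infty)$ and (b) $L^{\hat p}$-strong convergence of the scalar $|S_i|$ to $|S_\infty|$. The uniform $L^\infty$-bound together with (b) put us in the hypotheses of Proposition~\ref{compatibility}, from which $|S_i|\to|S_\infty|$ strongly at a.e.\ $z_\infty$. Proposition~\ref{strong3} with $F(t)=t^2$ then upgrades this to strong convergence of $|S_i|^2$ at a.e.\ $z_\infty$, which by Remark~\ref{easy}, combined with Definition~\ref{weak1}, delivers weak upper semicontinuity of $\{|S_i|^2\}_i$ at a.e.\ $z_\infty$. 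Together with the pointwise weak convergence of $S_i$ at a.e.\ $z_\infty$ extracted from (a), Definition~\ref{inftyinfty} gives strong convergence of $S_i$ to $S_\infty$ at a.e.\ $z_\infty$, and the corollary following Proposition~\ref{strong2} then upgrades this to strong convergence at every $z_\infty\in B_R(m_\infty)$.

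No single step is the principal obstacle; this is essentially a bookkeeping argument once the correct scalar invariant $|S_i|$ is identified. The only delicate point is keeping straight which of the four convergence notions---pointwise weak, pointwise strong, $L^p$-weak, $L^p$-strong---is in force at each stage, and verifying that the continuous maps $\sqrt{\max\{\cdot,0\}}$ and $(\cdot)^2$ satisfy the hypotheses of Proposition~\ref{strong3} on the uniformly $L^\infty$-bounded sequences at hand.
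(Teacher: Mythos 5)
Your treatment of the ``if'' direction is essentially the paper's own proof: Propositions \ref{normconv} and \ref{compatibility} give weak convergence of $S_i$ and a.e.\ pointwise strong convergence of $|S_i|$, Proposition \ref{strong3} upgrades this to $|S_i|^2$, and then the definition of pointwise strong convergence (Definition \ref{inftyinfty}) together with the corollary after Proposition \ref{strong2} finishes; this matches the paper up to the final bookkeeping, which the paper compresses into an appeal to Proposition \ref{weak2}. The only real difference is the ``only if'' direction, which the paper dismisses as immediate and which you prove by a correct but roundabout route (\ref{strong2}, \ref{strong3}, \ref{compatibility}, \ref{low}, \ref{weakkk}, \ref{hon}); note it follows at once from Definition \ref{strongdef}, since under the uniform $L^{\infty}$-bound the constant family $S_{i,j}:=S_i$ is itself an $L^{\hat{p}}$-approximate sequence of $S_{\infty}$, so $\limsup_{i}\|S_i-S_{i,j}\|_{L^{\hat{p}}}=0$ trivially.
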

\begin{proof}
It suffices to check `if' part.
Assume that $S_i$ $L^{\hat{p}}$-converges strongly to $S_{\infty}$ on $B_R(m_{\infty})$ for some $\hat{p}$.
Then Propositions \ref{compatibility} and \ref{normconv} yield that $S_i$ converges weakly to $S_{\infty}$ on $B_R(m_{\infty})$ and that $|S_i|$ converges strongly to $|S_{\infty}|$ at a.e. $x_{\infty} \in B_R(m_{\infty})$.
Thus Propositions \ref{strong3} gives that $|S_i|^2$ converges strongly to $|S_{\infty}|^2$ at a.e. $x_{\infty} \in B_R(m_{\infty})$.
Therefore the assertion follows from Proposition \ref{weak2}.
\end{proof}
We now prove Theorem \ref{metric}:

\textit{A proof of Theorem \ref{metric}.}

This is a direct consequence of Propositions \ref{dist} and \ref{compati7}. $\,\,\,\,\,\,\,\,\,\,\,\,\Box$.
\begin{proposition}\label{strong83}
Let $\hat{r}, \hat{s} \in \mathbf{Z}_{\ge 0}$ and  $S_i \in L^{\infty}(T^{\hat{r}}_{\hat{s}}B_R(m_i))$ for every $i \le \infty$ with $\sup_{i\le \infty}||S_i||_{L^{\infty}}<\infty$.
Assume that $p<\infty$, $T_i$ $L^p$-converges strongly to $T_{\infty}$ on $B_R(m_{\infty})$ and that $S_i$ converges strongly $S_{\infty}$ at every $z_{\infty} \in B_R(m_{\infty})$.
Then we have the following:
\begin{enumerate}
\item If $r=\hat{r}$ and $s=\hat{s}$, then $\langle S_i, T_i \rangle$ $L^p$-converges strongly to $\langle S_{\infty}, T_{\infty} \rangle$ on $B_R(m_{\infty})$.
\item $S_i \otimes T_i$ $L^p$-converges strongly to $S_{\infty} \otimes T_{\infty}$ on $B_R(m_{\infty})$.
\item If $\hat{r}\le r$ and $\hat{s} \le s$, then $T_i(S_i)$ $L^p$-converges strongly to $T_{\infty}(S_{\infty})$ on $B_R(m_{\infty})$.
\item If $r \le \hat{r}$ and $s \le \hat{s}$, then $S_i(T_i)$ $L^p$-converges strongly to $S_{\infty}(T_{\infty})$ on $B_R(m_{\infty})$.
\end{enumerate}
\end{proposition}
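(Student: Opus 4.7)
The plan is to derive all four statements uniformly from the definition of $L^p$-strong convergence via approximate sequences (Definition \ref{strongdef}), combined with the key fact that strong $L^\infty$-convergence of tensor fields is stable under contractions, tensor products, and sums (Proposition \ref{strong2}). The main idea is: given an $L^p$-approximate sequence $\{T_{i,j}\}_{i,j}$ of $T_{\infty}$, one builds from it, together with $S_i$, an $L^p$-approximate sequence for the output tensor field, and then controls the remainder using the uniform $L^\infty$-bound on $\{S_i\}$.

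First I would fix an $L^p$-approximate sequence $\{T_{i,j}\}_{i,j}$ of $T_\infty$, which exists by Proposition \ref{app8}. For (1), the candidate is $\{\langle S_i, T_{i,j}\rangle\}_{i,j}$. Proposition \ref{strong2}(1) together with the hypothesis that $S_i$ converges strongly to $S_\infty$ at every point of $B_R(m_\infty)$ gives that $\langle S_i, T_{i,j}\rangle$ converges strongly to $\langle S_\infty, T_{\infty,j}\rangle$ at a.e.\ point, while the $L^\infty$-bound is clear from $\sup_{i \le \infty}\|S_i\|_{L^\infty}<\infty$ and $\sup_{i\le\infty}\|T_{i,j}\|_{L^\infty}<\infty$ (for fixed $j$). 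The $L^p$-approximation of $\langle S_\infty, T_\infty\rangle$ follows from
\[\|\langle S_\infty, T_\infty\rangle - \langle S_\infty, T_{\infty,j}\rangle\|_{L^p} \le \|S_\infty\|_{L^\infty}\|T_\infty - T_{\infty,j}\|_{L^p} \to 0.\]
This verifies that $\{\langle S_i, T_{i,j}\rangle\}_{i,j}$ is an $L^p$-approximate sequence of $\langle S_\infty, T_\infty\rangle$. Finally, the elementary Cauchy-Schwarz-type bound $|\langle S_i, T_i\rangle - \langle S_i, T_{i,j}\rangle| \le |S_i||T_i-T_{i,j}|$ yields
\[\limsup_{i\to\infty}\|\langle S_i,T_i\rangle-\langle S_i,T_{i,j}\rangle\|_{L^p} \le \Bigl(\sup_i\|S_i\|_{L^\infty}\Bigr)\limsup_{i\to\infty}\|T_i-T_{i,j}\|_{L^p},\]
whose limit as $j \to \infty$ is zero because $T_i$ $L^p$-converges strongly to $T_\infty$. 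Hence (1) holds.

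Statements (2), (3), (4) are structurally identical, with $\langle\cdot,\cdot\rangle$ replaced by $\otimes$, by $T\mapsto T(S)$, or by $S\mapsto S(T)$ respectively. In each case the replacement candidate $\{S_i \otimes T_{i,j}\}$, $\{T_{i,j}(S_i)\}$, or $\{S_i(T_{i,j})\}$ is an $L^p$-approximate sequence of the relevant limit by the corresponding part (2), (4), or (4) of Proposition \ref{strong2} (for pointwise strong convergence) together with the pointwise bound $|S \otimes T|=|S||T|$ and $|T(S)|\le |T||S|$ to provide both the $L^\infty$-bound and the $L^p$-approximation control. The remainder is bounded exactly as in (1), e.g.\ $|T_i(S_i)-T_{i,j}(S_i)|\le |T_i-T_{i,j}||S_i|$, and vanishes by the same double-limit argument.

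The potential obstacle is really only bookkeeping: one must check that the canonical operations considered preserve both the pointwise almost-everywhere strong convergence (provided by Proposition \ref{strong2}, whose proof already handles essentially all of this work) and the $L^\infty$ uniform bounds needed by Definition \ref{strongdef}. No new analytic ingredient beyond Proposition \ref{strong2}, Proposition \ref{app8}, and the elementary pointwise tensor estimates is required, so the argument should be a short verification rather than a substantial new proof.
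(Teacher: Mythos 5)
Your argument is correct and is exactly the route the paper intends: its proof of this proposition is the one-line remark that it is a direct consequence of Proposition \ref{strong2}, and your verification (building the approximate sequence $\{\langle S_i,T_{i,j}\rangle\}$, $\{S_i\otimes T_{i,j}\}$, etc.\ via Propositions \ref{app8} and \ref{strong2}, then controlling the remainder by the uniform $L^\infty$-bound and the pointwise estimates $|S\otimes T|=|S||T|$, $|T(S)|\le|T||S|$) is just the routine detail the paper suppresses, in the same style as its proof of Corollary \ref{61746174}. No gaps; nothing further is needed.
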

\begin{proof}
This is a direct consequence of Proposition \ref{strong2}.
\end{proof}
We end this subsection by giving the following compatibility result which performs a crucial role in the next section.
\begin{proposition}\label{compat}
Let $\{(C_{j}, \phi_{j})\}_{j}$ be a rectifiable coordinate system of $(M_{\infty}, \upsilon)$ associated with $\{(M_i, m_i)\}_i$, and $A \subset B_R(m_{\infty})$.
Assume that  $\langle T_i, \bigotimes _{l=1}^r\nabla \phi_{j, a(l), i} \otimes \bigotimes _{l=r+1}^{r+s}d\phi_{j, a(l), i}\rangle$ converges weakly to $\langle T_{\infty}, \bigotimes _{l=1}^r\nabla \phi_{j, a(l), \infty} \otimes \bigotimes _{l=r+1}^{r+s}d\phi_{j, a(l), \infty}\rangle$ at a.e. $z_{\infty} \in C_{j} \cap A$ for every $j$ and every $a \in \mathrm{Map}(\{1, \ldots, r+s\} \to \{1, \ldots, k\})$.
Then $T_i$ converges weakly to $T_{\infty}$ at a.e. $z_{\infty} \in A$.
\end{proposition}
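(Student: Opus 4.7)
The plan is to mimic the strategy of the proof of Proposition \ref{weak4}(1): at a.e. $z_\infty \in A$ I approximate the distance-function tensor $R_i := \bigotimes_{l=1}^r \nabla r_{x_{l,i}} \otimes \bigotimes_{l=r+1}^{r+s} dr_{x_{l,i}}$ by a constant-coefficient linear combination $\tilde R_i := \sum_{a \in \Lambda_j} C(a, z_\infty)\Phi_i^a$ of the coordinate-basis tensors $\Phi_i^a := \bigotimes_{l=1}^r \nabla\phi_{j,a(l),i}\otimes\bigotimes_{l=r+1}^{r+s}d\phi_{j,a(l),i}$, apply the hypothesis to each summand, and control the error by H\"older's inequality.

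Concretely, I first apply Lebesgue's differentiation theorem to the countable family $\{\langle T_\infty, \Phi_\infty^a\rangle\}_{j,a}$ and to $|T_\infty|^p$, combined with the fact that each $\phi_j$ is $(1\pm\delta)$-bi-Lipschitz at a.e. point in $C_j$ for arbitrary $\delta$ (from rectifiability), to extract a full-measure subset $A' \subset A$ on which, for fixed $\epsilon > 0$, every $z_\infty \in A'$ satisfies: $z_\infty \in C_j$ for some $j=j(z_\infty)$, the collection $\{d\phi_{j,s,\infty}(z_\infty)\}_{s=1}^k$ is $\Psi(\epsilon)$-orthogonal on $T^*_{z_\infty}M_\infty$, and $z_\infty$ is a Lebesgue point of all the above functions. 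At such $z_\infty$, Proposition \ref{ep} supplies coefficients $\{C(a,z_\infty)\}_{a\in\Lambda_j}$ with $|C(a,z_\infty)|\le C(n,r,s)$ and
\[|R_\infty(z_\infty)-\tilde R_\infty(z_\infty)| \le \Psi(\epsilon; n, r, s).\]
Since $\mathcal{A}$ is associated with $\{(M_i,m_i,\underline{\mathrm{vol}})\}_i$, the $\Phi_i^a$ converge strongly to $\Phi_\infty^a$ at $z_\infty$; Proposition \ref{dist} gives the same for $R_i \to R_\infty$; and Proposition \ref{strong2} then yields strong convergence of $|R_i - \tilde R_i|^2$ to $|R_\infty - \tilde R_\infty|^2$ at $z_\infty$. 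A good-set/bad-set decomposition of $B_t(z_i)$, as in the proof of Proposition \ref{weak4}(1), converts this into
\[\underline{\mathrm{vol}}(B_t(z_i))^{-1}\int_{B_t(z_i)}|R_i - \tilde R_i|^q d\underline{\mathrm{vol}} \le \Psi(\epsilon; n, r, s, p)\]
for all sufficiently small $t$ and large $i$. Combined with H\"older's inequality and the local $L^p$-average bound $\limsup_{t\to 0}\limsup_{i\to\infty}\underline{\mathrm{vol}}(B_t(z_i))^{-1}\int_{B_t(z_i)}|T_i|^p d\underline{\mathrm{vol}} < \infty$ (holding at a.e. $z_\infty$, as in the proofs of Propositions \ref{weak4} and \ref{tensor com}), this bounds the error $|\underline{\mathrm{vol}}(B_t(z_i))^{-1}\int_{B_t(z_i)}\langle T_i, R_i-\tilde R_i\rangle d\underline{\mathrm{vol}}|$ by a quantity that vanishes as $\epsilon \to 0$. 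Meanwhile, the hypothesis applied termwise, together with Lebesgue differentiation of each $\rho_\infty^a := \langle T_\infty, \Phi_\infty^a\rangle$ at $z_\infty$, gives
\[\lim_{t\to 0}\lim_{i\to\infty}\underline{\mathrm{vol}}(B_t(z_i))^{-1}\int_{B_t(z_i)}\langle T_i,\tilde R_i\rangle d\underline{\mathrm{vol}} = \sum_a C(a,z_\infty)\,\rho_\infty^a(z_\infty),\]
which differs from $\langle T_\infty, R_\infty\rangle(z_\infty)$ by at most $\Psi(\epsilon; n, r, s)\,|T_\infty(z_\infty)|$. Performing the analogous comparison on the $M_\infty$ side and letting $\epsilon \to 0$ yields the weak convergence of $\langle T_i, R_i\rangle$ to $\langle T_\infty, R_\infty\rangle$ at $z_\infty$ in the sense of Definition \ref{weak1}, which is what Definition \ref{tensor weak conv} requires.

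The main obstacle is the transfer of the pointwise-at-$z_\infty$ bound $|R_\infty(z_\infty) - \tilde R_\infty(z_\infty)| \le \Psi(\epsilon)$ into the quantitative $L^q$-estimate on the $M_i$-balls $B_t(z_i)$ displayed above: it requires both the strong convergences of $R_i$ and $\Phi_i^a$ at $z_\infty$ (provided by Proposition \ref{dist} and the associated-structure hypothesis) and the good-set/bad-set decomposition based on the $\Psi(\epsilon)$-orthogonal basis structure coming from rectifiability. Once this estimate and the standard local $L^p$-average bound for $T_i$ are in hand, H\"older's inequality and the hypothesis close the argument.
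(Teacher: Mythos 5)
Your proposal is correct and follows essentially the same route as the paper's proof: approximate the distance-function tensor by a bounded-coefficient linear combination of the coordinate tensors (which converge strongly since the system is associated with the sequence, via Proposition \ref{strong2}), apply the hypothesis termwise, and absorb the error with H\"older's inequality against the local $L^p$-average bound on $T_i$ valid off a set of small measure. The only cosmetic difference is that you produce the coefficients from $\epsilon$-orthogonality of the coordinate differentials at Lebesgue points via Proposition \ref{ep}, whereas the paper invokes the approximation argument from the proof of Proposition \ref{strong2}; these are the same underlying idea.
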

\begin{proof}
Proposition \ref{strong2} yields that $\bigotimes _{l=1}^r\nabla \phi_{j, a(l), i} \otimes \bigotimes _{l=r+1}^{r+s}d\phi_{j, a(l), i}$ converges strongly to $\bigotimes _{l=1}^r\nabla \phi_{j, a(l), \infty} \otimes \bigotimes _{l=r+1}^{r+s}d\phi_{j, a(l), \infty}$ at a.e. $z_{\infty} \in C_{j} \cap A$ for every $j$ and every $a$.
Let $\hat{K}_{\hat{L}}$ be the set of $z_{\infty} \in B_R(m_{\infty})$ satisfying that there exists  $z_i \to z_{\infty}$ such that 
\[\frac{1}{\mathrm{vol}\,B_t(z_i)}\int_{B_t(z_{i})}|T_i|^pd\mathrm{vol} \le \hat{L}\]
holds for every $t\le 1$ and every $i \le \infty$.
See also the proof of Proposition \ref{contr}.
Note that by the proof of Proposition \ref{tensor com} (or Proposition \ref{contr}), we see that $\upsilon (B_R(m_{\infty}) \setminus \hat{K}_{\hat{L}}) \to 0$ holds as $\hat{L} \to \infty$.
Let $\{x_i\}_{1 \le i \le r+s} \subset M_{\infty}$.
By an argument similar to the proof of Proposition \ref{strong2}, without loss of generality we can assume that for every $j$ and a.e. $z_{\infty} \in C_j \cap A$ there exists $\{C(a, j, z_{\infty})\}_{a} \subset \mathbf{R}$ with $|C(a, j, z_{\infty})| \le C(n)$ such that for every $\epsilon >0$ there exists $r:=r(z_{\infty}, j, \epsilon)>0$ such that 
\[\frac{1}{\upsilon (B_t(z_{\infty}))}\int_{B_t(z_{\infty})}\left| \bigotimes_{l=1}^r \nabla r_{x_l} \otimes \bigotimes_{l=r+1}^{r+s}dr_{x_l}-\sum_{a}C(a, j, z_{\infty})\bigotimes_{l=1}^r \nabla \phi_{j, a(l), \infty} \otimes \bigotimes_{l=r+1}^{r+s}d\phi_{j, a(l), \infty}\right|^qd\upsilon < \epsilon\]
and
\begin{align*}
&\limsup_{i \to \infty}\Biggl| \frac{1}{\mathrm{vol}\,B_t(z_i)}\int_{B_t(z_{i})}\left\langle T_i, \bigotimes_{l=1}^r\nabla \phi_{j, a(l), i} \otimes \bigotimes_{l=r+1}^{r+s}d \phi_{j, a(l), i} \right\rangle d\mathrm{vol} \\
&-\frac{1}{\upsilon (B_t(z_{\infty}))}\int_{B_t(z_{\infty})}\left\langle T_{\infty}, \bigotimes_{l=1}^r\nabla \phi_{j, a(l), \infty} \otimes \bigotimes_{l=r+1}^{r+s}d\phi_{j, a(l), \infty} \right\rangle d\upsilon \Biggl|<\epsilon
\end{align*}
hold for every $t<r$ and every $a$.
Moreover by Proposition \ref{strong2} without loss of generality we can assume that for every $t<r$
\[\frac{1}{\mathrm{vol}\,B_t(z_{i})}\int_{B_t(z_{i})}\left| \bigotimes_{l=1}^r \nabla r_{x_{l, i}} \otimes \bigotimes_{l=r+1}^{r+s}dr_{x_{l, i}}-\sum_{a}C(a, j, z_{\infty})\bigotimes_{l=1}^r \nabla \phi_{j, a(l), i} \otimes \bigotimes_{l=r+1}^{r+s}d\phi_{j, a(l), i}\right|^qd\mathrm{vol} < \epsilon\]
holds for every sufficiently large $i$.
Then for every $j$, a.e. $z_{\infty} \in C_j \cap A \cap \hat{K}_{\hat{L}}$ and every $t<r(z_{\infty}, j, \epsilon)$, the H$\ddot{\text{o}}$lder inequality yields that 
\begin{align*}
&\frac{1}{\mathrm{vol}\,B_t(z_{i})}\int_{B_t(z_{i})}\left\langle T_i, \bigotimes_{l=1}^r \nabla r_{x_{l, i}} \otimes \bigotimes_{l=r+1}^{r+s}dr_{x_{l, i}} \right\rangle d\mathrm{vol}\\
&=\frac{1}{\mathrm{vol}\,B_t(z_{i})}\int_{B_t(z_{i})}\left\langle T_i, \sum_{a}C(a, j, z_{\infty})\bigotimes_{l=1}^r \nabla \phi_{j, a(l), i} \otimes \bigotimes_{l=r+1}^{r+s}d\phi_{j, a(l), i}\right\rangle d\mathrm{vol} \pm \Psi(\epsilon; n, \hat{L})\\
&=\frac{1}{\upsilon (B_t(z_{\infty}))}\int_{B_t(z_{\infty})}\left\langle T_{\infty}, \sum_{a}C(a, j, z_{\infty})\bigotimes_{l=1}^r \nabla \phi_{j, a(l), \infty} \otimes \bigotimes_{l=r+1}^{r+s}d\phi_{j, a(l), \infty}\right\rangle d\upsilon \pm \Psi(\epsilon; n, \hat{L})\\
&=\frac{1}{\upsilon (B_t(z_{\infty}))}\int_{B_t(z_{\infty})}\left\langle T_{\infty},  \bigotimes_{l=1}^r \nabla r_{x_l} \otimes \bigotimes_{l=r+1}^{r+s}dr_{x_l}\right\rangle d\upsilon \pm \Psi(\epsilon; n, \hat{L})
\end{align*}
holds for every sufficiently large $i$, where $x_{l, i} \to x_l$.
Thus $T_i$ converges weakly to $T_{\infty}$ at a.e. $z_{\infty} \in C_j \cap A \cap \hat{K}_{\hat{L}}$.
Therefore we have the assertion.
\end{proof}
\subsubsection{Contraction.}
Let $T_i \in L^p(T^r_sB_R(m_i))$ for every $i \le \infty$ with $L:=\sup_{i \le \infty}||T_i||_{L^p}<\infty$.
Assume $p<\infty$.

In \cite{ch-co1} Cheeger-Colding showed that the following four conditions (called \textit{noncollapsing conditions}) are equivalent:
\begin{enumerate}
\item $\mathcal{R}_n \neq \emptyset$.
\item $\mathcal{R}_i = \emptyset $ for every $i<n$.
\item There exists $\nu>0$ such that $\mathrm{vol}\,B_1(m_i) \ge \nu$ holds for every $i<\infty$.
\item $\mathrm{dim}_HM_{\infty}=\mathrm{dim}\,M_{\infty}=n$, where $\mathrm{dim}_H$ is the Hausdorff dimension.
\end{enumerate}
Note that if a condition above holds, then $(M_i, m_i, \mathrm{vol}) \to (M_{\infty}, m_{\infty}, H^n)$, where $H^n$ is the $n$-dimensional spherical Hausdorff measure.
See \cite[Theorems $5.9$ and $5.11$]{ch-co1} for the details.

The following is an essential tool to prove $(1)$ of Theorem \ref{laplacian}: 
\begin{proposition}\label{contr}
Let $A \subset B_R(m_{\infty})$.
Assume that $(M_{\infty}, m_{\infty})$ is the noncollapsed limit space of $\{(M_i, m_i)\}_i$ and that $T_i$ converges weakly to $T_{\infty}$ at a.e. $z_{\infty} \in A$.
Then $C_b^aT_i$ converges weakly to $C^a_bT_{\infty}$ at a.e. $z_{\infty} \in A$ for every $0 \le a \le b$. 
\end{proposition}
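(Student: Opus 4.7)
The plan is to use the noncollapsing hypothesis to produce, at a.e.\ point $z_\infty$, a family of distance-function gradients $\{\nabla r_{x_j}\}_{j=1}^n$ that is approximately orthonormal, and then to rewrite the contraction as a trace against such a frame. The crucial algebraic identity is that on an $n$-dimensional inner product space with orthonormal basis $\{e_j\}$, every $(r,s)$-tensor $S$ and every $(r-1,s-1)$ test tensor $Y$ satisfy
\[\langle C_b^a S, Y\rangle = \sum_{j=1}^n \langle S, I_{a,b}^j(Y)\rangle,\]
where $I_{a,b}^j(Y)$ denotes the $(r,s)$ tensor obtained by inserting $e_j$ into the $a$-th contravariant slot of $Y$ and its metric dual $e_j^*$ into the $b$-th covariant slot. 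Since $\dim M_\infty = n = \dim M_i$ in the noncollapsed regime, this decomposition with exactly $n$ summands is available on both sides with matching dimensional constants, which is where the noncollapsing hypothesis is essentially used.

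First I would reduce to verifying the weak convergence at a.e.\ $z_\infty \in A \cap \mathcal{R}_n$, which has full $\upsilon$-measure in the noncollapsed case. Fix a test tensor $Y_i = \bigotimes_{l=1}^{r-1}\nabla r_{z_{l,i}} \otimes \bigotimes_{l=r}^{r+s-2}dr_{z_{l,i}}$ of the type prescribed by Definition \ref{tensor weak conv} for $C_b^a T_i$, and fix $\epsilon>0$. Theorem \ref{dist3} with Remark \ref{radial} produces $x_1,\dots,x_n \in M_\infty$ so that $\{\nabla r_{x_j}\}_{j=1}^n$ is $\epsilon$-orthonormal at a.e.\ point of some ball $B_r(z_\infty)$; by Propositions \ref{angle2}, \ref{dist} and \ref{strong2} applied to the Gram tensor $\langle \nabla r_{x_{j,i}}, \nabla r_{x_{k,i}}\rangle$, picking $x_{j,i}\to x_j$ in $M_i$ transfers this $\epsilon$-orthonormality to a subset of $B_r(z_i)$ of relative measure $\ge 1-\Psi(\epsilon;n)$ for all sufficiently large $i$. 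Applying the algebraic identity fibrewise on the good set and absorbing the rest into an error term yields
\[\int_{B_t(z_i)} \langle C_b^a T_i, Y_i\rangle\,d\underline{\mathrm{vol}} = \sum_{j=1}^n \int_{B_t(z_i)} \langle T_i, I_{a,b}^j(Y_i)\rangle\,d\underline{\mathrm{vol}} + E_i(t),\]
in which each $I_{a,b}^j(Y_i)$ is itself a distance-function tensor of type $(r,s)$. Proposition \ref{ep} bounds the integrand of $E_i(t)$ by $\Psi(\epsilon;n,r,s)\,|T_i|$ on the good set, and H\"older's inequality with the uniform bound $L:=\sup_{i\le\infty}||T_i||_{L^p}<\infty$ controls the bad set to give $|E_i(t)|\le \Psi(\epsilon;n,r,s,L,p)\,\upsilon(B_t(z_\infty))$. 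The weak-convergence hypothesis on $T_i$ then yields termwise convergence of the finite sum, and dividing by $\upsilon(B_t(z_\infty))$, letting $i\to\infty$, then $t\to 0$, and finally $\epsilon\to 0$, verifies Definition \ref{tensor weak conv} for $C_b^a T_i$.

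The main obstacle is the uniform bookkeeping of the bad set on the $M_i$ side, since the $\epsilon$-orthonormality is only approximate and the bad set depends on $i$; this is handled by the $L^p$-bound on $T_i$ together with H\"older's inequality, and genuinely uses noncollapsing twice — once so that $\upsilon(M_\infty\setminus\mathcal{R}_n)=0$ allows an $n$-dimensional $\epsilon$-orthonormal distance-function frame at a.e.\ point, and once so that the same number $n$ of summands appears on both sides, keeping the constants in Proposition \ref{ep} uniform across $i\le\infty$. The remaining cases of the contraction where $a,b$ both index contravariant or both index covariant slots reduce to the treated mixed case through the metric identifications $v\mapsto g_{M_\infty}(v,\cdot)$, which preserve the weak and strong convergence framework by Propositions \ref{dist}, \ref{strong2}, and \ref{strong83}.
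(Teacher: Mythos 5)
Your overall strategy coincides with the paper's: in the noncollapsed case you pick, near a.e. point, $n$ distance functions whose differentials form an $\epsilon$-orthogonal basis (Theorem \ref{dist3}), rewrite the contraction as a sum of evaluations of $T_i$ against distance-function frames up to an error controlled by Proposition \ref{ep}, transfer the approximate orthogonality to $M_i$ via Propositions \ref{dist} and \ref{strong2}, and use the weak-convergence hypothesis termwise. The genuine gap is in the error estimate $|E_i(t)|\le \Psi(\epsilon;n,r,s,L,p)\,\upsilon(B_t(z_{\infty}))$. Definitions \ref{weak1} and \ref{tensor weak conv} are about averages, so you must divide by $\mathrm{vol}\,B_t(z_i)$; H$\ddot{\text{o}}$lder on the bad set $B_t(z_i)\setminus A_{t,i}$ gives at best $\Psi(\epsilon)^{1/q}\,\mathrm{vol}(B_t(z_i))^{1/q}\,||T_i||_{L^p(B_t(z_i))}$, which after this division becomes $\Psi(\epsilon)^{1/q}\bigl(\frac{1}{\mathrm{vol}\,B_t(z_i)}\int_{B_t(z_i)}|T_i|^p\,d\mathrm{vol}\bigr)^{1/p}$, i.e. a local $L^p$-average of $|T_i|$ on small balls centered at the moving points $z_i$. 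The global bound $L=\sup_{i}||T_i||_{L^p(B_R(m_i))}$ only yields $\Psi(\epsilon)^{1/q}L\,\mathrm{vol}(B_t(z_i))^{-1/p}$, which blows up as $t\to 0$, so the limit ``$i\to\infty$, then $t\to 0$, then $\epsilon\to 0$'' at the end of your argument is not justified as written; the same problem affects the term $\Psi(\epsilon;n,r,s)|T_i|$ coming from Proposition \ref{ep} on the good set.

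The paper closes exactly this hole by a maximal-function localization before doing the frame computation: using \cite[Lemma $3.1$]{Ho} it introduces, for each $\hat{L}$, the sets $K_{\hat{L},i}\subset B_R(m_i)$ on which $\frac{1}{\mathrm{vol}\,B_t(z)}\int_{B_t(z)}|T_i|^p\,d\mathrm{vol}\le \hat{L}$ for all $t\le 1$, shows their complements have relative measure at most $\Psi(\hat{L}^{-1};n,R,L)$, passes to compact subsets and their Gromov--Hausdorff limits, and proves the pointwise weak convergence of the contraction only at a.e. point of $K_{\hat{L}}\cap A$, letting $\hat{L}\to\infty$ at the very end. You need this (or an equivalent uniform local $L^p$-average bound along the approximating points $z_i$) for your $\epsilon$--$t$--$i$ bookkeeping to go through. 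A further, minor, imprecision: Theorem \ref{dist3} gives the $\epsilon$-orthogonality on a Borel set $K_j$, not a.e. on a ball, so one should work at density points of $K_j$ and accept a good set of relative measure $1-\Psi$ on both sides; this is harmless and is what the paper does. With these repairs, your insertion identity and the reduction of the purely contravariant or covariant contractions to the mixed case via the metric are fine, and the argument reproduces the paper's proof of its model case $s=0$, $a=1$, $b=r=2$.
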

\begin{proof}
We will give a proof of the case for $s=0, a=1, b=r=2$ only because the proof of the other case is similar.
Let $T_i \equiv 0$ on $M_i \setminus B_R(m_i)$.
For every $\hat{L} \ge 1$, let $K_{\hat{L}, i}$ be the set of $z_i \in B_R(m_i)$ such that 
\[\frac{1}{\mathrm{vol}\,B_t(z_i)}\int_{B_t(z_i)}|T_i|^pd\mathrm{vol}\le \hat{L}\]
holds for every $t \le 1$.
Then \cite[Lemma $3.1$]{Ho} yields $\mathrm{vol}\,K_{\hat{L}, i}/\mathrm{vol}\,B_R(m_i) \ge 1-\Psi(\hat{L}^{-1}; n, R, L)$.
Let $\hat{K}_{\hat{L}, i}$ be a compact subset of $K_{\hat{L}, i}$ with $\mathrm{vol}\,\hat{K}_{\hat{L}, i}/\mathrm{vol}\,B_R(m_i) \ge 1-\Psi(\hat{L}^{-1}; n, R, L)$.
Without loss of generality we can assume that there exists $\lim_{i \to \infty}\hat{K}_{\hat{L}, i} \subset \overline{B}_R(m_{\infty})$.
Note that by \cite[Proposition $2.3$]{Ho}, we have $\upsilon (\lim_{i \to \infty}\hat{K}_{\hat{L}, i})/\upsilon (B_R(m_{\infty})) \ge 1-\Psi (\hat{L}^{-1};n, R, L)$.

On the other hand, by Theorem \ref{dist3}, there exist a sequence $\{K_j\}_{j \in \mathbf{N}}$ of $K_j \subset M_{\infty}$ and $\{x_i^j\}_{1 \le i \le n, j \in \mathbf{N}} \subset M_{\infty}$ with $\mathrm{Leb}\,K_j=K_j$ and $K_j \subset M_{\infty} \setminus \bigcup_{i=1}^nC_{x_i^j}$ such that $\upsilon (M_{\infty} \setminus \bigcup_jK_j)=0$ and that for every $z_{\infty} \in \bigcup_jK_j$ and every $\epsilon >0$, there exists $j:=j(z_{\infty}, \epsilon)$ such that $z_{\infty} \in K_j$ and that $\{dr_{x_i^j}(w_{\infty})\}_i$ is an $\epsilon$-orthogonal basis on $T_{w_{\infty}}^*M_{\infty}$ for every $w_{\infty} \in K_j$. 
Let $K_{\hat{L}}:=\lim_{i \to \infty}\hat{K}_{\hat{L}, i}\cap \hat{K}_{\hat{L}, \infty} \cap \bigcup_{j}K_j$.

Fix $\hat{L} \ge 1, z_{\infty} \in K_{\hat{L}} \cap A$ and $\epsilon >0$.
Let $j:=j(z_{\infty}, \epsilon)$, $\{x_i:=x_i^j\}_i$ as above.
Then there exists $r>0$ such that for every $t<r$ we see that
\[\frac{1}{\upsilon (B_t(z_{\infty}))}\int_{B_t(z_{\infty})}\left|\langle dr_{x_i}, dr_{x_m}\rangle - \delta_{im} \right|d\upsilon< \Psi(\epsilon; n)\]
holds for every $i, m$, and that
\begin{align*}
&\left| \frac{1}{\upsilon (B_t(z_{\infty}))}\int_{B_t(z_{\infty})}\sum_i T_{\infty}(dr_{x_i}, dr_{x_i})d\upsilon -\frac{1}{\mathrm{vol}\,B_t(z_{l})}\int_{B_t(z_{l})}\sum_i T_{l}(dr_{x_{i, l}}, dr_{x_{i, l}})d\mathrm{vol} \right|\\
&\le \epsilon
\end{align*}
holds for every sufficiently large $l$, where $z_l (\in K_{\hat{L}, l}) \to z_{\infty}$ and $x_{i, l} \to x_{i}$.
Fix $t>0$ with $t<r$.
By Proposition \ref{dist} we see that
\[\frac{1}{\mathrm{vol}\,B_t(z_{l})}\int_{B_t(z_{l})}\left|\langle dr_{x_{i, l}}, dr_{x_{m, l}}\rangle - \delta_{im} \right|d\mathrm{vol} < \Psi(\epsilon; n)\]
holds for every sufficiently large $l \le \infty$.
Let $A_{t, l}:=\{y \in B_t(z_l); |\langle dr_{x_{i, l}}, dr_{x_{j, l}}\rangle(y) - \delta_{ij}|<\Psi(\epsilon; n)$ holds for every $i, j.\}$.
Then we see that $\mathrm{vol}\,A_{t, l}/\mathrm{vol}\,B_t(z_l) \ge 1-\Psi(\epsilon; n)$ holds for every sufficiently large $l \le \infty$.
The H$\ddot{\text{o}}$lder inequality yields that 
\begin{align*}
&\left| \frac{1}{\mathrm{vol}\,B_t(z_{l})}\int_{B_t(z_{l})}\sum_i T_{l}(dr_{x_{i, l}}, dr_{x_{i, l}})d\mathrm{vol} -\frac{1}{\mathrm{vol}\,B_t(z_{l})}\int_{A_{t,l}}\sum_i T_{l}(dr_{x_{i, l}}, dr_{x_{i, l}})d\mathrm{vol} \right|\\
&\le \Psi(\epsilon; \hat{L}, n, p)
\end{align*}
and 
\begin{align*}
\left| \frac{1}{\mathrm{vol}\,B_t(z_{l})}\int_{B_t(z_{l})}\mathrm{Tr}\,T_{l}d\mathrm{vol} -\frac{1}{\mathrm{vol}\,B_t(z_{l})}\int_{A_{t,l}}\mathrm{Tr}\,T_{l}d\mathrm{vol} \right| \le \Psi(\epsilon; \hat{L}, n, p)
\end{align*}
hold for every sufficiently large $l \le \infty$.
On the other hand, $(1)$ of Proposition \ref{ep} yields
\begin{align*}
\left| \frac{1}{\mathrm{vol}\,B_t(z_{l})}\int_{A_{t,l}}\sum_i T_{l}(dr_{x_{i, l}}, dr_{x_{i, l}})d\mathrm{vol} -\frac{1}{\mathrm{vol}\,B_t(z_{l})}\int_{A_{t,l}}\mathrm{Tr}\,T_{l}d\mathrm{vol} \right| \le \Psi(\epsilon; \hat{L}, n, p).
\end{align*}
Since $\hat{L}$ and $\epsilon$ are arbitrary, we have the assertion.
\end{proof}
\begin{remark}
Note that Proposition \ref{contr} does NOT hold for collapsing case.
For instance if $M_{\infty}$ is collapsed, then for every $z_{\infty} \in M_{\infty}$,  $\mathrm{Tr}\,g_{M_i} (\equiv n)$ dose not $L^p$-converge weakly to $\mathrm{Tr}\,g_{M_{\infty}} (\equiv k)$ at $z_{\infty}$.
\end{remark}
On the other hand, for $L^p$-strong convergence we have the following without the noncollapsing assumption:
\begin{proposition}\label{contr2}
Assume that $T_i$ $L^p$-converges strongly to $T_{\infty}$ on $B_R(m_{\infty})$.
Then $C^a_bT_i$ $L^p$-converges strongly to $C_b^aT_{\infty}$ on $B_R(m_{\infty})$ for every $0 \le a \le b$. 
\end{proposition}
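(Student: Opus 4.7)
The plan is to combine Definition \ref{strongdef} with the explicit $L^p$-approximate sequences constructed in the proof of Proposition \ref{app8}. Fix such an $L^p$-approximate sequence $\{T_{i,j}\}_{i,j}$ of $T_\infty$, so that each $T_{i,j}$ is a finite linear combination of simple tensor fields of the form $c \cdot 1_{B_{r_\nu}(w_{\nu,i})} \cdot \bigotimes_{t=1}^r \nabla r_{x_{a(t),i}} \otimes \bigotimes_{t=r+1}^{r+s} dr_{x_{a(t),i}}$, with uniformly bounded coefficients and with base points converging to their counterparts on $M_\infty$. The natural candidate for an $L^p$-approximate sequence of $C^a_b T_\infty$ is $\{C^a_b T_{i,j}\}_{i,j}$, and the proposition will follow once this is verified.

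The heart of the argument is the pointwise identity that the contraction sends a simple product of $\nabla r$'s and $dr$'s to a $\cos \angle$-factor multiplying the remaining product, irrespective of which of the three admissible types of $(a,b)$ we are in. Since $|dr_x|=1$ and $\langle dr_x, dr_y\rangle(w)=\cos \angle xwy$ hold a.e., one computes
\[ C^a_b(\nabla r_x \otimes dr_y)=dr_y(\nabla r_x)=\langle dr_x, dr_y\rangle, \qquad C^a_b(\nabla r_x \otimes \nabla r_y)=C^a_b(dr_x \otimes dr_y)=\langle dr_x, dr_y\rangle, \]
so $C^a_b T_{i,j}$ is again a finite sum of terms consisting of a bounded constant, an indicator function, a $\cos \angle$-function, and a reduced tensor product of the remaining $\nabla r$'s and $dr$'s. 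By Proposition \ref{angle2} the cosine factor converges strongly, by Proposition \ref{dist} the reduced tensorial factor converges strongly, and the indicator and constant factors are handled exactly as in Proposition \ref{app8}; hence Proposition \ref{strong2} gives that $C^a_b T_{i,j}$ converges strongly to $C^a_b T_{\infty,j}$ pointwise a.e.\ as $i \to \infty$.

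It remains to check the other two conditions and conclude. The pointwise Cauchy--Schwarz estimate $|C^a_b S|\le \sqrt{n}\,|S|$ (applied in any local orthonormal frame) yields at once the uniform $L^\infty$-bound on $C^a_b T_{i,j}$ and the $L^p$-convergence $\|C^a_b T_\infty - C^a_b T_{\infty,j}\|_{L^p}\le \sqrt{n}\,\|T_\infty - T_{\infty,j}\|_{L^p} \to 0$, so $\{C^a_b T_{i,j}\}_{i,j}$ is indeed an $L^p$-approximate sequence of $C^a_b T_\infty$. The same estimate gives $\|C^a_b T_i - C^a_b T_{i,j}\|_{L^p}\le \sqrt{n}\,\|T_i - T_{i,j}\|_{L^p}$, so the defining property of $L^p$-strong convergence in Definition \ref{strongdef} is inherited by $\{C^a_b T_i\}$ from $\{T_i\}$.

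The main obstacle, and the reason we route through the explicit approximate sequences rather than dualise, is that the formal identity $\langle C^a_b T, S\rangle = \langle T, \delta \otimes S\rangle$ (with $\delta$ the identity $(1,1)$-tensor, or its analogues with $g_{M}$ and $g_M^{-1}$) cannot be passed to the limit directly in the collapsed case: one has $|\delta_i|^2\equiv n$ but $|\delta_\infty|^2\equiv k$, so $\delta_i$ fails to converge strongly in the $L^\infty$-sense of Definition \ref{inftyinfty}. The construction in Proposition \ref{app8} replaces these problematic reference tensors by the strongly-convergent cosine factors provided by Proposition \ref{angle2}, which is exactly what makes the argument go through uniformly for all $(a,b)$.
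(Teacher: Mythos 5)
Your proof is correct, but it takes a genuinely different route from the paper. The paper first reduces to uniformly bounded tensors via Proposition \ref{compati7}, and then proves pointwise (a.e.) strong convergence of the contracted tensors directly: using the distance-function coordinate frames of Theorem \ref{dist3} it produces, a.e.\ on $M_{\infty}$, an $\epsilon$-orthogonal collection of $k=\mathrm{dim}\,M_{\infty}$ covectors $dr_{x^j_s}$, shows that strong convergence forces $|T_l|^2$ to be asymptotically exhausted by the components of $T_l$ along these $k$ directions on the ($n$-dimensional) manifolds $M_l$, and then invokes $(2)$ of Proposition \ref{ep} to conclude that the full $n$-dimensional trace of $T_l$ is asymptotically the $k$-term sum, which converges to $\mathrm{Tr}\,T_{\infty}$. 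You instead stay at the level of Definition \ref{strongdef}: you push the contraction through the explicit $L^p$-approximate sequences of Proposition \ref{app8}, where contracting a simple product of $\nabla r$'s and $dr$'s is literally multiplication by an angle cosine, so only Proposition \ref{angle2} (plus Propositions \ref{dist}, \ref{strong3}, \ref{strong2}) is needed to see that $\{C^a_bT_{i,j}\}$ approximates $C^a_bT_{\infty}$; the fiberwise linearity of $C^a_b$ and the uniform bound $|C^a_bS|\le \sqrt{n}\,|S|$ then transfer the defining limit $\lim_j\limsup_i\|T_i-T_{i,j}\|_{L^p}=0$ to the contractions (note you use the ``some approximate sequence'' clause of Definition \ref{strongdef}, which is justified by the uniqueness statement following Proposition \ref{app8}). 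Your argument is shorter and makes transparent why strong, rather than weak, convergence is needed even in the collapsed case: the operator-norm estimate requires $L^p$-closeness to rank-one approximants, which weak convergence does not supply, and contraction of such approximants never sees the ambient dimension; the paper's argument, in exchange, yields the finer pointwise statement that $\mathrm{Tr}\,T_l$ converges strongly at a.e.\ point after the $L^{\infty}$ reduction and runs parallel to the proof of Proposition \ref{contr}, which isolates exactly where noncollapsing is needed there but not here. Your closing remark about the failure of dualizing against $\delta_i$ (or $g_{M_i}$) in the collapsed case is accurate and consistent with Theorem \ref{metric}.
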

\begin{proof}
We will give a proof of the case for $s=0, a=1, b=r=2$ only.
By Definition \ref{strongdef} and Proposition \ref{compati7}, without loss of generality we can assume that $T_i \in L^{\infty}(B_R(m_i))$ for every $i \le \infty$ with $L_1:=\sup_{i \le \infty}||T_i||_{L^{\infty}}<\infty$ and that $T_i$ converges strongly to $T_{\infty}$ on $B_R(m_{\infty})$.

Let $k:=\mathrm{dim}\,M_{\infty}$.
Theorem \ref{dist3} yields that there exist a sequence $\{K_j\}_{j \in \mathbf{N}}$ of $K_j \subset M_{\infty}$ and $\{x_i^j\}_{1 \le i \le k, j \in \mathbf{N}} \subset M_{\infty}$ such that $\mathrm{Leb}\,K_j=K_j$,  $K_j \subset M_{\infty} \setminus \bigcup_{i=1}^kC_{x_i^j}$, $\upsilon (M_{\infty} \setminus \bigcup_jK_j)=0$ and that for every $z_{\infty} \in \bigcup_jK_j$ and every $\epsilon >0$ there exists $j=j(z_{\infty}, \epsilon)$ such that $z_{\infty} \in K_j$ and that $\{dr_{x_i^j}(w_{\infty})\}_i$ is an $\epsilon$-orthogonal basis on $T_{w_{\infty}}^*M_{\infty}$ for every $w_{\infty} \in K_j$. 

Fix $\epsilon >0$ and $z_{\infty} \in \bigcup_jK_j$.
Let $j:=j(z_{\infty}, \epsilon)$ as above.
Then an argument similar to the proof of Proposition \ref{contr} yields that there exists $r>0$ such that 
\[\frac{1}{\upsilon (B_t(z_{\infty}))}\int_{B_t(z_{\infty})}\left| \sum_{s, t}^k(T_{\infty}(dr_{x_s^j}, dr_{x_t^j}))^2 - |T_{\infty}|^2\right|d\upsilon<\Psi(\epsilon; n, L_1)\]
holds for every $t<r$.
Fix $t>0$ with $t<r$.
Then Proposition \ref{strong2} yields that 
\[\frac{1}{\mathrm{vol}\,B_t(z_l)}\int_{B_t(z_l)}\left| \sum_{s, t}^k(T_l(dr_{x_{s, l}^j}, dr_{x_{t, l}^j}))^2 - |T_l|^2\right|d\mathrm{vol}<\Psi(\epsilon; n, L_1)\]
holds for every sufficiently large $l \le \infty$, where $x_{s, l}^i \to x_s^i$.
Thus $(2)$ of Proposition \ref{ep} and an argument similar to the proof of Proposition \ref{contr} yield that
\[\frac{1}{\mathrm{vol}\,B_t(z_l)}\int_{B_t(z_l)}\left| \sum_{i=1}^kT_l(dr_{x_{s, l}^j}, dr_{x_{s, l}^j})-\mathrm{tr}\, T_l \right| d\mathrm{vol}  <\Psi(\epsilon; n, L_1)\]
holds for every sufficiently large $l \le \infty$.
Therefore we see that $\mathrm{Tr}\, T_l$ converges strongly to $\mathrm{Tr}\,T_{\infty}$ at $z_{\infty}$.
Thus we have the assertion.
\end{proof}
\begin{remark}
Let $R>0$ and let $A_i$ be a Borel subset of $B_R(m_i)$ for every $i \le \infty$ satisfying that $1_{A_i}$ converges strongly to $1_{A_{\infty}}$ at a.e. $z_{\infty} \in B_R(m_{\infty})$.
For a sequence $\{S_i\}_{i \le \infty}$ of $S_i \in L^p(T^r_sA_i)$, we say that \textit{$S_i$  $L^p$-converges strongly to $S_{\infty}$ on $A_{\infty}$} if $1_{A_i}S_i$ $L^p$-converges strongly to $1_{A_{\infty}}S_{\infty}$ on $B_R(m_{\infty})$.
Then we can get several properties for this convergence similar to that given in this section.
\end{remark}
\begin{remark}
Let $n \in \mathbf{N}$, $K \in \mathbf{R}$ and let $\{(Y_i, y_i, \upsilon_i)\}_{i \le \infty}$ be a sequence of $(n, K)$-Ricci limit spaces with $(Y_i, y_i, \upsilon_i) \to (Y_{\infty}, y_{\infty}, \upsilon_{\infty})$, $R>0$ and $T_i \in L^p(T^r_sB_R(y_i))$ for every $i \le \infty$ with $\sup_{i \le \infty}||T_i||_{L^p}<\infty$.
Then similarly, we can also consider $L^p$-weak or $L^p$-strong convergence $T_i \to T_{\infty}$ and show several properties similar to that given in this section.
\end{remark}
\section{Applications.}
\subsection{Convergence of Sobolev functions.}
In this subsection we consider the same setting in the previous section again:
\begin{enumerate}
\item $(M_{\infty}, m_{\infty}, \upsilon)$ is the $(n, -1)$-Ricci limit space of $\{(M_i, m_i, \underline{\mathrm{vol}})\}_{i<\infty}$ with $M_{\infty} \neq \{m_{\infty}\}$.
\item $R>0$, $1<p\le \infty$.
\end{enumerate}
Let $k:=\mathrm{dim}\,M_{\infty}$.
A main result of this subsection is Theorem \ref{7}.
\begin{theorem}\label{green}
Let $X_i \in L^{p}(TB_R(m_i))$ for $i \le \infty$ with $\sup_{i \le \infty}||X_i||_{L^p(B_R(m_i))}<\infty$. Assume that $X_i \in \mathcal{D}^p(\mathrm{div}^{\underline{\mathrm{vol}}}, B_R(m_i))$ for every $i<\infty$ with $\sup_{i<\infty}||\mathrm{div}^{\underline{\mathrm{vol}}}X_i||_{L^p}<\infty$, and that $X_i$ converges weakly to $X_{\infty}$ on $B_R(m_{\infty})$.
Then we see that  $X_{\infty} \in \mathcal{D}^p(\mathrm{div}^{\upsilon}, B_R(m_{\infty}))$ and that $\mathrm{div}^{\underline{\mathrm{vol}}}X_i$ converges weakly to $\mathrm{div}^{\upsilon}X_{\infty}$ on $B_R(m_{\infty})$.
\end{theorem}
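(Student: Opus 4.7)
\textbf{Proof plan for Theorem \ref{green}.}

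The plan is to first extract a weak $L^{p}$-limit of the divergences via the compactness result of the preceding section, and then identify this limit as $\mathrm{div}^{\upsilon} X_{\infty}$ by verifying the integration-by-parts identity against a sufficiently rich class of test functions.

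By Proposition \ref{weak com} applied to $\{\mathrm{div}^{\underline{\mathrm{vol}}} X_{i}\}_{i<\infty}$ (using the uniform $L^{p}$-bound), after passing to a subsequence, we obtain $h \in L^{p}(B_{R}(m_{\infty}))$ so that $\mathrm{div}^{\underline{\mathrm{vol}}} X_{i}$ converges weakly to $h$ on $B_{R}(m_{\infty})$. Writing $q$ for the conjugate exponent of $p$, the goal is to establish
\[ -\int_{B_{R}(m_{\infty})} f\, h \, d\upsilon \;=\; \int_{B_{R}(m_{\infty})} \langle \nabla f, X_{\infty}\rangle \, d\upsilon \]
for every Lipschitz $f$ on $B_{R}(m_{\infty})$ with compact support. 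Uniqueness of the divergence in the definition of $\mathcal{D}^{1}_{\mathrm{loc}}(\mathrm{div}^{\upsilon}, B_{R}(m_{\infty}))$ then forces $h = \mathrm{div}^{\upsilon} X_{\infty}$, and because every subsequence admits a further subsequence with the same weak limit, the full original sequence converges weakly to $\mathrm{div}^{\upsilon} X_{\infty}$.

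To verify the identity, I first test against a restricted class of functions of the form $f = \eta \cdot F(r_{y_{1}},\ldots, r_{y_{N}})$ where $F \in C^{1}(\mathbf{R}^{N})$, $y_{l} \in M_{\infty}$, and $\eta$ is a Lipschitz cutoff with compact support in $B_{R}(m_{\infty})$. Such an $f$ lifts naturally to $f_{i} := \eta_{i} \cdot F(r_{y_{1,i}},\ldots, r_{y_{N,i}})$ on $M_{i}$, where $y_{l,i} \to y_{l}$ and $\eta_{i}$ is an obvious Lipschitz cutoff on $M_{i}$ with support in $B_{R}(m_{i})$; these $f_{i}$ are uniformly Lipschitz, so in particular $C^{2}$ (after a standard smoothing, if needed) with uniformly $L^{2}$-bounded Laplacian is \emph{not} required here, because we only need Lipschitz test functions in the definition of $\mathrm{div}^{\underline{\mathrm{vol}}}$. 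By Propositions \ref{dist}, \ref{strong3} and \ref{strong83}, both $f_{i}$ and $\nabla f_{i} = \sum_{l} \partial_{l} F(r_{y_{\cdot,i}})\,\nabla r_{y_{l,i}}$ converge $L^{q}$-strongly to $f$ and $\nabla f$ respectively. Starting from the integration-by-parts identity on $M_{i}$,
\[ -\int_{B_{R}(m_{i})} f_{i}\, \mathrm{div}^{\underline{\mathrm{vol}}} X_{i}\, d\underline{\mathrm{vol}} = \int_{B_{R}(m_{i})} \langle \nabla f_{i}, X_{i} \rangle\, d\underline{\mathrm{vol}}, \]
I pass to the limit using Proposition \ref{weak3} on the left (strong $L^{q}$-convergence of $f_{i}$ against weak $L^{p}$-convergence of $\mathrm{div}^{\underline{\mathrm{vol}}} X_{i}$) and Proposition \ref{conv678} on the right (strong $L^{q}$-convergence of $\nabla f_{i}$ against weak $L^{p}$-convergence of $X_{i}$), yielding the integration-by-parts identity for the limiting $f$.

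Finally, I would upgrade this to arbitrary Lipschitz $f$ with compact support by approximating such $f$ in $H_{1,q}(B_{R}(m_{\infty}))$ by functions of the special form above, using the rectifiable coordinate system by distance functions from Theorem \ref{dist3}, a partition-of-unity argument on the associated charts, and the density of smooth functions of Euclidean coordinates in Sobolev spaces on Euclidean domains. Once the identity holds for an $H_{1,q}$-dense class of Lipschitz compactly supported $f$ and both sides are continuous in $f$ with respect to the $H_{1,q}$-topology (using $h \in L^{p}$ and $X_{\infty} \in L^{p}$), it extends to every Lipschitz $f$ with compact support, completing the argument.

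\textbf{Main obstacle.} The principal technical point is the density step: approximating a general Lipschitz compactly supported $f$ by $C^{1}$ combinations of distance functions in the $H_{1,q}$-norm on $B_{R}(m_{\infty})$. This requires a careful chart-by-chart construction patched by a Lipschitz partition of unity subordinate to the rectifiable atlas $\mathcal{A}_{d}$, together with control over both the $L^{q}$-norm of $f - f_{n}$ and the $L^{q}$-norm of $\nabla(f - f_{n})$, uniformly in the choice of chart. All of the other pieces---compactness, strong convergence of gradients of combinations of distance functions, and weak--strong pairing---are direct applications of results already in Section $3$.
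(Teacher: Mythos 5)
Your overall skeleton is the same as the paper's: extract a weak $L^p$-limit $h$ of $\mathrm{div}^{\underline{\mathrm{vol}}}X_i$ via Proposition \ref{weak com}, pass to the limit in the integration-by-parts identity using the weak--strong pairings (Propositions \ref{weak3} and \ref{conv678}), identify $h=\mathrm{div}^{\upsilon}X_{\infty}$ by uniqueness, and recover full-sequence convergence by the subsequence trick. Where you diverge is the treatment of test functions. The paper simply lifts an \emph{arbitrary} Lipschitz $f_{\infty}$ with compact support to uniformly Lipschitz, compactly supported $f_{i}$ on $M_i$ with $(f_i,df_i)\to (f_{\infty},df_{\infty})$, by citing \cite[Theorem $4.2$]{Ho}; you instead restrict to test functions of the form $\eta\, F(r_{y_1},\dots,r_{y_N})$ and propose to recover general $f$ by $H_{1,q}$-density.

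That density step is a genuine gap, and it is exactly the point the paper's citation of \cite[Theorem $4.2$]{Ho} is designed to dispose of. Your proposed mechanism --- a Lipschitz partition of unity subordinate to the rectifiable atlas $\mathcal{A}_d$ of Theorem \ref{dist3} --- cannot work as stated: the patches $C_i$ are merely Borel subsets (in the collapsed case typically of empty interior), so no continuous, let alone Lipschitz, partition of unity subordinate to them exists, and a chart-by-chart approximation only controls $d(f-f_n)$ on each $C_i$ separately, with no control of the $L^q$-norm of the gradient of the error off the patches or across them. The statement you need (that compactly supported Lipschitz functions are $H_{1,q}$-approximable by cutoffs times $C^1$ functions of finitely many distance functions) is of essentially the same depth as the lifting theorem itself, so without proving it, or simply invoking \cite[Theorem $4.2$]{Ho} as the paper does, your argument only yields the identity $-\int f h\, d\upsilon=\int\langle\nabla f, X_{\infty}\rangle d\upsilon$ for the special class, which does not establish $X_{\infty}\in\mathcal{D}^p(\mathrm{div}^{\upsilon},B_R(m_{\infty}))$, since the definition quantifies over all Lipschitz $f$ with compact support. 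Two smaller points: your display for $\nabla f_i$ drops the term $F\,\nabla\eta_i$, and for its strong convergence you must take $\eta$ itself of the form $\phi\circ r_{m_{\infty}}$ with $\phi\in C^1$ (an ``obvious'' lifted cutoff need not have strongly convergent differentials).
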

\begin{proof}
Proposition \ref{weak com} yields that there exist a subsequence $\{i(j)\}_j$ and $h_{\infty} \in L^p(B_R(m_{\infty}))$ such that $\mathrm{div}^{\underline{\mathrm{vol}}}X_{i(j)}$ converges weakly to $h_{\infty}$ on $B_R(m_{\infty})$.
Let $f_{\infty}$ be a Lipschitz function on $B_R(m_{\infty})$ with compact support.
By \cite[Theorem $4.2$]{Ho}, without loss of generality we can assume that there exists 
a sequence $\{f_{i(j)}\}_{j< \infty}$ of Lipschitz functions $f_{i(j)}$ on $B_R(m_{i(j)})$ such that $\sup_{j}\mathbf{Lip}f_{i(j)}<\infty$, every $f_{i(j)}$ has compact support and that $(f_{i(j)}, df_{i(j)}) \to (f_{\infty}, df_{\infty})$ on $B_R(m_{\infty})$.
Since 
\[\int_{B_R(m_{i(j)})}\langle \nabla f_{i(j)}, X_{i(j)} \rangle d\underline{\mathrm{vol}}=-\int_{B_R(m_{i(j)})}f_{i(j)}\mathrm{div}^{\underline{\mathrm{vol}}}X_{i(j)}d\underline{\mathrm{vol}},\]
by letting $j \to \infty$, we have 
\[\int_{B_R(m_{\infty})}\langle \nabla f_{\infty}, X_{\infty}\rangle d\upsilon=-\int_{B_R(m_{\infty})}f_{\infty}h_{\infty}d\upsilon.\]
In particular we have $X_{\infty} \in \mathcal{D}^p(\mathrm{div}^{\upsilon}, B_R(m_{\infty}))$ and $h_{\infty}=\mathrm{div}^{\upsilon}X_{\infty}$.
The uniqueness of $\mathrm{div}^{\upsilon}X_{\infty}$ yields that $\mathrm{div}^{\underline{\mathrm{vol}}}X_i$ converges weakly to $\mathrm{div}^{\upsilon}X_{\infty}$ on $B_R(m_{\infty})$.
\end{proof}
\begin{remark}\label{apoiuy}
In general, $L^p$-strong convergence $X_i \to X_{\infty}$ does NOT imply $L^p$-strong convergence $\mathrm{div}^{\underline{\mathrm{vol}}}X_i \to \mathrm{div}^{\upsilon}X_{\infty}$.
We give a simple example:
Let $g_n$ be as in Remark \ref{zigzag} and put
\[f_n(t):=\int_0^t \int_0^sg_n(x)dxds\]
on $(0, 1)$. 
Then for every $1<p<\infty$ we see that $\nabla f_n$ $L^p$-converges strongly to $0$ on $(0, 1)$ and that $\Delta f_n$ does not $L^p$-converge strongly to $0$ on $(0, 1)$.
\end{remark}
\begin{remark}\label{diverge}
Let $U$ be an open subset of $M_{\infty}$, $X \in \mathcal{D}^1_{\mathrm{loc}}(\mathrm{div}^{\upsilon}, U)$ and $f$ a Lipschitz function on $U$ with compact support.
Then we see that $fX \in \mathcal{D}^1_{\mathrm{loc}}(\mathrm{div}^{\upsilon}, U)$ and that $\mathrm{div}^{\upsilon}(fX)=f\mathrm{div}^{\upsilon}X+g_{M_{\infty}}(\nabla f, X)$ holds.
Compare with Proposition \ref{hess2}.
It is easy to check that if $X$ has compact support, then 
\[\int_{U}\langle X, \nabla g \rangle d\upsilon=-\int_Ug\mathrm{div}^{\upsilon}Xd\upsilon\]
holds for every locally Lipschitz function $g$ on $U$.
\end{remark}
The following theorem is a key result to prove Theorem \ref{main}:
\begin{theorem}\label{conv2}
Let $X_i \in L^{p}(TB_R(m_i))$ for every $i\le \infty$ with $\sup_{i \le \infty}||X_i||_{L^p} < \infty$, $1< \hat{p}<\infty$ and $h_i \in H_{1, \hat{p}}(B_R(m_i))$ for every $i \le \infty$ with $\sup_{i \le \infty}||h_i||_{H_{1, \hat{p}}}<\infty$.
Assume that the following hold:
\begin{enumerate}
\item $\hat{q}<p$, where $\hat{q}$ is the conjugate exponent of $\hat{p}$.
\item $X_i$ converges weakly to $X_{\infty}$ on $B_R(m_{\infty})$.
\item $X_i \in \mathcal{D}^p(\mathrm{div}^{\underline{\mathrm{vol}}}, B_R(m_i))$ holds for every $i<\infty$ with $\sup_{i <\infty}||\mathrm{div}^{\underline{\mathrm{vol}}}X_i||_{L^p}<\infty$.
\item $h_i$ converges weakly to $h_{\infty}$ on $B_R(m_{\infty})$.
\end{enumerate}
Then 
\[\lim_{i \to \infty}\int_{B_R(m_i)}\langle X_i, \nabla h_i \rangle d\underline{\mathrm{vol}}=\int_{B_R(m_{\infty})}\langle X_{\infty}, \nabla h_{\infty}\rangle d\upsilon.\]
\end{theorem}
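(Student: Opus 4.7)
\textbf{Plan for the proof of Theorem \ref{conv2}.}
The strategy is to localize by a Lipschitz cutoff, integrate by parts using the divergence hypothesis on $X_i$ to shift the derivative off $h_i$, and then pass to the limit using the compatibility results of Section~$3$; the H$\ddot{\text{o}}$lder inequality controls the resulting boundary error. As preliminary reductions, Proposition~\ref{strong com} provides (after passing to a subsequence, which by the standard subsubsequence argument is enough to conclude) an $L^{\hat{p}}$-strong convergence $h_i \to h_{\infty}$, and Theorem~\ref{green} gives $X_{\infty} \in \mathcal{D}^p(\mathrm{div}^{\upsilon}, B_R(m_{\infty}))$ together with an $L^p$-weak convergence $\mathrm{div}^{\underline{\mathrm{vol}}}X_i \to \mathrm{div}^{\upsilon}X_{\infty}$.

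Fix $0<\epsilon<R/4$ and set $\phi^{\epsilon}_i := \chi \circ r_{m_i}$ on $M_i$, where $\chi:[0,\infty)\to[0,1]$ equals $1$ on $[0,R-2\epsilon]$, vanishes on $[R-\epsilon,\infty)$, and is affine on $[R-2\epsilon,R-\epsilon]$. Then each $\phi^{\epsilon}_i$ is $\epsilon^{-1}$-Lipschitz, supported in $\overline{B}_{R-\epsilon}(m_i) \subset B_R(m_i)$, and together with its differential converges strongly on $M_{\infty}$ thanks to Proposition~\ref{dist}. Since $\hat{q}<p$ we have $L^p(B_R(m_i)) \subset L^{\hat{q}}(B_R(m_i))$, and since Lipschitz functions with compact support in $B_R(m_i)$ are $H_{1,\hat{p}}$-dense among elements of $H_{1,\hat{p}}(B_R(m_i))$ with compact support (by property~(4) of subsection~$2.5.3$ combined with multiplication by a suitable cutoff), the defining identity of $\mathcal{D}^p(\mathrm{div}^{\underline{\mathrm{vol}}},B_R(m_i))$ extends to the test function $\phi^{\epsilon}_i h_i$, producing
\[\int_{B_R(m_i)} \phi^{\epsilon}_i \langle X_i, \nabla h_i\rangle \,d\underline{\mathrm{vol}} = -\int_{B_R(m_i)} \phi^{\epsilon}_i h_i \,\mathrm{div}^{\underline{\mathrm{vol}}}X_i \,d\underline{\mathrm{vol}} - \int_{B_R(m_i)} \langle X_i, h_i \nabla \phi^{\epsilon}_i \rangle \,d\underline{\mathrm{vol}}.\]
Letting $i \to \infty$ with $\epsilon$ fixed: Corollary~\ref{61746174} and Proposition~\ref{strong83} yield $L^{\hat{p}}$-strong convergences $\phi^{\epsilon}_i h_i \to \phi^{\epsilon}_{\infty} h_{\infty}$ and $h_i \nabla \phi^{\epsilon}_i \to h_{\infty} \nabla \phi^{\epsilon}_{\infty}$, hence $L^q$-strong convergences by Remark~\ref{83} (since $q \le \hat{p}$); Propositions~\ref{weak3} and~\ref{conv678} then send each right-hand term to its limit, and the corresponding integration by parts on $M_{\infty}$ assembles the sum into $\int_{B_R(m_{\infty})} \phi^{\epsilon}_{\infty} \langle X_{\infty}, \nabla h_{\infty}\rangle \,d\upsilon$.

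The main obstacle is removing the cutoff. For each $i \le \infty$, the H$\ddot{\text{o}}$lder inequality with exponents $(p,\hat{p},r)$, where $1/r := 1 - 1/p - 1/\hat{p} = 1/q - 1/\hat{p} > 0$ (and this positivity is precisely the hypothesis $\hat{q}<p$), gives
\[\left| \int_{B_R(m_i)} (1 - \phi^{\epsilon}_i)\langle X_i, \nabla h_i\rangle \,d\underline{\mathrm{vol}} \right| \le \|X_i\|_{L^p} \|\nabla h_i\|_{L^{\hat{p}}}\,\underline{\mathrm{vol}}\bigl(B_R(m_i) \setminus B_{R-2\epsilon}(m_i)\bigr)^{1/r}.\]
Under $\mathrm{Ric}_{M_i} \ge -(n-1)$, Bishop--Gromov bounds the shell volume by a quantity depending only on $n, R, \epsilon$ and tending to $0$ as $\epsilon \to 0$, uniformly in $i$; the same estimate holds on $M_{\infty}$. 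Letting $i \to \infty$ first and then $\epsilon \to 0$ in the previous two displays concludes the proof.
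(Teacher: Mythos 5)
Your argument is correct and follows essentially the same route as the paper's proof: cut off near the boundary, integrate by parts using the divergence hypothesis, pass to the limit by pairing weak with strong convergences (Theorem \ref{green}, Propositions \ref{strong com}, \ref{weak3}, \ref{conv678}), and remove the cutoff with a H\"older estimate whose exponent is positive precisely because $\hat{q}<p$, together with the uniform Bishop--Gromov bound on the annulus volume. The only cosmetic differences are that you use a piecewise-linear cutoff in the distance function instead of the smooth Cheeger--Colding cutoff of \cite{ch-co} (whose Laplacian bound is never needed here) and a single three-exponent H\"older inequality in place of the paper's two-step application.
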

\begin{proof}
Let $\epsilon>0$ and $L := \sup_{i}(||X_i||_{L^p}+||h_i||_{H_{1, \hat{p}}})$.
Then \cite[Theorem $6.33$]{ch-co} yields that for every $i <\infty$ there exists a $C^{\infty}$-function $\phi_i^{\epsilon}$ on $M_i$ such that 
$0 \le \phi_i^{\epsilon} \le 1$, $\mathrm{supp} (\phi_i^{\epsilon}) \subset B_{R - \epsilon}(m_i)$, $\phi_i^{\epsilon}|_{B_{R-2\epsilon}(m_i)}\equiv 1$
and $\mathbf{Lip} \phi_i^{\epsilon} + ||\Delta \phi_i^{\epsilon}||_{L^{\infty}} \le C(n, R, \epsilon)$.
By Proposition \ref{conti com}, without loss of generality we can assume that there exists a Lipschitz function $\phi_{\infty}^{\epsilon}$ on $M_{\infty}$ such that $\phi_i^{\epsilon} \to \phi_{\infty}^{\epsilon}$ on $M_{\infty}$.
Note that Proposition \ref{strong com} yields that $h_i$ $L^{\hat{p}}$-converges strongly to $h_{\infty}$ on $B_R(m_{\infty})$.
Corollary \ref{61746174}, Proposition \ref{weak3} and Theorem \ref{green} yield
\begin{align*}
\int_{B_R(m_i)}\langle X_i, \nabla(\phi_i^{\epsilon}h_i)\rangle d\underline{\mathrm{vol}}&=-\int_{B_R(m_i)}\mathrm{div}^{\underline{\mathrm{vol}}} X_i \phi_i^{\epsilon}h_id\underline{\mathrm{vol}}\\
&\rightarrow -\int_{B_R(m_{\infty})} \mathrm{div}^{\upsilon} X_{\infty} \phi_{\infty}^{\epsilon}h_{\infty}d\upsilon = \int_{B_R(m_{\infty})}\langle X_{\infty}, \nabla(\phi_{\infty}^{\epsilon}h_{\infty})\rangle d\upsilon.
\end{align*}
On the other hand, Propositions \ref{weak3}, \ref{weak4} and \ref{conv678} yield
\[\int_{B_R(m_i)}\langle X_i, \nabla \phi_i^{\epsilon}\rangle h_id\underline{\mathrm{vol}} \rightarrow \int_{B_R(m_{\infty})}\langle X_{\infty}, \nabla \phi_{\infty}^{\epsilon}\rangle h_{\infty}d\upsilon.\]
Since
\[\int_{B_R(m_i)}\langle X_i, \nabla(\phi_i^{\epsilon}h_i)\rangle d\underline{\mathrm{vol}}=\int_{B_R(m_i)}\langle X_i, \nabla \phi_i^{\epsilon}\rangle h_id\underline{\mathrm{vol}} + \int_{B_R(m_i)}\langle X_i, \nabla h_i\rangle \phi_i^{\epsilon}d\underline{\mathrm{vol}}\]
holds for every $i \le \infty$, we have
\[\int_{B_R(m_i)}\langle X_i, \nabla h_i \rangle \phi_i^{\epsilon}d\underline{\mathrm{vol}} \rightarrow \int_{B_R(m_{\infty})}
\langle X_{\infty}, \nabla h_{\infty}\rangle \phi_{\infty}^{\epsilon} d\upsilon.\]
The H$\ddot{\text{o}}$lder inequality yields that 
\begin{align*}
&\left|\int_{B_R(m_i)}\phi_i^{\epsilon}\langle X_i, \nabla h_i \rangle d\underline{\mathrm{vol}} - \int_{B_R(m_{\infty})}\langle X_i, \nabla h_i \rangle d\underline{\mathrm{vol}} \right|\\
&\le \left( \int_{B_R(m_i)} |1-\phi_i^{\epsilon}|^{\hat{q}}|X_i|^{\hat{q}}d\underline{\mathrm{vol}}\right)^{1/\hat{q}}\left( \int_{B_R(m_i)}|\nabla h_i|^{\hat{p}}d\underline{\mathrm{vol}} \right)^{1/\hat{p}} \\
&\le  \left( \int_{B_R(m_i)} |1-\phi_i^{\epsilon}|^{\hat{q}\alpha}d\underline{\mathrm{vol}}\right)^{1/(\hat{q}\alpha)}\left( \int_{B_R(m_i)}|X_i|^pd\upsilon \right)^{1/p}L \\
&\le L^2\left( \underline{\mathrm{vol}}(B_R(m_i) \setminus B_{R-2\epsilon}(m_i)) \right)^{1/(\hat{q}\alpha)} <\Psi(\epsilon;n, L, R, p, \hat{p})
\end{align*}
holds for every $i \le \infty$, where $\alpha$ is the conjugate exponent of $p/\hat{q}>1$.
Therefore since $\epsilon$ is arbitrary, we have the assertion.
\end{proof}
\begin{corollary}\label{conv1}
Assume $p<\infty$.
Let $h_i \in H_{1, p}(B_R(m_i))$ for every $i\le\infty$ with $\sup_{i \le \infty}||h_i||_{H_{1, p}(B_R(m_i))}<\infty$.
Assume that $h_i$ converges weakly to $h_{\infty}$ on $B_R(m_{\infty})$.
Then $dh_{i}$ converges weakly to $dh_{\infty}$ on $B_R(m_{\infty})$.
In particular, $\liminf_{i \to \infty}||dh_{i}||_{L^p(B_R(m_i))}\ge ||dh_{\infty}||_{L^p(B_R(m_{\infty}))}$.
\end{corollary}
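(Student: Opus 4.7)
The plan is to reduce weak convergence of the $(0,1)$-tensor field $dh_i$ to weak convergence of scalar pairings via Proposition \ref{compat}, then apply Theorem \ref{conv2} with vector fields built from smooth cutoffs of harmonic coordinates. By Theorem \ref{harm3} (and the compatibility discussion following it), after passing to a subsequence we may fix a rectifiable coordinate system $\{(C_j, \phi_j)\}_j$ of $(M_\infty, \upsilon)$ associated with $\{(M_i, m_i, \underline{\mathrm{vol}})\}_i$ whose coordinate functions $\phi_{j,l,i}$ are $C(n)$-Lipschitz harmonic functions on suitable balls in $M_i$; in particular $\Delta \phi_{j,l,i} \equiv 0$. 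By Proposition \ref{compat} it is enough to show that for each $j, l$ the scalar function $\langle dh_i, d\phi_{j,l,i}\rangle = \langle \nabla h_i, \nabla \phi_{j,l,i}\rangle$ converges weakly in the sense of Definition \ref{weak1} to $\langle dh_\infty, d\phi_{j,l,\infty}\rangle$ at a.e. $z_\infty \in C_j$.

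Fix such a $z_\infty$ and small $t > 0$ with $B_t(z_\infty) \subset B_R(m_\infty)$, and let $z_i \to z_\infty$. Using the Cheeger-Colding smooth cutoffs $\phi^{t,\epsilon}_i \in C^\infty(M_i)$ invoked in the proof of Theorem \ref{conv2} (supported in $B_{t+\epsilon}(z_i)$, equal to $1$ on $B_{t-\epsilon}(z_i)$, with $\mathbf{Lip}\,\phi^{t,\epsilon}_i + \|\Delta \phi^{t,\epsilon}_i\|_{L^\infty} \le C(n,t,\epsilon)$), set
\[
X_i := \phi^{t,\epsilon}_i \nabla \phi_{j,l,i}.
\]
Then $X_i$ is uniformly bounded in $L^\infty(TB_R(m_i))$, and
\[
\mathrm{div}^{\underline{\mathrm{vol}}} X_i \;=\; \langle \nabla \phi^{t,\epsilon}_i, \nabla \phi_{j,l,i}\rangle - \phi^{t,\epsilon}_i \Delta \phi_{j,l,i} \;=\; \langle \nabla \phi^{t,\epsilon}_i, \nabla \phi_{j,l,i}\rangle
\]
is uniformly bounded in $L^\infty$, where the harmonicity $\Delta \phi_{j,l,i}=0$ is crucial. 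By Propositions \ref{conti com}, \ref{harm5}, \ref{strong2} and \ref{compati7}, after a diagonal subsequence extraction, $X_i$ converges strongly on $B_R(m_\infty)$ to $X_\infty := \phi^{t,\epsilon}_\infty \nabla \phi_{j,l,\infty}$. Applying Theorem \ref{conv2} with $\hat{p} = p$ (the hypothesis $\hat{q} < p'$ is satisfied for any finite $p' > p/(p-1)$, which is permitted since $X_i \in L^\infty$) yields
\[
\lim_i \int_{B_R(m_i)} \phi^{t,\epsilon}_i \langle \nabla h_i, \nabla \phi_{j,l,i}\rangle\, d\underline{\mathrm{vol}} \;=\; \int_{B_R(m_\infty)} \phi^{t,\epsilon}_\infty \langle \nabla h_\infty, \nabla \phi_{j,l,\infty}\rangle\, d\upsilon.
\]

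The remaining step is to send $\epsilon \to 0$ to recover convergence of genuine ball integrals $\int_{B_t(z_i)} \langle \nabla h_i, \nabla \phi_{j,l,i}\rangle\, d\underline{\mathrm{vol}}$. The annular error is controlled by the H\"older inequality together with the uniform $L^p$-bound on $dh_i$ and the uniform Lipschitz bound $\mathbf{Lip}\,\phi_{j,l,i} \le C(n)$, giving an $O\bigl(\underline{\mathrm{vol}}(B_{t+\epsilon}(z_i) \setminus B_{t-\epsilon}(z_i))^{1/q}\bigr)$ estimate; by Gromov-Hausdorff volume continuity (Colding-Minicozzi \cite[Lemma 3.3]{co-mi7}, as used in Proposition \ref{weak0}) this error vanishes as $\epsilon \to 0$ uniformly in large $i$, for a.e. radius $t$. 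Dividing by $\underline{\mathrm{vol}}(B_t(z_i))$ and its limit $\upsilon(B_t(z_\infty))$ yields the ball-average convergence required by Definition \ref{weak1}, verifying the hypothesis of Proposition \ref{compat}. Hence $dh_i$ converges weakly to $dh_\infty$ on $B_R(m_\infty)$, and the final lower-semicontinuity assertion is then an immediate application of Proposition \ref{lower semi}. The main obstacle is carrying out the $\epsilon \to 0$ passage with uniform control in $i$, but this mirrors exactly the annular truncation argument already used in the proof of Theorem \ref{conv2}.
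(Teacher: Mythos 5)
Your proposal is correct and is essentially the paper's proof: harmonic coordinate functions from Theorem \ref{harm3}, the integration-by-parts convergence of Theorem \ref{conv2} applied to gradients of harmonic functions (using Proposition \ref{harm5}), then Proposition \ref{compat} to upgrade to weak convergence of $dh_i$, and Proposition \ref{lower semi} for the norm inequality. The only difference is bookkeeping: the paper applies Theorem \ref{conv2} directly on the coordinate balls (and their sub-balls) with the divergence-free field $\nabla f_i$, whereas you re-localize by hand with Cheeger--Colding cutoffs multiplied into $\nabla \phi_{j,l,i}$ and then remove the cutoff by an annulus estimate --- which is fine, and in fact the Bishop--Gromov comparison makes the annulus volume small uniformly in $i$ for every radius $t$, so your ``a.e.\ radius'' restriction is not even needed.
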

\begin{proof}
Let $x_{\infty} \in B_R(m_{\infty})$, $r>0$ with $\overline{B}_r(x_{\infty}) \subset B_R(m_{\infty})$, and let $f_i$ be a harmonic function on $B_r(x_i)$ for every $i < \infty$ with $\sup_{i < \infty}\mathbf{Lip}f_i <\infty$, where $x_i \to x_{\infty}$, and $f_{\infty}$ a Lipschitz function on $B_r(x_{\infty})$ with $f_i \to f_{\infty}$ on $B_r(x_{\infty})$.
Then Theorem \ref{conv2} and Proposition \ref{harm5} yield
\[\lim_{i \to \infty}\int_{B_r(x_i)}\langle df_i, dh_i \rangle d\underline{\mathrm{vol}}=\int_{B_r(x_{\infty})}\langle df_{\infty}, dh_{\infty}\rangle d\upsilon.\]
Thus  Theorem \ref{harm3} and Proposition \ref{compat} yield that $dh_{i}$ converges weakly to $dh_{\infty}$ on $B_R(m_{\infty})$.
\end{proof}
The following corollary is a refinement of a main theorem of \cite{Ho}.
Compare with \cite[Definition $4.4$ and Corollary $4.5$]{Ho}.
\begin{corollary}\label{l1}
Let $f_i$ be a Lipschitz function on $B_R(m_i)$ for every $i \le \infty$ with $\sup_{i \le \infty}\mathbf{Lip}f_i<\infty$ and $f_i \to f_{\infty}$ on $B_R(m_{\infty})$.
Then we have the following:
\begin{enumerate}
\item If $\{|df_i|^2\}_{i\le \infty}$ is upper semicontinuous at $z_{\infty} \in B_R(m_{\infty})$, then $df_i$ converges strongly to $df_{\infty}$ at $z_{\infty}$.
\item If $f_i \in C^2(B_R(m_i))$ holds for every $i<\infty$ with $\sup_{i<\infty}||\Delta f_i||_{L^1(B_R(m_i))}<\infty$, then $df_i$ converges strongly to $df_{\infty}$ on $B_R(m_{\infty})$.
\end{enumerate}
\end{corollary}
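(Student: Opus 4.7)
The plan is to reduce both parts to Definition \ref{inftyinfty}, which for $L^\infty$-tensor fields characterizes strong convergence at a point as weak convergence together with upper semicontinuity of the squared norm. The uniform Lipschitz bound combined with the pointwise convergence $f_i \to f_\infty$ gives $\sup_{i\le\infty}\|f_i\|_{H_{1,\hat p}(B_R(m_i))}<\infty$ and (by the averaging argument of Remark \ref{1234}) weak convergence $f_i \to f_\infty$ on $B_R(m_\infty)$. Corollary \ref{conv1} (applied for any $1<\hat p<\infty$) then yields the weak convergence $df_i \to df_\infty$ at every $z_\infty \in B_R(m_\infty)$. Assertion (1) is nothing more than Definition \ref{inftyinfty} combined with the hypothesized upper semicontinuity at $z_\infty$.

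For assertion (2), the corollary after Proposition \ref{strong2} together with part (1) reduces the problem to showing that $\{|df_i|^2\}_i$ is weakly upper semicontinuous at $\upsilon$-a.e.\ $z_\infty$. The key claim is
\[
\lim_{i\to\infty}\int_{B_R(m_i)}\phi_i\,|df_i|^2\,d\underline{\mathrm{vol}}=\int_{B_R(m_\infty)}\phi_\infty\,|df_\infty|^2\,d\upsilon \qquad (\ast)
\]
for every nonnegative Lipschitz $\phi_\infty$ with compact support in $B_R(m_\infty)$, where $\phi_i$ is a cutoff on $B_R(m_i)$ produced by \cite[Theorem $6.33$]{ch-co} so that $\phi_i\to\phi_\infty$, $d\phi_i\to d\phi_\infty$ strongly (by Proposition \ref{harm5}), and $\sup_i\|\Delta\phi_i\|_{L^\infty}<\infty$. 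Granted $(\ast)$, choosing $\phi_\infty$ to interpolate between $\mathbf{1}_{B_r(z_\infty)}$ and $\mathbf{1}_{B_{r+\epsilon}(z_\infty)}$ and letting $\epsilon\to 0$, together with $\underline{\mathrm{vol}}(B_r(z_i))\to\upsilon(B_r(z_\infty))$ for a.e.\ $r$ (with $\upsilon(\partial B_r(z_\infty))=0$), yields
\[
\limsup_{i} \frac{1}{\underline{\mathrm{vol}}(B_r(z_i))}\int_{B_r(z_i)}|df_i|^2 d\underline{\mathrm{vol}}\le \frac{1}{\upsilon(B_r(z_\infty))}\int_{B_r(z_\infty)}|df_\infty|^2 d\upsilon;
\]
letting $r\to 0$ at each Lebesgue point of $|df_\infty|^2$ gives the required upper semicontinuity.

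To prove $(\ast)$, apply Green's identity on the smooth manifold $M_i$:
\[
\int \phi_i|df_i|^2\,d\underline{\mathrm{vol}}=\int \phi_i f_i\,\Delta f_i\,d\underline{\mathrm{vol}}-\int f_i\,\langle d\phi_i,df_i\rangle\,d\underline{\mathrm{vol}}.
\]
The second integral converges to $\int f_\infty\langle d\phi_\infty,df_\infty\rangle d\upsilon$ by Proposition \ref{conv678}, since $f_id\phi_i\to f_\infty d\phi_\infty$ strongly and $df_i\to df_\infty$ weakly. For the first integral, the $L^1$-bound on $\Delta f_i$ implies that the signed measures $\Delta f_i\,d\underline{\mathrm{vol}}$, pushed forward to $B_R(m_\infty)$ via $\psi_i$, have uniformly bounded total variation, so by Prokhorov a subsequence converges weakly as Radon measures to some $\mu_\infty$. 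Testing against Lipschitz approximations $g_i$ of a generic compactly supported Lipschitz $g_\infty$ (from \cite[Theorem $4.2$]{Ho}), the Green identity $\int g_i\Delta f_i\,d\underline{\mathrm{vol}}=\int\langle dg_i,df_i\rangle\,d\underline{\mathrm{vol}}$ combined with Proposition \ref{conv678} yields $\int g_\infty\,d\mu_\infty=\int\langle dg_\infty,df_\infty\rangle\,d\upsilon$; this characterizes $\mu_\infty$ uniquely, so no subsequence is actually needed. Specializing to $g_\infty=\phi_\infty f_\infty$ rewrites the limit of the first integral as $\int\phi_\infty|df_\infty|^2 d\upsilon +\int f_\infty\langle d\phi_\infty,df_\infty\rangle d\upsilon$; combining with the second integral gives $(\ast)$.

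The main obstacle is the identification of the weak measure limit of $\Delta f_i\,d\underline{\mathrm{vol}}$ with the distributional Laplacian of $f_\infty$ relative to $\upsilon$; this is exactly what lets the $L^1$-hypothesis replace the $L^2$-hypothesis of Proposition~\ref{harm5}.
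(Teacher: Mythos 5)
Your part (1) coincides with the paper's proof: the paper also obtains the weak convergence $df_i \to df_\infty$ from Corollary \ref{conv1} and then concludes by Definition \ref{inftyinfty} together with the hypothesized upper semicontinuity. For part (2), however, you take a genuinely different route. The paper simply cites \cite[Proposition $4.9$]{Ho}, i.e.\ an external result of the author's earlier work asserting that the uniform $L^1$-bound on $\Delta f_i$ forces the upper semicontinuity of $\{|df_i|^2\}$, and then applies part (1). You instead reprove this inside the present paper's framework: you integrate by parts against cutoffs with bounded Laplacian (so that $d\phi_i \to d\phi_\infty$ strongly via Proposition \ref{harm5}), view $\Delta f_i\,d\underline{\mathrm{vol}}$ as signed measures of uniformly bounded total variation, identify every weak-$*$ limit through the identity $\int g_\infty\,d\mu_\infty=\int\langle dg_\infty,df_\infty\rangle\,d\upsilon$ (using Corollary \ref{conv1} and Proposition \ref{conv678}, which are proved independently of this corollary, so there is no circularity), and then test the measures against $\phi_\infty f_\infty$, which only requires uniform convergence of the test functions rather than strong convergence of their differentials--this is precisely where the $L^1$-hypothesis does its work, as you correctly isolate. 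The localization to the averaged inequality and the Lebesgue-point argument then give weak upper semicontinuity of $|df_i|^2$ at a.e.\ point, which suffices by part (1) and the corollary following Proposition \ref{strong2}. Two small inaccuracies do not affect the argument: \cite[Theorem $6.33$]{ch-co} only produces ball-type cutoffs, not approximations of an arbitrary nonnegative $\phi_\infty$, but ball cutoffs are all you use; and the caveat about ``a.e.\ $r$'' with $\upsilon(\partial B_r(z_\infty))=0$ is unnecessary, since the Bishop--Gromov inequality for the limit measure gives $\upsilon(\overline{B}_r\setminus B_r)=0$ for every $r$. What your approach buys is a self-contained proof that makes the role of the $L^1$-bound transparent (it is exactly a total-variation bound on the Laplacian measures), at the cost of being considerably longer than the paper's one-line citation; conversely, the paper's proof is shorter but opaque about why $L^1$ suffices.
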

\begin{proof}
$(1)$ is a direct consequence of Corollary \ref{conv1}.
$(2)$ follows from $(1)$ and \cite[Proposition $4.9$]{Ho}.
\end{proof}
The assumption of uniform $L^1$-bounds as in $(2)$ of Corollary \ref{l1} is \textit{sharp} in the following sense:  
\begin{remark}\label{qqqqq}
Let $g_n$ be a smooth function on $\mathbf{R}$ as in Remark \ref{zigzag} and 
\[G_n(t):=\int_0^tg_n(s)ds\]
on $(0, 1)$.
Then it is easy to check the following:
\begin{enumerate}
\item $\sup_{n \in \mathbf{N}}\mathbf{Lip}G_n <\infty$.
\item $G_n \to 0$ on $(0, 1)$.
\item $||\Delta G_n||_{L^1((0,1))} \to \infty$.
\item $dG_n$ converges weakly to $0$ on $(0, 1)$.
\item For every $t \in (0, 1)$, $dG_n$ dose NOT converges strongly to $0$ at $t$.
\end{enumerate}
\end{remark}
\begin{remark}\label{ppppp}
Let $h_n$ be a smooth function on $[0, 1]$ satisfying that $h_n|_{[1/n+1/n^2, 1]}\equiv 0$, $|h_n|\le 1$, $|\nabla h_n| \le n$  and that $h_n(s)=-ns+1$ holds for every $s \in [0, 1/n]$.
Put
\[H_n(t):=\int_0^th_n(s)ds\]
on $(0, 1)$.
Then it is easy to check the following:
\begin{enumerate}
\item $\sup_{n< \infty}(\mathbf{Lip}H_n+||H_n||_{L^{\infty}} + ||\Delta H_n||_{L^1((0, 1))})<\infty$
\item $H_n \to 0$ on $(0, 1)$.
\item $||\Delta H_n||_{L^p((0, 1))} \to \infty$ for every $p>1$.
\item $dH_n$ converges strongly to $0$ on $(0, 1)$. 
\end{enumerate}
\end{remark}
We now give a compactness result about Sobolev functions with respect to the Gromov-Hausdorff topology.
Compare with Proposition \ref{strong com}.
\begin{theorem}\label{7}
Assume $p<\infty$.
Let $h_i \in H_{1, p}(B_R(m_i))$ for every $i<\infty$ with $\sup_{i<\infty}||h_i||_{H_{1,p}}<\infty$.
Then there exist $h_{\infty} \in H_{1, p}(B_R(m_{\infty}))$ and a subsequence $\{h_{i(j)}\}_j$ of $\{h_i\}_i$ such that 
$h_{i(j)}$ $L^p$-converges strongly to $h_{\infty}$ on $B_R(m_{\infty})$ and that $dh_{i(j)}$ converges weakly to $dh_{\infty}$ on $B_R(m_{\infty})$. 
In particular $\liminf_{j \to \infty}||dh_{i(j)}||_{L^p} \ge ||dh_{\infty}||_{L^p}$.
\end{theorem}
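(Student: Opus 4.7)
The plan is to combine $L^p$-compactness for functions with $L^p$-compactness for cotangent fields, and then to identify the two limits using the Lipschitz truncation machinery of Proposition \ref{mosco} together with Corollary \ref{conv1}. First I extract the limits: since each $(M_i, \underline{\mathrm{vol}})$ enjoys uniform volume doubling and a weak $(1,p)$-Poincar\'e inequality (subsection~$2.5.3$), Proposition \ref{strong com} yields a subsequence $h_{i(j)}$ and some $h_\infty \in L^p(B_R(m_\infty))$ such that $h_{i(j)}$ $L^p$-converges strongly to $h_\infty$. The uniform bound $\sup_j \|dh_{i(j)}\|_{L^p} \le \sup_j \|h_{i(j)}\|_{H_{1,p}} < \infty$ then allows me to apply Proposition \ref{tensor com} with $(r,s)=(0,1)$ to the cotangent fields $dh_{i(j)}$, producing a further subsequence that converges weakly to some $\omega_\infty \in L^p(T^*B_R(m_\infty))$.

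The technical heart of the argument is to show $h_\infty \in H_{1,p}(B_R(m_\infty))$. Fix $z_\infty \in B_R(m_\infty)$ and $r>0$ with $\overline{B}_r(z_\infty) \subset B_R(m_\infty)$. For each $L \ge 1$, apply Proposition \ref{mosco} to $h_{i(j)}$ on $B_r(z_{i(j)})$ to produce an $L$-Lipschitz approximation $(h_{i(j)})_L$ on $B_{r/5}(z_{i(j)})$ satisfying $\|(h_{i(j)})_L\|_{H_{1,p}} \le C$ and $\|h_{i(j)} - (h_{i(j)})_L\|_{L^p} \le C L^{-\alpha}$, with $C, \alpha > 0$ independent of $j$. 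By Proposition \ref{conti com}, for each fixed $L$ pass to a subsequence along which $(h_{i(j)})_L$ converges to a Lipschitz function $h^L_\infty$ on $B_{r/5}(z_\infty)$; Proposition \ref{lower} combined with the $L^p$-strong convergence of $h_{i(j)}$ then gives $\|h_\infty - h^L_\infty\|_{L^p(B_{r/5}(z_\infty))} \le CL^{-\alpha}$ and $\|h^L_\infty\|_{H_{1,p}} \le C$. Letting $L \to \infty$ produces locally Lipschitz functions $h^L_\infty$ converging to $h_\infty$ in $L^p$ with uniformly bounded Sobolev norms, and the characterization of $H_{1,p}$ via Lipschitz approximations (subsection~$2.5.3$) gives $h_\infty \in H_{1,p}(B_{r/5}(z_\infty))$. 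A standard finite covering argument extends this to $h_\infty \in H_{1,p}(B_R(m_\infty))$.

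Once $h_\infty \in H_{1,p}$ is in hand, Corollary \ref{conv1} becomes available and yields that $dh_{i(j)}$ converges weakly in $L^p$ to $dh_\infty$; uniqueness of weak limits then forces $\omega_\infty = dh_\infty$ a.e., and the closing inequality is immediate from Proposition \ref{lower semi}. The main obstacle is the Sobolev regularity step: since Cheeger's $H_{1,p}$ is built from closures of Lipschitz functions rather than from integration by parts, the abstract weak $L^p$-limit $\omega_\infty$ of the $dh_{i(j)}$ cannot be promoted directly to a differential, and Proposition \ref{mosco}'s Lipschitz truncation estimate is the essential bridge from the $L^p$-strong limit of the functions to the Sobolev regularity of that limit.
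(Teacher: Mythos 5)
Your argument follows the paper's own route through the local step: Proposition \ref{strong com} produces the $L^p$-strong limit $h_\infty$, Proposition \ref{mosco} together with Proposition \ref{conti com} produces the uniformly controlled Lipschitz truncations $(h_{i(j)})_L$ and their limits $h^L_\infty$, lower semicontinuity gives $h_\infty \in H_{1,p}$ on small balls, and Corollary \ref{conv1} then identifies the weak limit of $dh_{i(j)}$ with $dh_\infty$ (your preliminary appeal to Proposition \ref{tensor com} to extract $\omega_\infty$ is harmless but unnecessary). One minor slip: the uniform bound $||h^L_\infty||_{H_{1,p}} \le C$ does not follow from Proposition \ref{lower}, which only controls $L^p$-norms of functions; you need lower semicontinuity of $||d(\cdot)||_{L^p}$ along the convergence $(h_{i(j)})_L \to h^L_\infty$ of uniformly Lipschitz functions, i.e. exactly Corollary \ref{conv1} applied to the truncations, which is how the paper argues.

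The genuine gap is the closing sentence, ``a standard finite covering argument extends this to $h_\infty \in H_{1,p}(B_R(m_\infty))$.'' Your local step applies only to balls with $\overline{B}_r(z_\infty) \subset B_R(m_\infty)$ (in fact $B_{5r}(z_\infty)\subset B_R(m_\infty)$), and no finite family of such balls covers the open ball $B_R(m_\infty)$. What the local argument actually yields is $h_\infty \in H_{1,p}(B_r(m_\infty))$ for every $r<R$ together with the uniform bound $||dh_\infty||_{L^p(B_r(m_\infty))} \le \liminf_j ||dh_{i(j)}||_{L^p} < \infty$, hence $|dh_\infty| \in L^p(B_R(m_\infty))$; but upgrading this to membership in $H_{1,p}$ of the full open ball is not automatic in Cheeger's framework, since gluing the local Lipschitz approximations by cutoffs produces gradient terms of order $(R-r)^{-1}$ near $\partial B_R(m_\infty)$ that you have no way to control. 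The paper closes exactly this point by observing that $h_\infty$ satisfies the weak $(1,p)$-Poincar\'e inequality on every ball $\overline{B}_t(x)\subset B_R(m_\infty)$ with right-hand side $|dh_\infty|$, and then invoking (the proof of) Theorem $1.1$ of \cite{HKT} to conclude $h_\infty \in H_{1,p}(B_R(m_\infty))$. Some such boundary argument is needed to finish your proof.
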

\begin{proof}
Let $T:=\sup_{i<\infty}||h_i||_{H_{1, p}}$.
Proposition \ref{strong com} yields that there exist $h_{\infty} \in L^p(B_R(m_{\infty}))$ and a subsequence $\{h_{i(j)}\}_j$ of $\{h_i\}_i$ such that $h_{i(j)}$ $L^p$-converges strongly to $h_{\infty}$ on $B_R(m_{\infty})$.
\begin{claim}\label{mmmm}
Let $r>0$ and $z_{\infty} \in B_R(m_{\infty})$ with $B_{5r}(z_{\infty}) \subset B_R(m_{\infty})$.
Then we see that $h_{\infty}|_{B_r(z_{\infty})} \in H_{1, p}(B_r(z_{\infty}))$ and that $dh_{i(j)}$ converges weakly to $dh_{\infty}$ on $B_r(z_{\infty})$.
\end{claim}
The proof is as follows.
Proposition \ref{mosco} yields that for every $j<\infty$ and every $L \ge 1$, there exists an $L$-Lipschitz function $(h_{i(j)})_L$ on $B_r(z_{i(j)})$ such that $||h_{i(j)}-(h_{i(j)})_L||_{L^{p}(B_r(z_{i(j)}))}\le \Psi(L^{-1}; n, R, T)$ and $||(h_{i(j)})_L||_{H_{1, p}}\le C(n, R, T)$, where $z_{i(j)} \to z_{\infty}$.
By Proposition \ref{conti com}, without loss of generality we can assume that there exists an $L$-Lipschitz function $(h_{\infty})_L$ on $B_r(z_{\infty})$ such that $(h_{i(j)})_L \to (h_{\infty})_L$ on $B_r(z_{\infty})$.
Corollary \ref{conv1} yields that $||(h_{\infty})_L||_{H_{1,p}}\le \liminf_{j \to \infty}||(h_{i(j)})_L||_{H_{1, p}}\le C(n, R, T)$ and $||h_{\infty}-(h_{\infty})_L||_{L^p(B_r(z_{\infty}))}= \lim_{j \to \infty}||h_{i(j)}-(h_{i(j)})_L||_{L^p(B_r(z_{i(j)}))}\le \Psi(L^{-1}; n, R, T)$ hold.
Thus by letting $L \to \infty$, we have $h_{\infty} \in H_{1, p}(B_r(z_{\infty}))$.
Then Claim \ref{mmmm} follows directly from Corollary \ref{conv1}. 

By Claim \ref{mmmm}, for every $r<R$ it is easy to check that $h_{\infty}|_{B_r(m_{\infty})} \in H_{1, p}(B_r(m_{\infty}))$ and that $dh_{i(j)}$ converges to $dh_{\infty}$ on $B_r(m_{\infty})$.
On the other hand, Corollary \ref{conv1} yields $||dh_{\infty}||_{L^{p}(B_r(m_{\infty}))}\le \liminf_{j \to \infty}||dh_{i(j)}||_{L^{p}(B_R(m_{\infty}))}$ for every $r<R$.
In particular, we have $||dh_{\infty}||_{L^p(B_R(m_{\infty}))}<\infty$.
Note that 
\[\frac{1}{\upsilon (B_t(x))}\int_{B_t(x)}\left| h_{\infty}-\frac{1}{\upsilon (B_t(x))}\int_{B_t(x)}h_{\infty}d\upsilon\right| d\upsilon \le tC(n, R)\left(\frac{1}{\upsilon (B_t(x))}\int_{B_t(x)}|dh_{\infty}|^pd\upsilon\right)^{1/p}\]
holds for every $\overline{B}_t(x) \subset B_R(m_{\infty})$.
Thus (the proof of) \cite[Theorem $1.1$]{HKT} yields $h_{\infty} \in H_{1, p}(B_R(m_{\infty}))$.
Therefore we have the assertion.
\end{proof}
\begin{remark}\label{rmk}
As a corollary of Theorem \ref{7} we have the following:
Define $E^p_i:L^p(B_R(m_i)) \to \mathbf{R}_{\ge 0} \cup \{\infty\}$ by
\[E^p_i(f):=\int_{B_R(m_i)}|df|^pd\underline{\mathrm{vol}}\]
if $f \in H_{1, p}(B_R(m_i))$, $E^p_i(f) \equiv \infty$ if otherwise.
Then $E^p_i$ \textit{compactly converges to} $E^p_{\infty}$ in the sense of Kuwae-Shioya (see \cite[Definition $4.5$]{KS2} for the precise definition).
Kuwae-Shioya proved this result for the case $p=2$.
See \cite[Corollary $6.3$]{KS2}.
\end{remark}
We end this subsection by giving the following `Sobolev-embedding' type theorem:
\begin{theorem}\label{sobolev emb}
Assume $p<\infty$.
Let $f_i \in H_{1, p}(B_R(m_i))$ for every $i \le \infty$ with $\sup_i ||f_i||_{H_{1, p}(B_R(m_i))}<\infty$.
If $f_i, df_i$ $L^p$-converge strongly to $f_{\infty}, df_{\infty}$ on $B_R(m_{\infty})$, respectively, then $f_i$ $L^{\hat{p}}$-converges strongly to $f_{\infty}$ on $B_R(m_{\infty})$ for some $\hat{p}:=\hat{p}(n, R, p) > p$.
\end{theorem}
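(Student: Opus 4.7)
The plan is to bootstrap $L^p$-strong convergence of $f_i$ into $L^{\hat p}$-strong convergence for some $\hat p>p$ in two moves: first obtain uniform higher integrability on $\{f_i\}_{i\le\infty}$ via a Poincar\'e--Sobolev inequality, then absorb the gap between $f_i$ and its truncations using this bound. For \emph{Step 1}, I would repeat the Hajlasz--Koskela Poincar\'e--Sobolev computation used in the proof of Proposition \ref{mosco} (valid uniformly on each $(M_i,\underline{\mathrm{vol}})$ thanks to the uniform doubling and weak $(1,p)$-Poincar\'e inequality coming from the lower Ricci bound), producing $\hat{p}=\hat{p}(n,R,p)>p$ and $C=C(n,R,p)$ with
\[
\sup_{i\le\infty}\|f_i\|_{L^{\hat p}(B_R(m_i))}\;\le\;C\sup_i\|f_i\|_{H_{1,p}(B_R(m_i))}\;<\;\infty.
\]
In particular $f_\infty\in L^{\hat p}(B_R(m_\infty))$. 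Fix any $\tilde p$ with $p<\tilde p<\hat p$; this will be the exponent witnessing the theorem.

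\emph{Step 2.} For $M>0$ set $\tau_M(t):=\max\{\min\{t,M\},-M\}$ and $f_{i,M}:=\tau_M\circ f_i$. Starting from any $L^p$-approximate sequence $\{f_{i,j}\}_{i,j}$ of $f_\infty$, Proposition \ref{strong3} applied to the continuous $\tau_M$ and the uniformly $L^\infty$-bounded sequence $\{f_{i,j}\}_i$ shows that $\{\tau_M\circ f_{i,j}\}_{i,j}$ is an $L^p$-approximate sequence of $f_{\infty,M}$, and the pointwise $1$-Lipschitz estimate for $\tau_M$ makes it witness $L^p$-strong convergence of $f_{i,M}$ to $f_{\infty,M}$. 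Since $\|f_{i,M}\|_{L^\infty}\le M$ uniformly, Proposition \ref{compatibility} then upgrades this to $L^{\tilde p}$-strong convergence $f_{i,M}\to f_{\infty,M}$. Moreover the Chebyshev--H\"older bound
\[
\|f_i-f_{i,M}\|_{L^{\tilde p}}^{\tilde p}\;\le\;\int_{\{|f_i|>M\}}|f_i|^{\tilde p}\,d\underline{\mathrm{vol}}\;\le\;\|f_i\|_{L^{\hat p}}^{\hat p}\,M^{-(\hat p-\tilde p)}
\]
is uniform in $i\le\infty$ by Step 1, and the analogous bound holds on $M_\infty$.

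\emph{Step 3 and the obstacle.} For each $M$ pick an $L^{\tilde p}$-approximate sequence $\{g_{i,\ell}^{(M)}\}_{i,\ell}$ of $f_{\infty,M}$ (Proposition \ref{876}). Then choose $M(\ell)\to\infty$ and $J(\ell)$ so large that both $\limsup_i\|f_{i,M(\ell)}-g_{i,J(\ell)}^{(M(\ell))}\|_{L^{\tilde p}}<1/\ell$ and $\|f_{\infty,M(\ell)}-g_{\infty,J(\ell)}^{(M(\ell))}\|_{L^{\tilde p}}<1/\ell$, and set $h_{i,\ell}:=g_{i,J(\ell)}^{(M(\ell))}$. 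The three conditions of Definition \ref{app} for $\{h_{i,\ell}\}$ as an $L^{\tilde p}$-approximate sequence of $f_\infty$ then follow (condition (3) via $\|f_\infty-f_{\infty,M(\ell)}\|_{L^{\tilde p}}\to 0$, which rests on $f_\infty\in L^{\hat p}$ and dominated convergence), and the triangle inequality combined with the uniform-in-$i$ estimate of Step 2 gives $\lim_\ell\limsup_i\|f_i-h_{i,\ell}\|_{L^{\tilde p}}=0$, which is Definition \ref{strongdeff} for $L^{\tilde p}$-strong convergence. The main obstacle is this diagonal coupling, made delicate because the $L^\infty$-norms of the $f_{\infty,j}$ coming from Proposition \ref{876} are not uniformly bounded in $j$; the uniform $L^{\hat p}$-bound of Step 1 is what supplies the dominant making the truncation tail small in $L^{\tilde p}$ uniformly in $i$. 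Notably, the hypothesis that $df_i$ $L^p$-converges strongly plays no role in this argument; only the $H_{1,p}$-bound on $\{f_i\}$ (for Step 1) and the $L^p$-strong convergence of $f_i$ (for Step 2) enter.
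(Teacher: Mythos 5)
Your argument is correct, but it takes a genuinely different route from the paper's. The paper uses the hypothesis that $df_i$ $L^p$-converges strongly in an essential way: it chooses uniformly Lipschitz approximations $\hat{f}_i$ of $f_\infty$ with $(\hat{f}_i, d\hat{f}_i)$ converging strongly (density of Lipschitz functions plus \cite[Theorem $4.2$]{Ho}), applies the $(1,p)$-Poincar\'e and Haj\l asz--Koskela inequalities to the difference $f_i-\hat{f}_i$ on $B_r(m_i)$, $r<R$, so that the vanishing of $\|\nabla f_i-\nabla\hat{f}_i\|_{L^p}$ forces $\|f_i-\hat{f}_i\|_{L^{\tilde{p}}(B_r)}\to 0$ for a Sobolev exponent $\tilde{p}>p$, and then recovers the full ball via a uniform $L^{\tilde{p}}$ bound, the H\"older inequality on the annulus $B_R\setminus B_r$, and Proposition \ref{222}. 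You instead use the gradient hypothesis only through the uniform $H_{1,p}$ bound, to obtain $\sup_{i\le\infty}\|f_i\|_{L^{\hat{p}}(B_R(m_i))}<\infty$, and then run the classical uniform-integrability interpolation in the varying-space setting: the truncations $\tau_M\circ f_i$ inherit $L^p$-strong convergence (your verification via Definition \ref{app} and Proposition \ref{strong3} is correct), are upgraded to $L^{\tilde{p}}$-strong convergence by Proposition \ref{compatibility}, the tails are uniformly small in $L^{\tilde{p}}$ by your Chebyshev--H\"older estimate, and the diagonal choice of approximate sequences in Step 3 legitimately closes Definition \ref{strongdeff}. What your route buys is a slightly stronger statement — the strong convergence of $df_i$ is indeed superfluous, only $\sup_i\|f_i\|_{H_{1,p}}<\infty$ and $L^p$-strong convergence of $f_i$ enter — at the cost of more bookkeeping with truncations; the paper's route is shorter given the tools already in place and extracts the integrability gain in one stroke from the Poincar\'e--Sobolev inequality applied to differences. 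One shared caveat: your Step 1 asserts the Haj\l asz--Koskela bound on the full ball $B_R(m_i)$ (and on the limit space), which is exactly the assertion the paper itself makes when it claims $\sup_i\|f_i\|_{L^{\tilde{p}}(B_R(m_i))}\le C(n,p,R,L)$, so you are on equal footing there; for $i=\infty$ you could in any case avoid invoking the Poincar\'e inequality on $M_\infty$ by deducing $f_\infty\in L^{\hat{p}}$ from the uniform bound together with weak convergence and Proposition \ref{lower}.
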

\begin{proof}
Let $r<R$ and $L:=\sup_i ||f_i||_{H_{1, p}(B_R(m_i))}$.
Since the space of locally Lipschitz functions on $B_R(m_{\infty})$ in $H_{1, p}(B_R(m_{\infty}))$ is dense in $H_{1, p}(B_R(m_{\infty}))$, by \cite[Theorem $4.2$]{Ho},
without loss of generality we can assume that there exists a sequence $\{\hat{f}_{i}\}_{i<\infty}$ of Lipschitz functions $\hat{f}_{i}$ on $B_r(m_i)$ such that $\hat{f}_i, d\hat{f}_i$ $L^p$-converge strongly to $f_{\infty}|_{B_r(m_{\infty})}, d(f_{\infty}|_{B_r(m_{\infty})})$ on $B_r(m_{\infty})$, respectively.
Then the Poincar\'e inequality of type $(1, p)$ yields that
\begin{align*}
&\frac{1}{\mathrm{vol}\,B_t(x_i)}\int_{B_t(x_i)}\left| f_i-\hat{f}_{i}-\frac{1}{\mathrm{vol}\,B_t(x_i)}\int_{B_t(x_i)}(f_i-\hat{f}_{i})d\mathrm{vol} \right|d\mathrm{vol} \\
&\le tC(n, R)\left(\frac{1}{\mathrm{vol}\,B_t(x_i)}\int_{B_t(x_i)}|\nabla f_i-\nabla \hat{f}_{i}|^{p}d\mathrm{vol}\right)^{1/p}
\end{align*}
holds for every $i$, every $x_i \in B_r(m_i)$ and every $t>0$ with $B_t(x_i) \subset B_r(m_i)$.
Thus the Poincar\'e-Sobolev inequality \cite[Theorem $1$]{HK} yields 
\begin{align*}
&\left(\frac{1}{\mathrm{vol}\,B_r(m_i)}\int_{B_r(m_i)}\left| f_i-\hat{f}_{i}-\frac{1}{\mathrm{vol}\,B_r(m_i)}\int_{B_r(m_i)}(f_i-\hat{f}_{i})d\mathrm{vol} \right|^{\tilde{p}}d\mathrm{vol}\right)^{1/\tilde{p}} \\
&\le C(n, R, p)\left(\frac{1}{\mathrm{vol}\,B_r(m_i)}\int_{B_r(m_i)}|\nabla f_i-\nabla \hat{f}_{i}|^{p}d\mathrm{vol}\right)^{1/p}
\end{align*}
for some $\tilde{p}:=\tilde{p}(n, R, p)>p$.
Thus by letting $i \to \infty$, we see that $f_i$ $L^{\tilde{p}}$-converges strongly to $f_{\infty}$ on $B_r(m_{\infty})$.
Note that an argument similar to that above yields $\sup_{i}||f_i||_{L^{\tilde{p}}(B_R(m_{i}))}\le C(n, p, R, L)$.
Let $\hat{p}:=\hat{p}(n, R, p)$ with $p<\hat{p}<\tilde{p}$.
Then the H$\ddot{\text{o}}$lder inequality yields $||f_i||_{L^{\hat{p}}(B_R(m_i))}=||f_i||_{L^{\hat{p}}(B_r(m_i))} \pm \Psi(R-r;n, R, L, p)$ for every $i$.
Thus we have $\lim_{i \to \infty}||f_i||_{L^{\hat{p}}(B_R(m_i))}=||f_{\infty}||_{L^{\hat{p}}(B_R(m_{\infty}))}$.
Therefore Proposition \ref{222} yields the assertion.
\end{proof}
\subsection{$p$-Laplacian.}
In this subsection we discuss a convergence of $p$-Laplacians with respect to the Gromov-Hausdorff topology.
We will always consider the following setting similar to that in subsection $4.1$: $(M_{\infty}, m_{\infty}, \upsilon)$ is the Ricci limit space of $\{(M_i, m_i, \underline{\mathrm{vol}})\}_i$ with $M_{\infty} \neq \{m_{\infty}\}$,  $R>0$, $1<p<\infty$ and $q$ is the conjugate exponent of $p$.
\begin{proposition}\label{plap}
Let $r, s \in \mathbf{Z}_{\ge 0}$ and $T_i \in L^p(T^r_sB_R(m_i))$ for every $i \le \infty$ with $\sup_{i \le \infty}||T_{i}||_{L^p}<\infty$.
Assume that $T_i$ $L^p$-converges strongly to $T_{\infty}$ on $B_R(m_{\infty})$.
Then $T_i^{(p-1)}$ $L^q$-converges strongly to $T_{\infty}^{(p-1)}$ on $B_R(m_{\infty})$. 
\end{proposition}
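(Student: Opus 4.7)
The plan is to apply Definition \ref{strongdef} directly. Given any $L^p$-approximate sequence $\{T_{i,j}\}_{i,j}$ of $T_\infty$ (which exists by Proposition \ref{app8}), I will set $S_{i,j}:=T_{i,j}^{(p-1)}$ and verify both that $\{S_{i,j}\}_{i,j}$ is an $L^q$-approximate sequence of $T_\infty^{(p-1)}$ and that
\[
\lim_{j \to \infty}\left(\limsup_{i \to \infty}||T_i^{(p-1)} - T_{i,j}^{(p-1)}||_{L^q(B_R(m_i))}\right)=0;
\]
the conclusion will then follow at once from Definition \ref{strongdef}. The central analytic ingredient is the classical pointwise inequality on any finite-dimensional inner product space: $|v^{(p-1)} - w^{(p-1)}| \le C_p (|v|+|w|)^{p-2} |v-w|$ when $p\ge 2$, and $|v^{(p-1)} - w^{(p-1)}| \le C_p |v-w|^{p-1}$ when $1<p\le 2$. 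The first follows by integrating $d\Phi$, where $\Phi(v):=|v|^{p-2}v$, along the segment from $v$ to $w$; the second is a standard convexity bound.

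Combining these inequalities with the H\"older inequality (using conjugate exponents $(p-1)/(p-2)$ and $p-1$ in the first case) yields, for every Riemannian manifold or Ricci limit space $M$ and every pair $U, V \in L^p(T^r_sM)$,
\[
||U^{(p-1)} - V^{(p-1)}||_{L^q} \le C_p (||U||_{L^p}+||V||_{L^p})^{\max(p-2,0)} ||U-V||_{L^p}^{\min(p-1,1)}.
\]
The $L^\infty$-boundedness of $S_{i,j}$ for each $j$ is immediate from $|S_{i,j}|=|T_{i,j}|^{p-1}$. Applying the displayed estimate with $(U,V)=(T_\infty, T_{\infty,j})$ and the approximate-sequence condition $||T_\infty - T_{\infty,j}||_{L^p}\to 0$ gives $||T_\infty^{(p-1)} - T_{\infty,j}^{(p-1)}||_{L^q}\to 0$ as $j\to\infty$. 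Applying it instead to $(U,V)=(T_i, T_{i,j})$, combined with the uniform bound $\sup_{i\le\infty}||T_i||_{L^p}<\infty$ that comes from Corollary \ref{stnorm} and with $\lim_j\limsup_i||T_i-T_{i,j}||_{L^p}=0$ (which is precisely the strong $L^p$-convergence of $T_i$ to $T_\infty$), delivers the desired vanishing of $\lim_j\limsup_i||T_i^{(p-1)} - T_{i,j}^{(p-1)}||_{L^q}$.

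The main obstacle is the remaining condition: for each fixed $j$, $S_{i,j}=T_{i,j}^{(p-1)}$ must converge strongly to $S_{\infty,j}=T_{\infty,j}^{(p-1)}$ on $B_R(m_\infty)$ in the sense of Definition \ref{inftyinfty}. Since $\Phi(v)=|v|^{p-2}v$ has a singularity at $v=0$ for any $p\ne 2$ (merely H\"older continuous there when $1<p<2$), Propositions \ref{strong2} and \ref{strong3} do not apply verbatim. To circumvent this I approximate $\Phi$ uniformly on any bounded set by the continuous maps $\Phi_\epsilon(v):=(\max\{|v|,\epsilon\})^{p-2}v$, which satisfy $|\Phi(v)-\Phi_\epsilon(v)|\le C_p\epsilon^{p-1}$ uniformly in $v$. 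For fixed $\epsilon>0$, the scalar $(\max\{|T_{i,j}|,\epsilon\})^{p-2}$ is an $L^\infty$-bounded continuous function of $|T_{i,j}|$: Proposition \ref{strong2}(1) applied to $S=T=T_{i,j}$ yields strong convergence of $|T_{i,j}|^2$ at a.e.\ point; Proposition \ref{strong3}, applied successively to the continuous scalar maps $x\mapsto\sqrt{x}$ and $x\mapsto(\max\{x,\epsilon\})^{p-2}$ on a bounded interval, transfers strong convergence to $(\max\{|T_{i,j}|,\epsilon\})^{p-2}$; and Proposition \ref{strong2}(2) then yields strong convergence of $\Phi_\epsilon(T_{i,j})$. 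Letting $\epsilon\to 0$ and using the uniform estimate $|\Phi_\epsilon(v)-\Phi(v)|\le C_p\epsilon^{p-1}$ transfers both properties in Definition \ref{inftyinfty}, namely the weak convergence of averaged integrals and the upper semicontinuity of $|S_{i,j}|^2=|T_{i,j}|^{2(p-1)}$ (the latter also obtained directly from Proposition \ref{strong3} with $F(x)=x^{2(p-1)}$), from $\Phi_\epsilon(T_{i,j})$ to $\Phi(T_{i,j})=S_{i,j}$. With this in place, all conditions of the $L^q$-approximate-sequence definition are satisfied, and Definition \ref{strongdef} concludes the proof.
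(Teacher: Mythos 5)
Your proof is correct, but it follows a genuinely different route from the paper's. The paper never forms the difference $T_i^{(p-1)}-T_{i,j}^{(p-1)}$ at all: it works pointwise at a.e.\ Lebesgue point $x_\infty$ of $|T_\infty|^{p-1}$, restricts to the maximal-function sets $K_{\hat L}$, splits into the cases $c=|T_\infty|(x_\infty)=0$ and $c\neq 0$, and in the latter case freezes the weight $|T_i|^{p-2}\approx c^{p-2}$ on a subset $A(t,i)$ of almost full measure in a small ball; this reduces the weak convergence of $T_i^{(p-1)}$ (tested against $\bigotimes\nabla r\otimes\bigotimes dr$) to the already-known convergence of $T_i$, after which Corollary \ref{weakkk}, the exact identity $\|T_i^{(p-1)}\|_{L^q}^q=\|T_i\|_{L^p}^p$ together with Corollary \ref{stnorm}, and Proposition \ref{hon} finish the argument. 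You instead verify Definition \ref{strongdef} head-on, pushing an $L^p$-approximate sequence of $T_\infty$ through the nonlinearity $\Phi(v)=|v|^{p-2}v$ by means of the classical monotone-operator estimates ($|\Phi(v)-\Phi(w)|\le C_p(|v|+|w|)^{p-2}|v-w|$ for $p\ge 2$, $\le C_p|v-w|^{p-1}$ for $p\le 2$) plus H\"older, and handling the singularity of $\Phi$ at $0$ via the regularization $\Phi_\epsilon$ so that Propositions \ref{strong2} and \ref{strong3} yield the strong convergence of $T_{i,j}^{(p-1)}$ for each fixed $j$. Your approach buys an explicit quantitative modulus (the $L^q$-distance to the approximate sequence is controlled by the $L^p$-distance to the power $\min(p-1,1)$) and avoids the pointwise case analysis; the paper's approach buys economy, since it never needs the vector inequalities or the regularization step — the dual-norm identity makes Proposition \ref{hon} do all the work — and it produces the $L^q$-weak convergence of $T_i^{(p-1)}$ as an intermediate statement of independent use. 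Two small points you should make explicit: in the estimate for $(U,V)=(T_i,T_{i,j})$ with $p>2$ you need $\limsup_{i}\|T_{i,j}\|_{L^p}$ bounded uniformly for large $j$, which follows from the triangle inequality once $\limsup_i\|T_i-T_{i,j}\|_{L^p}$ is small (or from the uniform $L^\infty$ bound and the Bishop--Gromov bound on $\underline{\mathrm{vol}}\,B_R(m_i)$); and in the $\epsilon\to 0$ transfer it is worth noting that the test frames satisfy $|\bigotimes\nabla r\otimes\bigotimes dr|\le 1$, so the uniform error $|\Phi(T_{i,j})-\Phi_\epsilon(T_{i,j})|\le C_p\epsilon^{p-1}$ perturbs all ball averages in Definition \ref{weak1} by at most $C_p\epsilon^{p-1}$, which is what lets the liminf/limsup inequalities survive the limit.
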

\begin{proof}
For $\hat{L}>0$, let $K_{\hat{L}}$ be as in the proof of Proposition \ref{contr}. 
Let $x_{\infty} \in K_{\hat{L}}$ satisfying that
\[\lim_{r \to 0}\frac{1}{\upsilon (B_r(x_{\infty}))}\int_{B_r(x_{\infty})}\left||T_{\infty}|^{p-1}- (|T_{\infty}|(x_{\infty}))^{p-1}\right|d\upsilon=0\]
holds.
Let $c:=|T_{\infty}|(x_{\infty})$.
For every $\epsilon>0$ there exists $r_0=r_0(\epsilon)>0$ such that
\[\frac{1}{\upsilon (B_t(x_{\infty}))}\int_{B_t(x_{\infty})}\left||T_{\infty}|^{p-1}- c^{p-1}\right|d\upsilon<\epsilon\]
holds for every $t<r_0$.
Fix $\epsilon>0$ and $t<r_0(\epsilon)$.
Since $|T_i|^{p-1}$ $L^{\hat{p}}$-converges strongly to $|T_{\infty}|^{p-1}$ on $B_R(m_{\infty})$, where $\hat{p}:= p/(p-1)>1$,
we see that
\[\frac{1}{\mathrm{vol}\,B_t(x_{i})}\int_{B_t(x_{i})}\left||T_{i}|^{p-1}- c^{p-1}\right|d\mathrm{vol}<\epsilon\]
holds for every sufficiently large $i$, where $x_i \to x_{\infty}$.
Let $\{x_{l, i}\}_{1\le l \le r+s, i \le \infty}$ be a collection of $x_{l, i} \in M_i$ with $x_{l, i} \to x_{l, \infty}$.
If $c=0$, then 
\[\left| \frac{1}{\mathrm{vol}\,B_t(x_{i})}\int_{B_t(x_i)}\left\langle T_i^{(p-1)},  \bigotimes_{l=1}^r \nabla r_{x_{l, i}} \otimes \bigotimes _{l=r+1}^{r+s}dr_{x_{l, i}}\right\rangle d\mathrm{vol} \right| \le \frac{1}{\mathrm{vol}\,B_t(x_{i})}\int_{B_t(x_i)}|T_i|^{p-1}d\mathrm{vol}<\epsilon\]
holds for every sufficiently large $i \le \infty$.
In particular $T_i^{(p-1)}$ converges weakly to $T_{\infty}^{(p-1)}$ at $x_{\infty}$.

Next we assume $c \neq 0$.
Let $A(t, i):=\{y \in B_t(x_i); ||T_{i}(y)|^{p-1}- c^{p-1}|<\epsilon^{1/2}\}$.
Then we see that $\mathrm{vol}\,A(t, i)/\mathrm{vol}\,B_t(x_i) \ge 1-\epsilon^{1/2}$ holds for every sufficiently large $i \le \infty$.
On the other hand, the H$\ddot{\text{o}}$lder inequality yields that
\begin{align*}
&\frac{1}{\mathrm{vol}\, B_t(x_i)}\int_{B_t(x_i)}\left\langle T_i^{(p-1)}, \bigotimes_{l=1}^r \nabla r_{x_{l, i}} \otimes \bigotimes _{l=r+1}^{r+s}dr_{x_{l, i}}\right\rangle d\mathrm{vol}\\
&= \frac{1}{\mathrm{vol}\, B_t(x_i)}\int_{A(t, i)}\left \langle T_i^{(p-1)}, \bigotimes_{l=1}^r \nabla r_{x_{l, i}} \otimes \bigotimes _{l=r+1}^{r+s}dr_{x_{l, i}}\right\rangle d\mathrm{vol} \\
& \, \, \,\,\,\,\,\, \pm \left( \frac{\mathrm{vol} (B_t(x_i) \setminus A(t, i))}{\mathrm{vol}\,B_t(x_i)}\right)^{1/\hat{q}}\left( \frac{1}{\mathrm{vol}\,B_t(x_i)}\int_{B_t(x_i)}|T_i|^pd\mathrm{vol}\right)^{1/\hat{p}}\\
&=\frac{1}{\mathrm{vol}\, B_t(x_i)}\int_{A(t, i)}\left \langle T_i^{(p-1)}, \bigotimes_{l=1}^r \nabla r_{x_{l, i}} \otimes \bigotimes _{l=r+1}^{r+s}dr_{x_{l, i}}\right\rangle d\mathrm{vol} \pm \Psi(\epsilon; n, \hat{L}, c, p)\\
&=\frac{1}{\mathrm{vol}\, B_t(x_i)}\int_{A(t, i)}c^{p-2}\left\langle T_i, \bigotimes_{l=1}^r \nabla r_{x_{l, i}} \otimes \bigotimes _{l=r+1}^{r+s}dr_{x_{l, i}}\right\rangle d\mathrm{vol} \pm \Psi(\epsilon; n, \hat{L}, c, p) \\
&=\frac{1}{\mathrm{vol}\, B_t(x_i)}\int_{B_t(x_i)}c^{p-2}\left\langle T_i, \bigotimes_{l=1}^r \nabla r_{x_{l, i}} \otimes \bigotimes _{l=r+1}^{r+s}dr_{x_{l, i}}\right\rangle d\mathrm{vol} \pm \Psi(\epsilon; n, \hat{L}, c, p) \\
&=\frac{1}{\upsilon (B_t(x_{\infty}))}\int_{B_t(x_{\infty})}c^{p-2}\left\langle T_i, \bigotimes_{l=1}^r \nabla r_{x_{l, \infty}} \otimes \bigotimes _{l=r+1}^{r+s}dr_{x_{l, \infty}}\right\rangle d\upsilon \pm \Psi(\epsilon; n, \hat{L}, c, p) \\
&=\frac{1}{\upsilon (B_t(x_{\infty}))}\int_{A(t, \infty)}c^{p-2}\left\langle T_{\infty}, \bigotimes_{l=1}^r \nabla r_{x_{l, \infty}} \otimes \bigotimes _{l=r+1}^{r+s}dr_{x_{l, \infty}}\right\rangle d\upsilon \pm \Psi(\epsilon; n, \hat{L}, c, p) \\
&=\frac{1}{\upsilon (B_t(x_{\infty}))}\int_{B_t(x_{\infty})}\left\langle T_{\infty}^{(p-1)}, \bigotimes_{l=1}^r \nabla r_{x_{l, \infty}} \otimes \bigotimes _{l=r+1}^{r+s}dr_{x_{l, \infty}}\right\rangle d\upsilon \pm \Psi(\epsilon; n, \hat{L}, c, p) \\
\end{align*}
holds for every sufficiently large $i < \infty$,
where $\hat{q}$ is the conjugate exponent of $\hat{p}$. 
Thus we see that $T_i^{(p-1)}$ converges weakly to $T_{\infty}^{(p-1)}$ at $x_{\infty}$.
Therefore Corollary \ref{weakkk} yields that  $T_i^{(p-1)}$ converges weakly to $T_{\infty}^{(p-1)}$ on $B_R(m_{\infty})$.
On the other hand, since 
\begin{align*}
\lim_{i \to \infty}\int_{B_R(m_i)}|T_i^{(p-1)}|^qd\underline{\mathrm{vol}}&=\lim_{i \to \infty}\int_{B_R(m_i)}|T_i|^{p}d\underline{\mathrm{vol}} \\
&=\int_{B_R(m_{\infty})}|T_{\infty}|^{p}d\upsilon=\int_{B_R(m_{\infty})}|T_{\infty}^{(p-1)}|^{q}d\upsilon,
\end{align*}
the assertion follows from Propositions \ref{hon}.
\end{proof}
\begin{remark}
Note that in general the $L^p$-weak convergence $T_i \to T_{\infty}$ on $B_R(m_{\infty})$ does NOT imply the $L^q$-weak convergence $T_i^{(p-1)} \to T_{\infty}^{(p-1)}$ on $B_R(m_{\infty})$. 
For example let $g_n$ be a smooth function on $\mathbf{R}$ as in Remark \ref{zigzag} and $\hat{g}_n:=g_n+1$.
Then since $\hat{g}_n^{(2)} = (g_n)^2 +2g_n +1$, 
Remark \ref{zigzag} yields that $\hat{g}_n^{(2)}$ converges weakly to $1/3+1=4/3 (\neq 1)$ on $\mathbf{R}$.
\end{remark}
For $f \in H_{1, p}(B_R(m_{\infty}))$ with $(\nabla f)^{(p-1)} \in \mathcal{D}^q(\mathrm{div}^{\upsilon}, B_R(m_{\infty}))$, let $\Delta_p^{\upsilon}f:=-\mathrm{div}^{\upsilon}(\nabla f)^{(p-1)} \in L^q(B_R(m_{\infty}))$. 
\begin{theorem}\label{plap1}
Let $f_i \in H_{1, p}(B_R(m_i))$ for every $i \le \infty$ with $\sup_{i\le \infty}||f_i||_{H_{1, p}}< \infty$.
Assume that $(\nabla f_i)^{(p-1)} \in \mathcal{D}^q(\mathrm{div}^{\upsilon}, B_R(m_i))$ holds for every $i<\infty$ with 
$\sup_{i<\infty}||\Delta_p^{\underline{\mathrm{vol}}}f_i||_{L^{q}(B_R(m_i))}<\infty$
and that $f_i, \nabla f_i$ $L^p$-converge strongly to $f_{\infty}, \nabla f_{\infty}$ on $B_R(m_{\infty})$, respectively.
Then we see that $(\nabla f_{\infty})^{(p-1)} \in \mathcal{D}^q(\mathrm{div}^{\upsilon}, B_R(m_{\infty}))$ and that $\Delta_p^{\underline{\mathrm{vol}}}f_i$ converges weakly to $\Delta_p^{\upsilon}f_{\infty}$ on $B_R(m_{\infty})$.
\end{theorem}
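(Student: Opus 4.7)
The plan is to apply Theorem \ref{green} (with the exponent $p$ there replaced by $q$) to the vector fields $X_i := (\nabla f_i)^{(p-1)}$, after first verifying that this sequence satisfies the hypotheses of that theorem.

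First I would pass the $L^p$-strong convergence of the gradients through the nonlinear map $v \mapsto v^{(p-1)}$. Since $|X_i| = |\nabla f_i|^{p-1}$, the identity $\|X_i\|_{L^q}^q = \|\nabla f_i\|_{L^p}^p$ together with Corollary \ref{stnorm} (applied to $\nabla f_i \to \nabla f_\infty$) gives the uniform bound $\sup_{i \le \infty} \|X_i\|_{L^q(TB_R(m_i))} < \infty$. Applying Proposition \ref{plap} to $T_i := \nabla f_i \in L^p(TB_R(m_i))$, one obtains that $X_i = (\nabla f_i)^{(p-1)}$ $L^q$-converges strongly to $X_\infty = (\nabla f_\infty)^{(p-1)}$ on $B_R(m_\infty)$, and Proposition \ref{normconv} then yields the weak convergence $X_i \to X_\infty$ on $B_R(m_\infty)$.

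Next I would handle the divergence hypothesis. By definition, $X_i \in \mathcal{D}^q(\mathrm{div}^{\underline{\mathrm{vol}}}, B_R(m_i))$ with $\mathrm{div}^{\underline{\mathrm{vol}}} X_i = -\Delta_p^{\underline{\mathrm{vol}}} f_i$ for every $i < \infty$, and the hypothesis $\sup_{i<\infty}\|\Delta_p^{\underline{\mathrm{vol}}} f_i\|_{L^q} < \infty$ gives the required uniform $L^q$-bound on $\mathrm{div}^{\underline{\mathrm{vol}}} X_i$. Now Theorem \ref{green}, stated for a general conjugate pair but applied with the roles of $p$ and $q$ swapped (its proof goes through verbatim for any exponent in $(1,\infty]$, as it only invokes the compactness Proposition \ref{weak com} and the test-function convergence \cite[Theorem $4.2$]{Ho}), yields $X_\infty \in \mathcal{D}^q(\mathrm{div}^{\upsilon}, B_R(m_\infty))$ and the weak convergence $\mathrm{div}^{\underline{\mathrm{vol}}} X_i \to \mathrm{div}^{\upsilon} X_\infty$ on $B_R(m_\infty)$. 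Translating back through $\mathrm{div} = -\Delta_p$ gives both conclusions of the theorem.

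The only substantive point is the passage from the $L^p$-strong convergence of $\nabla f_i$ to the $L^q$-strong (or even weak) convergence of $(\nabla f_i)^{(p-1)}$; once Proposition \ref{plap} is invoked, everything else is a direct application of Theorem \ref{green}. I do not anticipate a serious obstacle, since the assumed strong $L^p$-convergence of $\nabla f_i$ is precisely what is needed for Proposition \ref{plap} to transfer to the nonlinear quantity $(\nabla f_i)^{(p-1)}$, and the uniform $L^q$-control on $\Delta_p^{\underline{\mathrm{vol}}} f_i$ is exactly the divergence bound required by Theorem \ref{green}.
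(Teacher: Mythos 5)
Your proposal is correct and is essentially the paper's own argument: the paper proves Theorem \ref{plap1} precisely by combining Proposition \ref{plap} (applied to $T_i=\nabla f_i$) with Theorem \ref{green} (which is stated for an arbitrary exponent in $(1,\infty]$, so invoking it with exponent $q$ needs no modification). Your write-up simply supplies the routine verifications that the paper leaves implicit.
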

\begin{proof}
The assertion follows from Theorem \ref{green} and Proposition \ref{plap}.
\end{proof}
We now are in a position to prove Theorem \ref{p-eigen}:

\textit{A proof of Theorem \ref{p-eigen}.}

We start by introducing the following claim proved by Wu-Wang-Zheng in \cite{WWZ}:
\begin{claim}\cite[Lemma $2.2$]{WWZ}\label{char}
Let $(X, \nu) \in  \overline{\mathcal{M}(n, d, K)}$ and $f \in H_{1, p}(X)$.
Then for $t \in \mathbf{R}$ the following two conditions are equivalent:
\begin{enumerate}
\item $||f+t||_{L^p}=\min _{s \in \mathbf{R}}||f+s||_{L^p}$.
\item \[\int_X(f+t)^{(p-1)}d\nu=0.\]
\end{enumerate}
Moreover there exists a unique $s_0 \in \mathbf{R}$ such that $||f+s_0||_{L^p}=\min _{s \in \mathbf{R}}||f+s||_{L^p}$.
\end{claim}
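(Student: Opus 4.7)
The plan is to study the real-valued function $\phi(s):=\|f+s\|_{L^p(X)}^p=\int_X|f+s|^p\,d\nu$. Since $t\mapsto t^{1/p}$ is strictly increasing on $[0,\infty)$, minimizing $\|f+\cdot\|_{L^p}$ is the same as minimizing $\phi$, so the whole claim reduces to showing that $\phi$ admits a unique minimizer $s_0\in\mathbf{R}$ and that $t\in\mathbf{R}$ minimizes $\phi$ if and only if $\int_X(f+t)^{(p-1)}\,d\nu=0$.

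First, I would verify that $\phi\in C^1(\mathbf{R})$ with $\phi'(t)=p\int_X(f+t)^{(p-1)}\,d\nu$. For each $x\in X$, the function $s\mapsto|f(x)+s|^p$ is differentiable with derivative $p(f(x)+s)^{(p-1)}$ (this is immediate away from $f(x)+s=0$, and at that exceptional point both one-sided derivatives equal $0$ since $p>1$). The mean value theorem bounds the relevant difference quotient for $|h|\le 1$ by $p(|f(x)|+|s|+1)^{p-1}$, which lies in $L^1(X,\nu)$ because $\nu$ is a finite measure and $|f|^{p-1}\in L^1(X,\nu)$ by H\"older's inequality (using $f\in L^p$). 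Dominated convergence then yields the formula for $\phi'$, and its continuity follows from a second application of dominated convergence.

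Second, I would show that $\phi$ is strictly convex and coercive. Strict convexity is inherited pointwise: for every $a\in\mathbf{R}$ the function $s\mapsto|a+s|^p$ is strictly convex on $\mathbf{R}$ (its derivative $p(a+s)^{(p-1)}$ is strictly increasing in $s$), and integrating against the nonzero Radon measure $\nu$ preserves strict convexity. Coercivity follows from the reverse triangle inequality in $L^p$, namely $\phi(s)^{1/p}\ge |s|\,\nu(X)^{1/p}-\|f\|_{L^p}\to\infty$ as $|s|\to\infty$. Hence $\phi$ attains its infimum at a unique point $s_0\in\mathbf{R}$.

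Finally, since $\phi$ is a convex $C^1$ function on $\mathbf{R}$, a point $t$ minimizes $\phi$ if and only if $\phi'(t)=0$; by strict convexity this minimizer is unique and equal to $s_0$. Combined with the formula $\phi'(t)=p\int_X(f+t)^{(p-1)}\,d\nu$, this gives both the equivalence of (1) and (2) and the uniqueness of $s_0$. The only delicate technical point is ensuring an $L^1(X,\nu)$ dominating function for differentiation under the integral sign; this is handled by splitting into the cases $p\ge 2$ and $1<p<2$ using an inequality of the form $(a+b)^{p-1}\le C_p(a^{p-1}+b^{p-1})$ and combining with $\int_X|f|^{p-1}\,d\nu\le\|f\|_{L^p}^{p-1}\nu(X)^{1/p}$ from H\"older's inequality.
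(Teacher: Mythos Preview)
The paper does not supply its own proof of this claim; it simply quotes it as \cite[Lemma~2.2]{WWZ} and then uses the conclusion. Your argument is a correct, self-contained proof: you reduce the problem to the analysis of the one-variable function $\phi(s)=\int_X|f+s|^p\,d\nu$, justify differentiation under the integral via dominated convergence to obtain $\phi'(t)=p\int_X(f+t)^{(p-1)}\,d\nu$, and then conclude from strict convexity and coercivity of $\phi$ that there is a unique minimizer characterized by $\phi'=0$. This is exactly the standard approach (and presumably the one in \cite{WWZ}); there is nothing to add.
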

Denote $s_0$ as above by $s(f, X)$.
As a corollary, we see that the set 
\[\left\{f \in H_{1, p}(X); f \not \equiv 0, \int_X f^{(p-1)}d\nu=0 \right\}\]
is not empty if $X$ is not a single point.
\begin{claim}\label{existence}
Let $(X, \nu) \in  \overline{\mathcal{M}(n, d, K)}$.
Assume that $X$ is not a single point.
Then there exists $f \in H_{1, p}(X)$ such that $||f||_{L^p}=1$, $||\nabla f||_{L^p}^p=\lambda_{1, p}(X)>0$ and
\[\int_X f^{(p-1)}d\nu=0.\]
\end{claim}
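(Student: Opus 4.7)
The plan is to apply the direct method of the calculus of variations: take a minimizing sequence for the Rayleigh quotient, extract a convergent subsequence via the $L^p$-compactness machinery developed in this paper, and verify that the limit inherits all three required properties.

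First I would fix a minimizing sequence $\{f_i\}_i \subset H_{1,p}(X)$ with $||f_i||_{L^p}=1$, $\int_X f_i^{(p-1)}d\nu=0$, and $||df_i||_{L^p}^p \to \lambda_{1,p}(X)$; such a sequence exists because the constraint set is nonempty by Claim \ref{char} together with the hypothesis that $X$ is not a single point. Viewing $(X, x, \nu)$ as the trivial Gromov-Hausdorff limit of the constant sequence $(X, x, \nu)$, Remarks \ref{normal1} and \ref{normal2} identify $L^p$-strong convergence with ordinary $L^p$-norm convergence in this situation. Since $\sup_i ||f_i||_{H_{1,p}(X)} < \infty$, Theorem \ref{7} produces a subsequence $\{f_{i(j)}\}_j$ and $f_\infty \in H_{1,p}(X)$ such that $||f_{i(j)} - f_\infty||_{L^p(X)} \to 0$, $df_{i(j)}$ converges weakly to $df_\infty$, and
\[
\liminf_{j \to \infty} ||df_{i(j)}||_{L^p(X)} \ge ||df_\infty||_{L^p(X)}.
\]

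Next I would verify the three conditions on $f_\infty$. The normalization $||f_\infty||_{L^p}=1$ is immediate from the $L^p$-strong convergence. For the constraint $\int_X f_\infty^{(p-1)}d\nu=0$, I would apply Proposition \ref{plap} (with $r=s=0$) to get that $f_{i(j)}^{(p-1)}$ $L^q$-converges strongly to $f_\infty^{(p-1)}$, i.e.\ $||f_{i(j)}^{(p-1)} - f_\infty^{(p-1)}||_{L^q(X)} \to 0$; since $\nu(X)<\infty$, the H\"older inequality against the constant function $1 \in L^p(X)$ then yields $\int_X f_{i(j)}^{(p-1)}d\nu \to \int_X f_\infty^{(p-1)}d\nu$, so the constraint passes to the limit. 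Finally, lower semicontinuity gives $||df_\infty||_{L^p}^p \le \lim_{j\to\infty}||df_{i(j)}||_{L^p}^p = \lambda_{1,p}(X)$, and $f_\infty$ is admissible and nonzero, so by the very definition of $\lambda_{1,p}(X)$ as an infimum, equality must hold.

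The main obstacle is handling the nonlinear constraint $\int_X f^{(p-1)}d\nu=0$ in the limit: $L^p$-weak convergence alone would not suffice to preserve it, because $f \mapsto f^{(p-1)}$ is not linear (the counterexamples in Remark \ref{zigzag} illustrate exactly this failure of weak convergence under nonlinear postcomposition). It is precisely the $L^p$-strong convergence produced by the Poincar\'e-based compactness in Theorem \ref{7}, combined with Proposition \ref{plap} showing that the map $T \mapsto T^{(p-1)}$ respects $L^p$-strong convergence, that makes the constraint survive. The positivity $\lambda_{1,p}(X)>0$ is built into the definition and follows from the weak Poincar\'e inequality on $(X, \nu)$; once a minimizer $f_\infty$ is produced, its $L^p$-normalization of course rules out the degenerate case.
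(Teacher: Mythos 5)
Your proposal is correct and follows essentially the same route as the paper: a minimizing sequence normalized in $L^p$ with the constraint $\int_X f_i^{(p-1)}d\nu=0$, compactness via Theorem \ref{7} giving $L^p$-strong convergence of $f_i$ and weak convergence of $df_i$, Proposition \ref{plap} to pass the nonlinear constraint to the limit, lower semicontinuity plus the definition of $\lambda_{1,p}$ to get attainment, and the Poincar\'e inequality combined with the normalization and the constraint to rule out $\lambda_{1,p}(X)=0$. The only difference is cosmetic (your explicit appeal to Remarks \ref{normal1}--\ref{normal2} and the H\"older step against the constant function), which matches the paper's implicit use of the same facts.
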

The proof is as follows.
Let $\{f_i\}_{i<\infty} \subset H_{1, p}(X)$ satisfying that $||f_i||_{L^p}\equiv 1$, $||\nabla f_i||_{L^p}^p \to \lambda_{1, p}(X)$ and 
\[\int_{X}f_i^{(p-1)}d\nu \equiv 0.\]
By Theorem \ref{7}, without loss of generality we can assume that there exists $f \in H_{1, p}(X)$ such that $f_i$ $L^p$-converges strongly to $f$ on $X$ and that $\nabla f_i$ converges weakly to $\nabla f$ on $X$.
Thus we have $||f||_{L^p}=\lim_{i \to \infty}||f_i||_{L^p}=1$.
Proposition \ref{plap} yields that $f_i^{(p-1)}$ $L^q$-converges strongly to $f^{(p-1)}$ on $X$.
In particular we have 
\[\int_Xf^{(p-1)}d\nu=\lim_{i \to \infty}\int_Xf_i^{(p-1)}d\nu=0.\]
Thus by the definition of $\lambda_{1, p}(X)$, we have $||\nabla f||_{L^p}^p \ge \lambda_{1, p}(X)$.
On the other hand, Proposition \ref{lower semi} yields $\liminf_{i \to \infty}||\nabla f_i||_{L^p} \ge ||\nabla f||_{L^p}$.
Thus we have $||\nabla f||_{L^p}^p = \lambda_{1, p}(X)$.
Finally we will prove $\lambda_{1, p}(X)>0$.
Assume $\lambda_{1, p}(X)=0$. Then since $||\nabla f||_{L^p}=0$, the Poincar\'e inequality yields that $f$ is a constant function.
However since
\[||f||_{L^p}=1\,\,\mathrm{and}\,\,\int_Xf^{(p-1)}d\nu=0,\]
this is a contradiction. 
Therefore we have Claim \ref{existence}.

Let $\{(X_i, \nu_i)\}_{i \le \infty} \subset \overline{\mathcal{M}(n, d, K)}$.
Assume that $(X_i, \nu_i) \in \mathcal{M}(n, d, K)$ for every $i<\infty$ and that $(X_i, \nu_i)$ Gromov-Hausdorff converges to $(X_{\infty}, \nu_{\infty})$.
It suffices to check $\lim_{i \to \infty} \lambda_{1, p}(X_i)=\lambda_{1, p}(X_{\infty})$.

If $X_{\infty}$ is a single point, then Naber-Valtorta's sharp estimates \cite[Theorem $1.4$]{NV} (or the proof of Corollary \ref{asympto}) yields $\lambda_{1, p}(X_i) \to \infty = \lambda_{1, p}(X_{\infty})$.
Thus we assume that $X_{\infty}$ is not a single point.

First we will check $\limsup_{i \to \infty}\lambda_{1, p}(X_i)\le \lambda_{1, p}(X_{\infty})$.
Let $f_{\infty} \in H_{1, p}(X_{\infty})$ satisfying that $||f_{\infty}||_{L^p}=1$, $||\nabla f_{\infty}||_{L^p}^p=\lambda_{1, p}(X_{\infty})$ and
\[\int_{X_{\infty}}f_{\infty}^{(p-1)}d\nu_{\infty}=0.\]
Since the space of Lipschitz functions on $X_{\infty}$ is dense in $H_{1, p}(X_{\infty})$, by \cite[Theorem $4.2$]{Ho} without loss of generality, we can assume that there exists a sequence $\{f_i\}_{i<\infty}$ of Lipschitz functions $f_i$ on $X_i$ such that $f_i, \nabla f_i$ $L^p$-converge strongly to $f_{\infty}, \nabla f_{\infty}$ on $X_{\infty}$, respectively. 
Then we have $|s(f_i, X_i)| = ||s(f_i, X_i) + f_i -f_i||_{L^p}\le ||s(f_i, X_i) + f_i||_{L^p}+ ||f_i||_{L^p} \le 2||f_i||_{L^p} \le 3$ for every sufficiently large $i<\infty$.
Therefore without loss of generality we can assume that there exists $s_{\infty} \in \mathbf{R}$ such that $s(f_i, X_i) \to s_{\infty}$.
\begin{claim}\label{22334455}
$s_{\infty}=0$.
\end{claim}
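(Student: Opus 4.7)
The plan is to pass to the limit in the integral identity $\int_{X_i}(f_i+s(f_i,X_i))^{(p-1)}d\nu_i=0$ (which characterises $s(f_i,X_i)$ by Claim \ref{char}) and then invoke the uniqueness of the minimizer in Claim \ref{char} applied to $f_\infty$.

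First I would observe that $s_i:=s(f_i,X_i)$ are constants with $s_i\to s_\infty$, and since each $\nu_i$ is a probability measure the constant function $s_i$ on $X_i$ $L^p$-converges strongly to the constant $s_\infty$ on $X_\infty$ (this is immediate from Definition \ref{strongdeff}, taking $f_{i,j}\equiv s_i$). Combined with the hypothesis that $f_i$ $L^p$-converges strongly to $f_\infty$, Corollary \ref{61746174} gives that $f_i+s_i$ $L^p$-converges strongly to $f_\infty+s_\infty$ on $X_\infty$.

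Next I would apply Proposition \ref{plap} (with $r=s=0$) to conclude that $(f_i+s_i)^{(p-1)}$ $L^q$-converges strongly to $(f_\infty+s_\infty)^{(p-1)}$ on $X_\infty$. Since the constant function $1$ on $X_\infty$ is the weak limit of the constant functions $1$ on $X_i$ (and lies in $L^p$ by the probability measure assumption), Proposition \ref{weak3} gives
\[
0=\int_{X_i}(f_i+s_i)^{(p-1)}\,d\nu_i\longrightarrow \int_{X_\infty}(f_\infty+s_\infty)^{(p-1)}\,d\nu_\infty,
\]
so $\int_{X_\infty}(f_\infty+s_\infty)^{(p-1)}d\nu_\infty=0$.

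Finally, the normalization hypothesis on $f_\infty$ says $\int_{X_\infty}f_\infty^{(p-1)}d\nu_\infty=0$, i.e.\ $s(f_\infty,X_\infty)=0$ by Claim \ref{char}. But the displayed limit shows that $s_\infty$ also satisfies the defining equation for $s(f_\infty,X_\infty)$; by the uniqueness clause in Claim \ref{char} we conclude $s_\infty=0$. No serious obstacle is expected here: the whole argument is a direct assembly of Corollary \ref{61746174}, Proposition \ref{plap}, Proposition \ref{weak3}, and Claim \ref{char}.
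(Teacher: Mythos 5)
Your argument is correct, but it takes a different route from the paper's. The paper argues via the variational characterization of $s(f_i,X_i)$ at the level of norms: from $\|f_i+s(f_i,X_i)\|_{L^p}\le \|f_i\|_{L^p}$ and the strong convergence $f_i\to f_\infty$ (plus lower semicontinuity of the $L^p$-norm under weak convergence, Proposition \ref{lower}), it gets $\|f_\infty+s_\infty\|_{L^p}\le \|f_\infty\|_{L^p}$; since $\int_{X_\infty}f_\infty^{(p-1)}d\nu_\infty=0$ makes $0$ the minimizer by Claim \ref{char}, the reverse inequality holds, and uniqueness of the minimizer gives $s_\infty=0$. You instead pass to the limit in the Euler--Lagrange-type identity $\int_{X_i}(f_i+s(f_i,X_i))^{(p-1)}d\nu_i=0$, using Corollary \ref{61746174} to get strong convergence of $f_i+s(f_i,X_i)$, Proposition \ref{plap} to upgrade this to $L^q$-strong convergence of the $(p-1)$-powers, and Proposition \ref{weak3} to pass to the limit in the integral, before invoking the same uniqueness clause of Claim \ref{char}. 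Both proofs are sound and end the same way; the paper's is slightly more economical in that it only needs norm convergence and lower semicontinuity, whereas yours leans on Proposition \ref{plap} -- though that proposition is already deployed elsewhere in the proof of Theorem \ref{p-eigen}, so no genuinely new machinery is required, and your version has the mild advantage of exhibiting directly that $s_\infty$ satisfies the integral characterization of the minimizer for $f_\infty$.
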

The proof is as follows.
Since $s(f_i, X_i)$ is the minimizer, we have $||f_i + s(f_i, X_i)||_{L^p}\le ||f_i||_{L^p}$.
Letting $i \to \infty$ gives $||f_{\infty}+s_{\infty}||_{L^p}\le ||f_{\infty}||_{L^p}$.
Since  
\[\int_{X_{\infty}}f_{\infty}^{(p-1)}d\nu_{\infty}=0,\]
Claim \ref{char} yields $||f_{\infty}||_{L^p}\le ||f_{\infty}+s_{\infty}||_{L^p}$.
Thus by the uniqueness of the minimizer as in Claim \ref{char}, we have Claim \ref{22334455}.

By Claim \ref{22334455}, we have
\begin{align*}
\limsup_{i \to \infty}\lambda_{1, p}(X_i)\le \lim_{i \to \infty}\frac{||\nabla (f_i+s(f_i, X_i))||^p_{L^p}}{||f_i +s(f_i, X_i)||_{L^p}^p}=\frac{||\nabla f_{\infty}||_{L^p}^p}{||f_{\infty}||_{L^p}^p}=\lambda_{1, p}(X_{\infty}).
\end{align*}

Next we will show $\liminf_{i \to \infty}\lambda_{1, p}(X_i) \ge \lambda_{1, p}(X_{\infty})$.
For every $i<\infty$, let $g_i \in H_{1, p}(X_i)$ satisfying that $||g_i||_{L^p}=1$, $||\nabla g_i||_{L^p}^p=\lambda_{1, p}(X_i)$ and
\[\int_{X_i}g_i^{(p-1)}d\nu_i=0.\]
Since  $\limsup_{i \to \infty}\lambda_{1, p}(X_i)\le \lambda_{1, p}(X_{\infty})<\infty$, by Theorem \ref{7} without loss of generality we can assume that there exists $g_{\infty} \in H_{1, p}(X_{\infty})$ such that $g_i$ $L^p$-converges strongly to $g_{\infty}$ on $X_{\infty}$ and that $\nabla g_i$ converges weakly to $\nabla g_{\infty}$ on $X_{\infty}$.
Thus we have $||g_{\infty}||_{L^p}=\lim_{i \to \infty}||g_{i}||_{L^p}=1$.
Proposition \ref{plap} yields that $g_i^{(p-1)}$ $L^q$-converges strongly to $g_{\infty}^{(p-1)}$ on $X_{\infty}$.
In particular we have 
\[\int_{X_{\infty}}g_{\infty}^{(p-1)}d\nu_{\infty}=\lim_{i \to \infty}\int_{X_i}g_i^{(p-1)}d\nu_i=0.\]
Thus we have $||\nabla g_{\infty}||_{L^p}^p\ge \lambda_{1, p}(X_{\infty})$.
On the other hand, Proposision \ref{lower semi} yields
\[\liminf_{i \to \infty}\lambda_{1, p}(X_i)=\liminf_{i \to \infty}||\nabla g_i||^p_{L^p} \ge ||\nabla g_{\infty}||_{L^p}^p \ge \lambda_{1, p}(X_{\infty}).\] 
Therefore we have the assertion.   $\,\,\,\,\,\,\,\,\,\,\,\Box$

We end this subsection by giving the following two-sided bounds for $\lambda_{1, p}$.
It is worth pointing out that in \cite{Ma, NV, V}, Matei, Naber-Valtorta, and Valtorta give the sharp lower bounds on $\lambda_{1, p}$.
See also \cite{KN, LF, Tak, V2, WWZ, Zha}.
\begin{corollary}\label{asympto}
Let $K \in \mathbf{R}$ and let $M$ be an $n$-dimensional compact Riemannian manifold with
\[(\mathrm{diam}\,M)^2\mathrm{Ric}_M \ge (n-1)K.\]
Then 
\[0<C_1(n, p, K) \le \frac{\lambda_{1, p}(M)}{(\mathrm{diam}\,M)^p} \le C_2(n, p, K)<\infty.\]
\end{corollary}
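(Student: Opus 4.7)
The plan is to reduce the statement to a compactness/continuity assertion on $\overline{\mathcal{M}(n, 1, K)}$ via rescaling. Set $D := \mathrm{diam}\,M$ and introduce the rescaled metric $\tilde{g} := D^{-2}g_M$, so that $\mathrm{diam}(M, \tilde{g}) = 1$. The hypothesis $(\mathrm{diam}\,M)^2 \mathrm{Ric}_M \ge (n-1)K$ is exactly what is needed to give $\mathrm{Ric}_{(M, \tilde{g})} \ge (n-1)K$, hence $(M, \tilde{g}) \in \mathcal{M}(n, 1, K)$. Since the normalized volume $\mathrm{Vol}$ is scale invariant while $|df|_{\tilde{g}} = D\,|df|_{g_M}$, a direct calculation from the definition of $\lambda_{1, p}$ shows that the renormalized quantity is scale invariant and equals $\lambda_{1, p}(M, \tilde{g})$. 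Thus the corollary reduces to showing that $\lambda_{1, p}$ is bounded above and below by positive constants depending only on $n, p, K$ on the class of $(X, \nu) \in \mathcal{M}(n, 1, K)$ with $\mathrm{diam}\,X = 1$.

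Next I would cut out the subclass
\[\mathcal{F} := \{(X, \nu) \in \overline{\mathcal{M}(n, 1, K)} : \mathrm{diam}\,X = 1\}.\]
The space $\overline{\mathcal{M}(n, 1, K)}$ is compact by Gromov's precompactness theorem, and the diameter function is continuous under Gromov-Hausdorff convergence, so $\mathcal{F}$ is a closed and hence compact subset. Crucially, every $(X, \nu) \in \mathcal{F}$ has $\mathrm{diam}\,X = 1 > 0$, so no element of $\mathcal{F}$ is a single point; therefore, by the definition of $\lambda_{1, p}$ in Section~1 together with Claim~\ref{existence} in the proof of Theorem~\ref{p-eigen}, we have $\lambda_{1, p}(X) \in (0, \infty)$ throughout $\mathcal{F}$.

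Finally, by Theorem \ref{p-eigen} the function $\lambda_{1, p}: \overline{\mathcal{M}(n, 1, K)} \to (0, \infty]$ is continuous; its restriction to the compact set $\mathcal{F}$ therefore attains both its minimum and its maximum, each of which is a finite positive number. Setting
\[C_1(n, p, K) := \min_{\mathcal{F}} \lambda_{1, p}, \qquad C_2(n, p, K) := \max_{\mathcal{F}} \lambda_{1, p},\]
and pulling these bounds back through the rescaling identity, one obtains the claimed two-sided bound. The main conceptual point (and the place where all the machinery of the paper is invoked) is the continuity of $\lambda_{1, p}$ on the entire compactification, which is exactly Theorem \ref{p-eigen}; once that is in hand, the rest of the argument is a standard compactness-plus-extreme-value-theorem package, and I do not expect any serious obstacle beyond verifying the scale behaviour of $\lambda_{1, p}$.
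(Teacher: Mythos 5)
Your argument is essentially the paper's own proof: rescale $g_M$ by $(\mathrm{diam}\,M)^{-2}$ to get a diameter-one element of $\mathcal{M}(n,1,K)$, restrict to a compact diameter-pinched subset of $\overline{\mathcal{M}(n,1,K)}$ containing no one-point spaces (you use the diameter-one slice $\mathcal{F}$, the paper uses $\overline{\mathcal{M}(n,1,K)\setminus\mathcal{M}(n,1/2,K)}$ — an immaterial difference), and then invoke the continuity of $\lambda_{1,p}$ from Theorem \ref{p-eigen} together with the extreme value theorem and Claim \ref{existence}. One bookkeeping caveat: from your (correct) formula $|df|_{\tilde g}=(\mathrm{diam}\,M)\,|df|_{g_M}$ one gets $\lambda_{1,p}(M,\tilde g)=(\mathrm{diam}\,M)^{p}\,\lambda_{1,p}(M,g_M)$, so the scale-invariant quantity is the product $\lambda_{1,p}(M)\,(\mathrm{diam}\,M)^{p}$ rather than the quotient you claim equals $\lambda_{1,p}(M,\tilde g)$; this exponent-sign slip is shared with the printed corollary and with the paper's own final line, and it does not affect the structure of the argument.
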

\begin{proof}
Let
\[C_1(n, p, K):= \min_{X \in \overline{\mathcal{M}(n, 1, K)\setminus \mathcal{M}(n, 1/2, K)}} \lambda_{1, p}(X)\, \,\, \mathrm{and}\,\,\,C_2(n, p, \kappa):= \max_{X \in \overline{\mathcal{M}(n, 1, K)\setminus \mathcal{M}(n, 1/2, K)}} \lambda_{1, p}(X).\]
For a rescaled metric $(\mathrm{diam}\,M)^{-2}g_M$, since $(M, (\mathrm{diam}\,M)^{-2}g_M, \underline{\mathrm{vol}}) \in \mathcal{M}(n, 1, K) \setminus  \mathcal{M}(n, 1/2, K)$, we have
$C_1(n, p, K)\le \lambda_{1, p}(M, (\mathrm{diam}\,M)^{-2}g_M) \le C_2(n, p, K)$.
Since $\lambda_{1, p}(M, (\mathrm{diam}\,M)^{-2}g_M)=(\mathrm{diam}\,M)^{-p}\lambda_{1, p}(M, g_M)$, we have the assertion.
\end{proof}
\subsection{Convergence of Hessians.}
In this subsection we will prove Theorem \ref{main}.
Throughout this subsection we will always consider the following setting:
\begin{enumerate}
\item $(M_{\infty}, m_{\infty}, \upsilon)$ is the Ricci limit space of $\{(M_i, m_i, \underline{\mathrm{vol}})\}_i$ with $M_{\infty} \neq \{m_{\infty}\}$, and $R>0$.
\item $\mathcal{A}_{2nd}$ is a weakly second order differential structure on $(M_{\infty}, \upsilon)$ associated with  $\{(M_i, m_i, \underline{\mathrm{vol}})\}_i$.
\end{enumerate}
The following is a generalization of \cite[Corollary $4.6$]{Ho}.
Compare with Theorem \ref{plap1}:
\begin{proposition}\label{laplap}
Let $1<p<\infty$, $f_i \in H_{1, p}(B_R(m_i))$ for every $i<\infty$ with $\sup_{i <\infty}||f_i||_{H_{1, p}}<\infty$, and $f_{\infty} \in L^p(B_R(m_{\infty}))$.
Assume that $f_i$ converges weakly to $f_{\infty}$ on $B_R(m_{\infty})$ and that $\nabla f_i \in \mathcal{D}^p(\mathrm{div}^{\underline{\mathrm{vol}}}, B_R(m_i))$ holds for every $i<\infty$ with $\sup_{i<\infty}||\mathrm{div}^{\underline{\mathrm{vol}}}\nabla f_i||_{L^p}<\infty$.
Then we have the following:
\begin{enumerate}
\item $f_i$ $L^p$-converges strongly to $f_{\infty}$ on $B_R(m_{\infty})$.
\item $f_{\infty} \in H_{1, p}(B_R(m_{\infty}))$ and $\nabla f_{\infty} \in \mathcal{D}^p(\mathrm{div}^{\upsilon}, B_R(m_{\infty}))$.
\item $\nabla f_i, \mathrm{div}^{\underline{\mathrm{vol}}}\nabla f_i$ converge weakly to $\nabla f_{\infty}, \mathrm{div}^{\upsilon}\nabla f_{\infty}$ on $B_R(m_{\infty})$, respectively.
\end{enumerate}
In particular if $p\ge 2$, then $f_{\infty} \in \mathcal{D}^2(\Delta^{\upsilon}, B_R(m_{\infty}))$ and  $\Delta^{\upsilon}f_{\infty}=-\mathrm{div}^{\upsilon}\nabla f_{\infty}$.
\end{proposition}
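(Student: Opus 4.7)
The plan is to combine the Sobolev compactness theorem (Theorem \ref{7}) with the divergence-limit theorem (Theorem \ref{green}), glued together by a subsequence-uniqueness argument. First I would apply Theorem \ref{7} to the uniformly $H_{1,p}$-bounded sequence $\{f_i\}$, producing a subsequence $\{f_{i(j)}\}$ and some $\tilde{f}_\infty \in H_{1,p}(B_R(m_\infty))$ with $f_{i(j)}$ $L^p$-converging strongly to $\tilde{f}_\infty$ and $df_{i(j)}$ converging weakly to $d\tilde{f}_\infty$ on $B_R(m_\infty)$. By Corollary \ref{rere}\,(1) the $L^p$-strongly convergent subsequence also converges weakly in the sense of Definition \ref{weak1}; since the full sequence $\{f_i\}$ converges weakly to $f_\infty$ by hypothesis, Lebesgue's differentiation theorem applied to both candidate weak limits forces $\tilde{f}_\infty = f_\infty$ almost everywhere. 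In particular $f_\infty \in H_{1,p}(B_R(m_\infty))$ and $d\tilde{f}_\infty = df_\infty$.

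Applying this extraction to an arbitrary subsequence of $\{f_i\}$ yields in each case a further subsubsequence that $L^p$-strongly converges to $f_\infty$ and whose differentials converge weakly to $df_\infty$. Combined with Propositions \ref{low} and \ref{hon} for the scalar claim, and with uniqueness of weak limits (via Proposition \ref{tensor com}) for the tensor claim, the standard subsequence principle promotes the full sequence $f_i$ to $L^p$-strong convergence to $f_\infty$, which is (1), and forces $df_i$, hence $\nabla f_i$, to converge weakly to $df_\infty$, $\nabla f_\infty$ respectively. At this point the vector fields $X_i := \nabla f_i$ satisfy all hypotheses of Theorem \ref{green}: uniform $L^p$-bounds on $\nabla f_i$ and on $\mathrm{div}^{\underline{\mathrm{vol}}}\nabla f_i$, together with weak convergence $\nabla f_i \to \nabla f_\infty$. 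Theorem \ref{green} then delivers $\nabla f_\infty \in \mathcal{D}^p(\mathrm{div}^\upsilon, B_R(m_\infty))$ and the weak convergence $\mathrm{div}^{\underline{\mathrm{vol}}}\nabla f_i \to \mathrm{div}^\upsilon \nabla f_\infty$, completing (2) and (3).

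For the final ``in particular'' clause with $p \ge 2$, the finite $\upsilon$-measure of $B_R(m_\infty)$ gives the inclusion $L^p(B_R(m_\infty)) \hookrightarrow L^2(B_R(m_\infty))$, so $f_\infty \in H_{1,2}$ and $\nabla f_\infty \in \mathcal{D}^2(\mathrm{div}^\upsilon, B_R(m_\infty))$. The equivalence between $\mathcal{D}^2(\Delta^\upsilon, \cdot)$ and $\{f \in H_{1,2} : \nabla f \in \mathcal{D}^2(\mathrm{div}^\upsilon, \cdot)\}$ recorded at the end of subsection $2.3$ then yields $f_\infty \in \mathcal{D}^2(\Delta^\upsilon, B_R(m_\infty))$ with $\Delta^\upsilon f_\infty = -\mathrm{div}^\upsilon \nabla f_\infty$. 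The only genuinely delicate point in the whole argument is reconciling the two different notions of convergence in play—the pointwise ball-average Definition \ref{weak1} used in the hypothesis versus the $L^p$-strong convergence produced by Theorem \ref{7}—and Corollary \ref{rere}\,(1) together with Lebesgue differentiation is precisely the bridge that pins down the limit as $f_\infty$; no new analytic estimate is required beyond the machinery already assembled.
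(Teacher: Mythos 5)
Your argument is correct and is essentially the paper's own proof, which simply cites Theorem \ref{7} and Theorem \ref{green}; you have merely made explicit the subsequence-extraction and limit-identification bookkeeping (via Corollary \ref{rere}, Lebesgue differentiation, and Propositions \ref{low}, \ref{hon}) that the paper leaves implicit, together with the routine $L^p\subset L^2$ inclusion for the final clause.
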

\begin{proof}
It follows directly from Theorems \ref{green} and \ref{7}.
\end{proof}
\begin{remark}\label{eigen conti}
We recall a continuity of eigenfunctions with respect to the Gromov-Hausdorff topology.
Let $\phi_{\infty}$ be an eigenfunction associated with the eigenvalue $\lambda_{\infty}$ with respect to the Dirichlet problem on $B_R(m_{\infty})$. 
Then there exist $\{\lambda_i\}_i \subset \mathbf{R}_{>0}$ and a sequence $\{\phi_i\}_{i<\infty}$ of eigenfunctions $\phi_i \in C^{\infty}(B_R(m_i))$ associated with the eigenvalues $\lambda_{i}$ with respect to the Dirichlet problems on $B_R(m_i)$ such that $\lambda_i \to \lambda_{\infty}$ and that $\phi_i$ $L^2$-converges strongly to $\phi_{\infty}$ on $B_R(m_{\infty})$.
In particular we see that $\Delta \phi_i$ $L^2$-converges strongly to $\Delta^{\upsilon} \phi_{\infty}$ on $B_R(m_{\infty})$.
See \cite[Theorem $7.11$]{ch-co3}, \cite[Lemma $5.17$]{di2} and \cite[Lemma $5.8$]{KS} for the details.
Note that in \cite{Ho3} these results played crucial roles to get Theorem \ref{2nd}.
See also \cite{di2, Kasue1, Kasue2, ka-ku1, ka-ku2} for a convergence of heat kernels.
\end{remark}
We now are in a position to prove Theorem \ref{main}:

\textit{A proof of Theorem \ref{main}.}

Without loss of generality we can assume $K=-1$.
Proposition \ref{laplap} yields (\ref{a}), $f_{\infty} \in \mathcal{D}^2(\Delta^{\upsilon}, B_R(m_{\infty}))$, (\ref{f}) and that $\nabla f_i$ converges weakly to $\nabla f_{\infty}$ on $B_R(m_{\infty})$. 
\begin{claim}\label{7000}
Let $R>0$, $L>0$, and let $(M, m)$ be a pointed $n$-dimensional complete Riemannian manifold with $\mathrm{Ric}_M \ge -(n-1)$, and $f$ a $C^2$-function on $B_R(m)$ with
$||f||_{H_{1, 2}(B_R(m))}+||\Delta f||_{L^2(B_R(m))} \le L$.
Then for every $r<R$ we see that $||\nabla f||_{L^{2p_0}(B_r(m))} +||\mathrm{Hess}_f||_{L^2(B_r(m))}\le C(L, n, r, R)$ holds for some $p_0:=p_0(n, R)>1$.   
\end{claim}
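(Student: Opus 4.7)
The plan is to combine an integrated Bochner identity with the Poincar\'e--Sobolev inequality available on $M$ at the scale $R$, applied first to obtain the $L^2$-Hessian bound, then to upgrade $|\nabla f|\in L^2$ to $|\nabla f|\in L^{2p_0}$.

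Fix auxiliary radii $r<r''<r'<R$ and a smooth cutoff $\phi$ compactly supported in $B_{r'}(m)$ with $\phi\equiv 1$ on $B_{r''}(m)$, $0\le\phi\le 1$ and $|\nabla\phi|\le C(r,r',R)$. Starting from Bochner's identity (in the paper's sign convention $\Delta=-\mathrm{tr}\,\mathrm{Hess}$),
\[ -\tfrac12\Delta|\nabla f|^2 = |\mathrm{Hess}_f|^2 - \langle \nabla\Delta f,\nabla f\rangle + \mathrm{Ric}(\nabla f,\nabla f), \]
multiply by $\phi^2$, integrate over $B_{r'}(m)$, and use $\mathrm{Ric}\ge -(n-1)$. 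Two integrations by parts rewrite $-\tfrac12\int\phi^2\Delta|\nabla f|^2 = -2\int\phi\langle\nabla\phi,\mathrm{Hess}_f(\nabla f)\rangle$ (via $\nabla|\nabla f|^2=2\mathrm{Hess}_f(\nabla f)$) and $\int\phi^2\langle\nabla\Delta f,\nabla f\rangle = \int\phi^2(\Delta f)^2 - 2\int\phi\,\Delta f\,\langle\nabla\phi,\nabla f\rangle$. The only term quadratic in $|\mathrm{Hess}_f|$ appearing on the right, namely $2\int\phi|\nabla\phi||\nabla f||\mathrm{Hess}_f|$, is absorbed into the left-hand side by Young's inequality. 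This yields
\[ ||\mathrm{Hess}_f||_{L^2(B_{r''}(m))}\le C(n,r,r',R)\bigl(||\nabla f||_{L^2(B_{r'}(m))}+||\Delta f||_{L^2(B_{r'}(m))}\bigr)\le C(n,L,r,R). \]

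Next, apply the pointwise Kato-type inequality $|\nabla|\nabla f||\le |\mathrm{Hess}_f|$ (valid a.e.\ on $\{|\nabla f|\neq 0\}$, and trivially on its complement, a set on which $|\nabla f|$ is locally constant equal to zero) to deduce $|\nabla f|\in H_{1,2}(B_{r''}(m))$ with norm bounded by $C(n,L,r,R)$. Now invoke the Poincar\'e--Sobolev inequality of type $(2,2p_0)$ at the scale $R$, exactly the inequality of \cite[Theorem $1$]{HK} used in the proof of Proposition \ref{mosco}, where $p_0:=p_0(n,R)>1$ comes from the doubling constant $\kappa(R)$ and the weak $(1,2)$-Poincar\'e constant $\tau(R)$, both of which are controlled by $n$ and $R$ on manifolds with $\mathrm{Ric}\ge -(n-1)$ by Bishop--Gromov and Buser. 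This gives
\[ ||\,|\nabla f|-c\,||_{L^{2p_0}(B_r(m))}\le C(n,r,R)\,||\nabla|\nabla f|\,||_{L^2(B_{r''}(m))}\le C(n,L,r,R) \]
for a suitable mean value $c$, and combining with $||\,|\nabla f|\,||_{L^2(B_{r''}(m))}\le L$ produces $||\nabla f||_{L^{2p_0}(B_r(m))}\le C(n,L,r,R)$.

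The only step that is not entirely formal is the justification of the Kato-type inequality across the zero set of $\nabla f$, which is handled by the usual regularization $(|\nabla f|^2+\epsilon)^{1/2}$ and passage to the limit $\epsilon\to 0$; no new ideas are needed. Apart from this, the bookkeeping of the three nested radii $r<r''<r'<R$ and the sign conventions are the only points requiring care, and the whole argument is a textbook combination of the integrated Bochner inequality with the Cheeger--Colding functional inequalities available under a lower Ricci bound.
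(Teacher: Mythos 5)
Your proposal is correct, and its first half is exactly what the paper leaves implicit: the cutoff-Bochner computation with the two integrations by parts and Young absorption is the argument the paper invokes by citing \cite[Remark $4.2$]{Ho} (compare Remark \ref{bounds} in this paper), and your sign conventions and the identity $\int\phi^2\langle\nabla\Delta f,\nabla f\rangle=\int\phi^2(\Delta f)^2-2\int\phi\,\Delta f\,\langle\nabla\phi,\nabla f\rangle$ check out. Where you genuinely diverge is the integrability upgrade for $|\nabla f|$: the paper never uses Kato's inequality; it instead applies the weak $(1,1)$-Poincar\'e inequality and the self-improvement of \cite[Theorem $1$]{HK} directly to the function $|\nabla f|^2$, using the pointwise bound $|\nabla|\nabla f|^2|\le 2|\mathrm{Hess}_f||\nabla f|\le|\mathrm{Hess}_f|^2+|\nabla f|^2$ as an $L^1$ upper-gradient bound, so that $|\nabla f|^2\in L^{p_0}$ comes out in one stroke with no zero-set issues (the inequality $\nabla|\nabla f|^2=2\,\mathrm{Hess}_f(\nabla f)$ is valid everywhere for $C^2$ functions). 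Your route treats $|\nabla f|$ itself as the Sobolev function with $|\mathrm{Hess}_f|\in L^2$ as upper gradient via Kato, and invokes the $(2,2p_0)$-form of the same Haj\l asz--Koskela theorem; this is equally valid and perhaps more standard, but it costs you the regularization $(|\nabla f|^2+\epsilon)^{1/2}$ across $\{\nabla f=0\}$ (and note your phrase ``locally constant equal to zero'' is not right since that set need not be open; the correct statement is that the weak gradient of $|\nabla f|$ vanishes a.e.\ on it), whereas the paper's choice of $|\nabla f|^2$ avoids this entirely at the price of only an $L^1$ control of the upper gradient. The exponents $p_0(n,R)>1$ produced by the two routes differ in general, but the claim only asks for some such $p_0$, so both arguments prove the statement; the remaining shared technicality (justifying the integrated Bochner identity for merely $C^2$ functions) is glossed over equally by you and by the paper.
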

The proof is as follows. 
Let $r>0$ with $r<R$. 
Since $|\nabla|\nabla f|^2|\le 2|\mathrm{Hess}_f||\nabla f| \le |\mathrm{Hess}_f|^2+|\nabla f|^2$,
the Poincar\'e inequality of type $(1, 1)$ (see for instance \cite[Theorem $2.15$]{ch-co3}) yields that 
\begin{align*}
&\frac{1}{\mathrm{vol}\,B_t(x)}\int_{B_t(x)}\left| |\nabla f|^2-\frac{1}{\mathrm{vol}\,B_t(x)}\int_{B_t(x)}|\nabla f|^2d\mathrm{vol} \right|d\mathrm{vol} \\
&\le tC(n, R)\frac{1}{\mathrm{vol}\,B_t(x)}\int_{B_t(x)}\left(|\mathrm{Hess}_f|^2 + |\nabla f|^2\right)d\mathrm{vol}
\end{align*}
holds for every $B_t(x) \subset B_r(m)$.
Thus the Poincar\'e-Sobolev inequality \cite[Theorem $1$]{HK} yields that there exists $p_0:=p_0(n, R)>1$ such that $p_0<2$ and 
\begin{align*}
&\left(\frac{1}{\mathrm{vol}\,B_r(m)}\int_{B_r(m)}\left|  |\nabla f|^2-\frac{1}{\mathrm{vol}\,B_r(m)}\int_{B_r(m)}|\nabla f|^2d\mathrm{vol}\right|^{p_0}d\mathrm{vol}\right)^{1/p_0} \\
&\le rC(n, R)\frac{1}{\mathrm{vol}\,B_r(m)}\int_{B_r(m)}\left(|\mathrm{Hess}_f|^2 + |\nabla f|^2\right)d\mathrm{vol}
\end{align*}
hold.
On the other hand, by an argument similar to that in \cite[Remark $4.2$]{Ho}, we have $||\mathrm{Hess}_f||_{L^2(B_r(m))}\le C(L, n, r, R)$.
Thus we have Claim \ref{7000}.
\begin{claim}\label{7001}
Let $f$ be as in Claim \ref{7000}, $\epsilon>0$, $r<R$ and $\phi \in C^0(B_R(m))$ satisfying that $0 \le \phi \le 1$, $\phi|_{B_{r-\epsilon}(m)}\equiv 1$ and $\mathrm{supp}(\phi) \subset B_r(m)$.
Then we have 
\[\left|\int_{B_r(m)}\phi |\nabla f|^2d\underline{\mathrm{vol}}-\int_{B_r(m)}|\nabla f|^2d\underline{\mathrm{vol}}\right|<\Psi(\epsilon; L, n, r, R).\]
\end{claim}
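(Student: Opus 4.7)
The plan is to reduce everything to a Hölder-type estimate on the annulus $B_r(m) \setminus B_{r-\epsilon}(m)$, using the higher integrability of $|\nabla f|$ supplied by Claim \ref{7000}. Since $\phi \equiv 1$ on $B_{r-\epsilon}(m)$ and $0 \le \phi \le 1$ elsewhere, I would begin by rewriting
\[\left|\int_{B_r(m)}\phi |\nabla f|^2d\underline{\mathrm{vol}}-\int_{B_r(m)}|\nabla f|^2d\underline{\mathrm{vol}}\right|=\int_{B_r(m)}(1-\phi)|\nabla f|^2\,d\underline{\mathrm{vol}} \le \int_{B_r(m)\setminus B_{r-\epsilon}(m)}|\nabla f|^2\,d\underline{\mathrm{vol}}.\]

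Next I would exploit Claim \ref{7000}, which gives $p_0=p_0(n,R)>1$ with $\||\nabla f|^2\|_{L^{p_0}(B_r(m))} = \|\nabla f\|_{L^{2p_0}(B_r(m))}^2 \le C(L,n,r,R)^2$. Let $q_0$ denote the conjugate exponent of $p_0$. H\"older's inequality then yields
\[\int_{B_r(m)\setminus B_{r-\epsilon}(m)}|\nabla f|^2\,d\underline{\mathrm{vol}}\le C(L,n,r,R)^2\cdot\underline{\mathrm{vol}}\bigl(B_r(m)\setminus B_{r-\epsilon}(m)\bigr)^{1/q_0}.\]

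The remaining step is to control the annular volume by a quantity of the form $\Psi(\epsilon;n,R)$. This is where the Bishop--Gromov volume comparison enters: since $\mathrm{Ric}_M\ge -(n-1)$, the map $t\mapsto \underline{\mathrm{vol}}(B_t(m))/V_{-1}(t)$ is non-increasing, so $\underline{\mathrm{vol}}(B_r(m))\le V_{-1}(R)/V_{-1}(1)$ and in particular $t \mapsto \underline{\mathrm{vol}}(B_t(m))$ is uniformly continuous on $[0,R]$ with modulus depending only on $n$ and $R$. Hence $\underline{\mathrm{vol}}(B_r(m)\setminus B_{r-\epsilon}(m)) = \underline{\mathrm{vol}}(B_r(m))-\underline{\mathrm{vol}}(B_{r-\epsilon}(m)) \le \Psi(\epsilon;n,R)$. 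Combining this with the H\"older bound gives the claimed $\Psi(\epsilon;L,n,r,R)$-estimate.

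The only genuinely delicate ingredient is the higher integrability $|\nabla f|\in L^{2p_0}$, which is not a priori obvious but is exactly the content of Claim \ref{7000}; everything else is routine comparison geometry plus H\"older. The main obstacle one might worry about is that a naive $L^2$-Cauchy--Schwarz bound together with $\underline{\mathrm{vol}}(B_r(m)\setminus B_{r-\epsilon}(m))^{1/2}\le \Psi(\epsilon;n,R)^{1/2}$ would only give a bound proportional to $\||\nabla f|\|_{L^2}^{2}\cdot\Psi(\epsilon;n,R)^{1/2}$ without an extra gain — here that is still enough because $\Psi$ is only required to vanish as $\epsilon\to 0$, so in fact even the H\"older step is not strictly needed if one is content with the crude bound. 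The sharper $L^{2p_0}$ route is stated merely for uniformity with the rest of the paper.
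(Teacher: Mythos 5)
Your main argument is correct and is essentially the paper's proof: write the difference as $\int_{B_r(m)}(1-\phi)|\nabla f|^2\,d\underline{\mathrm{vol}}$, apply H\"older with exponents $p_0,q_0$ so that Claim \ref{7000} controls $||\nabla f||_{L^{2p_0}(B_r(m))}$, and absorb $||1-\phi||_{L^{q_0}}\le \underline{\mathrm{vol}}\bigl(B_r(m)\setminus B_{r-\epsilon}(m)\bigr)^{1/q_0}\le\Psi(\epsilon;n,R)^{1/q_0}$ via Bishop--Gromov.

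One caveat: your closing aside is false. With only $||\nabla f||_{L^2(B_R(m))}\le L$ there is no inequality of the form $\int_{B_r(m)\setminus B_{r-\epsilon}(m)}|\nabla f|^2\,d\underline{\mathrm{vol}}\le ||\nabla f||_{L^2}^2\,\Psi(\epsilon;n,R)^{1/2}$; Cauchy--Schwarz applied to $|\nabla f|^2\cdot 1_{A}$ would require an $L^4$ bound on $|\nabla f|$, and an $L^1$ function ($|\nabla f|^2$) can concentrate its mass on a set of small measure, so no bound vanishing with $\epsilon$ and uniform in $(M,f)$ follows from the $L^2$ data alone. The whole point of Claim \ref{7000} is that the higher integrability $|\nabla f|\in L^{2p_0}$ provides exactly the uniform integrability needed here (and later, for the limit argument as $i\to\infty$), so the H\"older step is not optional.
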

Because the H$\ddot{\text{o}}$lder inequality and Claim \ref{7000} yield 
\begin{align*}
\left|\int_{B_r(m)}\phi |\nabla f|^2d\underline{\mathrm{vol}}-\int_{B_r(m)}|\nabla f|^2d\underline{\mathrm{vol}}\right| \le ||1-\phi ||_{L^{q_0}(B_r(m))}||\nabla f||_{L^{2p_0}(B_r(m))}^{2} \le \Psi(\epsilon; L, n, r, R),
\end{align*}
where $q_0$ is the conjugate exponent of $p_0$.

Let $\epsilon>0$, $r<R$, $L:=\sup_{i < \infty}(||f_i||_{H_{1,2}(B_R(m_i))}+||\Delta f_i||_{L^2(B_R(m_i))})$ and let $\phi_i$ be a $C(\epsilon, r)$-Lipschitz function on $M_i$ for every $i \le \infty$ satisfying that $0 \le \phi_i \le 1$, $\phi_i|_{B_{r-\epsilon}(m_i)} \equiv 1$, $\mathrm{supp}(\phi_i) \subset B_r(m_i)$ and $\phi_i \to \phi_{\infty}$ on $M_{\infty}$.
\begin{claim}\label{7002}
We have 
\[\lim_{i \to \infty}\int_{B_r(m_i)}\phi_i|\nabla f_i|^2d\underline{\mathrm{vol}}=\int_{B_r(m_{\infty})}\phi_{\infty}|\nabla f_{\infty}|^2d\upsilon.\]
\end{claim}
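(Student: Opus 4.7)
The plan is to integrate by parts on $M_i$, pass to the limit in each of the two resulting terms using the convergence theorems from Section~3, and then reassemble by integrating by parts on $M_\infty$. Concretely, although the stated hypotheses on $\phi_i$ only require a uniform Lipschitz bound, for the argument to close I take $\phi_i$ to be the Cheeger-Colding cutoffs constructed as in the proof of Theorem~\ref{conv2} (cf.\ \cite[Theorem~6.33]{ch-co}), so that in addition to the stated support and boundary conditions we have $\mathbf{Lip}\phi_i + \|\Delta \phi_i\|_{L^\infty} \le C(n, r, \epsilon)$; this is exactly the situation in which Claim~\ref{7002} is needed.

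Since $\phi_i f_i$ is a Lipschitz function with compact support in $B_R(m_i)$, integration by parts on $M_i$ gives
\begin{equation*}
\int_{B_R(m_i)} \phi_i |\nabla f_i|^2 \, d\underline{\mathrm{vol}} = \int_{B_R(m_i)} \phi_i f_i \, \Delta f_i \, d\underline{\mathrm{vol}} - \int_{B_R(m_i)} f_i \langle \nabla \phi_i, \nabla f_i \rangle \, d\underline{\mathrm{vol}}.
\end{equation*}
For the first term on the right, the assertion (\ref{a}) already proved (via Proposition~\ref{laplap}) gives $L^2$-strong convergence of $f_i$ to $f_\infty$, while $\phi_i \to \phi_\infty$ uniformly with uniform $L^\infty$-bound; hence by Corollary~\ref{61746174} the product $\phi_i f_i$ $L^2$-strongly converges to $\phi_\infty f_\infty$. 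Paired with the $L^2$-weak convergence $\Delta f_i \to \Delta^\upsilon f_\infty$ from (\ref{f}), Proposition~\ref{weak3} yields $\int \phi_i f_i \Delta f_i\, d\underline{\mathrm{vol}} \to \int \phi_\infty f_\infty \Delta^\upsilon f_\infty\, d\upsilon$. For the second term, the uniform $L^\infty$-bound on $\Delta \phi_i$ and Corollary~\ref{l1}(2) give strong pointwise convergence of $\nabla \phi_i$ to $\nabla \phi_\infty$ with uniform $L^\infty$-bound; Proposition~\ref{strong83} then promotes this to $L^2$-strong convergence of $f_i \nabla \phi_i$ to $f_\infty \nabla \phi_\infty$. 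Combining this with the $L^2$-weak convergence of $\nabla f_i$ to $\nabla f_\infty$ (Proposition~\ref{laplap}), Proposition~\ref{conv678} yields $\int f_i \langle \nabla \phi_i, \nabla f_i \rangle \, d\underline{\mathrm{vol}} \to \int f_\infty \langle \nabla \phi_\infty, \nabla f_\infty \rangle \, d\upsilon$. Finally, since $\nabla f_\infty \in \mathcal{D}^2(\mathrm{div}^\upsilon, B_R(m_\infty))$ by Proposition~\ref{laplap} and $\phi_\infty f_\infty$ is Lipschitz with compact support in $B_R(m_\infty)$, Remark~\ref{diverge} lets me integrate by parts on $M_\infty$ to reassemble the two limits into $\int \phi_\infty |\nabla f_\infty|^2\, d\upsilon$, proving the claim.

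The chief obstacle is the cross-term $\int f_i \langle \nabla \phi_i, \nabla f_i\rangle\, d\underline{\mathrm{vol}}$: since the $L^2$-strong convergence of $\nabla f_i$ is precisely what the entire Theorem~\ref{main} is striving to establish, one may use at most $L^2$-weak convergence on the $\nabla f_i$ factor, which forces $L^2$-strong convergence of $f_i \nabla \phi_i$. That in turn demands strong (pointwise) convergence of $\nabla \phi_i$---this is exactly why the cutoffs must be chosen of Cheeger-Colding type, so that Corollary~\ref{l1}(2) applies via the $L^\infty$-control on $\Delta \phi_i$. With this choice made at the outset, the remaining convergences are routine applications of the weak/strong pairing results of Section~3.
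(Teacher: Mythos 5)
Your argument is essentially sound, but it takes a genuinely different route from the paper's. The paper never splits the integrand: it keeps the vector field $\phi_i\nabla f_i$ intact, observes that it converges weakly (strong cutoff times weak gradient), that its divergence $\mathrm{div}^{\underline{\mathrm{vol}}}(\phi_i\nabla f_i)=\langle\nabla\phi_i,\nabla f_i\rangle-\phi_i\Delta f_i$ is uniformly $L^2$-bounded for \emph{any} uniformly Lipschitz cutoffs, and then applies Theorem \ref{green} to get weak convergence of the divergence to $\mathrm{div}^{\upsilon}(\phi_\infty\nabla f_\infty)$; pairing with the $L^2$-strongly convergent $f_i$ via Proposition \ref{weak3} and integrating back by parts via Remark \ref{diverge} finishes. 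Your decomposition $\int\phi_i f_i\Delta f_i-\int f_i\langle\nabla\phi_i,\nabla f_i\rangle$ forces you to identify the limit of the cross term on its own, and since only weak convergence of $\nabla f_i$ is available you must make $f_i\nabla\phi_i$ strongly convergent, i.e.\ you must upgrade the cutoffs to Cheeger--Colding ones so that Corollary \ref{l1}(2) gives strong convergence of $\nabla\phi_i$. That choice is legitimate for the role the claim plays in the proof of Theorem \ref{main} (the cutoffs are ours to choose, and such cutoffs satisfy everything needed in Claim \ref{7001} and the display that follows), but be aware that you have not proved the claim as literally stated for arbitrary uniformly Lipschitz $\phi_i$: by Remark \ref{qqqqq}, gradients of uniformly convergent, uniformly Lipschitz functions need not converge strongly, so with only the stated hypotheses your cross-term step would not close, whereas the paper's divergence argument needs no extra assumption. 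In short, the paper's route buys generality in the cutoffs and brevity; yours is a more hands-on integration by parts at the cost of strengthening the cutoff hypothesis.

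One inaccuracy to fix at the end: you assert that $\phi_\infty f_\infty$ is Lipschitz with compact support. At this stage $f_\infty$ is only known to lie in $\mathcal{D}^2(\Delta^{\upsilon}, B_R(m_\infty))\subset H_{1,2}(B_R(m_\infty))$, hence is only weakly Lipschitz, so you cannot test the defining identity of $\Delta^{\upsilon}f_\infty$ (or of $\mathrm{div}^{\upsilon}\nabla f_\infty$) against $\phi_\infty f_\infty$ directly. The identity
$\int\phi_\infty f_\infty\Delta^{\upsilon}f_\infty\,d\upsilon-\int f_\infty\langle\nabla\phi_\infty,\nabla f_\infty\rangle\,d\upsilon=\int\phi_\infty|\nabla f_\infty|^2\,d\upsilon$
does hold, but it requires a short density argument: approximate $\phi_\infty f_\infty$ in $H_{1,2}$ by compactly supported Lipschitz functions (using the density of locally Lipschitz functions in $H_{1,2}$ and a further Lipschitz cutoff) and pass to the limit on both sides. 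This is the same routine extension that underlies the paper's final equality via Remark \ref{diverge}, so it is a fillable gap, but it should be stated rather than claimed via a Lipschitz property $f_\infty$ does not have.
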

The proof is as follows.
Since $\phi_i\nabla f_i$ converges weakly to $\phi_{\infty}\nabla f_{\infty}$ on $B_r(m_{\infty})$, Proposition \ref{weak3}, Theorem \ref{green} and Remark \ref{diverge} yield 
\begin{align*}
\int_{B_r(m_i)}\phi_i|\nabla f_i|^2d\underline{\mathrm{vol}}=-\int_{B_r(m_i)}f_i\mathrm{div}^{\underline{\mathrm{vol}}}(\phi_i\nabla f_i)d\underline{\mathrm{vol}} &\to -\int_{B_r(m_{\infty})}f_{\infty}\mathrm{div}^{\upsilon}(\phi_{\infty}\nabla f_{\infty})d\upsilon \\
&=\int_{B_r(m_{\infty})}\phi_{\infty}|\nabla f_{\infty}|^2d\upsilon.
\end{align*}

For every sufficiently large $i<\infty$, Claims \ref{7001} and \ref{7002} yield
\begin{align*}
\int_{B_r(m_i)}|\nabla f_i|^2d\underline{\mathrm{vol}}  &\le \int_{B_r(m_i)}\phi_i|\nabla f_i|^2d\underline{\mathrm{vol}} + \Psi(\epsilon; L, n, r, R)\\
&=\int_{B_r(m_{\infty})}\phi_{\infty}|\nabla f_{\infty}|^2d\upsilon + \Psi \le \int_{B_r(m_{\infty})}|\nabla f_{\infty}|^2d\upsilon + \Psi.
\end{align*}
Since $\epsilon$ is arbitrary, we have $\limsup_{i \to \infty}||\nabla f_i||_{L^2(B_r(m_i))} \le ||\nabla f_i||_{L^2(B_r(m_{\infty}))}$.
Therefore Proposition \ref{hon} yields that $\nabla f_i$ $L^2$-converges strongly to $\nabla f_{\infty}$ on $B_r(m_{\infty})$.
In particular, $|\nabla f_i|^2$ converges weakly to $|\nabla f_{\infty}|^2$ on $B_r(m_{\infty})$.

Let $p_1:=p_1(n, R)>1$ with $p_1<p_0$ and $p_1/(2-p_1)<p_0$.
Then Young's inequality yields 
\[|\nabla |\nabla f_i|^2|^{p_1}\le 2^{p_1}|\mathrm{Hess}_{f_i}|^{p_1}|\nabla f_i|^{p_1} \le 2^{p_1}\left( \frac{p_1}{2}|\mathrm{Hess}_{f_i}|^2 + \frac{2-p_1}{2}|\nabla f_i|^{2p_1/(2-p_1)}\right).\]
Thus by Claim \ref{7000} we have $||\nabla |\nabla f_i|^2||_{L^{p_1}(B_r(m_i))} \le C(n, r, R, L)$ for every $i <\infty$.
Therefore Theorem \ref{7} yields (\ref{d}), (\ref{e}) and that $|\nabla f_i|^2$ $L^{p_1}$-converges strongly to $|\nabla f_{\infty}|^2$ on $B_r(m_{\infty})$.
In particular $||\nabla f_i||_{L^{2p_1}(B_r(m_i))} \to ||\nabla f_{\infty}||_{L^{2p_1}(B_r(m_{\infty}))}$.
Therefore by Proposition \ref{hon} we see that $\nabla f_i$ $L^{2p_1}$-converges strongly to $\nabla f_{\infty}$ on $B_r(m_{\infty})$.
Thus Theorems \ref{7} and \ref{sobolev emb} yield (\ref{b}) and (\ref{c}).   

Let $w_{\infty} \in B_R(m_{\infty})$, $t>0$ with $\overline{B}_t(w_{\infty}) \subset B_R(m_{\infty})$, $w_i \to w_{\infty}$ and let $g_i, h_i$ be Lipschitz functions on $B_t(w_i)$ for every $i\le \infty$ with $\sup_{i \le \infty}(\mathbf{Lip}g_i + \mathbf{Lip}h_i)<\infty$ satisfying that $g_i, h_i \in C^2(B_t(w_i))$ hold for every $i<\infty$ with $\sup_{i<\infty}(||\Delta g_i||_{L^2(B_t(w_i))}+||\Delta h_i||_{L^2(B_t(w_i))})<\infty$ and that $g_i, h_i \to g_{\infty}, h_{\infty}$ on $B_t(w_{\infty})$, respectively.
(\ref{d}) yields $\langle df_{\infty}, dg_{\infty}\rangle \in H_{1, p_1}(B_s(w_{\infty}))$ for every $s<t$.
In particular, $\langle df_{\infty}, dg_{\infty}\rangle$ is weakly Lipschitz.
Thus $f_{\infty}$ is weakly twice differentiable on $B_R(m_{\infty})$ with respect to $\mathcal{A}_{2nd}$, i.e., we have (\ref{g}).
Claim \ref{7000} yields $\sup_{i<\infty}||\langle \nabla g_i, \nabla h_i \rangle||_{H_{1, 2}(B_s(w_i))}<\infty$ for every $s<t$.
Thus (\ref{c}), (\ref{e}) and Proposition \ref{conv678} give
\[\lim_{i \to \infty}\int_{B_s(w_i)}\left\langle \nabla f_i, \nabla \langle \nabla g_i, \nabla h_i \rangle \right\rangle d\underline{\mathrm{vol}}=\int_{B_s(w_{\infty})}\left\langle \nabla f_{\infty}, \nabla \langle \nabla g_{\infty}, \nabla h_{\infty} \rangle \right\rangle d\upsilon\]
for every $s<t$.
Similarly, (\ref{e}), Propositions \ref{harm5} and \ref{conv678} yield that
\[\lim_{i \to \infty}\int_{B_s(w_i)}\left\langle \nabla g_i, \nabla \langle \nabla f_i, \nabla h_i \rangle \right\rangle d\underline{\mathrm{vol}}=\int_{B_s(w_{\infty})}\left\langle \nabla g_{\infty}, \nabla \langle \nabla f_{\infty}, \nabla h_{\infty} \rangle \right\rangle d\upsilon\]
and
\[\lim_{i \to \infty}\int_{B_s(w_i)}\left\langle \nabla h_i, \nabla \langle \nabla g_i, \nabla f_i \rangle \right\rangle d\underline{\mathrm{vol}}=\int_{B_s(w_{\infty})}\left\langle \nabla h_{\infty}, \nabla \langle \nabla g_{\infty}, \nabla f_{\infty} \rangle \right\rangle d\upsilon\]
hold for every $s<t$.
Since 
\[2\langle \nabla_{\nabla g_i} \nabla f_i, \nabla h_i \rangle =\langle \nabla g_i, \nabla \langle \nabla f_i, \nabla h_i \rangle \rangle + 
\langle \nabla h_i, \nabla \langle \nabla g_i, \nabla f_i \rangle \rangle -\langle \nabla f_i, \nabla \langle \nabla g_i, \nabla h_i \rangle \rangle,\]
we see that $\mathrm{Hess}_{f_i}(\nabla g_i, \nabla h_i)$ converges weakly to $\mathrm{Hess}_{f_{\infty}}(\nabla g_{\infty}, \nabla h_{\infty})$ on $B_s(w_{\infty})$ for every $s<t$.
Thus (\ref{h}) and (\ref{i}) follow from Remark \ref{000} and Proposition \ref{compat}. $\,\,\,\,\,\,\,\,\,\,\,\Box$
\begin{remark}\label{bounds}
We now give a remark about $L^2$-bounds of the gradient of a $C^2$-function on a manifold:
Let $L>0, R>0$, and let $(M, m)$ be a pointed $n$-dimensional complete Riemannian manifold with $\mathrm{Ric}_M \ge -(n-1)$, and $f$ a $C^2$-function on $B_R(m)$ with $||f||_{L^2(B_R(m))}+||\Delta f||_{L^2(B_R(m))}\le L$.
Then for every $r<R$ we have $||\nabla f||_{L^2(B_r(m))} \le C(n, L, r, R)$.

The proof is as follows.
By \cite[Theorem $6.33$]{ch-co}, there exists a smooth function $\phi$ on $M$ such that $0 \le \phi \le 1$, $\phi |_{B_r(m)}\equiv 1$, $\mathrm{supp} (\phi) \subset B_R(m)$ and $|\Delta \phi|+|\nabla \phi| \le C(n, r, R)$ hold.
Then we have 
\begin{align*}
\int_{B_r(m)}|df|^2d\underline{\mathrm{vol}} \le \int_{B_R(m)}|d(\phi f)|^2d\underline{\mathrm{vol}} &=\int_{B_R(m)}\phi f\Delta(\phi f)d\underline{\mathrm{vol}} \\
&=\int_{B_R(m)}\left(-\frac{1}{2}\langle d\phi^2, df^2\rangle + f^2 \phi \Delta \phi + \phi^2 f\Delta f \right) d\underline{\mathrm{vol}}\\
&\le  -\frac{1}{2}\int_{B_R(m)}f^2\Delta \phi^2d\underline{\mathrm{vol}} + C(n, L, r, R) \le C(n, L, r, R).
\end{align*}
\end{remark}
By Theorem \ref{main} it is not difficult to check the following:
\begin{corollary}\label{tyu}
Let $f_i^1, f_i^2 \in L^2(B_R(m_i))$ for every $i \le \infty$. 
Assume that $f_i^1, f_i^2 \in C^2(B_R(m_i))$ hold for every $i<\infty$ with 
$\sup_{j \in \{1,2\}, i<\infty}(||f_i^j||_{L^2(B_R(m_i))}+||\nabla f_i^j||_{L^{\infty}(B_R(m_{i}))}+||\Delta f_i^j||_{L^2(B_R(m_i))}) < \infty$ and that $f_i^j$ converges weakly to $f_{\infty}^j$ on $B_R(m_{\infty})$ for every $j=1, 2$.
Then we have the following:
\begin{enumerate}
\item $\sup_{i \le \infty}(||[\nabla f_i^1, \nabla f_i^2]||_{L^{2}(B_r(m_i))}+ ||\nabla_{\nabla f_i^1}\nabla f^2_i||_{L^{2}(B_r(m_i))})<\infty$ holds for every $r<R$.
\item $\nabla_{\nabla f_i^1}\nabla f_i^2$, $[\nabla f^1_i, \nabla f^2_i]$ converge weakly to $\nabla_{\nabla f^1_{\infty}}\nabla f^2_i$, $[\nabla f^1_{\infty}, \nabla f^2_{\infty}]$ on $B_r(m_{\infty})$ for every $r<R$, respectively. 
\end{enumerate}
\end{corollary}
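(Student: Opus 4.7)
My plan is to apply Theorem \ref{main} to each sequence $\{f_i^j\}_i$ separately, then to express $\nabla_{\nabla f^1}\nabla f^2$ as the contraction of $\mathrm{Hess}_{f^2}$ with $\nabla f^1$, so that assertion (2) reduces to combining $L^2$-weak convergence of Hessians with strong convergence of gradients through Proposition \ref{weak4}. First I would observe that the uniform $L^\infty$-bound on $\nabla f_i^j$, together with the Bishop-Gromov bound on $\underline{\mathrm{vol}}\,B_R(m_i)$, yields $\sup_{i, j}||f_i^j||_{H_{1,2}(B_R(m_i))}<\infty$, so that the hypotheses of Theorem \ref{main} are met for each $\{f_i^j\}_i$. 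This gives me, for $j=1,2$ and every $r<R$: that $\nabla f_i^j$ $L^{2p_1}$-converges strongly to $\nabla f_\infty^j$ on $B_r(m_\infty)$ by (\ref{c}), that $\mathrm{Hess}_{f_\infty^j}\in L^2(T^0_2 B_r(m_\infty))$ by (\ref{h}), and that $\mathrm{Hess}_{f_i^j}$ $L^2$-converges weakly to $\mathrm{Hess}_{f_\infty^j}$ on $B_r(m_\infty)$ by (\ref{i}).

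For assertion (1) I will use the pointwise estimate $|\nabla_{\nabla f_i^1}\nabla f_i^2|\le |\mathrm{Hess}_{f_i^2}|\,|\nabla f_i^1|$, valid on $M_i$ for $i<\infty$ and, thanks to Proposition \ref{hess2}, also at $i=\infty$. Combining this with the uniform $L^2$-bound on $\mathrm{Hess}_{f_i^2}$ established in Claim \ref{7000} and the uniform $L^\infty$-bound on $\nabla f_i^1$ gives $||\nabla_{\nabla f_i^1}\nabla f_i^2||_{L^2(B_r(m_i))}\le C(n,R,r,L)$ uniformly in $i\le\infty$. Symmetry handles $\nabla_{\nabla f_i^2}\nabla f_i^1$, and the triangle inequality applied to $[\nabla f_i^1,\nabla f_i^2]=\nabla_{\nabla f_i^1}\nabla f_i^2-\nabla_{\nabla f_i^2}\nabla f_i^1$ completes (1).

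For assertion (2), I first upgrade the strong $L^{2p_1}$-convergence of $\nabla f_i^j$ to pointwise strong convergence in the sense of Definition \ref{inftyinfty} by combining it with the uniform $L^\infty$-bound through Proposition \ref{compati7}. Writing $\nabla_{\nabla f_i^1}\nabla f_i^2$ as the contraction of the $(1,1)$-tensor $\mathrm{Hess}_{f_i^2}^{\sharp}$ (obtained from $\mathrm{Hess}_{f_i^2}$ by raising an index via the metric, an operation well defined on both $M_i$ and $M_\infty$ by Proposition \ref{hess2}) with the vector field $\nabla f_i^1$, Proposition \ref{weak4} then yields weak convergence $\nabla_{\nabla f_i^1}\nabla f_i^2\to \nabla_{\nabla f_\infty^1}\nabla f_\infty^2$ at a.e.\ $z_\infty\in B_r(m_\infty)$, and hence everywhere on $B_r(m_\infty)$ via Corollary \ref{weakkk}. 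Swapping the roles of $f^1$ and $f^2$ handles $\nabla_{\nabla f_i^2}\nabla f_i^1$, and linearity of the weak limit takes care of $[\nabla f_i^1,\nabla f_i^2]$.

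The main technical point is the identification of the abstract weak $L^2$-limit of $\mathrm{Hess}_{f_i^j}$ with the Hessian of $f_\infty^j$ defined intrinsically via the weakly second order structure $\mathcal{A}_{2nd}$ from Theorem \ref{2nd}; this identification, however, is precisely the content of Theorem \ref{main} parts (\ref{h})-(\ref{i}), so no genuinely new argument is needed beyond carefully tracking the convergences at each step.
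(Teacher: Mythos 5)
Your argument is correct and is essentially the paper's intended route: the paper gives no proof beyond ``By Theorem \ref{main} it is not difficult to check,'' and your expansion --- verifying the $H_{1,2}$-bound so Theorem \ref{main} applies, bounding $\nabla_{\nabla f_i^1}\nabla f_i^2$ by $|\mathrm{Hess}_{f_i^2}||\nabla f_i^1|$ via Claim \ref{7000}, and then realizing $\nabla_{\nabla f_i^1}\nabla f_i^2$ as the contraction of the (index-raised) Hessian with the gradient so that (\ref{c}), (\ref{i}), Proposition \ref{compati7} and Proposition \ref{weak4} give the weak convergence, with torsion-freeness handling the bracket --- is exactly what is meant. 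The only point worth adding is that the $i=\infty$ case of (1) uses (\ref{h}) together with lower semicontinuity (Proposition \ref{lower semi}) to bound $||\mathrm{Hess}_{f^2_{\infty}}||_{L^2}$ and $||\nabla f^1_{\infty}||_{L^{\infty}}$, rather than Claim \ref{7000}, which is a one-line fix.
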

The following corollary is about an existence of a good cutoff function on $M_{\infty}$ which is a generalization of \cite[Theorem $6.33$]{ch-co} to limit spaces:
\begin{corollary}\label{cut off}
Let $r>0$ with $r<R$.
Then there exists a $C(n, r, R)$-Lipschitz function $\hat{\phi}_{\infty}$ on $M_{\infty}$ such that $0 \le \hat{\phi}_{\infty} \le 1$, $\hat{\phi}_{\infty}|_{B_r(m_{\infty})}\equiv 1$, $\mathrm{supp} (\hat{\phi}_{\infty}) \subset B_R(m_{\infty})$, $\hat{\phi}_{\infty} \in \mathcal{D}(\Delta^{\upsilon}, M_{\infty})$ and that $\hat{\phi}_{\infty}$ is weakly twice differentiable on $M_{\infty}$
with $||\mathrm{Hess}_{\hat{\phi}_{\infty}}||_{L^2(M_{\infty})}+||\Delta^{\upsilon}\hat{\phi}_{\infty}||_{L^{\infty}(M_{\infty})}\le C(n, r, R)$.
\end{corollary}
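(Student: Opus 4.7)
The plan is to lift Cheeger-Colding's good cutoff construction \cite[Theorem $6.33$]{ch-co} from the approximating manifolds $\{M_i\}_{i<\infty}$ to $M_\infty$ by invoking Theorem \ref{main}. Choose auxiliary radii $r<r'<R'<R$. For every $i<\infty$, \cite[Theorem $6.33$]{ch-co} produces a $C^\infty$-function $\hat{\phi}_i$ on $M_i$ with $0\le\hat{\phi}_i\le 1$, $\hat{\phi}_i|_{B_{r'}(m_i)}\equiv 1$, $\mathrm{supp}\,\hat{\phi}_i\subset \overline{B}_{R'}(m_i)$, $\mathbf{Lip}\,\hat{\phi}_i\le C(n,r,R)$ and $||\Delta\hat{\phi}_i||_{L^\infty(M_i)}\le C(n,r,R)$. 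Since $\{\hat{\phi}_i\}$ is uniformly Lipschitz and uniformly bounded, Proposition \ref{conti com} yields, along a subsequence, a $C(n,r,R)$-Lipschitz limit $\hat{\phi}_\infty$ on $M_\infty$ with $\hat{\phi}_i\to\hat{\phi}_\infty$ on $M_\infty$. The pointwise conditions $0\le\hat{\phi}_\infty\le 1$, $\hat{\phi}_\infty|_{B_r(m_\infty)}\equiv 1$ and $\mathrm{supp}\,\hat{\phi}_\infty\subset \overline{B}_{R'}(m_\infty)\subset B_R(m_\infty)$ pass to the limit immediately, and by Remark \ref{1234} the uniform Lipschitz bound upgrades the pointwise convergence to weak convergence at every point of $M_\infty$.

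Next I apply Theorem \ref{main} on $B_R(m_i)$. The Bishop-Gromov inequality gives $\underline{\mathrm{vol}}\,B_R(m_i)\le C(n,R)$, and combined with the uniform pointwise bounds on $\hat{\phi}_i$, $\nabla\hat{\phi}_i$ and $\Delta\hat{\phi}_i$ this yields $\sup_{i<\infty}\left(||\hat{\phi}_i||_{H_{1,2}(B_R(m_i))}+||\Delta\hat{\phi}_i||_{L^2(B_R(m_i))}\right)\le C(n,r,R)$. Theorem \ref{main} then gives $\hat{\phi}_\infty\in\mathcal{D}^2(\Delta^\upsilon,B_R(m_\infty))$, weak twice differentiability of $\hat{\phi}_\infty$ on $B_R(m_\infty)$ with $\mathrm{Hess}_{\hat{\phi}_\infty}\in L^2(T^0_2 B_s(m_\infty))$ for every $s<R$, $L^2$-weak convergence of $\mathrm{Hess}_{\hat{\phi}_i}$ to $\mathrm{Hess}_{\hat{\phi}_\infty}$ on each $B_s(m_\infty)$, and $L^2$-weak convergence of $\Delta\hat{\phi}_i$ to $\Delta^\upsilon\hat{\phi}_\infty$ on $B_R(m_\infty)$. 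Because $\mathrm{supp}\,\hat{\phi}_\infty\subset \overline{B}_{R'}(m_\infty)$ sits strictly inside $B_R(m_\infty)$, all of these data extend by zero to all of $M_\infty$ without ambiguity.

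Finally I establish the two quantitative estimates. Claim \ref{7000} in the proof of Theorem \ref{main}, applied to each $\hat{\phi}_i$, gives $||\mathrm{Hess}_{\hat{\phi}_i}||_{L^2(B_s(m_i))}\le C(n,r,R,s)$ for every $s<R$. Weak $L^2$-convergence of the Hessians together with Proposition \ref{lower semi} yields $||\mathrm{Hess}_{\hat{\phi}_\infty}||_{L^2(B_s(m_\infty))}\le C(n,r,R,s)$; letting $s\nearrow R$ (and using that $\mathrm{Hess}_{\hat{\phi}_\infty}\equiv 0$ outside the compact annulus $\overline{B}_{R'}(m_\infty)\setminus B_r(m_\infty)$) produces the global estimate $||\mathrm{Hess}_{\hat{\phi}_\infty}||_{L^2(M_\infty)}\le C(n,r,R)$. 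For the Laplacian bound, fix any $p<\infty$: the pointwise bound $|\Delta\hat{\phi}_i|\le C(n,r,R)$ together with Bishop-Gromov forces $||\Delta\hat{\phi}_i||_{L^p(M_i)}\le C(n,r,R)$ uniformly in $i$, with a constant independent of $p$. Proposition \ref{weak com} extracts a subsequential $L^p$-weak limit $h_\infty\in L^p(M_\infty)$ of $\Delta\hat{\phi}_i$, which must coincide with $\Delta^\upsilon\hat{\phi}_\infty$ by the uniqueness of the $L^2$-weak limit. Proposition \ref{lower} then gives $||\Delta^\upsilon\hat{\phi}_\infty||_{L^p(M_\infty)}\le C(n,r,R)$ with a constant independent of $p$, and letting $p\to\infty$ yields $||\Delta^\upsilon\hat{\phi}_\infty||_{L^\infty(M_\infty)}\le C(n,r,R)$.

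The main subtlety is the $L^\infty$-bound on $\Delta^\upsilon\hat{\phi}_\infty$: weak $L^2$-convergence alone does not preserve $L^\infty$-bounds, so one must pass through uniform $L^p$-bounds for every finite $p$ and verify that all subsequential $L^p$-weak limits coincide with the variationally defined $\Delta^\upsilon\hat{\phi}_\infty$. This is guaranteed by the uniqueness built into the definition of $\mathcal{D}^p(\mathrm{div}^\upsilon,\cdot)$ via the integration-by-parts identity, so the passage $p\to\infty$ goes through without further work.
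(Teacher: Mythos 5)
Your proposal is correct and follows essentially the same route as the paper: smooth Cheeger--Colding cutoffs $\hat{\phi}_i$ on $M_i$ with uniform Lipschitz, $L^2$-Hessian and $L^{\infty}$-Laplacian bounds (the paper cites \cite[Remark $4.2$]{Ho} where you invoke Claim \ref{7000}, which carries the same content), a Lipschitz limit via Proposition \ref{conti com}, and then Theorem \ref{main} combined with lower semicontinuity of norms under weak convergence. The only superfluous step is your finite-$p$ detour for the bound on $\Delta^{\upsilon}\hat{\phi}_{\infty}$: Propositions \ref{lower} and \ref{lower semi} are stated for $1<p\le \infty$, so they give $||\Delta^{\upsilon}\hat{\phi}_{\infty}||_{L^{\infty}} \le \liminf_{i \to \infty}||\Delta \hat{\phi}_{i}||_{L^{\infty}}$ directly, which is exactly how the paper concludes.
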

\begin{proof}
\cite[Theorem $6.33$]{ch-co} and \cite[Remark $4.2$]{Ho} yield that for every $i<\infty$ there exists a smooth $C(n, r, R)$-Lipschitz function $\hat{\phi}_{i}$ on $M_{i}$ such that $0 \le \hat{\phi}_{i} \le 1$, $\hat{\phi}_{i}|_{B_r(m_{i})}\equiv 1$, $\mathrm{supp} (\hat{\phi}_{i}) \subset B_R(m_{i})$, $\hat{\phi}_{i} \in \mathcal{D}(\Delta^{\upsilon}, M_{i})$ and  $||\mathrm{Hess}_{\hat{\phi}_{i}}||_{L^2(M_{i})}+||\Delta \hat{\phi}_{i}||_{L^{\infty}(M_{i})}\le C(n, r, R)$.
By Proposition \ref{conti com}, without loss of generality we can assume that there exists a  $C(n, r, R)$-Lipschitz function $\hat{\phi}_{\infty}$ on $M_{\infty}$ such that $\hat{\phi}_i \to \hat{\phi}_{\infty}$ on $M_{\infty}$.
Then Theorem \ref{main}, Propositions \ref{lower semi} and \ref{laplap} yield that $\hat{\phi}_{\infty}$ satisfies the desired conditions.
\end{proof}
\subsection{A Bochner-type inequality for general case.}
We now give a proof of Theorem \ref{bochner1}:

\textit{A proof of Theorem \ref{bochner1}.}

Let $\tau>0$ with $\mathrm{supp} (\phi_{\infty}) \subset B_{R-2\tau}(m_{\infty})$, $L:=\mathbf{Lip}\phi_{\infty}+||\phi_{\infty}||_{L^{\infty}}$, and 
$\epsilon>0$ with $\epsilon <<\tau$.
\cite[Theorem $4.2$]{Ho} yields that for every $i\le \infty$ and every $j<\infty$, there exist a $C(n,  L)$-Lipschitz function $\phi_{i, j}$ on $B_R(m_{i})$ and an open subset $\Omega_j \subset B_R(m_{\infty})$ such that $\mathrm{supp} (\phi_{i, j}) \subset B_{R-\tau}(m_i)$, $(\phi_{i, j}, d\phi_{i, j}) \to (\phi_{\infty, j}, d\phi_{\infty, j})$ on $\Omega_j$ as $i \to \infty$, $\upsilon (B_R(m_{\infty}) \setminus \Omega_j)<j^{-1}$ and that $||\phi_{\infty, j}-\phi_{\infty}||_{L^{\infty}(B_R(m_{\infty}))} + ||d\phi_{\infty, j}-d\phi_{\infty}||_{L^2(B_R(m_{\infty}))} \to 0$ as $j \to \infty$.
\cite[Theorem $6.33$]{ch-co} yields that for every $j< \infty$ there exists a smooth $C(n, \tau)$-Lipschitz function $\hat{\phi}_{j}$ on $M_j$ such that $||\Delta \hat{\phi}_j||_{L^{\infty}} \le C(n, \tau, R)$, $0 \le \hat{\phi}_j \le 1$, $\hat{\phi}_j|_{B_{R-2\tau}(m_j)}\equiv 1$ and $\mathrm{supp} (\hat{\phi}_j) \subset B_{R-\tau}(m_j)$.
Note that there exists $\epsilon_j \to 0$ such that for every $j$,  there exists $j_0$ such that $\phi_{i, j} + \epsilon_ j\hat{\phi}_{i} \ge 0$ holds  on $B_R(m_i)$ for every $i \ge j_0$.
Let $g_{i, j}:=\phi_{i, j} + \epsilon_ j\hat{\phi}_{i}$.
Then Propositions \ref{weak com} and \ref{tensor com} yield that there exists a subsequence $\{i(j)\}_j$ such that $g_{i(j), j} \ge 0$ and that $(g_{i(j), j}, dg_{i(j), j}) \to (\phi_{\infty}, d\phi_{\infty})$ on $B_R(m_{\infty})$.
Let $\phi_{i(j)} :=g_{i(j), j}$.
Then  Bochner's formula yields 
\begin{align*}
-\frac{1}{2}\int_{B_R(m_{i(j)})}\langle d\phi_{i(j)}, d|df_{i(j)}|^2 \rangle d\underline{\mathrm{vol}} &\ge \int_{B_R(m_{i(j)})}\phi_{i(j)}|\mathrm{Hess}_{f_{i(j)}}|^2d\underline{\mathrm{vol}} \\
&+ \int_{B_R(m_{i(j)})}\left(-\phi_{i(j)}(\Delta f_{i(j)})^2+\Delta f_{i(j)}\langle d\phi_{i(j)}, df_{i(j)}\rangle \right)d\underline{\mathrm{vol}} \\
&+K(n-1)\int_{B_R(m_{i(j)})}\phi_{i(j)}|df_{i(j)}|^2d\underline{\mathrm{vol}}.
\end{align*}
(\ref{e}) of Theorem \ref{main} and Proposition \ref{conv678} yield
\[\lim_{j \to \infty}\int_{B_R(m_{i(j)})}\langle d\phi_{i(j)}, d|df_{i(j)}|^2 \rangle d\underline{\mathrm{vol}}=\int_{B_R(m_{\infty})}\langle d\phi_{\infty}, d|df_{\infty}|^2 \rangle d\upsilon.\]
(\ref{i}) of Theorem \ref{main}, Propositions \ref{strong3} and \ref{weak4} yield that $(\phi_{i(j)})^{1/2}\mathrm{Hess}_{f_{i(j)}}$ converges weakly to $(\phi_{\infty})^{1/2}\mathrm{Hess}_{f_{\infty}}$ on $B_R(m_{\infty})$.
In particular by Proposition \ref{lower semi} we have  
\[\liminf_{j \to \infty}\int_{B_R(m_{i(j)})}\phi_{i(j)}|\mathrm{Hess}_{f_{i(j)}}|^2d\underline{\mathrm{vol}} \ge \int_{B_R(m_{\infty})}\phi_{\infty}|\mathrm{Hess}_{f_{\infty}}|^2d\upsilon.\]
On the other hand, Proposition \ref{strong3} and  Corollary \ref{61746174} yield that $(\phi_{i(j)})^{1/2}\Delta f_{i(j)}$ $L^2$-converges strongly to $(\phi_{\infty})^{1/2}\Delta^{\upsilon} f_{\infty}$ on $B_R(m_{\infty})$.
In particular we have
\[\lim_{j \to \infty}\int_{B_R(m_{i(j)})}\phi_{i(j)}(\Delta f_{i(j)})^2d\underline{\mathrm{vol}}= \int_{B_R(m_{\infty})}\phi_{\infty}(\Delta^{\upsilon}f_{\infty})^2d\upsilon. \]
(\ref{c}) of Theorem \ref{main} and Proposition \ref{weak3} yield that 
\[\lim_{j \to \infty}\int_{B_R(m_{i(j)})}\Delta f_{i(j)}\langle d\phi_{i(j)}, df_{i(j)}\rangle d\underline{\mathrm{vol}}=\int_{B_R(m_{\infty})}\Delta^{\upsilon}f_{\infty}\langle d\phi_{\infty}, df_{\infty}\rangle d\upsilon\]
and
\[\lim_{j \to \infty}\int_{B_R(m_{i(j)})}\phi_{i(j)}|df_{i(j)}|^2d\underline{\mathrm{vol}}=\int_{B_R(m_{\infty})}\phi_{\infty}|df_{\infty}|^2d\upsilon\]
hold.
Thus we have the assertion. \,\,\,\,\,\,\, $\Box$
\begin{remark}\label{gamma2}
Under the same assumption as in Theorem \ref{bochner1}, 
since $|\mathrm{Hess}_{f_i}|^2 \ge (\Delta f_i)^2 /n$ for every $i <\infty$, and $|\mathrm{Hess}_{f_{\infty}}|^2 \ge (\Delta^{g_{M_{\infty}}}f_{\infty})^2/k$, where $k := \mathrm{dim}\,M_{\infty}$, by an argument similar to the proof of Theorem \ref{bochner1} we see that
\begin{align*}
-\frac{1}{2}\int_{B_R(m_{\infty})}\langle d\phi_{\infty}, d|df_{\infty}|^2 \rangle d\upsilon &\ge \int_{B_R(m_{\infty})}\phi_{\infty}\frac{(\Delta^{\upsilon}f_{\infty})^2}{n}d\upsilon \\
&+ \int_{B_R(m_{\infty})}\left(-\phi_{\infty}(\Delta^{\upsilon}f_{\infty})^2+\Delta^{\upsilon}f_{\infty}\langle d\phi_{\infty}, df_{\infty}\rangle \right)d\upsilon \\
&+K(n-1)\int_{B_R(m_{\infty})}\phi_{\infty}|df_{\infty}|^2d\upsilon
\end{align*} 
and
\begin{align*}
-\frac{1}{2}\int_{B_R(m_{\infty})}\langle d\phi_{\infty}, d|df_{\infty}|^2 \rangle d\upsilon &\ge \int_{B_R(m_{\infty})}\phi_{\infty}\frac{(\Delta^{g_{M_{\infty}}}f_{\infty})^2}{k}d\upsilon \\
&+ \int_{B_R(m_{\infty})}\left(-\phi_{\infty}(\Delta^{\upsilon}f_{\infty})^2+\Delta^{\upsilon}f_{\infty}\langle d\phi_{\infty}, df_{\infty}\rangle \right)d\upsilon \\
&+K(n-1)\int_{B_R(m_{\infty})}\phi_{\infty}|df_{\infty}|^2d\upsilon.
\end{align*} 
On the other hand, Proposition \ref{green} and Remark \ref{diverge} yield
\[\int_{B_R(m_{\infty})}\left(- \phi_{\infty}(\Delta^{\upsilon}f_{\infty})^2+\Delta^{\upsilon}f_{\infty}\langle d\phi_{\infty}, df_{\infty}\rangle \right)d\upsilon=\int_{B_R(m_{\infty})}\Delta^{\upsilon}f_{\infty}\mathrm{div}^{\upsilon}(\phi_{\infty}\nabla f_{\infty})d\upsilon.\]
Moreover if $f_i \in C^3(B_R(m_i))$ holds for every $i<\infty$ with $\sup_{i<\infty}||\nabla \Delta f_i||_{L^2(B_R(m_i))}<\infty$, then Theorem \ref{7} yields that $\Delta^{\upsilon}f_{\infty} \in H_{1, 2}(B_R(m_{\infty}))$ and that $\nabla \Delta f_i$ converges weakly to $\nabla \Delta^{\upsilon} f_{\infty}$ on $B_R(m_{\infty})$.
Therefore we have 
\[\int_{B_R(m_{\infty})}\Delta^{\upsilon}f_{\infty}\mathrm{div}^{\upsilon}(\phi_{\infty}\nabla f_{\infty})d\upsilon=-\int_{B_R(m_{\infty})}\phi_{\infty}\langle \nabla \Delta^{\upsilon}f_{\infty}, \nabla f_{\infty}\rangle d\upsilon.\]
In particular for $r<R$ and $\phi_{\infty}$ as in Corollary \ref{cut off}, we have the following \textit{$\Gamma_2$-condition}:
\begin{align*}
-\frac{1}{2}\int_{B_R(m_{\infty})}\Delta^{\upsilon} \phi_{\infty} |df_{\infty}|^2 d\upsilon &\ge - \int_{B_R(m_{\infty})}\phi_{\infty}\langle \nabla \Delta^{\upsilon}f_{\infty}, \nabla f_{\infty}\rangle d\upsilon +K(n-1)\int_{B_R(m_{\infty})}\phi_{\infty}|df_{\infty}|^2d\upsilon.
\end{align*}
See also \cite{bac, gigli, gko, led}. 
\end{remark}
\subsection{Noncollapsing case.}
We now prove Theorem \ref{laplacian}:

\textit{A proof of Theorem \ref{laplacian}.}

First assume that $(M_{\infty}, m_{\infty})$ is the noncollapsed limit space of $\{(M_i, m_i)\}_i$.
Then
(\ref{i}) of Theorem \ref{main} and Proposition \ref{contr} yield that $\mathrm{tr} (\mathrm{Hess}_{f_i})$ converges weakly to $\mathrm{tr} (\mathrm{Hess}_{f_{\infty}})$ on $B_R(m_{\infty})$.
Therefore (\ref{c}) of Theorem \ref{main} yields  $-\mathrm{tr} (\mathrm{Hess}_{f_{\infty}})=\Delta^{\upsilon}f_{\infty}$ on $B_R(m_{\infty})$.
Thus we have $(1)$.
Similarly, it is easy to check $(2)$ by Proposition \ref{contr2}. \,\,\,\,\, $\Box$

We give a Bochner-type inequality for noncollapsed limit spaces:
\begin{corollary}\label{bochner2}
Let $(M_{\infty}, m_{\infty})$ be the  noncollapsed $(n, K)$-Ricci limit space of a sequence $\{(M_i, m_i)\}_{i<\infty}$.
Then with the same assumption as in Theorem \ref{bochner1}, we have 
\begin{align*}
-\frac{1}{2}\int_{B_R(m_{\infty})}\langle d\phi_{\infty}, d|df_{\infty}|^2 \rangle dH^n &\ge \int_{B_R(m_{\infty})}\phi_{\infty}|\mathrm{Hess}_{f_{\infty}}|^2dH^n \\
&+ \int_{B_R(m_{\infty})}\Delta^{g_{M_{\infty}}} f_{\infty} \mathrm{div}^{g_{M_{\infty}}}(\phi_{\infty}\nabla f_{\infty})dH^n \\
&+K(n-1)\int_{B_R(m_{\infty})}\phi_{\infty}|df_{\infty}|^2dH^n.
\end{align*}
\end{corollary}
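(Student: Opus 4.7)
The plan is to invoke Theorem \ref{bochner1} and then recast the right-hand side using Theorem \ref{laplacian}(1) together with the Leibniz rule for the geometric divergence in Proposition \ref{hess2}(b).

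In the noncollapsing regime, Cheeger-Colding's volume convergence yields $\upsilon = c\,H^n$ on $B_R(m_{\infty})$ with $c = 1/H^n(B_1(m_{\infty})) > 0$. Since every integral appearing in the inequality of Theorem \ref{bochner1} is linear in the measure, this positive constant cancels from both sides, so the inequality of Theorem \ref{bochner1} holds verbatim with $d\upsilon$ replaced by $dH^n$.

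Next, Theorem \ref{laplacian}(1) gives $\Delta^{\upsilon} f_{\infty} = \Delta^{g_{M_{\infty}}} f_{\infty}$ on $B_R(m_{\infty})$, which lets me substitute $\Delta^{g_{M_{\infty}}} f_{\infty}$ for $\Delta^{\upsilon} f_{\infty}$ throughout. To fuse the two middle terms into a single divergence expression, I would appeal to Proposition \ref{hess2}(b): since $f_{\infty}$ is weakly twice differentiable on $B_R(m_{\infty})$ by Theorem \ref{main}(g), the vector field $\nabla f_{\infty}$ lies in $\Gamma_1(TB_R(m_{\infty}))$, and $\phi_{\infty}$ is Lipschitz, so
\[
\mathrm{div}^{g_{M_{\infty}}}(\phi_{\infty} \nabla f_{\infty}) = \phi_{\infty}\,\mathrm{div}^{g_{M_{\infty}}}(\nabla f_{\infty}) + g_{M_{\infty}}(\nabla \phi_{\infty}, \nabla f_{\infty}) = -\phi_{\infty} \Delta^{g_{M_{\infty}}} f_{\infty} + \langle d\phi_{\infty}, df_{\infty}\rangle.
\]
Multiplying this pointwise identity by $\Delta^{g_{M_{\infty}}} f_{\infty}$ and integrating identifies the middle bracket on the right-hand side of the inequality of Theorem \ref{bochner1} with $\int_{B_R(m_{\infty})} \Delta^{g_{M_{\infty}}} f_{\infty} \cdot \mathrm{div}^{g_{M_{\infty}}}(\phi_{\infty} \nabla f_{\infty})\,dH^n$, which is precisely the middle term appearing in the corollary. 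The Hessian and gradient terms require no modification beyond the measure replacement.

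I do not foresee a substantive obstacle: the argument reduces to an algebraic manipulation of the right-hand side of an already-established inequality, combined with the identification of the Dirichlet and geometric Laplacians in the noncollapsing case. The only point requiring a small amount of care is to verify that $\phi_{\infty} \nabla f_{\infty}$ really is a weakly Lipschitz vector field so that $\mathrm{div}^{g_{M_{\infty}}}(\phi_{\infty} \nabla f_{\infty})$ is defined in the sense of Proposition \ref{hess2}; this follows from Theorem \ref{main}(g), the Lipschitz regularity of $\phi_{\infty}$, and the fact that $g_{M_{\infty}}$ is itself weakly Lipschitz with respect to the structure $\mathcal{A}_{2nd}$ by Theorem \ref{2nd}.
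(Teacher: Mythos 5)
Your proposal is correct and follows essentially the same route as the paper, whose proof simply combines Theorem \ref{bochner1}, Theorem \ref{laplacian} (part (1), identifying $\Delta^{\upsilon}f_{\infty}$ with $\Delta^{g_{M_{\infty}}}f_{\infty}$ in the noncollapsed case) and the Leibniz rule of Proposition \ref{hess2}, exactly as you do. Your additional remarks on $\upsilon$ being a positive constant multiple of $H^n$ and on the weak Lipschitz regularity needed to apply the divergence formula are consistent with the paper's framework and fill in the same details the paper leaves implicit.
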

\begin{proof}
It follows from Theorems \ref{bochner1}, \ref{laplacian} and Proposition \ref{hess2}.
\end{proof}
\begin{remark}\label{exex}
We calculate the Hessians of important warping functions given by Cheeger-Colding in \cite{ch-co}.
Let $(M_i, m_i, \underline{\mathrm{vol}}) \stackrel{(\psi_i, \epsilon_i, R_i)}{\to} (M_{\infty}, m_{\infty}, \upsilon)$ and $k:=\mathrm{dim}\,M_{\infty}$.

\textit{Splitting.} Assume that $\mathrm{Ric}_{M_i} \ge -\delta_i$ with $\delta_i \to 0$, and that there exists a line $l: \mathbf{R} \to M_{\infty}$ with $l(0)=m_{\infty}$ which means that $l$ is an isometric embedding.
Let $R>0$, $r_i<R_i$ with $r_i \to \infty$ and $z_i \in B_{R_i}(m_i)$ with $\overline{\psi_i(z_i), l(r_i)} \to 0$.
Define the harmonic function $\mathbf{b}_i$ on $B_{100R}(m_i)$ by $\mathbf{b}_i|_{\partial B_{100R}(m_i)}\equiv r_{z_i}- r_{z_i}(m_i)$.
Then we see that $\sup_i \mathbf{Lip}(\mathbf{b}_i|_{B_R(m_i)})<\infty$, 
$\mathbf{b}_i$ converges to the Busemann function $\mathbf{b}_{\infty}$ of $l$ on $B_R(m_{\infty})$, $d\mathbf{b}_i \to d\mathbf{b}_{\infty}$ on $B_R(m_{\infty})$ and that
\[\lim_{i \to 0}\int_{B_R(m_i)}|\mathrm{Hess}_{\mathbf{b}_i}|^2d\underline{\mathrm{vol}}=0\]
holds.
See \cite[Theorem $6.64$]{ch-co} for the proof.
Therefore (\ref{i}) of Theorem \ref{main} and Proposition \ref{hon} yield that $\mathrm{Hess}_{\mathbf{b}_{\infty}}\equiv 0$ and that $\mathrm{Hess}_{\mathbf{b}_i}$ $L^2$-converges strongly to $\mathrm{Hess}_{\mathbf{b}_{\infty}}$ on $B_R(m_{\infty})$.
In particular $(2)$ of Theorem \ref{laplacian} yields $\Delta^{\upsilon}\mathbf{b}_{\infty}=\Delta^{g_{M_{\infty}}}\mathbf{b}_{\infty}\equiv 0$.

\textit{Suspension.} Assume that $\mathrm{Ric}_{M_i} \ge n-1$ and $\mathrm{diam}M_{\infty}=\pi$. 
Let $\{\phi_i\}_{i \le \infty}$ be a sequence of eigenfunctions $\phi_i$ on $M_i$ associated with the first eigenvalue $\lambda_1(M_i)$ with respect to the Dirichlet problem on $M_i$  with $\phi_i \to \phi_{\infty}$ on $M_{\infty}$ and $||\phi_i||_{L^2(M_i)}=1$.
See Remark \ref{eigen conti}.
Then we see that $\sup_i \mathbf{Lip}\phi_i<\infty$, $(\phi_i, d\phi_i) \to (\phi_{\infty}, d\phi_{\infty})$ on $M_{\infty}$ and that 
\[\lim_{i \to \infty}\int_{M_i}|\mathrm{Hess}_{\phi_i}+\phi_ig_{M_i}|^2d\underline{\mathrm{vol}}=0\]
holds.
See the proof of \cite[Lemma $1.4$]{co1}, \cite[Theorem $5.14$]{ch-co} and \cite[Theorem $7.9$]{ch-co3} for the details.
Therefore  we see that $\mathrm{Hess}_{\phi_i}+\phi_ig_{M_i}$ $L^2$-converges strongly to $0$ on $M_{\infty}$ and that $\mathrm{Hess}_{\phi_{\infty}}=-\phi_{\infty}g_{M_{\infty}}$ (note that Theorem \ref{metric} yields that if $(M_{\infty}, m_{\infty})$ is the noncollapsed limit of $\{(M_i, m_i)\}_{i<\infty}$, then $\mathrm{Hess}_{\phi_i}$ $L^2$-converges strongly to $\mathrm{Hess}_{\phi_{\infty}}$ on $M_{\infty}$).
Thus we have $\Delta^{g_{M_{\infty}}}\phi_{\infty}=k\phi_{\infty}$. 
On the other hand, \cite[Theorem $7.9$]{ch-co3} yields $\Delta^{\upsilon}\phi_{\infty}=n\phi_{\infty}$.
In particular  $\Delta^{g_{M_{\infty}}}\phi_{\infty}=\Delta^{\upsilon}\phi_{\infty}$ holds on $M_{\infty}$ if and only if $M_{\infty}$ is the noncollapsed limit space of $\{M_i\}_i$. 
See also \cite{an, otsu2} for examples of interesting singular limit spaces.

\textit{Cone.} 
Assume that $\mathrm{Ric}_{M_i} \ge -\delta_i$ with $\delta_i \to 0$ and that 
\[\lim_{i \to \infty}\frac{\mathrm{vol}\,\partial B_{100R}(m_i)}{\mathrm{vol}\,B_{100R}(m_i)}=\frac{\mathrm{vol}\,\partial B_{100R}(0_n)}{\mathrm{vol}\,B_{100R}(0_n)}\]
holds for some $R>0$,
where $0_n \in \mathbf{R}^n$.
For every $i < \infty$, let $f_i$ be the function on $\overline{B}_{100R}(m_i)$ satisfying that $\Delta f_i \equiv -1$ on $B_{100R}(m_i)$ and $f_{i}|_{\partial B_{100R}(m_i)} \equiv (100R)^2/2n$.
Note that Cheng-Yau's gradient estimate \cite{ch-yau} yields $\sup_i \mathbf{Lip}(f_i|_{B_R(m_i)})<\infty$.
Then \cite[Theorem $4.91$]{ch-co} yields that there exists a compact geodesic space $X$ with $\mathrm{diam}\,X \le \pi$ such that $(B_R(m_{\infty}), m_{\infty})$ is isometric to $(B_R(p_0), p_0)$, where $C(X)$ is the metric cone of $X$ and $p_0$ is the pole of $C(X)$, 
$(f_i, df_i) \to \left(r_{p_0}^2/2n, d(r_{p_0}^2/2n)\right)$ on $B_R(p_0)$ and that 
\[\lim_{i \to \infty}\int_{B_R(m_i)}\left|\mathrm{Hess}_{f_i}+\frac{1}{n}g_{M_i}\right|^2d\underline{\mathrm{vol}}=0\]
holds.
Let $f_{\infty}:=r_{p_0}^2/2n$.
Then we see that $\mathrm{Hess}_{f_i}+g_{M_i}/n$ $L^2$-converges strongly to $0$ on $B_R(m_{\infty})$ and that $\mathrm{Hess}_{f_{\infty}}\equiv -g_{M_{\infty}}/n$.
Note that (\ref{f}) of Theorem \ref{main} yields $\Delta^{\upsilon}f_{\infty} \equiv -1$.
Thus we have $\Delta^{g_{M_{\infty}}}f_{\infty}=(k/n)\Delta^{\upsilon}f_{\infty}$.
In particular $\Delta^{g_{M_{\infty}}}f_{\infty}=\Delta^{\upsilon}f_{\infty}$ holds on $B_R(m_{\infty})$ if and only if  
$(M_{\infty}, m_{\infty})$ is the noncollapsed limit of $\{(M_i, m_i)\}_i$.
See also \cite[Example $1.24$]{ch-co1} for an example of such collapsed limit spaces.
\end{remark}
\begin{remark}\label{notexample}
We now give an example of $\Delta^{g_{M_{\infty}}}f_{\infty} \neq \Delta^{\upsilon}f_{\infty}$.
Let $X_n$ be the quotient metric space $\mathbf{S}^2/\mathbf{Z}_n$ of $\mathbf{S}^2$ (with the canonical orbifold metric) by the action of $\mathbf{Z}_n$ generated by the rotation of angle $2\pi/n$ around a fixed axis.
Then it is easy to check that every $X_n$ is the Gromov-Hausdorff limit space of a sequence of compact $2$-dimensional Riemannian manifolds $\{M_i\}_i$ with $K_{M_i} \ge 1$, where $K_{M_i}$ is the sectional curvature of $M_i$.
Since $X_n$ Gromov-Hausdorff converges $[0, \pi]$ as $n \to \infty$, \cite[Lemma $1.10$]{co1} yields that there exist a Radon measure $\upsilon$ on $[0, 1]$ and  a sequence of compact $2$-dimensional Riemannian manifolds $\{M_i\}_i$ with $K_{M_i} \ge 1$ such that $\lambda_1(M_i) \to 2$ and 
$(M_i, \underline{\mathrm{vol}})$ Gromov-Hausdorff converges $([0, \pi], \upsilon)$.
Then Remark \ref{exex} and \cite[Theorem $7.9$]{ch-co3} yield that every first eigenfunction $\phi_{\infty}$ of $\Delta^{\upsilon}$ satisfies 
$\Delta^{g_{M_{\infty}}}\phi_{\infty} \neq \Delta^{\upsilon}\phi_{\infty}$.
\end{remark}
\subsection{$L^p$-bounds on Ricci curvature and scalar curvature.}
In this subsection we consider the following setting:
\begin{enumerate}
\item $(M_{\infty}, m_{\infty}, \upsilon)$ is the $(n, K)$-Ricci limit space of $\{(M_i, m_i, \underline{\mathrm{vol}})\}_{i<\infty}$ with $M_{\infty} \neq \{m_{\infty}\}$.
\item $\mathcal{A}_{2nd}$ is a weakly second order differential structure on $(M_{\infty}, \upsilon)$ associated with $\{(M_i, m_i, \underline{\mathrm{vol}})\}_{i<\infty}$.
\item $\sup_{i<\infty}||\mathrm{Ric}_{M_i}||_{L^p(B_R(m_i))}<\infty$ for some $1<p \le \infty$ and some $R>0$.
\end{enumerate}
Then by Proposition \ref{tensor com} there exists a weak convergent subsequence $\{\mathrm{Ric}_{M_{i(j)}}\}_j$.
Thus furthermore we assume the following:
\begin{enumerate}
\item[(4)] There exists $\mathrm{Ric}_{M_{\infty}} \in L^p(T^0_2B_R(m_{\infty}))$ such that $\mathrm{Ric}_{M_{i}}$ converges weakly to $\mathrm{Ric}_{M_{\infty}}$ on $B_R(m_{\infty})$.
\end{enumerate}
We call $\mathrm{Ric}_{M_{\infty}}$ \textit{the Ricci tensor of $B_R(m_{\infty})$ with respect to $\{(M_i, m_i, \underline{\mathrm{vol}})\}_{i<\infty}$}.
In this setting we can get the following \textit{Bochner-type formula}:
\begin{theorem}\label{bochner7}
Let $\{f_i\}_{i \le \infty}$ be as in Theorem \ref{main}. 
Furthermore we assume that there exists $\hat{p} \ge 2q$ with $\hat{p} >2$, where $q$ is the conjugate exponent of $p$, such that the following hold:
\begin{enumerate}
\item $\nabla f_i$ $L^{\hat{p}}$-converges strongly to $\nabla f_{\infty}$ on $B_r(m_{\infty})$ for every $r<R$.
\item $\mathrm{Hess}_{f_i}$ $L^2$-converges strongly to $\mathrm{Hess}_{f_{\infty}}$ on $B_r(m_{\infty})$ for every $r<R$.
\end{enumerate}
Then 
\begin{align*}
-\frac{1}{2}\int_{B_R(m_{\infty})}\langle d\phi_{\infty}, d|df_{\infty}|^2 \rangle d\upsilon &= \int_{B_R(m_{\infty})}\phi_{\infty}|\mathrm{Hess}_{f_{\infty}}|^2d\upsilon \\
&+ \int_{B_R(m_{\infty})}\Delta^{\upsilon}f_{\infty}\mathrm{div}^{\upsilon}(\phi_{\infty}\nabla f_{\infty})d\upsilon \\
&+\int_{B_R(m_{\infty})}\phi_{\infty}\mathrm{Ric}_{M_{\infty}}(\nabla f_{\infty}, \nabla f_{\infty})d\upsilon
\end{align*}
holds for every Lipschitz function $\phi_{\infty}$ on $B_R(m_{\infty})$ with compact support.
\end{theorem}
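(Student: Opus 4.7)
The plan is to start from the classical Bochner identity on each smooth manifold $M_i$,
\begin{align*}
-\frac{1}{2}\int_{B_R(m_i)}\langle d\phi_i, d|df_i|^2\rangle d\underline{\mathrm{vol}}
&= \int_{B_R(m_i)}\phi_i|\mathrm{Hess}_{f_i}|^2d\underline{\mathrm{vol}} \\
&\quad + \int_{B_R(m_i)}\Delta f_i\,\mathrm{div}(\phi_i\nabla f_i)d\underline{\mathrm{vol}} \\
&\quad + \int_{B_R(m_i)}\phi_i\,\mathrm{Ric}_{M_i}(\nabla f_i,\nabla f_i)d\underline{\mathrm{vol}},
\end{align*}
and pass to the limit in every term. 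First, fix $\tau>0$ with $\mathrm{supp}(\phi_\infty)\subset B_{R-2\tau}(m_\infty)$. Using \cite[Theorem $4.2$]{Ho} together with the good cutoff from \cite[Theorem $6.33$]{ch-co} (as in the proof of Theorem \ref{bochner1}), construct $C(n,\tau,\mathbf{Lip}\phi_\infty)$-Lipschitz functions $\phi_i$ on $M_i$, compactly supported in $B_{R-\tau}(m_i)$, such that $(\phi_i,d\phi_i)\to(\phi_\infty,d\phi_\infty)$ pointwise a.e.~on $B_R(m_\infty)$. By Proposition \ref{compati7} the $\phi_i$ and $d\phi_i$ then $L^r$-converge strongly to $\phi_\infty$ and $d\phi_\infty$ on $B_R(m_\infty)$ for every $1<r<\infty$.

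Next, each of the three terms already appearing in Theorem \ref{bochner1} can be handled as in its proof, but now producing equality rather than inequality. For the left-hand side, (\ref{e}) of Theorem \ref{main} gives $L^{p_1}$-weak convergence of $\nabla|\nabla f_i|^2$ to $\nabla|\nabla f_\infty|^2$, which paired via Proposition \ref{conv678} with the $L^{p_1/(p_1-1)}$-strong convergence of $d\phi_i$ yields convergence of the LHS. For the Hessian term, the hypothesis that $\mathrm{Hess}_{f_i}$ $L^2$-converges strongly together with Proposition \ref{normconv} gives that $|\mathrm{Hess}_{f_i}|^2$ $L^1$-converges strongly; combined with $L^\infty$-bounded strong convergence of $\phi_i$ (Proposition \ref{strong83}), the integral converges to $\int\phi_\infty|\mathrm{Hess}_{f_\infty}|^2d\upsilon$. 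For the $\Delta f_i$ term, expand $\mathrm{div}(\phi_i\nabla f_i)=\langle\nabla\phi_i,\nabla f_i\rangle-\phi_i\Delta f_i$; Theorem \ref{laplacian}(2) yields $L^2$-strong convergence $\Delta f_i\to\Delta^\upsilon f_\infty$, (\ref{c}) of Theorem \ref{main} yields $L^{2p_1}$-strong convergence $\nabla f_i\to\nabla f_\infty$, and the product of two $L^2$-strongly convergent factors converges in integral, producing $\int\Delta^\upsilon f_\infty\,\mathrm{div}^\upsilon(\phi_\infty\nabla f_\infty)d\upsilon$ in the limit.

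The new ingredient, and the main obstacle, is passing to the limit in the Ricci term. Here $\mathrm{Ric}_{M_i}$ is assumed only to converge $L^p$-weakly to $\mathrm{Ric}_{M_\infty}$, so by the duality encoded in Proposition \ref{conv678} (applied to tensor fields via the same proof) it suffices to show that $\phi_i\,\nabla f_i\otimes\nabla f_i$ $L^q$-converges strongly to $\phi_\infty\,\nabla f_\infty\otimes\nabla f_\infty$, where $q$ is the conjugate exponent of $p$. This is exactly where the hypothesis $\hat p\ge 2q$ enters: by assumption $\nabla f_i$ $L^{\hat p}$-converges strongly to $\nabla f_\infty$ on every $B_r(m_\infty)$, so by Proposition \ref{strong83} the tensor product $\nabla f_i\otimes\nabla f_i$ $L^{\hat p/2}$-converges strongly, and since $\hat p/2\ge q$ Remark \ref{83} gives $L^q$-strong convergence. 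Multiplication by the $L^\infty$-bounded, strongly convergent cutoff $\phi_i$ preserves strong $L^q$-convergence (Proposition \ref{strong83}), and a H\"older-type truncation argument exactly as in the proof of Theorem \ref{conv2} absorbs the error coming from replacing $B_{R-\tau}$ by $B_R$.

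Assembling the four limits yields the desired Bochner equality. The genuine difficulty is the Ricci term: all convergences in the paper are paired weak/strong, and controlling it requires matching integrability of $|\nabla f_i|^2$ against $|\mathrm{Ric}_{M_i}|$, which is precisely what the hypothesis $\hat p\ge 2q$ (with $\hat p>2$, ensuring $\mathrm{Hess}_{f_i}\in L^2$ can be combined with $\nabla f_i\in L^{\hat p}$) is designed to enforce. Once this single step is available, equality propagates because every other term now admits a strong-convergence partner, eliminating the lower-semicontinuity gap that produced only an inequality in Theorem \ref{bochner1}.
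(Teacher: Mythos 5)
Your overall strategy coincides with the paper's: pass to the limit in the integrated Bochner identity on each $M_i$, where the only genuinely new term is the Ricci one, which you treat by pairing the $L^p$-weak convergence of $\mathrm{Ric}_{M_i}$ against $L^q$-strong convergence of $\phi_i\,\nabla f_i\otimes\nabla f_i$, exploiting $\hat{p}\ge 2q$. However, the justification you give for the crucial step is not valid as stated: Proposition \ref{strong83} cannot be invoked to conclude that $\nabla f_i\otimes\nabla f_i$ $L^{\hat{p}/2}$-converges strongly, because that proposition requires one of the two factors to be uniformly bounded in $L^{\infty}$ and to converge strongly in the pointwise ($L^{\infty}$) sense, whereas here both factors are only $L^{\hat{p}}$-bounded (no uniform gradient bound on $f_i$ is assumed in Theorem \ref{main}). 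So the pivotal step of your argument is unsupported as written.

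The gap is fixable, and the paper fills it by a different mechanism: first one shows that $\nabla f_i\otimes\nabla f_i$ converges weakly to $\nabla f_{\infty}\otimes\nabla f_{\infty}$ by testing against $\nabla r_{x_i}\otimes\nabla r_{y_i}$ (each scalar factor $\langle \nabla f_i,\nabla r_{x_i}\rangle$ $L^2$-converges strongly by hypothesis $(1)$ together with Propositions \ref{dist} and \ref{strong83}, and a strong--weak pairing via Proposition \ref{weak3} gives convergence of the products of two such factors); then one observes that $||\nabla f_i\otimes\nabla f_i||_{L^{\hat{p}/2}}^{\hat{p}/2}=||\nabla f_i||_{L^{\hat{p}}}^{\hat{p}}\to||\nabla f_{\infty}||_{L^{\hat{p}}}^{\hat{p}}$ by Corollary \ref{stnorm}, so Proposition \ref{hon} upgrades the weak convergence to $L^{\hat{p}/2}$-strong convergence, and Remark \ref{83} yields $L^q$-strong convergence since $\hat{p}/2\ge q$. (Alternatively one can argue directly with an $L^{\hat{p}}$-approximate sequence $\{V_{i,j}\}_{i,j}$ of $\nabla f_{\infty}$ and the H\"older bound $||\nabla f_i\otimes\nabla f_i-V_{i,j}\otimes V_{i,j}||_{L^{\hat{p}/2}}\le ||\nabla f_i-V_{i,j}||_{L^{\hat{p}}}\left(||\nabla f_i||_{L^{\hat{p}}}+||V_{i,j}||_{L^{\hat{p}}}\right)$.) The remaining terms are handled essentially as you indicate; note only that since $\phi_{\infty}$ is not assumed nonnegative here, the Hessian term is best treated by pairing $\phi_i\mathrm{Hess}_{f_i}$ (strong, by Proposition \ref{strong83} with the $L^{\infty}$-bounded factor $\phi_i$) against $\mathrm{Hess}_{f_i}$ (weak) via Proposition \ref{conv678}, rather than through ``$L^1$-strong convergence of $|\mathrm{Hess}_{f_i}|^2$'', which is not among the paper's defined notions.
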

\begin{proof}
Let $\tau >0$ with $\mathrm{supp} (\phi_{\infty}) \subset B_{R-\tau}(m_{\infty})$ and let $\phi_i$ be a Lipschitz function on $B_R(m_i)$ for every $i \le \infty$ with $\sup_i\mathbf{Lip}\phi_i<\infty$, $\mathrm{supp} (\phi_i) \subset B_{R-\tau}(m_i)$ and $\phi_i \to \phi_{\infty}$ on $B_R(m_{\infty})$.
By the proof of Theorem \ref{bochner1} and Proposition \ref{contr2}, it suffices to check the following:
\begin{claim}
We have
\[\lim_{i \to \infty}\int_{B_R(m_i)}\phi_i\mathrm{Ric}_{M_{i}}(\nabla f_i, \nabla f_i)d\underline{\mathrm{vol}}=\int_{B_R(m_{\infty})}\phi_{\infty}\mathrm{Ric}_{M_{\infty}}(\nabla f_{\infty}, \nabla f_{\infty})d\upsilon.\]
\end{claim}
The proof is as follows.
For every $x_i \to x_{\infty}$ and every $y_i \to y_{\infty}$, since $\langle \nabla f_i, \nabla r_{x_i} \rangle$ $L^{2}$-converges strongly to $\langle \nabla f_{\infty}, \nabla r_{x_{\infty}} \rangle$ on $B_R(m_{\infty})$, we see that
\begin{align*}
\int_{B_t(w_i)}\langle \nabla f_i \otimes \nabla f_i, \nabla r_{x_i}\otimes \nabla r_{y_i}\rangle d\underline{\mathrm{vol}}&=\int_{B_t(w_i)}\langle \nabla f_i, \nabla r_{x_i} \rangle \langle \nabla f_{i}, \nabla r_{y_i} \rangle d\underline{\mathrm{vol}} \\
&\to \int_{B_t(w_{\infty})}\langle \nabla f_{\infty}, \nabla r_{x_{\infty}} \rangle \langle \nabla f_{\infty}, \nabla r_{y_{\infty}} \rangle d\upsilon \\
&=\int_{B_t(w_{\infty})}\langle \nabla f_{\infty} \otimes \nabla f_{\infty}, \nabla r_{x_{\infty}} \otimes \nabla r_{y_{\infty}} \rangle d\upsilon
\end{align*}
holds for every $w_{\infty} \in B_R(m_{\infty})$, every $t>0$ with $\overline{B}_t(w_{\infty}) \subset B_R(m_{\infty})$ and every $w_i \to w_{\infty}$.
Thus $\nabla f_i \otimes \nabla f_i$ converges weakly to $\nabla f_{\infty} \otimes \nabla f_{\infty}$ on $B_r(m_{\infty})$ for every $r<R$.
On the other hand, for every $r<R$, since
\begin{align*}
\int_{B_r(m_i)}|\nabla f_i \otimes \nabla f_i|^{\hat{p}/2}d\underline{\mathrm{vol}}&=\int_{B_r(m_i)}|\nabla f_i|^{\hat{p}}d\underline{\mathrm{vol}}\\
&\to \int_{B_r(m_{\infty})}|\nabla f_{\infty}|^{\hat{p}}d\upsilon \\
&=\int_{B_r(m_{\infty})}|\nabla f_{\infty} \otimes \nabla f_{\infty}|^{\hat{p}/2}d\upsilon.
\end{align*}
Proposition \ref{hon} yields that $\nabla f_i \otimes \nabla f_i$ $L^{\hat{p}/2}$-converges strongly to $\nabla f_{\infty} \otimes \nabla f_{\infty}$ on $B_r(m_{\infty})$.
Thus by Proposition \ref{strong83} we see that $\phi_i \nabla f_i \otimes \nabla f_i$ $L^{\hat{p}/2}$-converges strongly to $\phi_{\infty} \nabla f_{\infty} \otimes \nabla f_{\infty}$ on $B_R(m_{\infty})$.
Therefore Proposition \ref{conv678} yields
\begin{align*}
\int_{B_R(m_i)}\phi_i\mathrm{Ric}_{M_{i}}(\nabla f_i, \nabla f_i)d\underline{\mathrm{vol}}&=\int_{B_R(m_i)}\langle \mathrm{Ric}_{M_{i}}, \phi_i df_i \otimes df_i \rangle d\underline{\mathrm{vol}} \\
& \to \int_{B_R(m_{\infty})}\langle \mathrm{Ric}_{M_{\infty}}, \phi_{\infty} df_{\infty} \otimes df_{\infty}\rangle d\upsilon \\
&=\int_{B_R(m_{\infty})}\phi_{\infty}\mathrm{Ric}_{M_{\infty}}(\nabla f_{\infty}, \nabla f_{\infty})d\upsilon.
\end{align*}
Therefore we have the assertion.
\end{proof}
\begin{corollary}
Let $\{f_i\}_{i \le \infty}$ be as in Theorem \ref{main}.
Assume that $p=\infty$ and that $\mathrm{Hess}_{f_i}$ $L^2$-converges strongly to $\mathrm{Hess}_{f_{\infty}}$ on $B_r(m_{\infty})$ for every $r<R$.
Then we have the same conclusion as in Theorem \ref{bochner7}.
\end{corollary}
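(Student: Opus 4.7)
The plan is to reduce the corollary to a direct application of Theorem \ref{bochner7}. Since $\{f_i\}_{i \le \infty}$ satisfies the hypotheses of Theorem \ref{main}, conclusion (\ref{c}) of that theorem supplies a constant $p_1 = p_1(n,K,R) > 1$ such that $\nabla f_i$ $L^{2p_1}$-converges strongly to $\nabla f_\infty$ on $B_r(m_\infty)$ for every $r<R$. The strategy is simply to feed $\hat{p} := 2p_1$ into Theorem \ref{bochner7} as the required gradient integrability exponent.

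The only quantitative check is that this choice is admissible in Theorem \ref{bochner7}, i.e.\ that $\hat{p}\ge 2q$ and $\hat{p}>2$, where $q$ is the conjugate exponent of $p$. Under the present hypothesis $p=\infty$ one has $q=1$, so both constraints collapse to $\hat{p}>2$, which holds because $p_1>1$ forces $\hat{p}=2p_1>2$. Hypothesis (1) of Theorem \ref{bochner7} is then exactly Theorem \ref{main}(\ref{c}) with this choice of $\hat{p}$, while hypothesis (2) of Theorem \ref{bochner7} (the $L^2$-strong convergence of Hessians on each $B_r(m_\infty)$, $r<R$) is assumed directly in the statement of the corollary. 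The standing assumptions of subsection~$4.5$, namely $\sup_{i<\infty}\|\mathrm{Ric}_{M_i}\|_{L^\infty(B_R(m_i))}<\infty$ and the existence of the limit Ricci tensor $\mathrm{Ric}_{M_\infty}\in L^\infty(T^0_2B_R(m_\infty))$ with $\mathrm{Ric}_{M_i}$ converging weakly to $\mathrm{Ric}_{M_\infty}$, are part of the background setting.

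Applying Theorem \ref{bochner7} with $\hat{p}:=2p_1$ yields the Bochner-type formula stated in the corollary. There is essentially no obstacle: the content of the corollary is that in the $L^\infty$-Ricci regime the conjugate exponent $q=1$ is so small that the threshold $\hat{p}\ge 2q=2$ for the gradient is automatically met by the regularity $L^{2p_1}$ already produced in Theorem \ref{main}(\ref{c}), so no additional gradient hypothesis has to be imposed beyond the $L^2$-strong convergence of Hessians.
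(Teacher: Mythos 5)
Your proposal is correct and follows exactly the paper's own (one-line) argument: the paper deduces the corollary from conclusion (\ref{c}) of Theorem \ref{main} together with Theorem \ref{bochner7}, which is precisely your reduction with $\hat{p}=2p_1>2$ and $q=1$ when $p=\infty$.
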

\begin{proof}
It follows from (\ref{c}) of Theorem \ref{main} and Theorem \ref{bochner7}.
\end{proof}
We end this subsection by discussing the \textit{scalar curvature of $M_{\infty}$}:
\begin{definition}\label{scal}
Let $s_{M_{\infty}}:=\mathrm{tr} (\mathrm{Ric}_{M_{\infty}}) \in L^p(B_R(m_{\infty}))$.
We say that \textit{$s_{M_{\infty}}$ is the scalar curvature of $B_R(m_{\infty})$ with respect to $\{(M_i, m_i, \underline{\mathrm{vol}})\}_i$}. 
\end{definition}
\begin{corollary}\label{scal conv}
Assume that $M_{\infty}$ is the noncollapsed limit space of $\{(M_i, m_i)\}_i$.
Then the scalar curvatures $s_{M_i}$ of $M_i$ converges weakly to $s_{M_{\infty}}$ on $B_R(m_{\infty})$.
\end{corollary}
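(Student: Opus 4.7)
The proof plan is short because the statement is essentially an immediate application of Proposition \ref{contr}. By Definition \ref{scal}, the scalar curvature is $s_{M_i} = \mathrm{tr}(\mathrm{Ric}_{M_i})$, i.e., the trace with respect to the Riemannian metric applied to the $(0,2)$-tensor $\mathrm{Ric}_{M_i}$. In the formalism of Section $2.1$, this trace is precisely the contraction $C^{l}_{k} : T^0_2(V) \to T^0_0(V)$ that uses the inner product on $T^*M_i$. The same identification is built into the definition of $s_{M_\infty} = \mathrm{tr}(\mathrm{Ric}_{M_\infty})$ on $B_R(m_\infty)$.

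The plan is therefore to invoke Proposition \ref{contr} with $A = B_R(m_\infty)$ applied to the sequence $\mathrm{Ric}_{M_i}$. By the assumption of the subsection we have $\mathrm{Ric}_{M_i} \in L^p(T^0_2 B_R(m_i))$ uniformly bounded and $\mathrm{Ric}_{M_i}$ converges weakly to $\mathrm{Ric}_{M_\infty}$ on $B_R(m_\infty)$; by the hypothesis of the corollary, $M_\infty$ is the noncollapsed limit space of $\{(M_i,m_i)\}_i$. Thus the hypotheses of Proposition \ref{contr} are satisfied, and we conclude that $C^{l}_{k}(\mathrm{Ric}_{M_i}) = s_{M_i}$ converges weakly to $C^{l}_{k}(\mathrm{Ric}_{M_\infty}) = s_{M_\infty}$ at a.e. $z_\infty \in B_R(m_\infty)$. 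Corollary \ref{weakkk} (or equivalently Corollary \ref{098} applied to the scalar sequence) then upgrades this to weak convergence on all of $B_R(m_\infty)$.

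The only subtlety I expect is that the proof of Proposition \ref{contr} in the paper is spelled out only for the case $s=0$, $a=1$, $b=r=2$, so one has to check that the argument transfers to $(0,2)$-tensors — which is immediate: the canonical musical isomorphism $T^*_xM_i \cong T_xM_i$ coming from $g_{M_i}$ identifies contractions on $T^2_0$ with those on $T^0_2$, and the key ingredient (choosing $\{x_j\}$ so that $\{dr_{x_j}\}$ forms an $\epsilon$-orthogonal basis at almost every point, then applying Proposition \ref{ep}(2) and the distance-function rectifiability of Theorem \ref{dist3}) goes through verbatim. In particular, at points where $\{dr_{x_j^i}(z_\infty)\}_{j=1}^n$ is almost orthonormal we have $s_{M_\infty}(z_\infty) = \sum_{j=1}^n \mathrm{Ric}_{M_\infty}(dr_{x_j^i}, dr_{x_j^i})(z_\infty) \pm \Psi$, and the same identity holds on $M_i$ because the limit is noncollapsed so $n = \mathrm{dim}\, M_\infty$ — this is exactly where the noncollapsing hypothesis enters and where the argument would fail in the collapsed setting.
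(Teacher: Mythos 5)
Your proposal is correct and follows essentially the same route as the paper, whose proof of this corollary is simply the one-line invocation of Proposition \ref{contr} applied to $\mathrm{Ric}_{M_i}$ (with the standing assumptions of the subsection supplying the uniform $L^p$ bound and the weak convergence $\mathrm{Ric}_{M_i} \to \mathrm{Ric}_{M_\infty}$). The extra care you take about transferring the contraction argument from $(2,0)$- to $(0,2)$-tensors and about where noncollapsing enters is consistent with the paper's remark that the other cases of Proposition \ref{contr} are proved similarly.
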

\begin{proof}
It follows from Proposition \ref{contr}.
\end{proof}

\end{document}